\documentclass[3p,times,sort&compress]{elsarticle}

\usepackage{amssymb}
\usepackage{subcaption}

\usepackage{booktabs}

\newcommand{\e}{\mathrm{e}}

\journal{}

\usepackage{mathtools}
\DeclareMathOperator{\sgn}{sgn}

\usepackage{hyperref}
\renewcommand{\i}{\mathrm{i}}

\makeatletter
\def\ps@pprintTitle{%
  \let\@oddhead\@empty
  \let\@evenhead\@empty
  \def\@oddfoot{%
    \hbox to \textwidth{%
      \@myfooterfont%
      \ifx\@elsarticlemyfooteralign\@elsarticlemyfooteraligncenter%
        \hfil\@elsarticlemyfooter\hfil%
      \else%
      \ifx\@elsarticlemyfooteralign\@elsarticlemyfooteralignleft%
        \@elsarticlemyfooter\hfill{}%
      \else%
      \ifx\@elsarticlemyfooteralign\@elsarticlemyfooteralignright%
        {}\hfill\@elsarticlemyfooter%
      \else%
        \ifx\@journal\@empty%
          \hfill \@date%
        \else%
          \hfill \@journal \hfill \@date%
        \fi%
      \fi%
      \fi%
      \fi%
    }%
  }%
  \let\@evenfoot\@oddfoot
}
\makeatother 


\DeclarePairedDelimiterX{\inp}[2]{\langle}{\rangle}{#1, #2}

\newcommand{\tran}{^{\mkern-1.5mu\mathsf T}}

\DeclareMathOperator{\diag}{diag}

\DeclareMathOperator{\arccsc}{arccsc}

\usepackage{bm}

\numberwithin{equation}{section}

\usepackage{amsthm}
\newtheorem{definition}{Definition}
\newtheorem{theorem}{Theorem}
\newtheorem{proposition}{Proposition}
\newtheorem{lemma}{Lemma}
\newtheorem{corollary}{Corollary}
\newtheorem{remark}{Remark}

\usepackage{tikz}

\usepackage{physics2}
\makeatletter
\newcommand\vb{\protect\@ifstar\protect\boldsymbol\protect\mathbf}
\makeatother
\usepackage{fixdif}

\usepackage{todonotes}

\newcommand{\dx}{\d{x}}

\newcommand{\dthe}{\d{\theta}}

\usepackage{derivative}
\derivset{\odv}[switch-*=false]
\derivset{\pdv}[switch-*=false]

\def\bfF{\vb F}
\def\bfP{\vb P}

\def\bftheta{\bm \theta}
\def\bfW{\vb W}

\newcommand{\colr}[1]{#1}

\newcommand{\rev}[1]{#1}

\begin{document}

\begin{frontmatter}

\title{A sparse $hp$-finite element method for piecewise-smooth differential equations with periodic boundary conditions}
\author[imp]{Daniel VandenHeuvel\corref{cor1}}
\author[imp]{Sheehan Olver }
\address[imp]{Department of Mathematics, Imperial College, London, England}
\cortext[cor1]{Corresponding author: d.vandenheuvel24@imperial.ac.uk}

\begin{abstract}
We develop an efficient $hp$-finite element method for piecewise-smooth differential equations with periodic boundary conditions, using orthogonal polynomials defined on circular arcs. The operators derived from this basis are banded and achieve optimal complexity regardless of $h$ or $p$, both for building the discretisation and solving the resulting linear system in the case where the operator is symmetric positive definite. The basis serves as a useful alternative to other bases such as the Fourier or integrated Legendre bases, especially for problems with discontinuities. \colr{We relate the convergence properties of these bases to regions of analyticity in the complex plane, and further use several differential equation examples to demonstrate these properties}. The basis spans the low order eigenfunctions of constant coefficient differential operators, thereby achieving better smoothness properties for time-evolution partial differential equations.
\end{abstract}

\begin{keyword}
Spectral methods \sep Periodic boundary conditions \sep Semiclassical orthogonal polynomials \sep Orthogonal polynomials \sep Finite element method 

\end{keyword}

\end{frontmatter}



\section{Introduction}

In this work, we develop a periodic piecewise smooth basis for solving differential equations with periodic boundary conditions in one dimension. The basis can either be viewed as a re-orthogonalised variant of Fourier series, or via the change of variables $x = \cos \theta$, $y = \sin \theta$, as  bivariate orthogonal polynomials defined over arcs of the unit circle. The construction leads to sparse banded operators for the weak Laplacian and mass matrices, with bandwidth independent of the truncation of the basis, hence the complexity of numerical solutions matches that of Fourier series. Since our basis is periodic, the numerical solution remains periodic and, unlike a standard Fourier approach \cite{boyd2001chebyshev, wright2015extension}, we can easily handle discontinuities by  placing element nodes at the points of discontinuity. While in this work we only consider one-dimensional problems, the basis we develop can be  extended to two-dimensional problems such as radial problems, for example differential equations in a sector.

Differential equations with periodic boundary conditions arise in a variety of applications, such as molecular dynamics \cite{frenkel2002understanding}, quantum mechanics \cite{fausset2007periodic}, and fluid mechanics \cite{combined2006dong}. Applying orthogonal polynomials for solving these differential equations with a spectral element method can be traced back to the hierarchical $p$-finite element method (FEM) basis using integrated Legendre polynomials as introduced by Babu{\v{s}}ka and Szab{\'o} \cite{babuvska1983lecture}, where high-order orthogonal polynomials can be utilised together with sparse operators for the efficient numerical solution of such problems. While other classical bases such as the Fourier or the integrated Legendre basis could be applied to these periodic problems \cite{boyd2001chebyshev, wright2015extension, knook2024quasi, raju2014spectral}, they all have certain limitations. The Fourier basis is not able to handle discontinuities, and other orthogonal bases do not have periodicity as a natural property. For example, the integrated Legendre basis can be used for the spectral element method by making the shape functions themselves periodic \cite{los2015dealing}, \colr{but the periodicity in the resulting solution's derivatives may be lost numerically at the element junctions, in particular at $\theta = \pm T$ for $2T$-periodic problems, when solving problems over time}. \colr{We call this numerical difference in periodicity the \textit{periodic drift} and find that solutions using bases such as the integrated Legendre basis may display growing periodic drift over time, particularly in the solution's derivatives.} The basis we develop in this work gets around this property by, instead of defining the polynomials on the interval  in the $\theta$ coordinate, we define the polynomials on circular arcs instead, meaning in the $x$ and $y$ coordinates. Thus, each individual basis polynomial is itself periodic \rev{when extended to the whole interval} and we obtain a sparse and efficient solver. 

\colr{We find that this basis helps to reduce the effects of periodic drift, an advantage over bases such as the integrated Legendre basis.} A key contribution of this paper is the analysis and comparison of these different bases, that is, the Fourier basis (which we denote $\bfF$), the integrated Legendre basis (which we denote $\bfW^{(b),\bftheta}$ and is polynomial in $\theta$) and the new arc polynomial basis (which we denote $\bfP^{(b),\bftheta}$ and is polynomial in $x$ and $y$). A similar basis \rev{was} previously introduced in \cite{huybrechs2010fourier}\rev{, which defines} \emph{half-range Chebyshev polynomials} for the purpose of Fourier extensions, which are in fact a special case of our arc polynomials \rev{corresponding to} a semicircular arc. \rev{Orthogonal polynomials on arcs were also considered by \cite{olver2021orthogonal}; however an important difference --- and a primary contribution --- of our work is the embedding of such a basis within an $hp$-FEM framework, together with a subsequent convergence analysis.} \colr{The bases we consider in this work are summarised in Table \ref{tab:basis_tab}.}

\begin{table}[h!]
\centering
\caption{Bases used in this work. The primary contribution of this work is the introduction of the $\vb P^{(b)}$ and $\vb P^{(b), \bm\theta}$ bases, although the periodic forms of the $\vb W^{(b)}$ and $\vb W^{(b), \bm\theta}$ bases from \cite{babuvska1983lecture, knook2024quasi} are also new.}\label{tab:basis_tab}
\begin{tabular}{@{}lll@{}}
\toprule
Name                          & Notation                 & Definition                                                  \\ \midrule
Fourier                       & $\vb F$                  & Trigonometric polynomials. \\
Piecewise Integrated Legendre & $\vb W^{(b), \bm\theta}$ & Periodic hierarchical $p$-FEM basis \cite{babuvska1983lecture}; see \ref{app:integ_legen_def}. \\
Arc Polynomial                & $\vb P^{(b)}$            & Orthogonal trigonometric polynomials on an arc; see Section \ref{sec:ortho_arc_p}. \\
Piecewise Arc Polynomial      & $\vb P^{(b), \bm\theta}$ & Hierarchical $p$-FEM basis built from arc polynomials; see Section \ref{sec:sem2_}. \\ \bottomrule
\end{tabular}
\end{table}

Our introduction of the basis $\bfP^{(b),\bftheta}$ as a replacement for the Fourier basis $\bfF$ is \rev{similar in spirit} to the work in \cite{wright2015extension}. There, the authors extend the Chebfun software \cite{Driscoll2014} by replacing the Chebyshev basis with a Fourier basis and develop the corresponding approximation theory and operator framework, with applications to \rev{areas such as} differential equations and eigenvalue problems. \rev{In this sense, our work can be viewed as having a similar goal: we replace the classical Fourier basis by a more flexible piecewise polynomial basis $\bfP^{(b),\bftheta}$, which may be interpreted as a natural piecewise extension of $\bfF$.} While \rev{the underlying motivation is similar}, the operators required to work with $\bfP^{(b),\bftheta}$ are substantially more delicate to construct.

The paper is structured as follows. In Section \ref{sec:semiclassical_jacobi_basis} we introduce the semiclassical Jacobi polynomials from \cite{papadopoulos2024building, magnus1995painleve} and generalise them to the non-integrable weight $x^a(t-x)^c/(1-x)$, serving as a fundamental component of our spectral element method for their use as bubble functions, basis elements which are supported on a single element. Section \ref{sec:ortho_arc_p} defines orthogonal polynomials on an arc $\{(x, y) : x^2 + y^2 = 1,\, x \geq h\}$, $|h| < 1$, defined in terms of the polynomials from Section \ref{sec:semiclassical_jacobi_basis}. We then use these polynomials in Section \ref{sec:sem2_} to derive our spectral element method, taking the polynomials from Section \ref{sec:ortho_arc_p} and defining them piecewise over an entire circle. Section \ref{sec:analysis__} gives some analysis of our piecewise basis, showing that we can indeed \rev{represent} trigonometric polynomials exactly, and that the convergence rate for expansions in this basis are efficient compared to a periodic version of the integrated Legendre polynomial basis from \cite{knook2024quasi}\colr{, relating this convergence to regions of analyticity in the complex plane}. Finally, we give some examples of solving differential equations in Section \ref{sec:solve_de_} as well as a short analysis of the computational performance of our basis, and give concluding remarks in Section \ref{sec:conc}. 

\section{Semiclassical Jacobi polynomials with $b = -1$}\label{sec:semiclassical_jacobi_basis}

A fundamental building block for the orthogonal polynomials constructed in this work are the \textit{semiclassical Jacobi polynomials}.

\begin{definition}[\cite{papadopoulos2024building, magnus1995painleve}]\label{def1}
The semiclassical Jacobi polynomials with parameters $t$, $a$, $b$, and $c$, are orthogonal polynomials associated with the weight $w^{t, (a, b, c)}(x) := x^a(1-x)^b(t-x)^c$, where $t > 1$ and $a, b > -1$. We denote by $P_n^{t, (a, b, c)}$ the $n$th degree semiclassical Jacobi polynomial. Using quasimatrix notation \cite{townsend2015continuous}, we define
\begin{equation}\label{eq:semiclassical_jacobi}
\vb P^{t, (a, b, c)}(x) := \begin{bmatrix} P_0^{t, (a, b, c)}(x) & P_1^{t, (a, b, c)}(x) & P_2^{t, (a, b, c)}(x) & \cdots \end{bmatrix}.
\end{equation}
For the linear polynomials, it will also be useful to define the notation $P_1^{t, (a, b, c)} := \beta^{t, (a, b, c)}(x - \alpha^{t, (a, b, c)})$. The associated inner product is denoted
\begin{equation}\label{eq:def:inp}
\inp{f}{g}^{t, (a, b, c)} := \int_0^1 f(x)g(x)w^{t,(a,b,c)}(x) \dx.
\end{equation}
We similarly define $\|f\|_{t, (a, b, c)}^2 := \inp{f}{f}^{t, (a, b, c)}$.
\end{definition}

The coefficients $\beta^{t, (a, b, c)}$ and $\alpha^{t, (a, b, c)}$ are derived in \ref{app:linear_coefficients}. In this section, we are interested in extending Definition \ref{def1} to the case $b = -1$, mimicking the Jacobi polynomials
$$
P_k^{(-1,0)}(x) = \begin{cases} 1 & k = 0 \\
					-(1-x) P_{k-1}^{(1,0)}(x)/2 & \hbox{otherwise}
					\end{cases}
$$
as these can then be used as \rev{bubble functions in our} spectral element method developed later. The weight in this case is not integrable and so the inner product \eqref{eq:def:inp} only gives a formal definition of orthogonality for these polynomials, but we can still make a definition that gives us sparse operators. Similarly to the negative parameter Jacobi polynomials in \cite{guo2009generalized}, we make the following definition.

\begin{definition}
The semiclassical Jacobi polynomials with $b = -1$ are defined in terms of the $b = 1$ polynomials via
\begin{equation}\label{eq:semiclassical_jacobi_bneg1_def}
\vb P^{t, (a, -1, c)}(x) := \begin{bmatrix} 1 & (1 - x)\vb P^{t, (a, 1, c)}(x) \end{bmatrix}.
\end{equation}
\end{definition}

The cases for $b > -1$ were already developed in \cite{papadopoulos2024building}, and their software implementation is available from \cite{SemiPoly.jl2024}. The first five polynomials with $t = 2$, $a = \pm 1/2$, and $c = \pm 1/2$ are shown in Figure \ref{fig:semijac_polys}.

\begin{figure}[h!]
\centering
\includegraphics[width=\textwidth]{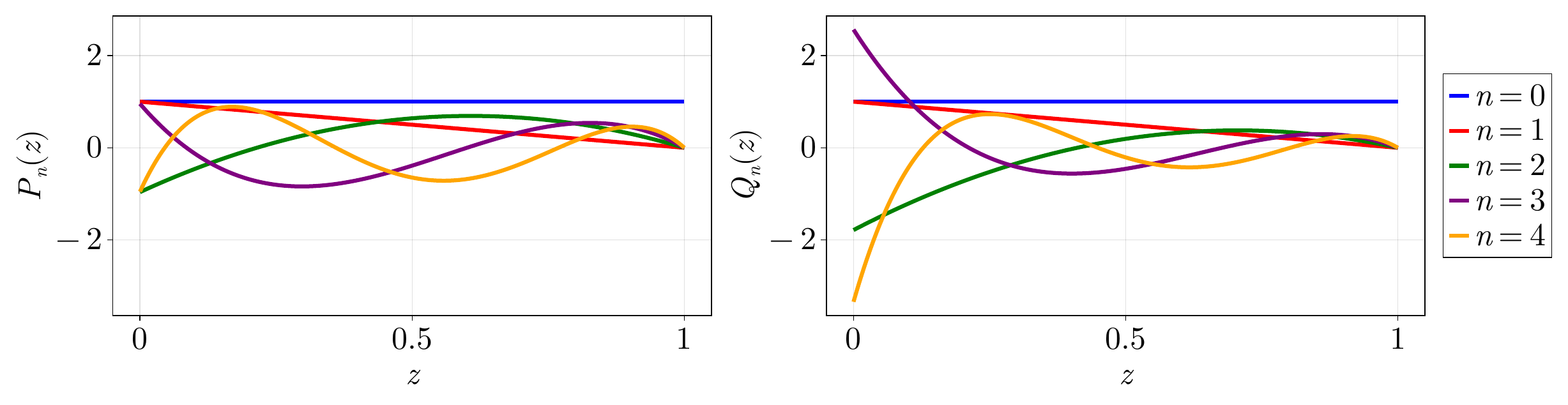}
\caption{The first five semiclassical Jacobi polynomials with $b=-1$ for $P_n^{2, (-1/2, -1, -1/2)}$ (left) and $P_n^{2, (1/2, -1, 1/2)}$ (right).}\label{fig:semijac_polys}
\end{figure}

\begin{proposition}\label{prop1}
The $\vb P^{t, (a, -1, c)}$ polynomials satisfy the three-term recurrence 
\begin{equation}
xP_n^{t, (a, -1, c)}(x) = c_n^{t, (a, -1, c)}P_{n-1}^{t, (a, -1, c)}(x) + a_n^{t, (a, -1, c)}P_n^{t, (a, -1, c)}(x) + b_n^{t, (a, -1, c)}P_{n+1}^{t, (a, -1, c)}(x), \quad n = 0, 1, 2, \ldots,
\end{equation}
where
\begin{equation}\label{eq:semiclassical_jacobi_3tr_bneg1}
a_n^{t, (a, -1, c)} = \begin{cases} 1 & n = 0, \\ a_{n-1}^{t, (a, 1, c)} & n \geq 1, \end{cases} \quad b_n^{t, (a, -1, c)} = \begin{cases} -1 & n = 0, \\ b_{n-1}^{t, (a, 1, c)} & n \geq 1, \end{cases} \quad c_n^{t, (a, -1, c)} = \begin{cases} 0 & n = 1, \\ c_{n-1}^{t, (a, 1, c)} & n \geq 2, \end{cases}
\end{equation}
using the notation $xP_n^{t, (a, b, c)}(x) = c_n^{t, (a, b, c)}P_{n-1}^{t, (a, b, c)}(x) + a_n^{t, (a, b, c)}P_n^{t,(a,b,c)} + b_n^{t, (a, b, c)}P_{n+1}^{t, (a, b, c)}(x)$ for the three-term recurrence in general.
\end{proposition}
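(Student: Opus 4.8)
The plan is to verify the recurrence directly from the defining relation \eqref{eq:semiclassical_jacobi_bneg1_def}, treating it as a purely algebraic identity. Since the $b=-1$ weight is non-integrable, I would not appeal to orthogonality to assert the existence of a three-term recurrence; instead I would substitute the definition together with the known $b=1$ recurrence and check that both sides agree. Write $Q_m := P_m^{t,(a,1,c)}$ for the $b=1$ family, with its recurrence $xQ_m = c_m^{t,(a,1,c)}Q_{m-1} + a_m^{t,(a,1,c)}Q_m + b_m^{t,(a,1,c)}Q_{m+1}$, and recall the normalisation $Q_0 = P_0^{t,(a,1,c)} = 1$ used for these families (consistent with the convention $P_1 = \beta(x-\alpha)$ in Definition \ref{def1}). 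By \eqref{eq:semiclassical_jacobi_bneg1_def} the $b=-1$ polynomials are $P_0^{t,(a,-1,c)} = 1$ and $P_n^{t,(a,-1,c)} = (1-x)Q_{n-1}$ for $n\geq 1$; in particular each $P_n^{t,(a,-1,c)}$ has exact degree $n$, so a three-term recurrence is the natural ansatz to confirm.

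For the generic indices $n\geq 2$ the computation is a one-line manipulation: I would write $xP_n^{t,(a,-1,c)} = (1-x)(xQ_{n-1})$ and expand $xQ_{n-1}$ using the $b=1$ recurrence at index $n-1$, giving $(1-x)\bigl[c_{n-1}^{t,(a,1,c)}Q_{n-2} + a_{n-1}^{t,(a,1,c)}Q_{n-1} + b_{n-1}^{t,(a,1,c)}Q_n\bigr]$. Re-identifying the three products via $(1-x)Q_{n-2} = P_{n-1}^{t,(a,-1,c)}$, $(1-x)Q_{n-1} = P_n^{t,(a,-1,c)}$ and $(1-x)Q_n = P_{n+1}^{t,(a,-1,c)}$ then yields exactly the claimed coefficients $c_n^{t,(a,-1,c)} = c_{n-1}^{t,(a,1,c)}$, $a_n^{t,(a,-1,c)} = a_{n-1}^{t,(a,1,c)}$ and $b_n^{t,(a,-1,c)} = b_{n-1}^{t,(a,1,c)}$.

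The part requiring care --- and the only real obstacle --- is the bookkeeping at the two boundary indices $n=0,1$, where $P_0^{t,(a,-1,c)} = 1$ is not of the form $(1-x)Q_{-1}$. For $n=0$ I would compute $xP_0^{t,(a,-1,c)} = x$ directly and match it against $a_0^{t,(a,-1,c)}P_0^{t,(a,-1,c)} + b_0^{t,(a,-1,c)}P_1^{t,(a,-1,c)} = a_0^{t,(a,-1,c)} + b_0^{t,(a,-1,c)}(1-x)$; comparing coefficients of $1$ and $x$ forces $a_0^{t,(a,-1,c)} = 1$ and $b_0^{t,(a,-1,c)} = -1$, which is exactly where the normalisation $Q_0=1$ is used. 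For $n=1$ the generic expansion still applies, but the $b=1$ recurrence at index $0$, namely $xQ_0 = a_0^{t,(a,1,c)}Q_0 + b_0^{t,(a,1,c)}Q_1$, carries no lower-order term (equivalently $c_0$ is absent, since $Q_{-1}$ does not occur); multiplying by $(1-x)$ reproduces $c_1^{t,(a,-1,c)} = 0$ while still giving $a_1^{t,(a,-1,c)} = a_0^{t,(a,1,c)}$ and $b_1^{t,(a,-1,c)} = b_0^{t,(a,1,c)}$. Assembling the three regimes gives the stated recurrence.
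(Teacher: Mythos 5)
Your proposal is correct and follows essentially the same route as the paper: direct algebraic substitution of the definition $P_n^{t,(a,-1,c)} = (1-x)P_{n-1}^{t,(a,1,c)}$ (with $P_0^{t,(a,-1,c)}=1$) into the $b=1$ recurrence, with the boundary cases handled by direct computation. The paper's proof is just a terser version of the same argument; your explicit treatment of the $n=1$ index (where $c_1=0$ arises because the $b=1$ recurrence at index $0$ has no lower-order term) fills in a detail the paper leaves implicit.
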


\begin{proof}
The $n = 0$ case follows immediately by using $P_0^{t, (a, b, c)} = 1$ and $P_1^{t, (a, -1, c)}(x) = 1 - x$. For $n > 0$, the result is derived directly by substituting in the definition \eqref{eq:semiclassical_jacobi_bneg1_def}.
\end{proof}

Proposition \ref{prop1} allows us to efficiently evaluate $P_n^{t, (a, -1, c)}(x)$ using Clenshaw's algorithm \cite{olver2020fast}. Now let us build operators for converting between bases.

\begin{definition}\label{def:connmat}
The upper triangular \emph{connection matrix} $R_{(a, b, c)}^{t, (\alpha, \beta, \gamma)}$ is defined by 
\[
\vb P^{t, (a, b, c)} = \vb P^{t, (\alpha, \beta, \gamma)}R_{(a, b, c)}^{t, (\alpha, \beta, \gamma)},
\] 
or equivalently $P_n^{t, (a, b, c)} = \sum_{j=0}^n r_{jn}P_j^{t, (\alpha, \beta, \gamma)}$ for $n=0,1,2,\ldots$, where $R_{(a, b, c)}^{t, (\alpha, \beta, \gamma)} = \{r_{jn}\}_{j,n=0,1,2,\ldots}$. In particular, $R_{(a, b, c)}^{t, (\alpha, \beta, \gamma)}$ gives the coefficients of $\vb P^{t, (a, b, c)}$ in the $\vb P^{t, (\alpha, \beta, \gamma)}$ basis. Similarly, we define the \emph{weighted connection matrix} $L_{\mathrm a\mathrm b\mathrm c, (a, b, c)}^{t, (\alpha, \beta, \gamma)}$ by 
\[
w^{t, (a, b, c)}\vb P^{t, (a, b, c)} = w^{t, (\alpha, \beta, \gamma)}\vb P^{t, (\alpha, \beta, \gamma)}L_{\mathrm a\mathrm b\mathrm c, (\alpha, \beta, \gamma)}^{t, (\alpha, \beta, \gamma)}.
\]
The Roman subscripts $\mathrm a$, $\mathrm b$, $\mathrm c$ denote the terms present in the weight functions, and may be omitted otherwise. For example, $(1-x)^b \vb P^{t, (a, b, c)} = (1-x)^\beta \vb P^{t, (\alpha, \beta, \gamma)}L_{\mathrm b, (a, b, c)}^{t,(\alpha,\beta,\gamma)}$. 
\end{definition}

\begin{proposition}\label{prop:ra1cmalphbeta_mat_r}
The matrix $R_{(a, -1, c)}^{t, (\alpha, \beta, \gamma)}$ is given by
\begin{equation}
R_{(a, -1, c)}^{t, (\alpha, \beta, \gamma)} = \begin{cases} \diag\left(1, R_{(a, 1, c)}^{t, (\alpha, 1, \gamma)}\right) & \beta = -1, \\[0.3em]
\begin{bmatrix} \vb e_1 & L_{\mathrm b, (a, 1, c)}^{t, (a, 0, c)} \end{bmatrix} & (a, c) = (\alpha, \gamma),\, \beta = 0, \\[0.3em]
R_{(a, 0, c)}^{t, (\alpha, \beta, \gamma)}R_{(a, -1, c)}^{t, (a, 0, c)} & otherwise,
\end{cases}
\end{equation}
where $\vb e_1 = (1, 0, \ldots)\tran$. Moreover, $R_{(a, -1, c)}^{t, (a, 0, c)}$ is upper bidiagonal.
\end{proposition}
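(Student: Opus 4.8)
The plan is to verify the three branches in order of increasing generality, routing everything through the intermediate basis $(a,0,c)$, and then to prove the bidiagonality separately by an orthogonality computation. For the first branch $\beta = -1$, I would substitute the defining forms from \eqref{eq:semiclassical_jacobi_bneg1_def}, writing $\vb P^{t,(a,-1,c)} = \begin{bmatrix} 1 & (1-x)\vb P^{t,(a,1,c)} \end{bmatrix}$ and $\vb P^{t,(\alpha,-1,\gamma)} = \begin{bmatrix} 1 & (1-x)\vb P^{t,(\alpha,1,\gamma)} \end{bmatrix}$. Matching the constant first column gives $\vb e_1$, and for the remaining block I would cancel the common factor $(1-x)$ and invoke the standard connection relation $\vb P^{t,(a,1,c)} = \vb P^{t,(\alpha,1,\gamma)}R_{(a,1,c)}^{t,(\alpha,1,\gamma)}$, which yields precisely the block-diagonal form $\diag(1, R_{(a,1,c)}^{t,(\alpha,1,\gamma)})$.

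For the second branch, with $(a,c)=(\alpha,\gamma)$ and $\beta=0$, the first column again matches since $P_0^{t,(a,-1,c)} = 1 = P_0^{t,(a,0,c)}$, contributing $\vb e_1$. The remaining columns require expressing $(1-x)\vb P^{t,(a,1,c)}$ in the $(a,0,c)$ basis, which by Definition \ref{def:connmat} is exactly the weighted connection matrix $L_{\mathrm b,(a,1,c)}^{t,(a,0,c)}$ specialised to $b=1$, $\beta=0$. In the third, general branch I would factor through this intermediate basis, using that connection matrices compose: $\vb P^{t,(a,-1,c)} = \vb P^{t,(a,0,c)}R_{(a,-1,c)}^{t,(a,0,c)} = \vb P^{t,(\alpha,\beta,\gamma)}R_{(a,0,c)}^{t,(\alpha,\beta,\gamma)}R_{(a,-1,c)}^{t,(a,0,c)}$, where the right factor is the matrix from the second branch and the left factor is the $b>-1$ connection matrix available from \cite{papadopoulos2024building}.

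For the moreover claim, I would show $R_{(a,-1,c)}^{t,(a,0,c)}$ is upper bidiagonal directly. By the second branch, column $n$ for $n\geq 1$ encodes $P_n^{t,(a,-1,c)} = (1-x)P_{n-1}^{t,(a,1,c)}$, a polynomial of degree $n$, in the $(a,0,c)$ basis. For any polynomial $q$ of degree at most $n-2$, absorbing the $(1-x)$ factor into the weight converts the inner product, giving $\inp{(1-x)P_{n-1}^{t,(a,1,c)}}{q}^{t,(a,0,c)} = \inp{P_{n-1}^{t,(a,1,c)}}{q}^{t,(a,1,c)} = 0$ by orthogonality of $P_{n-1}^{t,(a,1,c)}$ to all lower-degree polynomials. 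Hence only the coefficients of $P_{n-1}^{t,(a,0,c)}$ and $P_n^{t,(a,0,c)}$ survive, so column $n$ has nonzeros only on the diagonal and first superdiagonal; combined with the $\vb e_1$ first column this gives the bidiagonal structure.

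The main obstacle, though largely conceptual rather than computational, is the weight-absorption identity in the final step together with correctly tracking the degree shift induced by the $(1-x)$ factor: one must check that the two surviving coefficients land on the diagonal and the \emph{super}diagonal (reflecting that $P_n^{t,(a,-1,c)}$ has degree $n$ but is built from a degree $n-1$ polynomial), rather than producing a lower-triangular pattern. The remaining work is routine bookkeeping of the block structures and the composition rule for connection matrices.
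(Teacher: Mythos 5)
Your proof is correct and, for all three branches, follows essentially the same route as the paper: the $\beta=-1$ case read off from \eqref{eq:semiclassical_jacobi_bneg1_def}, the $(a,c)=(\alpha,\gamma)$, $\beta=0$ case identified with $\begin{bmatrix}\vb e_1 & L_{\mathrm b,(a,1,c)}^{t,(a,0,c)}\end{bmatrix}$ via the definition of the weighted connection matrix, and the general case by composition through the $(a,0,c)$ basis. The one place you diverge is the \emph{moreover} claim: the paper simply cites \cite{papadopoulos2024building} for the lower bidiagonality of $L_{\mathrm b,(a,1,c)}^{t,(a,0,c)}$ and then observes that prepending the column $\vb e_1$ shifts this into an upper bidiagonal $R_{(a,-1,c)}^{t,(a,0,c)}$, whereas you prove the sparsity from scratch by the weight-absorption identity $\inp{(1-x)P_{n-1}^{t,(a,1,c)}}{q}^{t,(a,0,c)} = \inp{P_{n-1}^{t,(a,1,c)}}{q}^{t,(a,1,c)}$ and orthogonality, correctly tracking that the two surviving coefficients land on the diagonal and superdiagonal. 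Your version is self-contained where the paper's is not, at the cost of a few extra lines; both arguments are sound and rest on the same underlying fact.
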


\begin{proof}
The case $\beta = -1$ follows immediately from the definition \eqref{eq:semiclassical_jacobi_bneg1_def}. To consider the case where $(a, c) = (\alpha, \gamma)$ and $\beta = 0$, write
\begin{equation}
R_{(a, -1, c)}^{t, (a, 0, c)} = \begin{bmatrix} 
r_{11} & \vb r_{12}\tran \\ \vb 0 & R_{22} \end{bmatrix}.
\end{equation}
The condition $\vb P^{t, (a, -1, c)} = \vb P^{t, (a, 0, c)}R_{(a, -1, c)}^{t, (a, 0, c)}$ then gives $r_{11} = 1$ and, using $P_0^{t, (a, 1, c)} \equiv 1$,
\[
(1 - x)\vb P^{t, (a, 1, c)} = \vb r_{12}\tran + \begin{bmatrix} P_1^{t, (a, 0, c)} & P_2^{t, (a, 0, c)} & \cdots \end{bmatrix}R_{22} = \vb P^{t, (a, 0, c)}\begin{bmatrix} \vb r_{12}\tran \\ R_{22} \end{bmatrix}.
\]
We thus see that the last matrix is $L_{\mathrm b, (a, 1, c)}^{t, (a, 0, c)}$, giving us this case. The fact that $R_{(a, -1, c)}^{t, (a, 0, c)}$ is upper bidiagonal follows from $L_{\mathrm b, (a, 1, c)}^{t, (a, 0, c)}$ being lower bidiagonal \cite{papadopoulos2024building}. Lastly, for the general case, this expression follows from the definition of the connection matrix, where we first increment from $b = -1$ up to $b = 0$ and then, from the $\vb P^{t, (a, 0, c)}$ basis, convert to $\vb P^{t, (\alpha,\beta,\gamma)}$ using $R_{(a, 0, c)}^{t, (\alpha,\beta,\gamma)}$.
\end{proof}

The last component we need for our basis is the differentiation matrix.

\begin{definition}
The matrix $D_{(a, b, c)}^{t, (\alpha, \beta, \gamma)}$ is the matrix defined by 
\begin{equation}
\odv*{\vb P^{t, (a, b, c)}}{x} = \vb P^{t, (\alpha, \beta, \gamma)}D_{(a, b, c)}^{t, (\alpha, \beta, \gamma)}.
\end{equation}
Similar to Definition \ref{def:connmat}, we include Roman subscripts to indicate weighted derivatives, for example the matrix $D_{\mathrm b, (a, b, c)}^{t, (\alpha, \beta, \gamma)}$ is defined by $\mathrm d[(1-x)^b\vb P^{t, (a, b, c)}]/\mathrm dx = (1-x)^\beta\vb P^{t, (\alpha,\beta,\gamma)}D_{\mathrm b, (a, b, c)}^{t, (\alpha,\beta,\gamma)}$.
\end{definition}

\begin{proposition}
The matrix $D_{(a, -1, c)}^{t, (a+1, 0, c+1)}$ is a $(-1, 2)$ banded matrix, meaning a banded matrix with bandwidth $-1$ and upper bandwidth $2$, given by 
\begin{equation}\label{eq:diff_mat_thm}
D_{(a, -1, c)}^{t, (a+1, 0, c+1)} = \begin{bmatrix} \vb 0 & D_{\mathrm b, (a, 1, c)}^{t, (a+1, 0, c+1)} \end{bmatrix}.
\end{equation}
\end{proposition}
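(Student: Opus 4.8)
The plan is to differentiate the quasimatrix $\vb P^{t,(a,-1,c)}$ column by column using its definition \eqref{eq:semiclassical_jacobi_bneg1_def}, and then to recognise the result as the weighted differentiation matrix $D_{\mathrm b,(a,1,c)}^{t,(a+1,0,c+1)}$ bordered by a single zero column. Since $\vb P^{t,(a,-1,c)} = \begin{bmatrix} 1 & (1-x)\vb P^{t,(a,1,c)} \end{bmatrix}$, the first column is the constant $1$, whose derivative is $0$, while the derivative of the block $(1-x)\vb P^{t,(a,1,c)}$ is, by the very definition of the weighted differentiation matrix, equal to $\vb P^{t,(a+1,0,c+1)}D_{\mathrm b,(a,1,c)}^{t,(a+1,0,c+1)}$. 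Assembling the columns gives
\[
\odv*{\vb P^{t,(a,-1,c)}}{x} = \begin{bmatrix} 0 & \vb P^{t,(a+1,0,c+1)}D_{\mathrm b,(a,1,c)}^{t,(a+1,0,c+1)} \end{bmatrix} = \vb P^{t,(a+1,0,c+1)}\begin{bmatrix} \vb 0 & D_{\mathrm b,(a,1,c)}^{t,(a+1,0,c+1)} \end{bmatrix},
\]
and uniqueness of the coefficients in the $\vb P^{t,(a+1,0,c+1)}$ basis yields \eqref{eq:diff_mat_thm}.

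It then remains to establish the bandwidth, and I would do this by first showing that $D_{\mathrm b,(a,1,c)}^{t,(a+1,0,c+1)}$ is upper bidiagonal, i.e.\ $(0,1)$-banded. Upper triangularity is immediate on degree grounds, since $(1-x)P_n^{t,(a,1,c)}$ has degree $n+1$ and hence its derivative lies in the span of $P_0^{t,(a+1,0,c+1)},\ldots,P_n^{t,(a+1,0,c+1)}$. To remove the remaining entries I would evaluate $\inp{\odv*{\big[(1-x)P_n^{t,(a,1,c)}\big]}{x}}{P_m^{t,(a+1,0,c+1)}}^{t,(a+1,0,c+1)}$ for $m \le n-2$ by integration by parts: the boundary contributions vanish because the explicit factor $(1-x)$ kills the endpoint $x=1$ and $x^{a+1}$ (using $a>-1$) kills $x=0$, and differentiating the weighted test function $P_m^{t,(a+1,0,c+1)}x^{a+1}(t-x)^{c+1}$ reproduces $w^{t,(a,1,c)}$ multiplied by a polynomial of degree at most $m+1$. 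Orthogonality of $P_n^{t,(a,1,c)}$ to all polynomials of degree below $n$ then forces the inner product to vanish whenever $m+1<n$, leaving only the diagonal and the first superdiagonal. Alternatively, this bidiagonality may be quoted directly from the sparse differentiation operators of \cite{papadopoulos2024building}, exactly as the lower-bidiagonal structure of $L_{\mathrm b,(a,1,c)}^{t,(a,0,c)}$ was invoked in Proposition \ref{prop:ra1cmalphbeta_mat_r}.

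Finally I would track how the leading zero column displaces the bands. Prepending one zero column shifts every entry of $D_{\mathrm b,(a,1,c)}^{t,(a+1,0,c+1)}$ one place to the right, so an entry on superdiagonal $k$ moves to superdiagonal $k+1$; equivalently the lower bandwidth drops by one and the upper bandwidth rises by one. Hence the $(0,1)$-banded $D_{\mathrm b,(a,1,c)}^{t,(a+1,0,c+1)}$ becomes $(-1,2)$-banded, which is precisely the claimed structure. I expect the only genuinely delicate point to be this bandwidth bookkeeping --- in particular obtaining the negative lower bandwidth $-1$ (no diagonal, with nonzero entries beginning on the first superdiagonal) with the correct sign convention --- together with the bidiagonality of $D_{\mathrm b,(a,1,c)}^{t,(a+1,0,c+1)}$ itself if one does not simply cite \cite{papadopoulos2024building}.
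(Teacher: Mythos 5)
Your proposal is correct and follows essentially the same route as the paper: differentiate the definition \eqref{eq:semiclassical_jacobi_bneg1_def} column by column (the constant gives the zero first column, and the block $(1-x)\vb P^{t,(a,1,c)}$ gives $\vb P^{t,(a+1,0,c+1)}D_{\mathrm b,(a,1,c)}^{t,(a+1,0,c+1)}$ by the definition of the weighted differentiation matrix), then use the upper bidiagonality of $D_{\mathrm b,(a,1,c)}^{t,(a+1,0,c+1)}$ --- which the paper simply cites from \cite{papadopoulos2024building} --- to read off the $(-1,2)$ bandwidth after the column shift. Your integration-by-parts verification of that bidiagonality is a sound self-contained addition (and your fallback of citing \cite{papadopoulos2024building} is exactly what the paper does), so the two arguments coincide in all essentials.
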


\begin{proof}
The first column is zero since $\mathrm d1/\mathrm dx = 0$. For the remaining columns, we have 
\[
\odv*{\left[(1 - x)\vb P^{t, (a, 1, c)}\right]}{x} = \vb P^{t, (a+1, 0, c+1)}D_{\mathrm b, (a, 1, c)}^{t, (a+1, 0, c+1)},
\]
giving \eqref{eq:diff_mat_thm}. Since $D_{\mathrm b, (a, 1, c)}^{t, (a+1, 0, c+1)}$ is upper bidiagonal \cite{papadopoulos2024building}, $D_{(a,-1,c)}^{t,(a+1,0,c+1)}$ is a $(-1, 2)$ banded matrix.
\end{proof}

We now have all the operators that we need for working with this $b = -1$ basis.

\section{Orthogonal polynomials on an arc}\label{sec:ortho_arc_p}

In this section we define orthogonal polynomials on the arc
\begin{equation}\label{eq:arc_def}
\Omega_h = \{(x, y) : x^2 + y^2 = 1, x \geq h\},
\end{equation}
where $|h| < 1$. These polynomials will underpin the basis we use for solving differential equations on a complete interval $-\pi \leq \theta < \pi$, splitting the interval into segments that individually represent arcs of the form $\Omega_h$. Following \cite{olver2021orthogonal}, we consider polynomials orthogonal with respect to the inner product
\begin{align}\label{eq:inp_arc}
\inp{f}{g}^{(b, h)} &:= \int_{-\varphi}^{\varphi} f(\cos\theta, \sin\theta)g(\cos\theta, \sin\theta)w(\cos\theta) \dthe \\
&= \int_h^1 \left[f(x, y)g(x, y) + f(x, -y)g(x, -y)\right]\frac{w(x)}{y} \dx,\label{eq:inp_arc_cartesian}
\end{align}
where $x = \cos\theta$, $y = \sin\theta = \sqrt{1 - x^2}$, $\cos\varphi = h$, and $w(x) = (x - h)^b$ for $b \geq -1$; note that this is only an inner product for $f, g \in \mathbb R[x, y] \setminus \langle x^2 + y^2 - 1 \rangle$, i.e. bivariate polynomials modulo those with a factor of $x^2 + y^2 - 1$. We may leave the dependence on $h$ implicit, writing $\inp{f}{g}^{(b)}$ instead of $\inp{f}{g}^{(b, h)}$, and similarly when convenient we may omit the superscript entirely. We will also use the notation $\|f\|_{(b, h)}^2 := \inp{f}{f}^{(b, h)}$, similarly omitting the subscripts when convenient.

\begin{theorem}
An orthogonal basis for the space of bivariate polynomials of degree $n$ with respect to \eqref{eq:inp_arc}, denoted $\mathcal H_n$, is given by $\{p_n^{(b, h)}\}_{n=0}^\infty \cup \{q_n^{(b, h)}\}_{n=1}^\infty$, where
\begin{align}
p_n^{(b, h)}(x, y) &= P_n^{\tau, (-1/2, b, -1/2)}\left(\sigma\right), \quad n = 0, 1, 2, \ldots, \label{eq:arcp} \\
q_n^{(b, h)}(x, y) &= yP_{n-1}^{\tau, (1/2, b, 1/2)}\left(\sigma\right), \quad n = 1, 2, \ldots, \label{eq:arcq}
\end{align}
defining $\tau := 2/(1-h)$ and $\sigma := (x-1)/(h-1)$. We call these polynomials the \emph{arc polynomials}. Note that $x = (h-1)\sigma + 1$.
\end{theorem}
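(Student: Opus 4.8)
The plan is to exploit the even–odd structure of polynomials on the circle. Modulo the ideal $\langle x^2+y^2-1\rangle$, every bivariate polynomial reduces uniquely to the form $f(x)+y\,g(x)$ with $f,g$ univariate, so $\mathcal H_n$ (the bivariate polynomials of degree at most $n$, taken modulo this ideal) splits into an even part, the polynomials in $x$, and an odd part, namely $y$ times polynomials in $x$; counting degrees shows $\dim\mathcal H_n=(n+1)+n=2n+1$. Since $p_n^{(b,h)}$ is a degree-$n$ polynomial in $x$ and $q_n^{(b,h)}=y\,P_{n-1}^{\tau,(1/2,b,1/2)}(\sigma)$ has total degree $n$, the $2n+1$ functions $p_0,\dots,p_n,q_1,\dots,q_n$ have exactly the right degrees, so once orthogonality (hence independence and nonvanishing) is established they must span $\mathcal H_n$.

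First I would dispatch the cross terms. For even $f$ and odd $y\,g$ the reflection $y\mapsto-y$ flips the sign of the odd factor, so the two contributions in \eqref{eq:inp_arc_cartesian} cancel and $\inp{p_n}{q_m}=0$ for all $n,m$ with no computation. It then remains to treat the even–even and odd–odd blocks, and for both I would push the Cartesian form \eqref{eq:inp_arc_cartesian} through the affine change of variables $\sigma=(x-1)/(h-1)$, which sends $x=h\mapsto\sigma=1$ and $x=1\mapsto\sigma=0$ with $\dx=(h-1)\dsigma$. The identities to record are $x-h=(1-h)(1-\sigma)$, $1-x=(1-h)\sigma=(2/\tau)\sigma$, and $1+x=(2/\tau)(\tau-\sigma)$, whence $\sqrt{1-x^2}=(2/\tau)\sigma^{1/2}(\tau-\sigma)^{1/2}$; note also $\tau=2/(1-h)>1$ since $|h|<1$, so the semiclassical hypotheses of Definition \ref{def1} apply.

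For the even block both reflected terms coincide, giving $\inp{p_n}{p_m}=2\int_h^1 p_np_m\,(x-h)^b/\sqrt{1-x^2}\,\dx$; substituting the identities turns the weight into a positive constant multiple of $\sigma^{-1/2}(1-\sigma)^b(\tau-\sigma)^{-1/2}=w^{\tau,(-1/2,b,-1/2)}$, so the integral is a positive multiple of $\inp{P_n^{\tau,(-1/2,b,-1/2)}}{P_m^{\tau,(-1/2,b,-1/2)}}^{\tau,(-1/2,b,-1/2)}$ and vanishes for $n\neq m$. For the odd block the factor $y^2$ arising from the two $\pm y$ contributions cancels the $1/y$ and leaves an extra $\sqrt{1-x^2}$, so after the substitution the weight becomes a multiple of $\sigma^{1/2}(1-\sigma)^b(\tau-\sigma)^{1/2}=w^{\tau,(1/2,b,1/2)}$; this is precisely why the odd polynomials carry the shifted parameters $(1/2,b,1/2)$, and orthogonality of $P^{\tau,(1/2,b,1/2)}$ finishes this block.

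The step I expect to be most delicate is the bookkeeping of the parameter shifts: verifying that the single extra power of $y=\sqrt{1-x^2}$ in the odd case promotes both the $\sigma$- and $(\tau-\sigma)$-exponents from $-1/2$ to $+1/2$ while the $(1-\sigma)^b$ factor, inherited from $(x-h)^b$, is untouched, and checking that this matches the parameters in \eqref{eq:arcp}–\eqref{eq:arcq}. The only genuine subtlety beyond algebra is the endpoint case $b=-1$, where the weight is non-integrable and one must read $P^{\tau,(a,-1,c)}$ through the formal definition \eqref{eq:semiclassical_jacobi_bneg1_def} rather than as a literal orthogonality integral.
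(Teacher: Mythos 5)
Your proposal is correct and follows essentially the same route as the paper: the paper's proof simply cites the argument of Theorem 3.1 in \cite{olver2021orthogonal} together with the change-of-variables identity \eqref{eq:inp_sj}, which is exactly the even/odd (parity) decomposition and the substitution $\sigma=(x-1)/(h-1)$ producing the weights $w^{\tau,(\mp1/2,b,\mp1/2)}$ that you work out in detail. Your explicit treatment of the parameter shifts and the formal reading of the $b=-1$ case fills in precisely what the citation leaves implicit.
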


\begin{proof}
This result can be obtained by applying a similar argument to the proof in Theorem 3.1 of \cite{olver2021orthogonal}, noting that with the given definitions of $\tau$ and $\sigma$ we have
\begin{equation}\label{eq:inp_sj}
\inp{f}{g} = (1-h)^b\int_0^1 \left[f(x, y)g(x, y) + f(x, -y)g(x, -y)\right]w^{\tau, (-1/2, b, -1/2)}(\sigma) \d\sigma. \qedhere
\end{equation}
\end{proof}

\begin{definition}
We denote the arc polynomial basis by 
\begin{align}\label{eq:quasimatrix_p}
\vb P^{(b, h)}(x, y) = \begin{bmatrix} \vb P_0^{(b, h)}(x, y) & \vb P_1^{(b, h)}(x, y) & \vb P_2^{(b, h)}(x, y) & \cdots \end{bmatrix},
\end{align}
where $\vb P_0^{(b, h)}(x, y) = p_0^{(b, h)}(x, y) = 1$ and $\vb P_n^{(b, h)}(x, y) = (q_n^{(b, h)}(x, y), p_n^{(b, h)}(x, y))$ for $n \geq 1$. \rev{We further define}
\begin{align}
\vb p_n^{(b, h)}(x, y) &= \begin{bmatrix} p_0^{(b, h)}(x, y) & p_1^{(b, h)}(x, y) & p_2^{(b, h)}(x, y) & \cdots \end{bmatrix}, \\
\vb q_n^{(b, h)}(x, y) &= \begin{bmatrix} q_1^{(b, h)}(x, y) & q_2^{(b, h)}(x, y) & q_3^{(b, h)}(x, y) & \cdots \end{bmatrix},
\end{align}
When convenient we will omit the dependence on either $h$, writing $\vb P^{(b)}$ for example, or on both $b$ and $h$. We may also write $p_n^{(b, h)}(\theta) := p_n^{(b, h)}(\cos\theta,\sin\theta), q_n^{(b, h)}(\theta) := q_n^{(b, h)}(\cos\theta,\sin\theta)$, and $\vb P^{(b, h)}(\theta) := \vb P^{(b, h)}(\cos\theta,\sin\theta)$.
\end{definition}

Note that these arc polynomials are the same as the half-range Chebyshev polynomials developed in \cite{huybrechs2010fourier} except there the polynomials are defined with $h=0$ while we allow any $|h| < 1$.  We show the first five arc polynomials for each of $p_n$ and $q_n$ in Figure \ref{fig:arcpoly} where $h = 0.2$ and $b = -1$.

\begin{figure}[h!]
\centering
\includegraphics[width=\textwidth]{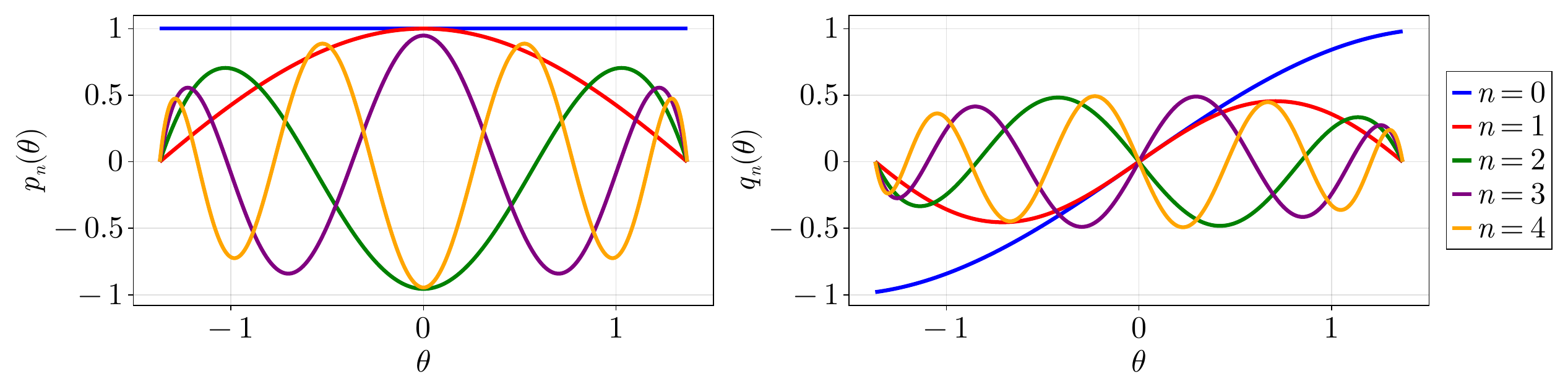}
\caption{The first five arc polynomials with  $b=-1$ for both $p_n^{(0.2, -1)}$ (left) and $q_{n+1}^{(0.2, -1)}$ (right).}\label{fig:arcpoly}
\end{figure}

\subsection{Connection matrix}

The first problem we consider is converting from $\vb P^{(b)}$ to $\vb P^{(b+1)}$, assuming that the value for $h$ remains the same.

\begin{definition}
The matrix $R_{(b_1)}^{(b_2, h)}$ is the matrix such that $\vb P^{(b_1)} = \vb P^{(b_2)}R_{(b_1)}^{(b_2, h)}$. It is assumed that the value of $h$ remains the same. When convenient, we will omit the dependence on $h$, writing $R_{(b_1)}^{(b_2)}$.
\end{definition}

\rev{In the next result, we only consider the case $b_1=b$ and $b_2=b_1+1$. For the case $|b_2-b_1| > 1$, note that the} connection matrix can be written as a product of connection matrices of this form \cite{olver2020fast}, for example $R_{(0)}^{(3)} = R_{(2)}^{(3)}R_{(1)}^{(2)}R_{(0)}^{(1)}$.

\begin{proposition}
The matrix $R_{(b)}^{(b+1)}$ is given by the $(0, 2)$ banded matrix
\begin{equation}\label{eq:conn_mat_rbb+1_02bm_arc_prop4}
R_{(b)}^{(b+1)} = \begin{bmatrix}
a_{00}^{(b)} & 0            & a_{01}^{(b)} &                &                &          \\
               & b_{00}^{(b)} & 0            & b_{01}^{(b)} &                &          \\
               &                & a_{11}^{(b)} & 0            & a_{12}^{(b)} &          \\
               &                &                & b_{11}^{(b)} & 0            & \ddots \\
               &                &                &                & a_{22}^{(b)} & \ddots \\
               &                &                &                &                & \ddots
\end{bmatrix},
\end{equation}
where
\begin{align}\label{eq:arc_connection_p1}
p_n^{(b)}(x, y) &= a_{n-1,n}^{(b)}p_{n-1}^{(b+1)}(x, y) + a_{nn}^{(b)}p_n^{(b+1)}(x, y), \quad n=0,1,2,\ldots, \\ \label{eq:arc_connection_p2}
q_n^{(b)}(x, y) &= b_{n-2,n-1}^{(b)}q_{n-1}^{(b+1)}(x, y) + b_{n-1,n-1}^{(b)}q_n^{(b+1)}(x, y), \quad n=1,2,\ldots.
\end{align}
\end{proposition}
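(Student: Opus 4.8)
The plan is to exploit the even/odd decomposition of the basis in the $y$ variable, after which everything reduces to one-dimensional connection relations for the semiclassical Jacobi polynomials in the variable $\sigma$. First I would record the parity observation: each $p_n^{(b,h)}(x,y) = P_n^{\tau,(-1/2,b,-1/2)}(\sigma)$ is even in $y$, whereas each $q_n^{(b,h)}(x,y) = yP_{n-1}^{\tau,(1/2,b,1/2)}(\sigma)$ is odd in $y$, and the inner product \eqref{eq:inp_arc_cartesian} is invariant under $y \mapsto -y$. Hence $\inp{p_m^{(b)}}{q_n^{(b')}} = 0$ for all $m,n$ and all $b, b'$, so the expansion of $p_n^{(b)}$ in the $\vb P^{(b+1)}$ basis contains only $p^{(b+1)}$ terms, while that of $q_n^{(b)}$ contains only $q^{(b+1)}$ terms. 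This decouples the connection matrix into a $p$-part and a $q$-part, which I would handle separately and then interleave according to the ordering $p_0, q_1, p_1, q_2, \ldots$ fixed by the quasimatrix $\vb P^{(b,h)}$.

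For the $p$-part, the relevant connection is that of the underlying semiclassical Jacobi polynomials under $b \mapsto b+1$, whose weights satisfy $w^{\tau,(-1/2,b+1,-1/2)} = (1-\sigma)\,w^{\tau,(-1/2,b,-1/2)}$. Writing $p_n^{(b)} = \sum_{j} r_{jn}\,p_j^{(b+1)}$, upper-triangularity gives $r_{jn} = 0$ for $j > n$, while each coefficient is proportional to
\[
\inp{P_n^{\tau,(-1/2,b,-1/2)}}{(1-\sigma)P_j^{\tau,(-1/2,b+1,-1/2)}}^{\tau,(-1/2,b,-1/2)},
\]
which vanishes for $j < n-1$ because $(1-\sigma)P_j^{\tau,(-1/2,b+1,-1/2)}$ has $\sigma$-degree $j+1 < n$ and $P_n^{\tau,(-1/2,b,-1/2)}$ is orthogonal to all lower-degree polynomials. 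This forces the $p$-connection to be upper bidiagonal, giving exactly \eqref{eq:arc_connection_p1}. The $q$-part is identical after cancelling the common factor $y$: it reduces to the $b \mapsto b+1$ connection for $P^{\tau,(1/2,b,1/2)}$, whose weights again differ by the factor $(1-\sigma)$, so the same degree/orthogonality argument yields the two-term relation \eqref{eq:arc_connection_p2}.

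The remaining task is organisational: placing the two bidiagonal relations at their interleaved positions. With the $p^{(b+1)}_n$ occupying the even-indexed rows and columns and the $q^{(b+1)}_n$ the odd-indexed ones, each two-term $p$-relation contributes an entry on the main diagonal and one two places above it, and likewise for each $q$-relation; the first superdiagonal is necessarily zero, since a nonzero entry there would couple a $p$ to a $q$, which the parity argument forbids. This produces precisely the hollow $(0,2)$-banded pattern displayed in \eqref{eq:conn_mat_rbb+1_02bm_arc_prop4}. The only case needing separate treatment is $b = -1$, where the weight $(x-h)^{-1}$ is non-integrable and the orthogonality argument is unavailable; there I would instead invoke Proposition \ref{prop:ra1cmalphbeta_mat_r} (its $\beta = 0$ case) to conclude that both $R_{(-1/2,-1,-1/2)}^{\tau,(-1/2,0,-1/2)}$ and its $q$-analogue $R_{(1/2,-1,1/2)}^{\tau,(1/2,0,1/2)}$ are upper bidiagonal. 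The main obstacle is not any single computation but the careful bookkeeping of the interleaved indexing, so that the $p$- and $q$-bidiagonals land in exactly the claimed positions; the bidiagonality itself follows cleanly once the parity decoupling is in place.
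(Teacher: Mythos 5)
Your proof is correct, and its skeleton matches the paper's: reduce everything to the one-dimensional connection problem for the semiclassical Jacobi polynomials in $\sigma$, note that those connections are upper bidiagonal, and interlace. The difference lies in how the bidiagonality and the $p$/$q$ decoupling are established. The paper's proof is a pure substitution argument: since $p_n^{(b)} = P_n^{\tau,(-1/2,b,-1/2)}(\sigma)$ and $q_n^{(b)} = yP_{n-1}^{\tau,(1/2,b,1/2)}(\sigma)$, inserting the known 1D connection matrices $R_{(-1/2,b,-1/2)}^{\tau,(-1/2,b+1,-1/2)}$ and $R_{(1/2,b,1/2)}^{\tau,(1/2,b+1,1/2)}$ (cited as upper bidiagonal, from \cite{papadopoulos2024building} for $b>-1$ and from Proposition \ref{prop:ra1cmalphbeta_mat_r} for $b=-1$) immediately yields \eqref{eq:arc_connection_p1}--\eqref{eq:arc_connection_p2}; the decoupling of the $p$ and $q$ families is then automatic from uniqueness of expansions in the orthogonal basis, so no parity lemma is needed. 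You instead compute the connection coefficients by inner products, which forces you to supply two extra ingredients: the parity observation (even/odd in $y$ under the $y\mapsto -y$ symmetric inner product \eqref{eq:inp_arc_cartesian}) to kill cross terms, and the weight-ratio identity $w^{\tau,(-1/2,b+1,-1/2)} = (1-\sigma)\,w^{\tau,(-1/2,b,-1/2)}$ plus degree/orthogonality to force bidiagonality. What this buys is self-containedness: you re-prove, rather than cite, the bidiagonality of the 1D connection matrices for $b>-1$. The trade-off is that your orthogonality argument genuinely fails at $b=-1$ (the weight $w^{\tau,(-1/2,-1,-1/2)}$ is non-integrable, so the polynomials are only formally orthogonal), and you correctly recognise this and fall back on Proposition \ref{prop:ra1cmalphbeta_mat_r} there --- which is exactly the ingredient the paper's one-line proof also relies on for that case. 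Your bookkeeping of the interleaved indexing, including the observation that the first superdiagonal must vanish because adjacent columns alternate between the two families, is consistent with \eqref{eq:conn_mat_rbb+1_02bm_arc_prop4}.
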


\begin{proof}
The recurrence relationships \eqref{eq:arc_connection_p1}--\eqref{eq:arc_connection_p2} come directly from the definition of the polynomials together with the connection matrices $R_{(-1/2, b, -1/2)}^{\tau, (-1/2,b+1,-1/2)} := \{a_{ij}^{(b)}\}_{i,j=0,1,\ldots}$ and $R_{(1/2,b,1/2)}^{\tau,(1/2,b+1,1/2)} := \{b_{i-1,j-1}^{(b)}\}_{i,j=1,2,\ldots}$, noting that these connection matrices are upper bidiagonal. Interlacing these entries gives the form of $R_{(b)}^{(b+1)}$ shown.
\end{proof}

\subsection{Differentiation matrix}

Now we consider differentiation. Since the differential equations we will be considering are in terms of $\theta$ rather than $x$ and $y$, we are interested in differentiation matrices for the operator
\begin{equation}
\odv*{}{\theta} = x\pdv*{}{y} - y\pdv*{}{x}.
\end{equation}

\begin{definition}
The matrices $D_{(b), \vb p}^{(b+1, h), \vb q}$, $D_{(b), \vb q}^{(b+1, h), \vb p}$, and $D_{(b)}^{(b+1, h)}$ are defined by 
\begin{equation}
\odv*{\vb p^{(b, h)}}{{\theta}} = \vb q^{(b+1, h)}D_{(b), \vb p}^{(b+1, h), \vb q}, \quad \odv*{\vb q^{(b, h)}}{{\theta}} = \vb p^{(b+1, h)}D_{(b), \vb q}^{(b+1, h), \vb p}, \quad \text{and} \quad \odv*{\vb P^{(b, h)}}{\theta} = \vb P^{(b+1, h)}D_{(b)}^{(b+1, h)}.
\end{equation}
When convenient, we will omit the dependence on $h$ in these superscripts.
\end{definition}

\begin{proposition}\label{prop:diffmatpqqp}
The matrices $D_{(b), \vb p}^{(b+1), \vb q}$ and $D_{(b), \vb q}^{(b+1), \vb p}$ are given by
\begin{align}
D_{(b), \vb p}^{(b+1), \vb q} &= \frac{1}{1-h}D_{(-1/2, h, -1/2)}^{\tau, (1/2, b+1, 1/2)}, \label{eq:diff_mat_arcp} \\
D_{(b), \vb q}^{(b+1), \vb p} &= R_{(1/2, b, 1/2)}^{\tau, (-1/2, b+1, -1/2)}\left[\left(h-1\right)J^{\tau, (1/2, b, 1/2)} + I\right] + \left(1-h\right)R_{(1/2,b+1,1/2)}^{\tau,(-1/2,b+1,-1/2)}L_{\mathrm a\mathrm c, (3/2,b+1,3/2)}^{\tau,(1/2,b+1,1/2)}D_{(1/2,b,1/2)}^{\tau,(3/2,b+1,3/2)},\label{eq:diff_mat_arcq}
\end{align}
where $\sigma\vb P^{\tau, (1/2, b, 1/2)}(\sigma) = \vb P^{\tau, (1/2, b, 1/2)}(\sigma)J^{\tau, (1/2, b, 1/2)}$. Moreover, $D_{(b), \vb p}^{(b+1), \vb q}$ is a $(-1, 2)$ banded matrix and $D_{(b), \vb q}^{(b+1), \vb p}$ is a lower bidiagonal matrix, so that we can also write these relationships as
\begin{align}\label{eq:diff_mat_arcp_relation}
\odv*{p_n^{(b)}(\theta)}{\theta} &= d_{n-1, n+1, \vb p}^{(b+1), \vb q}q_{n-1}^{(b+1)}(\theta) + d_{n, n+1, \vb p}^{(b+1), \vb q}q_n^{(b+1)}(\theta), \\ \label{eq:diff_mat_arcq_relation}
\odv*{q_{n}^{(b)}(\theta)}{\theta} &= d_{n,n,\vb q}^{(b+1), \vb p}p_{n-1}^{(b+1)}(\theta) + d_{n+1,n,\vb q}^{(b+1), \vb p}p_n^{(b+1)}(\theta),
\end{align}
with the coefficients coming from the differentiation matrices. The matrix $D_{(b)}^{(b+1)}$ is defined from interlacing $D_{(b), \vb p}^{(b+1), \vb q}$ and $D_{(b), \vb q}^{(b+1), \vb p}$, meaning placing the associated entries from $D_{(b), \vb p}^{(b+1), \vb q}$ in each odd column and those from $D_{(b), \vb q}^{(b+1), \vb p}$ in each even column, and is a $(1, 3)$ banded matrix.
\end{proposition}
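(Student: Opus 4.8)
The plan is to differentiate the arc polynomials directly and reduce every resulting term to the one-dimensional semiclassical operators of Section~\ref{sec:semiclassical_jacobi_basis}. The geometric ingredients are $\odv*{x}{\theta} = -y$ and $\odv*{y}{\theta} = x$, together with $\sigma = (1-x)/(1-h)$, which gives $\odv*{\sigma}{\theta} = y/(1-h)$. The algebraic ingredients are $x = 1-(1-h)\sigma$, $x-h = (1-h)(1-\sigma)$, and $1+x = (1-h)(\tau-\sigma)$, whence $y^2 = 1-x^2 = (1-h)^2\sigma(\tau-\sigma)$. These identities let me trade every factor of $x$, $y$, or $y^2$ that arises for the semiclassical variables $\sigma$ and $\tau-\sigma$.

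For $\vb p^{(b)}$, since $p_n^{(b)}(\theta) = P_n^{\tau,(-1/2,b,-1/2)}(\sigma)$ depends on $\theta$ only through $\sigma$, the chain rule gives $\odv*{p_n^{(b)}}{\theta} = \tfrac{y}{1-h}\odv*{P_n^{\tau,(-1/2,b,-1/2)}}{\sigma}$. Expanding the $\sigma$-derivative with $D_{(-1/2,b,-1/2)}^{\tau,(1/2,b+1,1/2)}$ produces a combination of $P_j^{\tau,(1/2,b+1,1/2)}(\sigma)$, and since $yP_j^{\tau,(1/2,b+1,1/2)}(\sigma) = q_{j+1}^{(b+1)}$ the prefactor $y/(1-h)$ converts each such term directly into an arc polynomial $q_{j+1}^{(b+1)}$, giving \eqref{eq:diff_mat_arcp}. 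The $(-1,2)$ banding is inherited from that of the semiclassical differentiation matrix \cite{papadopoulos2024building}, producing the two-term relation \eqref{eq:diff_mat_arcp_relation}.

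For $\vb q^{(b)}$ I apply the product rule to $q_n^{(b)}(\theta) = y\,P_{n-1}^{\tau,(1/2,b,1/2)}(\sigma)$, obtaining $\odv*{q_n^{(b)}}{\theta} = x\,P_{n-1}^{\tau,(1/2,b,1/2)}(\sigma) + \tfrac{y^2}{1-h}\odv*{P_{n-1}^{\tau,(1/2,b,1/2)}}{\sigma}$. In the first term I substitute $x = (h-1)\sigma + 1$ and use the Jacobi matrix to write $x\vb P^{\tau,(1/2,b,1/2)} = \vb P^{\tau,(1/2,b,1/2)}[(h-1)J^{\tau,(1/2,b,1/2)}+I]$, then apply $R_{(1/2,b,1/2)}^{\tau,(-1/2,b+1,-1/2)}$ to land in the $p^{(b+1)}$ basis, giving the first matrix of \eqref{eq:diff_mat_arcq}. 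In the second term I use $y^2/(1-h) = (1-h)\sigma(\tau-\sigma)$ and factor the operator $\sigma(\tau-\sigma)\,\odv*{}{\sigma}$ as the composition of the parameter-raising differentiation matrix $D_{(1/2,b,1/2)}^{\tau,(3/2,b+1,3/2)}$, the weighted connection $L_{\mathrm a\mathrm c,(3/2,b+1,3/2)}^{\tau,(1/2,b+1,1/2)}$ (which reinstates the factor $\sigma(\tau-\sigma)$ while returning the parameters to $(1/2,b+1,1/2)$), and finally $R_{(1/2,b+1,1/2)}^{\tau,(-1/2,b+1,-1/2)}$. Summing the two contributions reproduces \eqref{eq:diff_mat_arcq}.

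The remaining, and main, difficulty is the exact banding of $D_{(b),\vb q}^{(b+1),\vb p}$. Band arithmetic on the factors above does \emph{not} suffice: because $J$, the connection matrices, and $L_{\mathrm a\mathrm c}$ are wider than a single band, the two terms each contribute to entries beyond the first subdiagonal, and the claimed lower-bidiagonal form emerges only after these contributions cancel. I would therefore establish the structure intrinsically rather than by tracking bands: for $m \le n-2$, integrating $\inp{\odv*{q_n^{(b)}}{\theta}}{p_m^{(b+1)}}^{(b+1)}$ by parts in $\theta$ (using $(x-h)^{b+1} = (x-h)(x-h)^b$ with $x-h = (1-h)(1-\sigma)$) reduces it to a combination of $\inp{q_n^{(b)}}{y\,p_m^{(b+1)}}^{(b)}$ and $\inp{q_n^{(b)}}{(x-h)\,\odv*{p_m^{(b+1)}}{\theta}}^{(b)}$. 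The first collapses, via $y^2 = (1-h)^2\sigma(\tau-\sigma)$, to the semiclassical orthogonality $\inp{P_{n-1}^{\tau,(1/2,b,1/2)}}{P_m^{\tau,(-1/2,b+1,-1/2)}}^{\tau,(1/2,b,1/2)}$, which vanishes for $m \le n-2$; the second vanishes because $(x-h)\,\odv*{p_m^{(b+1)}}{\theta}$ is odd in $y$ of degree $m+1 < n$ and hence orthogonal to $q_n^{(b)}$ in $\inp{\cdot}{\cdot}^{(b)}$. This forces $\odv*{q_n^{(b)}}{\theta} \in \operatorname{span}\{p_{n-1}^{(b+1)}, p_n^{(b+1)}\}$, giving the lower-bidiagonal relation \eqref{eq:diff_mat_arcq_relation}; the boundary term vanishes for $b > -1$, and the $b=-1$ case follows by continuity of the entries in $b$. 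Finally, interlacing the odd columns (from $D_{(b),\vb p}^{(b+1),\vb q}$) with the even columns (from $D_{(b),\vb q}^{(b+1),\vb p}$) as dictated by the definition of $\vb P^{(b)}$ assembles $D_{(b)}^{(b+1)}$, and tracking where the $(-1,2)$ and bidiagonal blocks land under this interlacing yields the stated $(1,3)$ band.
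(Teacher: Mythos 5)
Your derivation of the two matrix formulas is essentially the paper's computation (the paper applies $\odv*{}{\theta} = x\partial_y - y\partial_x$ where you use the product rule in $\theta$; the two resulting terms are identical), and your integration-by-parts plus orthogonality argument for the lower bidiagonality of $D_{(b),\vb q}^{(b+1),\vb p}$ when $b>-1$ is sound. It is a legitimate variant of the paper's appendix argument, which instead substitutes the weighted differentiation identity $\odv*{\left[(\cos\theta-h)^{b+1}\vb p^{(b+1)}\right]}{\theta} = (\cos\theta-h)^b\,\vb q^{(b)}D_{\mathrm b,(-1/2,b+1,-1/2)}^{\tau,(1/2,b,1/2)}$, with $D_{\mathrm b}$ upper bidiagonal, into the integrated-by-parts inner product; that route localises both bands at once, whereas yours kills the coefficients for $m\le n-2$ by orthogonality and needs the (easy, but unstated) facts that $\odv*{q_n^{(b)}}{\theta}$ is even in $y$ and of degree at most $n$ to rule out components on $p_m^{(b+1)}$ with $m\ge n+1$ and so conclude the span statement.

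The genuine gap is the final clause: ``the $b=-1$ case follows by continuity of the entries in $b$.'' This is unjustified, and as stated it is false. The $b=-1$ polynomials are not limits of the $b>-1$ ones: they are defined by the separate algebraic construction \eqref{eq:semiclassical_jacobi_bneg1_def}, while the $b>-1$ polynomials carry the normalisation $\|P_n^{\tau,(1/2,b,1/2)}\|_{\tau,(1/2,b,1/2)} = \|1\|_{\tau,(1/2,b,1/2)}$. Since $\|1\|_{\tau,(1/2,b,1/2)}^2 = \Gamma^{\tau,(1/2,b,1/2)}\to\infty$ as $b\downarrow -1$ (the weight loses integrability), these polynomials blow up in the limit (e.g. $\beta^{\tau,(1/2,b,1/2)}\to\infty$ in $P_1^{\tau,(1/2,b,1/2)}=\beta^{\tau,(1/2,b,1/2)}(\sigma-\alpha^{\tau,(1/2,b,1/2)})$), so the entries of $D_{(b),\vb q}^{(b+1),\vb p}$ do not converge to those of $D_{(-1),\vb q}^{(0),\vb p}$. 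One could try to rescue this by rescaling each column (bidiagonality survives column scaling), but then you must prove that suitably renormalised $q_n^{(b)}$ converge to $q_n^{(-1)}$ and that the expansion coefficients pass to the limit --- a substantive argument you have not made, and about as long as doing $b=-1$ directly. The efficient fix is to rerun your own integration by parts at $b=-1$: the weight is then $(\cos\theta-h)^0=1$, the boundary term $2q_n^{(-1)}(\varphi)p_m^{(0)}(\varphi)$ vanishes for $n\ge 2$ because $q_n^{(-1)}(\pm\varphi)=0$ (the factor $1-\sigma$ in the $b=-1$ definition), and orthogonality of $P_{n-2}^{\tau,(1/2,1,1/2)}$ against derivatives of degree $m-1\le n-3$ finishes those columns. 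The column $n=1$ is genuinely exceptional --- there $q_1^{(-1)}(\varphi)=\sin\varphi\neq 0$, so the boundary term survives --- and this is precisely the case the paper's appendix treats separately, by noting $\odv*{q_1^{(-1)}}{\theta}=\cos\theta$ and expanding $\cos\theta$ explicitly in $p_0^{(0)}$ and $p_1^{(0)}$ via \eqref{app:cos_expansion_res}; your proposal contains no argument covering this case.
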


\begin{proof}
Differentiating $\vb p^{(b)}$, we obtain
\begin{equation}
\odv*{\vb p^{(b)}}{\theta} = x\pdv*{\vb P^{\tau, (-1/2, b, -1/2)}(\sigma)}{y} - y\pdv*{\vb P^{\tau, (-1/2, b, -1/2)}(\sigma)}{x} = \vb q^{(b+1)}\frac{1}{1-h}D_{(-1/2,b,-1/2)}^{\tau,(1/2,b+1,1/2)},
\end{equation}
and we let $D_{(b),\vb p}^{(b+1),\vb q} := (1-h)^{-1}D_{(-1/2,b,-1/2)}^{\tau,(1/2,b+1,1/2)}$. Since $D_{(-1/2,b,-1/2)}^{\tau,(1/2,b+1,1/2)}$ is a $(-1, 2)$ banded matrix, $D_{(b),\vb p}^{(b+1),\vb q}$ is as well. The $\vb q^{(b)}$ case is more involved. Using the definitions of $J^{\tau, (1/2, b, 1/2)}$ and $R_{(1/2, b, 1/2)}^{\tau, (-1/2, b+1,-1/2)}$, we obtain
\begin{equation}\label{eq:diff_mat_stepA}
x\pdv*{\vb q^{(b)}}{y} = x\pdv*{\left[y\vb P^{\tau, (1/2, b, 1/2)}(\sigma)\right]}{y} = x\vb P^{\tau, (1/2, b, 1/2)}(\sigma) = \vb p^{(b+1)}R_{(1/2, b, 1/2)}^{\tau, (-1/2,b+1,-1/2)}\left[(h-1)J^{\tau,(1/2,b,1/2)}+I\right].
\end{equation}
The $y\partial_x$ operator is obtained by using $y^2 = \sigma(1-h)^2(\tau-\sigma)$ to allow for use of the weight connected matrix, giving
\begin{equation}\label{eq:diff_mat_stepB}
y\pdv*{\vb q^{(b)}}{x} = y\pdv*{\left[y\vb P^{\tau, (1/2, b, 1/2)}(\sigma)\right]}{x} = (h-1)\vb p^{(b+1)}R_{(1/2,b+1,1/2)}^{\tau,(-1/2,b+1,-1/2)}L_{\mathrm a\mathrm c, (3/2,b+1,3/2)}^{\tau,(1/2,b+1,1/2)}D_{(1/2,b,1/2)}^{\tau,(3/2,b+1,3/2)}.
\end{equation}
Subtracting \eqref{eq:diff_mat_stepB} from \eqref{eq:diff_mat_stepA} gives the form of $D_{(b), \vb q}^{(b+1), \vb p}$ stated. Interlacing these entries give the $(1, 3)$ banded matrix $D_{(b)}^{(b+1)}$. We prove that $D_{(b), \vb q}^{(b+1), \vb p}$ is lower bidiagonal in \ref{app:dbqbpqb_lowerbidiag}. 
\end{proof}

\subsection{Mass matrix}

We now consider computing the mass matrix $M^{(b)} := [\vb P^{(b)}]\tran\vb P^{(b)}$. We can assume here that $b=0$ since, for $b \neq 0$, we can write $\vb P^{(b)} = \vb P^{(0)}R_{(0)}^{(b)}$ so that
\begin{align}\label{eq:arc_mass_matrix_bnot0}
M^{(b)} &= \left[R_{(0)}^{(b)}\right]\tran M^{(0)}R_{(0)}^{(b)},
\end{align}
thus all we need is the ability to compute $M^{(0)}$. The entries of $M^{(0)}$ are defined by 
\begin{align*}
M_{ij}^{(0)} = \int_{-\varphi}^{\varphi} P_i^{(0)}(\theta)P_j^{(0)}(\theta) \dthe, 
\end{align*}
where $P_k^{(0)}$ denotes the $k$th polynomial in $\vb P^{(0)}$. This is exactly the inner product defining orthogonality for the $\vb P^{(0)}$ polynomials, hence $M^{(0)}$ is diagonal. The entries come from interlacing the mass matrices for the $p_n^{(0)}$ and $q_n^{(0)}$ polynomials, in particular their squared norms.  Using the formula for the norms of the semiclassical Jacobi polynomials from \eqref{eqapp:weight_integral} in \ref{app:linear_coefficients}, we can show that 
\begin{align}\label{eq:arc_polynomial_mass_matrix_formulae_pq}
\|p_n^{(0)}\|^2  = 4\arccsc(\!\!\sqrt \tau), \quad \|q_n^{(0)}\|^2 = \frac{(1-h)^2}{2}\left[\tau^2\arccsc(\!\!\sqrt \tau) + (2 - \tau)\sqrt{\tau-1}\right].
\end{align}
Interlacing these expressions gives us $M^{(0)}$. We have therefore proven the following.

\begin{proposition}
Define the mass matrix $M^{(b, h)} := [\vb P^{(b, h)}]\tran\vb P^{(b, h)}$. The matrix $M^{(0, h)}$ is given by 
\begin{equation}
M^{(0, h)} = \diag(m_p, m_q, m_p, m_q, \ldots), 
\end{equation}
where $m_p = 4\arccsc(\!\!\sqrt\tau)$ and $m_q = (1-h)^2[\tau^2\arccsc(\!\!\sqrt\tau)+(2-\tau)\sqrt{\tau-1}]/2$. The general case $M^{(b, h)}$ is given by 
\begin{equation}
M^{(b, h)} = [R_{(0)}^{(b, h)}]\tran M^{(0, h)}R_{(0)}^{(b, h)}, \quad b \geq -1.
\end{equation}
When convenient, we will omit the dependence on $h$, writing $M^{(b)}$.
\end{proposition}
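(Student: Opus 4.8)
The plan is to reduce the general-$b$ claim to the case $b=0$ and then to identify the diagonal of $M^{(0)}$ by computing two families of squared norms. For the reduction I would use $\vb P^{(b)}=\vb P^{(0)}R_{(0)}^{(b)}$ together with bilinearity to write $M^{(b)}=[\vb P^{(b)}]\tran\vb P^{(b)}=[R_{(0)}^{(b)}]\tran M^{(0)}R_{(0)}^{(b)}$, which is exactly \eqref{eq:arc_mass_matrix_bnot0} and leaves only $M^{(0)}$ to determine. For $b=0$ the weight collapses to $w(x)=(x-h)^0\equiv1$, so the entry $M^{(0)}_{ij}=\inp{P_i^{(0)}}{P_j^{(0)}}^{(0)}$ is precisely the inner product for which the arc polynomials form an orthogonal basis; hence $M^{(0)}$ is diagonal, its diagonal being the squared norms listed in the interlaced order $\|p_0^{(0)}\|^2,\|q_1^{(0)}\|^2,\|p_1^{(0)}\|^2,\|q_2^{(0)}\|^2,\ldots$ dictated by the quasimatrix $\vb P^{(0)}$.

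Next I would evaluate the norms through \eqref{eq:inp_sj}. Because $p_n^{(0)}(x,y)=P_n^{\tau,(-1/2,0,-1/2)}(\sigma)$ is even in $y$, the two summands in \eqref{eq:inp_sj} agree and $\|p_n^{(0)}\|^2=2\|P_n^{\tau,(-1/2,0,-1/2)}\|_{\tau,(-1/2,0,-1/2)}^2$. The norm formula \eqref{eqapp:weight_integral} shows this is independent of $n$ and equal to $2\int_0^1 w^{\tau,(-1/2,0,-1/2)}(\sigma)\dsigma$; evaluating $\int_0^1\sigma^{-1/2}(\tau-\sigma)^{-1/2}\dsigma=2\arccsc(\sqrt\tau)$ then gives $\|p_n^{(0)}\|^2=4\arccsc(\sqrt\tau)=m_p$.

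For $q_n^{(0)}(x,y)=yP_{n-1}^{\tau,(1/2,0,1/2)}(\sigma)$, which is odd in $y$, the same symmetry yields $\|q_n^{(0)}\|^2=2\int_0^1 y^2[P_{n-1}^{\tau,(1/2,0,1/2)}(\sigma)]^2 w^{\tau,(-1/2,0,-1/2)}(\sigma)\dsigma$. Here the key device is the identity $y^2=\sigma(1-h)^2(\tau-\sigma)$, which absorbs $y^2$ into the weight and promotes $w^{\tau,(-1/2,0,-1/2)}$ to $(1-h)^2 w^{\tau,(1/2,0,1/2)}$; the integral then reduces to $2(1-h)^2\|P_{n-1}^{\tau,(1/2,0,1/2)}\|_{\tau,(1/2,0,1/2)}^2=2(1-h)^2\int_0^1 w^{\tau,(1/2,0,1/2)}(\sigma)\dsigma$, once more independent of $n$ by \eqref{eqapp:weight_integral}.

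The step I expect to be the main obstacle is the closed-form evaluation of $\int_0^1\sigma^{1/2}(\tau-\sigma)^{1/2}\dsigma$: completing the square and applying the standard antiderivative of $\sqrt{a^2-u^2}$ produces both an inverse-trigonometric and an algebraic contribution, which must combine to $\tfrac14[\tau^2\arccsc(\sqrt\tau)+(2-\tau)\sqrt{\tau-1}]$, so that after the prefactor $2(1-h)^2$ one obtains $\|q_n^{(0)}\|^2=m_q$. With both norms in hand, interlacing the constants $m_p$ and $m_q$ gives $M^{(0)}=\diag(m_p,m_q,m_p,m_q,\ldots)$, and substituting into the factorisation of the first paragraph completes the proof. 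The two delicate points are verifying that the $y^2$-absorption shifts the parameters exactly as $(-1/2,0,-1/2)\mapsto(1/2,0,1/2)$, matching the definition of $q_n^{(0)}$, and relying on \eqref{eqapp:weight_integral} so that the semiclassical norms---and hence $m_p$ and $m_q$---carry no residual dependence on $n$.
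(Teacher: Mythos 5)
Your proposal is correct and follows essentially the same route as the paper: reduce to $b=0$ via $\vb P^{(b)}=\vb P^{(0)}R_{(0)}^{(b)}$, observe that $M^{(0)}$ is diagonal because its entries are exactly the orthogonality inner product, and obtain the diagonal by interlacing the squared norms computed through \eqref{eq:inp_sj}, the identity $y^2=\sigma(1-h)^2(\tau-\sigma)$, and the $n$-independence of the semiclassical Jacobi norms from \ref{app:linear_coefficients}. You simply carry out explicitly the two elementary integrals ($\int_0^1\sigma^{-1/2}(\tau-\sigma)^{-1/2}\dsigma=2\arccsc(\!\!\sqrt\tau)$ and $\int_0^1\sigma^{1/2}(\tau-\sigma)^{1/2}\dsigma=\tfrac14[\tau^2\arccsc(\!\!\sqrt\tau)+(2-\tau)\sqrt{\tau-1}]$) that the paper leaves implicit, and both evaluations check out.
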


\subsection{Transforms}

To finish this section, we are interested in computing transforms that approximate a given function $f(x, y)$ by a polynomial in the arc polynomial basis. In particular, define the polynomial $f_n(x, y)$ by 
\begin{equation}\label{eq:interpolating_polynomial}
f_n(x, y) := \hat f_0 p_0(x, y) + \sum_{j=1}^{n-1} \left[ \hat f_{j1}q_j(x, y) + \hat f_{j2}p_j(x, y)\right].
\end{equation}
The aim is to find coefficients $\hat f_0$, $\hat f_{j1}$, and $\hat f_{j2}$ such that $f_n(x, y)$ interpolates $f(x, y)$ at some set of nodes. The following lemma tells us that these nodes should be the Gauss--Radau quadrature nodes.

\begin{lemma}\label{lem:gaussradaurule}
Assume $b > -1$. Let $\{(\xi_j, w_j)\}_{j=1}^n$ denote the Gauss--Radau quadrature nodes and weights with respect to the weight $w^{\tau, (-1/2, b, -1/2)}$, with $\xi_1 = 1$. The polynomials $\{p_j\}_{j=0}^{n-1}$ and $\{q_j\}_{j=1}^{n-1}$ are orthogonal with respect to the discrete inner product
\begin{equation}\label{eq:discrete_orthogonality}
\inp{f}{g}_n := \sum_{j=1}^n w_j\left[f(x_j, y_j) g(x_j, y_j) + f(x_j, -y_j)g(x_j, -y_j)\right],
\end{equation}
where $x_j = (h-1)\xi_j + 1$ and $y_j = \sqrt{1 - x_j^2}$. 
\end{lemma}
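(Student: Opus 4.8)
The plan is to show that, on the span of the polynomials listed, the discrete form $\inp{\cdot}{\cdot}_n$ agrees up to a fixed positive constant with the continuous arc inner product $\inp{\cdot}{\cdot}^{(b,h)}$, for which the stated orthogonality already holds by the theorem defining the arc polynomials. The bridge between the two is the exactness of the $n$-point Gauss--Radau rule, which integrates any polynomial of degree at most $2n-2$ exactly against $w^{\tau,(-1/2,b,-1/2)}$. Throughout I would lean on the parity structure in $y$: each $p_j(x,y)=P_j^{\tau,(-1/2,b,-1/2)}(\sigma)$ is independent of $y$ (even), whereas each $q_j(x,y)=yP_{j-1}^{\tau,(1/2,b,1/2)}(\sigma)$ is odd in $y$. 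Recall also that the substitution gives $\sigma_k=\xi_k$ at each node, since $x_k=(h-1)\xi_k+1$.

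First I would dispose of the mixed terms, which cost nothing. For any $i,j$ and any node $k$, the bracket $p_i(x_k,y_k)q_j(x_k,y_k)+p_i(x_k,-y_k)q_j(x_k,-y_k)$ vanishes identically because $p_i$ is even and $q_j$ odd in $y$; hence $\inp{p_i}{q_j}_n=0$ termwise, with no appeal to quadrature, exactly mirroring the continuous case. This reduces the problem to the two pure blocks. For the $p$--$p$ block, the $\pm y$ symmetrisation gives $\inp{p_i}{p_j}_n=2\sum_k w_k\,P_i^{\tau,(-1/2,b,-1/2)}(\xi_k)P_j^{\tau,(-1/2,b,-1/2)}(\xi_k)$. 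For $0\le i,j\le n-1$ the integrand $P_iP_j$ has degree at most $2n-2$, so Gauss--Radau exactness converts the sum into $2\inp{P_i}{P_j}^{\tau,(-1/2,b,-1/2)}$, which vanishes for $i\ne j$ by orthogonality of the semiclassical Jacobi polynomials.

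For the $q$--$q$ block I would first record $y^2=(1-h)^2\sigma(\tau-\sigma)$, which follows from $1-x=(1-h)\sigma$ and $1+x=(1-h)(\tau-\sigma)$ together with $(1-h)\tau=2$. The symmetrisation then yields
\begin{equation*}
\inp{q_i}{q_j}_n = 2(1-h)^2\sum_{k=1}^n w_k\,\xi_k(\tau-\xi_k)\,P_{i-1}^{\tau,(1/2,b,1/2)}(\xi_k)\,P_{j-1}^{\tau,(1/2,b,1/2)}(\xi_k).
\end{equation*}
For $1\le i,j\le n-1$ the integrand $\sigma(\tau-\sigma)P_{i-1}P_{j-1}$ has degree at most $2(n-2)+2=2n-2$, so exactness applies again; absorbing the polynomial prefactor into the weight via $\sigma(\tau-\sigma)\,w^{\tau,(-1/2,b,-1/2)}=w^{\tau,(1/2,b,1/2)}$ turns the sum into $2(1-h)^2\inp{P_{i-1}}{P_{j-1}}^{\tau,(1/2,b,1/2)}$, which vanishes for $i\ne j$.

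The main obstacle is the degree bookkeeping rather than any hard estimate: the whole argument hinges on every relevant product staying inside the degree-$(2n-2)$ exactness window of the $n$-point rule, and the $q$--$q$ case is the tight one because the factor $\sigma(\tau-\sigma)$ consumes two degrees. This is precisely what forces the truncation at $q_{n-1}$ rather than $q_n$, and so explains the asymmetric index ranges $\{p_j\}_{j=0}^{n-1}$ and $\{q_j\}_{j=1}^{n-1}$. I would also note that the assumption $b>-1$ is exactly what guarantees integrability of $w^{\tau,(-1/2,b,-1/2)}$ and hence the classical Gauss--Radau theory, and that the fixed node $\xi_1=1$ (i.e.\ $x_1=h$) together with the remaining nodes lying in the open interval $(0,1)$ keeps every $y_k>0$, so no evaluation point degenerates.
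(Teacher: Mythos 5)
Your proof is correct and is essentially the detailed version of what the paper does by citation: the paper's proof simply invokes Gautschi's Theorems 1.22--1.23 (discrete orthogonality as a consequence of quadrature exactness), adapted to Gauss--Radau and to the curve setting of Olver--Xu, and your three ingredients --- the termwise parity cancellation of the mixed $p$--$q$ terms, the degree count keeping every pure product inside the $2n-2$ exactness window, and the weight absorption $\sigma(\tau-\sigma)\,w^{\tau,(-1/2,b,-1/2)}(\sigma) = w^{\tau,(1/2,b,1/2)}(\sigma)$ --- are exactly what that citation encapsulates.

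One caveat on your closing remark. The paper's own proof notes that the $j=1$ summand is $2w_1 f(1,0)g(1,0)$, i.e.\ the fixed node is intended to be the degenerate point $(x_1,y_1)=(1,0)$, corresponding to $\sigma=0$; this is also what makes the interpolation count of $2n-1$ points match the $2n-1$ coefficients in Proposition \ref{prop:transformcoeffarc}. Your reading ($\xi_1=1$ with $x_j=(h-1)\xi_j+1$ giving $x_1=h$, $y_1>0$) follows the letter of the statement, which appears to contain a typo, but your assertion that ``no evaluation point degenerates'' is the opposite of the intended configuration. This does not affect your orthogonality argument --- an $n$-point Gauss--Radau rule is exact to degree $2n-2$ whichever endpoint carries the fixed node, and the parity cancellation is trivially unharmed at a point with $y=0$ --- but the remark should be corrected if the proof is to sit alongside the transform construction that follows.
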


\begin{proof}
This is a simple extension of Theorems 1.22 and 1.23 in \cite{gautschi2004orthogonal} adapted to Gauss--Radau quadrature and is similar to the quadrature rules developed in \cite{olver2021orthogonal}. The Gauss--Radau quadrature nodes and weights can be computed using Theorem 3.2 in \cite{gautschi2004orthogonal}. Note that the summand for $j = 1$ is $2w_1f(1, 0)g(1, 0)$.
\end{proof}

Lemma \ref{lem:gaussradaurule} can be used to compute the coefficients in \eqref{eq:interpolating_polynomial} using inner products. Similar to Proposition 8.9 in \cite{olver2021orthogonal}, we can derive Proposition \ref{prop:transformcoeffarc}. Proposition \ref{prop:transformcoeffarc} is key to computing expansions in our bases throughout this work, and we make use of it adaptively, increasing $n$ until the interpolating polynomial approximates $f(x, y)$ up to machine precision \cite{ClassicalOrthogonalPolynomials.jl2024,aurentz2017chopping}.

\begin{proposition}\label{prop:transformcoeffarc}
Assume $b > -1$. Given a function $f(x, y)$, the polynomial $f_n(x, y)$ from \eqref{eq:interpolating_polynomial} interpolates $f(x, y)$ at the Gauss--Radau quadrature nodes defined in Lemma \ref{lem:gaussradaurule}, and the coefficients are given by $\hat{\vb f} = P\vb f_n$, where $P =  N^{-1}E\tran W$ and
\begin{align*}
\hat{\vb f}_n\tran &= \begin{bmatrix} \hat f_0 & \hat f_{11} & \hat f_{12} & \cdots & \hat f_{n-1, 1} & \hat f_{n-1, 2} \end{bmatrix}, \\
\hat{\vb f}_n\tran &= \begin{bmatrix} f(1, 0) & f(x_2, y_2) & f(x_2, -y_2) & \cdots & f(x_{n-1}, y_{n-1}) & f(x_{n-1}, -y_{n-1}) \end{bmatrix}, \\
N &= \diag\left(\|p_0\|_n^2, \|q_1\|_n^2, \|p_1\|_n^2, \ldots, \|q_{n-1}\|_n^2, \|p_{n-1}\|_n^2\right), \\
E\tran &=  \begin{bmatrix}
p_0(1, 0) & p_0(x_2, y_2) & p_0(x_2, -y_2) & \cdots & p_0(x_{n-1}, y_{n-1}) & p_0(x_{n-1}, -y_{n-1}) \\
q_1(1, 0) & q_1(x_2, y_2) & q_1(x_2, -y_2) & \cdots & q_1(x_{n-1}, y_{n-1}) & q_1(x_{n-1}, -y_{n-1}) \\
p_1(1, 0) & p_1(x_2, y_2) & p_1(x_2, -y_2) & \cdots & p_1(x_{n-1}, y_{n-1}) & p_1(x_{n-1}, -y_{n-1}) \\
\vdots & \vdots & \vdots & \ddots & \vdots & \vdots \\
q_{n-1}(1, 0) & q_{n-1}(x_2, y_2) & q_{n-1}(x_2, -y_2) & \cdots & q_{n-1}(x_{n-1}, y_{n-1}) & q_{n-1}(x_{n-1}, -y_{n-1}) \\
p_{n-1}(1, 0) & p_{n-1}(x_2, y_2) & p_{n-1}(x_2, -y_2) & \cdots & p_{n-1}(x_{n-1}, y_{n-1}) & p_{n-1}(x_{n-1}, -y_{n-1}) \end{bmatrix}, \\
W &= \diag\left(2w_1, w_2, w_2, \ldots, w_n, w_n\right),
\end{align*}
where $\|g\|_n^2 := \inp{g}{g}_n$. Note that the dependence on $(b, h)$ has been omitted in these expressions.
\end{proposition}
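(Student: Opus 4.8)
The plan is to exploit the discrete orthogonality from Lemma \ref{lem:gaussradaurule}, which turns the problem into a standard orthogonal-projection computation, and then to observe that because the number of quadrature nodes matches the dimension of the polynomial space, this projection coincides with interpolation. First I would count the distinct evaluation points: the rule contributes the single point $(1,0)$, where the reflection $y \mapsto -y$ is trivial and the summand collapses to the doubled term $2w_1 f(1,0)g(1,0)$, together with the $n-1$ reflected pairs $(x_j, \pm y_j)$, giving $1 + 2(n-1) = 2n-1$ distinct nodes. This is exactly the number of basis polynomials $p_0, q_1, p_1, \ldots, q_{n-1}, p_{n-1}$ appearing in \eqref{eq:interpolating_polynomial}.

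Next I would note that, since the Gauss--Radau weights $w_j$ are strictly positive, $\inp{\cdot}{\cdot}_n$ restricts to a genuine positive-definite inner product on the $(2n-1)$-dimensional space $V$ of functions sampled at these nodes. By Lemma \ref{lem:gaussradaurule} the polynomials $\{p_j\}_{j=0}^{n-1} \cup \{q_j\}_{j=1}^{n-1}$ are orthogonal with respect to $\inp{\cdot}{\cdot}_n$ with nonzero discrete norms, so their restrictions to the nodes are linearly independent and therefore form an orthogonal basis of $V$. Consequently the restriction of $f$ to the nodes expands uniquely in this basis, with coefficients given by the projection formulas $\hat f_0 = \inp{f}{p_0}_n/\|p_0\|_n^2$, $\hat f_{j2} = \inp{f}{p_j}_n/\|p_j\|_n^2$, and $\hat f_{j1} = \inp{f}{q_j}_n/\|q_j\|_n^2$. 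Since $f_n$ is built from the same basis with these same coefficients, $f_n$ and $f$ agree at every node, which is the claimed interpolation property.

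It then remains to recast these projection coefficients in the stated matrix form. For each basis polynomial $g$ (one of the $p_j$ or $q_j$), the definition \eqref{eq:discrete_orthogonality} of $\inp{\cdot}{\cdot}_n$, together with the remark that the $j=1$ summand is $2w_1 f(1,0)g(1,0)$, shows that $\inp{f}{g}_n$ is precisely the dot product of the corresponding row of $E\tran$ against $W\vb f_n$, where $W = \diag(2w_1, w_2, w_2, \ldots, w_n, w_n)$ supplies the weights and the doubling at $(1,0)$, and the columns of $E\tran$ list the values of $g$ at the nodes in the order $(1,0), (x_2, y_2), (x_2, -y_2), \ldots$. Hence $E\tran W\vb f_n$ is the vector of discrete inner products $\inp{f}{g}_n$, and dividing by the squared discrete norms collected in $N$ yields $\hat{\vb f} = N^{-1}E\tran W\vb f_n = P\vb f_n$, as required.

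The routine parts are the positivity of the weights and the matrix bookkeeping; the step that warrants the most care is the identification of projection with interpolation, which hinges on the node count $2n-1$ matching the polynomial dimension and on the orthogonal polynomials' restrictions spanning all of $V$. The main bookkeeping hazard is the special treatment of the $(1,0)$ node: its single appearance with the doubled weight must be tracked consistently across \eqref{eq:discrete_orthogonality}, the definition of $W$, and the first column of $E\tran$, so that the reflection symmetry is neither double-counted nor dropped.
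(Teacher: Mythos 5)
Your proof is correct and follows essentially the route the paper intends: the paper offers no explicit proof of this proposition, instead deferring to Lemma \ref{lem:gaussradaurule} and the analogue in Proposition 8.9 of \cite{olver2021orthogonal}, and your argument---discrete orthogonality, the node count $2n-1$ matching the dimension of the polynomial space so that orthogonal projection coincides with interpolation, and the matrix bookkeeping with the doubled weight at $(1,0)$---is precisely that standard derivation. The only step you assert rather than justify is that the discrete norms are nonzero (Lemma \ref{lem:gaussradaurule} literally states only orthogonality); this is easily filled, either because the degree-$(2n-2)$ exactness of the Gauss--Radau rule makes each discrete norm equal its positive continuous counterpart, or by noting that a $p_j$ (degree $\le n-1$ in $x$) or $q_j$ vanishing at all $2n-1$ nodes would have more roots than its degree allows.
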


We will also be interested in the case $b = -1$, which is problematic as Lemma \ref{lem:gaussradaurule} no longer applies since $w^{\tau, (-1/2, -1, -1/2)}$ is not an integrable weight function.  To handle this case, we simply compute the expansion coefficients in the $\vb P^{(0)}$ basis and then transform back into the $\vb P^{(-1)}$ basis using the inverse of $R_{(b)}^{(b+1)}$.

\section{Spectral element method}\label{sec:sem2_}

We now use the arc polynomials to derive a spectral element method.  This will require that we first extend the arc polynomials in a piecewise basis over multiple intervals, and then we need to derive formulas for the mass matrix and the weak Laplacian. The bases we define are analogous to the bases defined in terms of the integrated Legendre functions introduced in \cite{babuvska1983lecture, knook2024quasi}. In what follows, we need the following definitions. 

\begin{definition}
We define the interval $I := [-\pi, \pi]$ and associate with $\bm\theta := (\theta_1, \ldots, \theta_{n+1})\tran$ the partition $-\pi = \theta_1 < \cdots < \theta_n < \theta_{n+1} = \pi$, where $n \geq 2$. The $i$th element is $E_i := [\theta_i, \theta_{i+1}]$ with length $\ell_i := \theta_{i+1} - \theta_i$, $i=1,\ldots,n$. We further define $E_0 = E_n$, $\ell_0 = \ell_n$, and $\theta_0 = \theta_{n+1}$.
\end{definition} 

\begin{definition}\label{eq:piecewise_arc_element_mapping}
The mapping $a_i \colon E_i \to E_i'$ is defined by 
\begin{equation}
a_i(\theta) := \theta - \frac{\theta_i + \theta_{i+1}}{2}, \quad i = 1, \ldots,n,
\end{equation}
where $E_i' := [-\varphi_i, \varphi_i]$ and $\varphi_i := \ell_i/2$. We define $h_i := \cos\varphi_i$ and $\tau_i := 2/(1-h_i)$.
\end{definition}

It is a crucial property that the mapping \eqref{eq:piecewise_arc_element_mapping} is defined only as a translation, instead of using, for example, $b_i(\theta) = (\pi/2\ell_i)(2\theta - \theta_i - \theta_{i+1})$ which maps $E_i$ into $[-\pi/2, \pi/2]$, since scaling implies that trigonometric polynomials on a single element will no longer be trigonometric polynomials with the same period on the entire interval $I$. This property is what will allow our bases to preserve periodicity, as we discuss later. As an example of this issue with scaling, consider $f(\theta) = \cos\theta$ on $E_i$. We find $f(a_i(\theta)) = \sin[(\theta_i+\theta_{i+1})/2]\sin(\theta) + \cos[(\theta_i+\theta_{i+1})/2]\cos\theta$ which is still $2\pi$-periodic, while $f(b_i(\theta)) = \sin[\pi(\theta_{i+1}-\theta)/\ell_i]$ is $2\ell_i$-periodic.

The first polynomials we define are related to the arc polynomials where $b = 0$. These are simply translated versions of $\vb P^{(0)}$ that are repeated over each interval. Formally, we have the following definition.

\begin{definition}[Piecewise arc polynomials with $b=0$]
The polynomial $P_{mj;i}^{(b), \bm\theta} \colon I \to \mathbb R$ is given by 
\begin{equation}\label{eq:piecewise_arc_base_polynomial_b0}
P_{mj; i}^{(b), \bm\theta}(\theta) := \begin{cases} P_{mj;i}^{(b)}\left(a_i(\theta)\right) & \theta \in E_i, \\ 0 & \text{otherwise}, \end{cases}
\end{equation}
\rev{where $m=0,1,\ldots$ denotes the polynomial degree, $i=1,2,\ldots,n$ indexes the element, and $j=1,2$ distinguishes the two families of arc polynomials.} 
The polynomials $P_{mj;i}^{(b)}$ are defined by 
\begin{align}
P_{m1;i}^{(b)}(\theta) &:= q_m^{(b, h_i)}(\theta), \quad m=1,2,3,\ldots,\quad i=1,\ldots,n, \\
P_{m2;i}^{(b)}(\theta) &:= p_m^{(b, h_i)}(\theta), \quad m=0,1,2,\ldots,\quad i=1,\ldots,n.
\end{align}
With these polynomials, we define the basis $\vb P^{(0), \bm\theta}$ by 
\begin{equation}\label{eq:piecewise_arc_beq0_defn}
\vb P^{(0), \bm\theta} := \begin{bmatrix} \vb P_{02}^{(0), \bm\theta} & \vb P_{11}^{(0), \bm\theta} & \vb P_{12}^{(0), \bm\theta} & \vb P_{21}^{(0), \bm\theta} & \vb P_{22}^{(0), \bm\theta} & \cdots \end{bmatrix},
\end{equation}
where
\begin{equation}\label{eq:piecewise_arc_beq0_blocks}
\vb P_{mj}^{(b), \bm\theta} := \begin{bmatrix} P_{mj; 1}^{(b), \bm\theta} & P_{mj; 2}^{(b), \bm\theta} & \cdots & P_{mj; n}^{(b), \bm\theta} \end{bmatrix}.
\end{equation}
\end{definition}

The $\vb P^{(0), \bm\theta}$ basis is not sufficient on its own for our spectral element method as to obtain sparsity we need a basis comprised of hat and bubble functions, also known as internal and external shape functions, respectively \cite{knook2024quasi, schwab1998theory}. In order to define an analogous basis $\vb P^{(-1), \bm\theta}$, let us first determine what the hat functions should look like in such a basis. Since we will be working with trigonometric polynomials, it would not be appropriate to consider linear hat functions in $\theta$. In particular, we should assume that the hat functions are represented as trigonometric polynomials, and that we have $n$ of them instead of $n+1$ since $\theta_1$ and $\theta_{n+1}$ represent the same point on a periodic interval.

\begin{lemma}\label{lem:hat_functions}
Let $\phi_i$ be a hat function satisfying $\phi_i(\theta_j) = \delta_{ij}$ where $\delta_{ij} = 1$ if $i=j$ and $\delta_{ij} = 0$ otherwise, and $i \in \{1, \ldots, n\}$. Then $\phi_i$ can be written as
\begin{equation}\label{eq:hat_function_psi0}
\phi_i(\theta) = \begin{cases} \psi_i^{(1)}(\theta) &\theta \in E_{i-1}, \\ \psi_i^{(2)}(\theta) & \theta \in E_i, \\ 0 & \text{otherwise}, \end{cases} \quad i = 1, \ldots, n,
\end{equation}
where $\psi_i^{(1)}(\theta)$ and $\psi_i^{(2)}(\theta)$ are linear trigonometric polynomials of the form
\begin{align}\label{eq:hat_function_psi1}
\psi_i^{(1)}(\theta) &= \frac12\left\{1 - \csc(\ell_{i-1})\left[\left(\sin \theta_{i-1} + \sin\theta_i\right)\cos\theta - \left(\cos\theta_{i-1}+\cos\theta_i\right)\sin\theta\right]\right\}, \\
\psi_i^{(2)}(\theta) &= \frac12\left\{1 + \csc(\ell_i)\left[\left(\sin\theta_i + \sin\theta_{i+1}\right)\cos\theta - \left(\cos\theta_i + \cos\theta_{i+1}\right)\sin\theta\right]\right\}. \label{eq:hat_function_psi2}
\end{align}
\end{lemma}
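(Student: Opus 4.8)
The plan is to exploit the hierarchical structure of the intended basis $\vb P^{(-1),\bm\theta}$ to pin down the hat functions, which the nodal conditions alone leave underdetermined. A hat function attached to the node $\theta_i$ is supported only on the two adjacent elements $E_{i-1}$ and $E_i$, so it suffices to determine its two restrictions $\psi_i^{(1)}$ on $E_{i-1}$ and $\psi_i^{(2)}$ on $E_i$. The conditions $\phi_i(\theta_j)=\delta_{ij}$ supply two equations per element (value $1$ at $\theta_i$, value $0$ at the outer node), but a general linear trigonometric polynomial has three degrees of freedom; the missing constraint is that $\phi_i$ be an \emph{external} (non-bubble) shape function, i.e.\ that on each element it lie in the span of the low-order arc polynomials that do not vanish at the element endpoints.

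First I would identify this non-bubble space on a generic element $E_j$. From the $b=-1$ construction in \eqref{eq:semiclassical_jacobi_bneg1_def} together with \eqref{eq:arcp}--\eqref{eq:arcq}, one has $p_0^{(-1)}=1$ and $q_1^{(-1)}(\theta)=\sin(a_j(\theta))$, whereas $p_1^{(-1)}\propto(x-h_j)=\cos(a_j(\theta))-h_j$ vanishes at both endpoints $\theta_j,\theta_{j+1}$ (where $\cos(a_j(\theta))=h_j$) and is therefore a bubble. Hence the linear trigonometric polynomials split as $\mathrm{span}\{1,\cos(a_j(\theta)),\sin(a_j(\theta))\}=\mathrm{span}\{1,\sin(a_j(\theta))\}\oplus\mathrm{span}\{\cos(a_j(\theta))-h_j\}$, the first summand being the external part. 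I would therefore take the ansatz $\psi_i^{(1)}(\theta)=A_1+B_1\sin(a_{i-1}(\theta))$ on $E_{i-1}$ and $\psi_i^{(2)}(\theta)=A_2+B_2\sin(a_i(\theta))$ on $E_i$.

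Next I would solve for the coefficients and convert to the stated form. Using $a_j(\theta_j)=-\varphi_j$ and $a_j(\theta_{j+1})=\varphi_j$, the conditions $\psi_i^{(2)}(\theta_i)=1$ and $\psi_i^{(2)}(\theta_{i+1})=0$ read $A_2-B_2\sin\varphi_i=1$ and $A_2+B_2\sin\varphi_i=0$, giving $A_2=1/2$ and $B_2=-1/(2\sin\varphi_i)$; the oddness of $\sin$ about the element centre is precisely what forces the constant $1/2$. The analogous $2\times2$ system on $E_{i-1}$ yields $A_1=1/2$ and $B_1=1/(2\sin\varphi_{i-1})$. Finally I would re-express everything in $\theta$: expanding $\sin(a_j(\theta))=\sin\theta\cos c_j-\cos\theta\sin c_j$ with $c_j=(\theta_j+\theta_{j+1})/2$, and applying the sum-to-product identities $\sin\theta_j+\sin\theta_{j+1}=2h_j\sin c_j$, $\cos\theta_j+\cos\theta_{j+1}=2h_j\cos c_j$ along with $\csc(\ell_j)=1/(2h_j\sin\varphi_j)$, collapses the bracketed expressions in \eqref{eq:hat_function_psi1}--\eqref{eq:hat_function_psi2} to $\mp\sin(a_j(\theta))/(2\sin\varphi_j)$, which reproduces $\psi_i^{(1)}$ and $\psi_i^{(2)}$ exactly.

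The hard part is conceptual rather than computational: articulating and justifying the extra constraint that singles out $\phi_i$ among all piecewise linear trigonometric interpolants of the nodal data. The resolution I would adopt is that the hat function is, by design, the external shape function complementing the bubbles, so it must exclude the bubble direction $\cos(a_j(\theta))-h_j$; once this is granted, the remainder is just the routine endpoint solve and trigonometric simplification sketched above.
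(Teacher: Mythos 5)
Your proposal is correct, but it takes a genuinely different route from the paper. The paper's proof is a one-line verification: it takes the formulas \eqref{eq:hat_function_psi1}--\eqref{eq:hat_function_psi2} as given and simply substitutes $\theta = \theta_j$, checking the cases $i = j$ and $i \neq j$ separately; existence of the stated form is all that is needed there, since the formulas effectively serve as the definition of the hat functions used later in Definition \ref{def:piecewisearcbneg1hat}. You instead \emph{derive} the formulas: you observe (correctly) that the nodal conditions alone underdetermine $\phi_i$, impose the additional structural constraint that $\phi_i$ contain no bubble component --- i.e.\ that on each supporting element it lie in $\mathrm{span}\{1, \sin(a_j(\theta))\}$, the complement of the bubble direction $p_1^{(-1)} \propto \cos(a_j(\theta)) - h_j$ --- solve the resulting $2\times 2$ systems (the oddness of $\sin(a_j(\theta))$ about the element centre forcing the constant $\tfrac12$), and convert to the stated global form via $\sin\theta_j + \sin\theta_{j+1} = 2h_j\sin c_j$, $\cos\theta_j+\cos\theta_{j+1} = 2h_j\cos c_j$, and $\csc\ell_j = 1/(2h_j\sin\varphi_j)$. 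I checked these computations and they are right: your coefficients $A_1=A_2=\tfrac12$, $B_1 = 1/(2\sin\varphi_{i-1})$, $B_2 = -1/(2\sin\varphi_i)$ reproduce \eqref{eq:hat_function_psi1}--\eqref{eq:hat_function_psi2} exactly (your phrase ``collapses to $\mp\sin(a_j(\theta))/(2\sin\varphi_j)$'' is slightly loose about which prefactors are included, but the substance is unaffected). What your approach buys is an explanation of where the formulas come from and a uniqueness statement within the external (non-bubble) space; it also exposes a looseness in the lemma's own wording, since ``let $\phi_i$ satisfy $\phi_i(\theta_j)=\delta_{ij}$, then $\phi_i$ can be written as\ldots'' is not literally true without your extra constraint (any bubble may be added). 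What the paper's approach buys is brevity, which is appropriate given that the lemma functions as a definition plus sanity check rather than a characterisation.
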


\begin{proof}
This can be easily proven by simply substituting $\theta = \theta_j$ into \eqref{eq:hat_function_psi1}--\eqref{eq:hat_function_psi2} and considering the cases $i=j$ and $i \neq j$ separately.
\end{proof}

\begin{figure}[h!]
\centering
\includegraphics[width=0.8\textwidth]{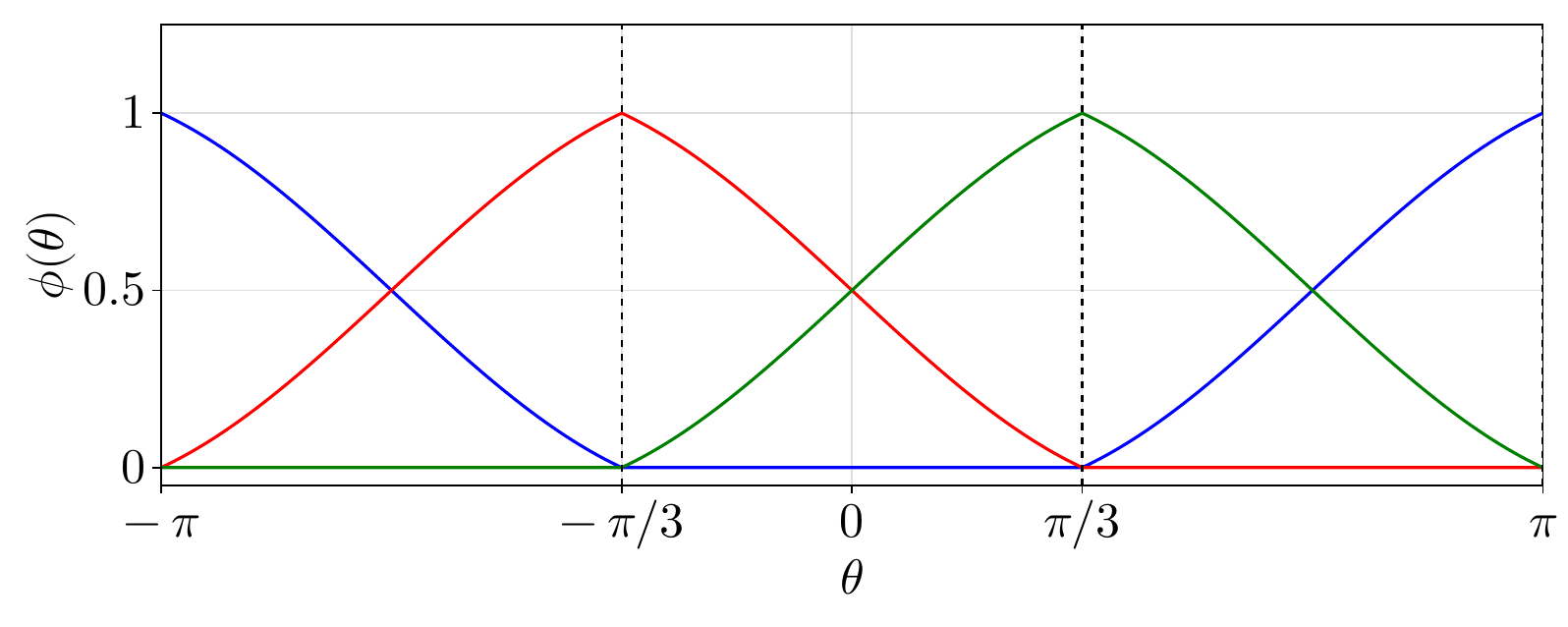}
\caption{The hat functions $\phi_1(\theta)$ (blue), $\phi_2(\theta)$ (red), and $\phi_3(\theta)$ (green) for the equally spaced grid $\bm\theta = (-\pi, -\pi/3, \pi/3, \pi)\tran$. The vertical lines show the grids. These hat functions are piecewise affine functions in $x$ and $y$.}\label{fig:hat_functions}	
\end{figure}

This lemma gives us the definition that we need for our hat functions $\phi_i$, and thus for our $b = -1$ basis. An example of what these hat functions look like in the case $n = 3$ is shown in Figure \ref{fig:hat_functions}. This basis will be what we use to define our trial and test functions for our spectral element method.

\begin{definition}[Piecewise arc polynomials with $b = -1$]\label{def:piecewisearcbneg1hat}
The $\vb P^{(-1), \bm\theta}$ basis is defined by 
\begin{equation}
\vb P^{(-1), \bm\theta} := \begin{bmatrix} \vb H^{\bm\theta} & \vb B^{\bm\theta} \end{bmatrix},
\end{equation}
where $\vb H^{\bm\theta}$ are the hat functions $\vb H^{\bm\theta} = (\phi_1, \ldots, \phi_n)$ from Lemma \ref{lem:hat_functions}, and $B$ are the bubble functions 
\begin{align}
\vb B^{\bm\theta} := \begin{bmatrix}
\vb P_{12}^{(-1), \bm\theta} & \vb P_{21}^{(-1), \bm\theta} & \vb P_{22}^{(-1), \bm\theta} & \vb P_{31}^{(-1), \bm\theta} & \cdots \end{bmatrix}.
\end{align} 
We will typically suppress the dependence of $\vb H^{\bm\theta}$ and $\vb B^{\bm\theta}$ on $\bm\theta$. 
\end{definition}

\rev{We note that a key difference between the $b=0$ and $b=-1$ bases is that the $b=0$ basis can represent discontinuous functions while the $b=-1$ basis cannot, with both bases giving the same convergence rate as we eventually show in Theorem \ref{thm:rate_of_conv_main_rho:all__}. We see the difference between these two bases later in Section \ref{sec:screened_poisson}, where we expand the solution to a differential equation in the $b=-1$ basis while expanding a discontinuous right-hand side in the $b=0$ basis.}

\subsection{Mass matrix}\label{sec:piecewise_arc:mass_matrix}

The mass matrix for these piecewise bases can be easily computed in terms of the arc polynomials, where we define the mass matrix $M^{(b), \bm\theta}$ by $M^{(b), \bm\theta} := [\vb P^{(b), \bm\theta}]\tran\vb P^{(b), \bm\theta}$. Let us start by considering $M^{(0), \bm\theta}$. To compute the entries, note that since the $b = 0$ arc polynomials are orthogonal with respect to the unit weight, $M^{(0), \bm\theta}$ must be a diagonal matrix. These entries can be found by noting that, for a given function $f(\theta) = g(a_i(\theta))$ supported over an element $E_i$, 
\begin{equation}
\int_I f(\theta)^2 \dthe = \int_{E_i} g(a_i(\theta))^2 \dthe = \int_{-\varphi_i}^{\varphi_i} g(\theta)^2 \dthe.
\end{equation}
Thus, $M^{(0), \bm\theta}$ can be obtained by simply taking the corresponding entries of the mass matrix for the associated arc polynomials. The $b = -1$ case can be obtained using the connection matrix defined in the next section, similarly to \eqref{eq:arc_mass_matrix_bnot0}, so that $M^{(-1), \bm\theta} = [R_{(-1)}^{(0), \bm\theta}]\tran M^{(0), \bm\theta}R_{(-1)}^{(0), \bm\theta}$. These findings are summarised below.

\begin{proposition}
The mass matrix $M^{(0), \bm\theta}$ is given by 
\begin{equation}\label{eq:piecewise_arc_mass_matrix_formula_repr}
M^{(0), \bm\theta} = \diag\left(\|\vb p_0^{(0), \bm\theta}\|^2, \|\vb q_1^{(0), \bm\theta}\|^2, \|\vb p_1^{(0), \bm\theta}\|^2, \ldots\right),
\end{equation}
where
\begin{equation}\label{eq:piecewise_arc_mass_matrix_formula_repr_pm_def}
\|\vb p_m^{(0), \bm\theta}\|^2 := \diag\left(\|p_m^{(0, h_1)}\|_{(0, h_1)}^2, \ldots, \|p_m^{(0, h_n)}\|_{(0, h_n)}^2\right) \in \mathbb R^{n\times n}
\end{equation}
and similarly for $\|\vb q_m^{(0), \bm\theta}\|^2$. The mass matrix $M^{(-1), \bm\theta}$ is given by $M^{(-1), \bm\theta} = [R_{(-1)}^{(0), \bm\theta}]\tran M^{(0), \bm\theta}R_{(-1)}^{(0), \bm\theta}$, where $\vb P^{(-1), \bm\theta} = \vb P^{(0), \bm\theta}R_{(-1)}^{(0), \bm\theta}$.
\end{proposition}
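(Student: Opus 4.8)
The plan is to establish the three assertions separately: that $M^{(0),\bm\theta}$ is diagonal, that its diagonal entries are the element-wise squared arc-polynomial norms arranged by interlacing, and that the $b=-1$ mass matrix is obtained from the $b=0$ one by congruence with the connection matrix. The first two reduce to transporting the orthogonality of the arc polynomials onto each element, while the third is purely algebraic once the defining relation $\vb P^{(-1),\bm\theta} = \vb P^{(0),\bm\theta}R_{(-1)}^{(0),\bm\theta}$ is granted.

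First I would argue diagonality. Two basis functions supported on distinct elements have disjoint supports, so their $L^2(I)$ pairing vanishes at once. For two functions on a common element $E_i$, each has the form $P_{mj;i}^{(0)}(a_i(\theta))$, and I would apply the substitution $\theta' = a_i(\theta)$. Since $a_i$ is a pure translation, it has unit Jacobian and maps $E_i$ exactly onto $[-\varphi_i,\varphi_i]$, so that
\begin{equation*}
\int_I P_{m_1 j_1;i}^{(0),\bm\theta}\,P_{m_2 j_2;i}^{(0),\bm\theta}\dthe = \inp{P_{m_1 j_1;i}^{(0)}}{P_{m_2 j_2;i}^{(0)}}^{(0,h_i)},
\end{equation*}
which is exactly the arc inner product \eqref{eq:inp_arc} with weight $w\equiv 1$. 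The orthogonality of the arc polynomials then makes every off-diagonal entry within the element vanish, and the same substitution identifies the diagonal entries as $\|q_m^{(0,h_i)}\|_{(0,h_i)}^2$ or $\|p_m^{(0,h_i)}\|_{(0,h_i)}^2$, whose closed forms are \eqref{eq:arc_polynomial_mass_matrix_formulae_pq}. Collecting the $n$ element norms for each fixed $(m,j)$ into the diagonal blocks $\|\vb p_m^{(0),\bm\theta}\|^2$ and $\|\vb q_m^{(0),\bm\theta}\|^2$ and interlacing them in the order fixed by \eqref{eq:piecewise_arc_beq0_defn} gives the stated $M^{(0),\bm\theta}$.

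For the $b=-1$ case I would simply substitute $\vb P^{(-1),\bm\theta} = \vb P^{(0),\bm\theta}R_{(-1)}^{(0),\bm\theta}$ into $M^{(-1),\bm\theta} = [\vb P^{(-1),\bm\theta}]\tran\vb P^{(-1),\bm\theta}$ and factor out the connection matrix on each side, yielding $[R_{(-1)}^{(0),\bm\theta}]\tran M^{(0),\bm\theta}R_{(-1)}^{(0),\bm\theta}$. The main obstacle, and really the only point needing care, is the single-element reduction: one must check that it is precisely the translational nature of $a_i$ (as opposed to a rescaling into a fixed reference interval) that makes the restricted $L^2(I)$ pairing coincide \emph{exactly} with $\inp{\cdot}{\cdot}^{(0,h_i)}$, with no Jacobian factor altering the weight and hence preserving arc orthogonality. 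Everything else is bookkeeping of supports and a congruence.
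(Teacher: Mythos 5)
Your proposal is correct and follows essentially the same route as the paper: diagonality of $M^{(0),\bm\theta}$ from disjoint supports plus arc-polynomial orthogonality, identification of the diagonal entries via the unit-Jacobian translation $a_i$ mapping $E_i$ onto $[-\varphi_i,\varphi_i]$, and the congruence $M^{(-1),\bm\theta} = [R_{(-1)}^{(0),\bm\theta}]\tran M^{(0),\bm\theta}R_{(-1)}^{(0),\bm\theta}$ by direct substitution of the connection relation. The paper's argument is just a more compressed version of exactly these steps.
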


\subsection{Connection matrix}

We consider in this section the connection matrix $R_{(-1)}^{(0), \bm\theta}$, defined by $\vb P^{(-1), \bm\theta} = \vb P^{(0), \bm\theta}R_{(-1)}^{(0), \bm\theta}$. The form of $R_{(-1)}^{(0), \bm\theta}$ is given in the following proposition.

\begin{proposition}\label{prop:pwa_connmat}
Define $R_{(-1)}^{(0), \bm\theta}$ by $\vb P^{(-1), \bm\theta} = \vb P^{(0), \bm\theta}R_{(-1)}^{(0), \bm\theta}$. This $R_{(-1)}^{(0), \bm\theta}$ is a block-banded matrix with block bandwidths $(1, 1)$ of the form
\begin{equation}\label{eq:connection_matrix_formula_piecewise_arc}
R_{(-1)}^{(0), \bm\theta} =\begin{bmatrix}
\|\vb p_0^{(0), \bm\theta}\|^{-2}\vb P_{02}\tran\vb H & A_{01}^{(-1),\bm\theta} &                           &                           &                           &        \\
\|\vb q_1^{(0), \bm\theta}\|^{-2}\vb P_{11}\tran\vb H &      \vb 0                     & B_{01}^{(-1), \bm\theta} &                           &                           &        \\
                     & A_{11}^{(-1),\bm\theta} &       \vb 0                    & A_{12}^{(-1),\bm\theta} &                           &        \\
                     &                           & B_{11}^{(-1),\bm\theta} &                          \vb 0 & B_{12}^{(-1),\bm\theta} &        \\
                     &                           &                           & A_{22}^{(-1),\bm\theta} &                   \vb 0        & \ddots \\
                     &                           &                           &                           & B_{22}^{(-1),\bm\theta} & \ddots \\
                     &                           &                           &                           &                           & \ddots
\end{bmatrix},
\end{equation}
where 
\begin{align}
B_{m -1, m}^{(-1), \bm\theta} := \diag\left(b_{m-1,m}^{(-1,h_1)}, \ldots, b_{m-1,m}^{(-1,h_n)}\right) \in \mathbb R^{n \times n},
\end{align}
with similar definitions for $B_{m-1,m-1}^{(-1), \bm\theta}$, $A_{m,m+1}^{(-1), \bm\theta}$, and $A_{mm}^{(-1), \bm\theta}$. The coefficients in these $A_{ij}^{(-1), \bm\theta}$ and $B_{ij}^{(-1), \bm\theta}$ matrices are the coefficients in \eqref{eq:arc_connection_p1}--\eqref{eq:arc_connection_p2}. The forms of $\vb P_{02}\tran\vb H$ and $\vb P_{11}\tran\vb H$ are
\begin{align}\label{eq:p02h_p11h_mat_v}
\vb P_{02}\tran\vb H = \frac12
\begin{bmatrix}
\ell_1 & \ell_1 &        &        &             &    \\
 & \ell_2 & \ell_2 &        &             &           \\
       &  & \ell_3 & \ddots &             &           \\
       &        & & \ddots & \ell_{n-2} &           \\
       &        &        & \ddots & \ell_{n-1} & \ell_{n-1} \\
\ell_n &        &        &        &  & \ell_n   
\end{bmatrix}, \quad \vb P_{11}\tran\vb H = \frac14\begin{bmatrix} 
\xi_1 & -\xi_1 \\
& \xi_2 & -\xi_2 \\
& & \xi_3 & \ddots \\
&&&\ddots & -\xi_{n-2} \\
&&&\ddots & \xi_{n-1} & -\xi_{n-1} \\
-\xi_n &&&&&\xi_n 
\end{bmatrix},
\end{align}
where $\xi_i := (\sin\ell_i - \ell_i)/\sin(\ell_i/2)$.
\end{proposition}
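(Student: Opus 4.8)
The plan is to treat the hat-function columns and the bubble-function columns of $R_{(-1)}^{(0),\bm\theta}$ separately, using throughout that the $b=0$ piecewise basis is orthogonal in $L^2(I)$ (equivalently, that $M^{(0),\bm\theta}$ is diagonal by the preceding proposition, since basis functions on distinct elements have disjoint support and the arc polynomials are orthogonal on each element). Orthogonality means that the coefficient of a basis function $P_{mj;i}^{(0),\bm\theta}$ in the expansion of any target $f$ is $\langle P_{mj;i}^{(0),\bm\theta},f\rangle/\|P_{mj;i}^{(0),\bm\theta}\|^2$; this is exactly why the leading factors $\|\vb p_0^{(0),\bm\theta}\|^{-2}$ and $\|\vb q_1^{(0),\bm\theta}\|^{-2}$ appear in the first column-block. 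The relevant inner product is $\int_I$, which on each element $E_i$ reduces to the $b=0$ arc inner product $\int_{-\varphi_i}^{\varphi_i}\!\cdots\,\dthe$.

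For the bubble columns I would invoke the arc connection relations \eqref{eq:arc_connection_p1}--\eqref{eq:arc_connection_p2} evaluated at $b=-1$; these remain valid at $b=-1$ because the underlying semiclassical connection matrices $R_{(-1/2,-1,-1/2)}^{\tau,(-1/2,0,-1/2)}$ and $R_{(1/2,-1,1/2)}^{\tau,(1/2,0,1/2)}$ are upper bidiagonal by Proposition \ref{prop:ra1cmalphbeta_mat_r}. On element $E_i$ the bubble $\vb P_{m2}^{(-1),\bm\theta}$ restricts to $p_m^{(-1,h_i)}=a_{m-1,m}^{(-1,h_i)}p_{m-1}^{(0,h_i)}+a_{mm}^{(-1,h_i)}p_m^{(0,h_i)}$ and $\vb P_{m1}^{(-1),\bm\theta}$ restricts to $q_m^{(-1,h_i)}=b_{m-2,m-1}^{(-1,h_i)}q_{m-1}^{(0,h_i)}+b_{m-1,m-1}^{(-1,h_i)}q_m^{(0,h_i)}$. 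Collecting these element-wise coefficients into the diagonal matrices $A_{\bullet}^{(-1),\bm\theta}$, $B_{\bullet}^{(-1),\bm\theta}$ and interlacing the $p$- and $q$-families reproduces the stated block-bidiagonal pattern of the bubble columns.

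The heart of the argument is the hat-function block. Writing $c_i:=(\theta_i+\theta_{i+1})/2$ and using $\sin\theta_i+\sin\theta_{i+1}=2\sin c_i\cos\varphi_i$, $\cos\theta_i+\cos\theta_{i+1}=2\cos c_i\cos\varphi_i$, together with $\csc\ell_i\cdot2\cos\varphi_i=\csc\varphi_i$, I would collapse the pieces of Lemma \ref{lem:hat_functions} to $\psi_i^{(2)}(\theta)=\tfrac12-\tfrac12\csc\varphi_i\,\sin(a_i(\theta))$ on $E_i$ and $\psi_i^{(1)}(\theta)=\tfrac12+\tfrac12\csc\varphi_{i-1}\,\sin(a_{i-1}(\theta))$ on $E_{i-1}$. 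Since $p_0^{(0,h_i)}\equiv1$ and $q_1^{(0,h_i)}(a_i(\theta))=\sin(a_i(\theta))$, each piece lies in $\operatorname{span}\{p_0^{(0,h_i)},q_1^{(0,h_i)}\}$ and is therefore orthogonal to every $p_m^{(0)}$ with $m\ge1$ and every $q_m^{(0)}$ with $m\ge2$; this is precisely why the hat columns meet only the $\vb P_{02}$ and $\vb P_{11}$ rows, and in particular why no $p_1^{(0)}$ component survives. The surviving inner products then follow from $\int_{E_i}\sin(a_i(\theta))\dthe=0$, $\int_{E_i}1\,\dthe=\ell_i$, and $\int_{E_i}\sin^2(a_i(\theta))\dthe=(\ell_i-\sin\ell_i)/2$, giving $\langle p_0^{(0,h_i)},\phi_i\rangle=\langle p_0^{(0,h_i)},\phi_{i+1}\rangle=\ell_i/2$ and $\langle q_1^{(0,h_i)},\phi_i\rangle=+\xi_i/4$, $\langle q_1^{(0,h_i)},\phi_{i+1}\rangle=-\xi_i/4$ with $\xi_i=(\sin\ell_i-\ell_i)/\sin(\ell_i/2)$. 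The two nonzero entries per row (columns $j=i$ and $j=i+1$) and the sign alternation in the $\xi_i$ case come from $\phi_j$ being supported on the adjacent elements $E_{j-1}\cup E_j$, with the periodic identification $E_0=E_n$ placing the wrap-around entries in the bottom-left corners; these are exactly the entries of $\vb P_{02}\tran\vb H$ and $\vb P_{11}\tran\vb H$.

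Finally, the block-bandwidth $(1,1)$ claim follows by combining supports: each hat column occupies the first two block-rows, while each bubble column couples degree $m$ only to degrees $m-1$ and $m$, which after interlacing sits one block above and one block below the diagonal. The main obstacle I anticipate is the hat-function reduction itself --- obtaining the exact collapse to $\tfrac12\pm\tfrac12\csc\varphi\,\sin(a(\theta))$ with correct signs and confirming the vanishing of the $p_1^{(0)}$ component --- since the clean bidiagonal form and the appearance of $\xi_i$ both hinge entirely on that step.
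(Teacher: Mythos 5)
Your proposal is correct, and its overall skeleton matches the paper's: the paper computes $R_{(-1)}^{(0), \bm\theta} = [M^{(0), \bm\theta}]^{-1}[\vb P^{(0), \bm\theta}]\tran\vb P^{(-1), \bm\theta}$ (equation \eqref{eq:piecewise_arc_connection_matrix_inp_form}, with details in \ref{app:computing_connection_matrix_piecewise}), which is exactly your orthogonal-expansion formulation, and your treatment of the bubble columns via \eqref{eq:arc_connection_p1}--\eqref{eq:arc_connection_p2} at $b=-1$ (justified through Proposition \ref{prop:ra1cmalphbeta_mat_r}) coincides with the paper's. Where you genuinely diverge --- and improve on the paper --- is the hat-function block. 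The paper integrates the raw expressions \eqref{eq:hat_function_psi1}--\eqref{eq:hat_function_psi2} against $p_0^{(0,h_i)}$, $q_1^{(0,h_i)}$ and $p_1^{(0,h_i)}$ separately: it dispatches $m>1$ by a degree argument, evaluates the $(0,2)$ and $(1,1)$ blocks by direct trigonometric integration, and for the crucial vanishing $\vb P_{12}\tran\vb H = \vb 0$ only remarks that one can ``work through the details''; note the degree argument alone cannot give this block, since $p_1$ has degree one just like the hat functions. Your route instead collapses each hat piece first: with $c_i = (\theta_i+\theta_{i+1})/2$ one has $(\sin\theta_i+\sin\theta_{i+1})\cos\theta - (\cos\theta_i+\cos\theta_{i+1})\sin\theta = 2\cos\varphi_i\sin(c_i-\theta) = -2\cos\varphi_i\sin(a_i(\theta))$ and $2\cos\varphi_i\csc\ell_i = \csc\varphi_i$, so indeed $\psi_i^{(2)} = \tfrac12 - \tfrac12\csc\varphi_i\sin(a_i(\theta))$ and $\psi_i^{(1)} = \tfrac12 + \tfrac12\csc\varphi_{i-1}\sin(a_{i-1}(\theta))$, placing every hat piece exactly in $\operatorname{span}\{p_0^{(0,h_i)}, q_1^{(0,h_i)}\}$. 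From this single identity, all vanishing statements (including $\vb P_{12}\tran\vb H = \vb 0$ and $m>1$) are immediate by orthogonality, and the surviving entries reduce to the three elementary integrals you list; your values $\ell_i/2$ and $\pm\xi_i/4$ with $\xi_i = (\sin\ell_i - \ell_i)/\sin(\ell_i/2)$ agree with \eqref{eq:p02h_p11h_mat_v} and with the paper's computations \eqref{eq:app:mass_matrix_block11_i=j}--\eqref{eq:app:mass_matrix_block21_j=i+1}. What each approach buys: the paper's is mechanical and self-contained, requiring nothing beyond patience with integrals; yours isolates the structural reason the hat columns meet only the $\vb P_{02}$ and $\vb P_{11}$ rows --- the hat functions contain no $\cos(a_i(\theta))$, hence no $p_1$, component --- which makes the shape of the first block column transparent rather than an artefact of cancellation.
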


\begin{proof}
Computing this matrix $R_{(-1)}^{(0), \bm\theta}$ relies on the formula
\begin{equation}\label{eq:piecewise_arc_connection_matrix_inp_form}
R_{(-1)}^{(0), \bm\theta} = \left[M^{(0), \bm\theta}\right]^{-1}\left(\left[\vb P^{(0), \bm\theta}\right]\tran\vb P^{(-1), \bm\theta}\right).
\end{equation}
Thus, to compute $R_{(-1)}^{(0), \bm\theta}$ first requires the computation of $[\vb P^{(0), \bm\theta}]\tran\vb P^{(-1), \bm\theta}$. The complete details for this computation are left to \ref{app:computing_connection_matrix_piecewise}.
\end{proof}

An important observation to make is that the form of \eqref{eq:connection_matrix_formula_piecewise_arc} has a similar structure to the banded-block-banded arrowhead ($B^3$-arrowhead) structure defined in \cite{knook2024quasi}. The only difference is that the blocks in the first row and first column are not banded since they have non-zero values in their corners, meaning they are \emph{cyclically banded} \cite{strang2011groups}. This observation leads us to the following definitions analogous to the definition of a $B^3$-arrowhead matrix from \cite{knook2024quasi}.

\begin{definition}[Adapted from \cite{strang2011groups}]
A matrix $A \in \mathbb R^{n \times n}$ is a \emph{cyclic banded matrix} with sub-bandwidths $(\ell, u)$ if $a_{ij} = 0$ when $\ell < i - j$ or $j - i > u$, with the exception of the cases $n - i + j \leq u$, or $n + i - j \leq \ell$. In particular, $A$ has the form
\begin{center}
\begin{tikzpicture}
\draw[very thick] (2,2)--(-2,2)--(-2,-2)--(2,-2)--cycle;
\fill[color=white,draw=black] (-1,2)--(-2,2)--(-2,1)--(1,-2)--(2,-2)--(2,-1)--cycle;
\draw[dashed](-2,2)--(2,-2);
\draw[<->] (-0.5,-0.5)--node[below]{$\ell$}(-0.02,-0.02);
\draw[<->] (0.02,0.02)--node[above]{$u$}(0.5,0.5);
\draw[color=white,draw=black] (1.0,2.0)--(2.0,1.0);
\draw[<->] (1.5,1.5)--node[left]{$\ell$}(1.98, 1.98);
\draw[color=white,draw=black] (-1.0,-2.0)--(-2.0,-1.0);
\draw[<->] (-1.5,-1.5)--node[right]{$u$}(-1.98,-1.98);
\end{tikzpicture}
\end{center}
\end{definition}

\begin{definition}\label{def:cb3ar}
A cyclic banded-block-banded-arrowhead ($CB^3$-arrowhead) matrix $A \in \mathbb R^{(m + pm) \times (m + pm)}$ with block-bandwidths $(\ell, u)$ and sub-block-bandwidth $\lambda + \mu$ has the following properties:
\begin{enumerate}
\item It is a block-banded matrix with block-bandwidths $(\ell, u)$.
\item The top-left block $A_0 \in \mathbb R^{m \times m}$ is a cyclic banded matrix with sub-bandwidths $(\lambda+\mu, \lambda+\mu)$.
\item The remaining blocks in the first row $B_k \in \mathbb R^{
m \times m}$ are cyclic banded matrices with sub-bandwidths $(\lambda, \mu)$.
\item The remaining blocks in the first column $C_k \in \mathbb R^{m \times m}$ are cyclic banded matrices with sub-bandwidths $(\mu, \lambda)$.
\item All other blocks $D_{kj} \in \mathbb R^{m \times m}$ are diagonal.
\end{enumerate}
Such a matrix takes the form
\begin{equation}\label{eq:cb3mat_}
A = \begin{bmatrix} A_0 & B \\ C & D \end{bmatrix} = \left[\begin{array}{c|c|c|c|c|c|c}
A_0 & B_1 & \cdots & B_u&&& \\ \hline
C_1 & D_{11} & \cdots & D_{1u} & D_{1,1+u}&& \\\hline
\vdots&\vdots&\ddots&\ddots&\ddots&\ddots\\\hline
C_\ell&D_{\ell 1}&\ddots&\ddots&\ddots&\ddots&D_{p-u, p} \\\hline
&D_{\ell+1,1} &\ddots&\ddots&\ddots&\ddots&D_{p-u+1,p} \\\hline
&&\ddots&\ddots&\ddots&\ddots&\vdots\\\hline
&&&D_{p,p-\ell}&D_{p,p-\ell+1}&\cdots&D_{p,p}
\end{array}\right].
\end{equation}
We may also use the notation $CB^3(\ell, u; \lambda, \mu)$ for such a matrix. To store the diagonal blocks in $D$ we represent it as an interlaced matrix of the form
\begin{equation}
D = \bigoplus_{i=1}^m D_i,
\end{equation}
where $D_i$ are banded matrices with bandwidths $(\ell, u)$ and, as in \cite{knook2024quasi} we use a direct sum to denote interlacing the entries of the matrices, in particular
\begin{equation}
\vb e_{\ell}\tran D_{kj} \vb e_\ell = \vb e_k\tran D_\ell \vb e_j.
\end{equation}
\end{definition}

The $CB^3$-arrowhead structure has many properties in common with the $B^3$-arrowhead structure of \cite{knook2024quasi}, for example it can be shown that multiplication of two $CB^3$-arrowhead matrices is another $CB^3$-arrowhead matrix. We leverage this structure to efficiently multiply matrices of these forms and solve linear systems throughout this work. The mass matrix $M^{(-1), \bm\theta}$ is also of this form.

\begin{lemma}\label{lem:product_cyclic_matrices}
Let $A_1, A_2 \in \mathbb R^{n \times n}$ be cyclic banded matrices both with sub-bandwidths $(1, 1)$. The product $A_1A_2$ is another cyclic banded matrix with sub-bandwidths $(2, 2)$.
\end{lemma}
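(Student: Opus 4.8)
The plan is to translate the (somewhat awkward) index conditions of the preceding definition of a cyclic banded matrix into a single statement about \emph{cyclic displacements}, and then run a one-line modular-arithmetic argument. First I would show that a matrix $A\in\mathbb R^{n\times n}$ is cyclic banded with sub-bandwidths $(\ell,u)$ if and only if $a_{ij}=0$ whenever the residue $(j-i)\bmod n$ fails to lie in $\{0,1,\dots,u\}\cup\{n-\ell,\dots,n-1\}$; equivalently, $a_{ij}\neq 0$ forces $j-i\equiv d\pmod n$ for some integer $d$ with $-\ell\le d\le u$. This is a direct check against the three regions in the definition: the main band $i-j\le\ell$, $j-i\le u$ gives $-\ell\le j-i\le u$; the exception $n-i+j\le u$ produces the super-diagonal wrap in the lower-left corner, i.e.\ residues $1,\dots,u$ realised by $j-i=d-n$ (large $i$, small $j$); and the exception $n+i-j\le\ell$ produces the sub-diagonal wrap in the upper-right corner, i.e.\ residues $n-\ell,\dots,n-1$ corresponding to $-\ell\le d\le -1$ (small $i$, large $j$).

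With this reformulation the product is immediate. Writing $(A_1A_2)_{ik}=\sum_j (A_1)_{ij}(A_2)_{jk}$, any nonzero summand requires an index $j$ for which both factors are nonzero, hence integers $d_1\in[-\ell_1,u_1]$ and $d_2\in[-\ell_2,u_2]$ with $j-i\equiv d_1$ and $k-j\equiv d_2\pmod n$. Adding these congruences gives $k-i\equiv d_1+d_2\pmod n$ with $d_1+d_2\in[-(\ell_1+\ell_2),\,u_1+u_2]$, so by the reformulation $A_1A_2$ is cyclic banded with sub-bandwidths $(\ell_1+\ell_2,u_1+u_2)$. Setting $\ell_1=u_1=\ell_2=u_2=1$ yields sub-bandwidths $(2,2)$, as claimed. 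The same argument shows more generally that the product of cyclic banded matrices with sub-bandwidths $(\ell_1,u_1)$ and $(\ell_2,u_2)$ is cyclic banded with sub-bandwidths $(\ell_1+\ell_2,u_1+u_2)$, which is the natural analogue of the additivity of ordinary matrix bandwidths.

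I expect the only real obstacle to be bookkeeping the wrap-around correctly. The delicate point is confirming that the two separate ``exception'' clauses of the definition collapse into the single modular condition above, since a careless treatment of the two corner triangles is exactly where an off-by-one in $\ell$ versus $u$ would creep in. A secondary caveat is the small-$n$ regime: when $n\le 4$ the admissible residue set $\{-2,\dots,2\}$ wraps onto the whole of $\mathbb Z/n\mathbb Z$, so the representative $d_1+d_2$ need not be unique and the $(2,2)$ statement becomes loose (though still true); for the partitions of interest $n$ is large enough that $[-2,2]$ injects into $\mathbb Z/n\mathbb Z$ and the bound is sharp, so I would either assume $n\ge 5$ or simply remark that the displacement argument yields a valid bandwidth bound for every $n$.
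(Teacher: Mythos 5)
Your proof is correct, but it takes a genuinely different route from the paper. The paper proves the lemma by splitting each factor as $A_i = B_i + \alpha_i E_{1n} + \beta_i E_{n1}$, where $B_i$ is an ordinary $(1,1)$ banded matrix and $E_{1n}$, $E_{n1}$ are the rank-one corner terms (for sub-bandwidths $(1,1)$ the wrapped entries are single scalars), and then expands the product term by term, checking that $B_1B_2$ is $(2,2)$ banded and that each of the eight remaining products only populates entries inside the $(2,2)$ cyclic pattern. Your argument instead characterises cyclic bandedness intrinsically --- $a_{ij}\neq 0$ forces $j-i\equiv d\pmod n$ with $d\in[-\ell,u]$ --- and then gets the result from additivity of congruences; I checked the translation of the two exception clauses into residues $\{1,\dots,u\}$ and $\{n-\ell,\dots,n-1\}$ and it is exact, so the reformulation is a genuine equivalence, not just a one-way implication. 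What each approach buys: the paper's decomposition is concrete, stays close to the matrix form that is reused in Proposition \ref{prop:cb3arrowhead_connection_mass}, and exploits the fact that for $(1,1)$ sub-bandwidths the corners are scalars; your displacement argument is cleaner, avoids the nine-term bookkeeping, and immediately yields the more general fact that sub-bandwidths add, $(\ell_1,u_1)+(\ell_2,u_2)\mapsto(\ell_1+\ell_2,u_1+u_2)$, for any $n$ --- with your caveat about small $n$ (where the bound is still valid, merely unsharp) handled correctly.
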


\begin{proof}
This can be proven using the representation
\[
A_i = B_i + \alpha_iE_{1n} + \beta_iE_{n1}, \quad i=1,2,
\]
where $\vb e_j$ denotes the $j$th column of the $n\times n$ identity matrix, $E_{ij} := \vb e_i\vb e_j\tran$, $B_i$ is a $(\ell_i, u_i)$ banded matrix, and $\alpha_i$ and $\beta_i$ are the corner values. Working through the terms in the product in this representation leads to the result.
\end{proof}

\begin{proposition}\label{prop:cb3arrowhead_connection_mass}
The matrices $R_{(-1)}^{(0), \bm\theta}$ and $M^{(-1), \bm\theta}$ are $CB^3(1, 1; 1, 0)$ and $CB^3(2, 2; 1, 0)$ matrices, respectively.
\end{proposition}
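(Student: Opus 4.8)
The plan is to handle the two matrices in turn: read the structure of $R_{(-1)}^{(0),\bm\theta}$ straight off Proposition~\ref{prop:pwa_connmat}, and then push that structure through the congruence $M^{(-1),\bm\theta}=[R_{(-1)}^{(0),\bm\theta}]\tran M^{(0),\bm\theta}R_{(-1)}^{(0),\bm\theta}$, exploiting that $M^{(0),\bm\theta}$ is diagonal.

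For the connection matrix I would check the five defining properties of Definition~\ref{def:cb3ar} against \eqref{eq:connection_matrix_formula_piecewise_arc} with $(\ell,u;\lambda,\mu)=(1,1;1,0)$. The arrowhead in \eqref{eq:connection_matrix_formula_piecewise_arc} is block tridiagonal, giving block-bandwidths $(1,1)$, and every interior block $A_{ij}^{(-1),\bm\theta}$, $B_{ij}^{(-1),\bm\theta}$ is diagonal (its entries are the scalar coefficients of \eqref{eq:arc_connection_p1}--\eqref{eq:arc_connection_p2} taken elementwise over the elements), which is property~5. For the border I use \eqref{eq:p02h_p11h_mat_v}: both $\vb P_{02}\tran\vb H$ and $\vb P_{11}\tran\vb H$ are upper bidiagonal with a single extra entry in the $(n,1)$ corner, hence cyclic banded with sub-bandwidths $(0,1)$, and left-multiplication by the diagonal matrices $\|\vb p_0^{(0),\bm\theta}\|^{-2}$, $\|\vb q_1^{(0),\bm\theta}\|^{-2}$ leaves this pattern unchanged. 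Thus the top-left block is cyclic banded with $(0,1)\subseteq(\lambda+\mu,\lambda+\mu)=(1,1)$, the lone first-column block with $(\mu,\lambda)=(0,1)$, and the lone first-row block $A_{01}^{(-1),\bm\theta}$ is diagonal, hence within $(\lambda,\mu)=(1,0)$. This gives the first claim.

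For the mass matrix I would write $R_{(-1)}^{(0),\bm\theta}$ in the arrowhead form $\left[\begin{smallmatrix}A_0 & B\\ C & D\end{smallmatrix}\right]$ of \eqref{eq:cb3mat_}, where $B=[\,A_{01}^{(-1),\bm\theta}\mid\vb 0\mid\cdots\,]$ has a single nonzero diagonal block, $C=[\,C_1;\vb 0;\cdots\,]$ with $C_1=\|\vb q_1^{(0),\bm\theta}\|^{-2}\vb P_{11}\tran\vb H$, and $D$ is the block-tridiagonal bulk with diagonal blocks. Partitioning $M^{(0),\bm\theta}=\diag(M_0,M_D)$ conformally (both $M_0$ and $M_D$ diagonal) and expanding the congruence yields the four blocks of $M^{(-1),\bm\theta}$: $A_0\tran M_0 A_0+C\tran M_D C$ (top-left), $A_0\tran M_0 B+C\tran M_D D$ (first row), its transpose (first column), and $B\tran M_0 B+D\tran M_D D$ (bulk). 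Since $M_0$, $M_D$ and all blocks of $B$ and $D$ are diagonal, the bulk has diagonal blocks, and composing the block-tridiagonal $D$ with its transpose doubles the block-bandwidth to $(2,2)$; this settles $(\ell,u)=(2,2)$ and property~5 for $M^{(-1),\bm\theta}$.

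The delicate point — and the step I expect to be the main obstacle — is showing the border of $M^{(-1),\bm\theta}$ keeps the tight sub-bandwidth $(\lambda,\mu)=(1,0)$, rather than the $(1,1)$ a naive product count would predict. I would compute the first block-row explicitly: $A_0\tran M_0 B$ places only $A_0\tran M_0 A_{01}^{(-1),\bm\theta}$ in its first block, while $C\tran M_D D$ places only $C_1\tran M_{D,1}B_{01}^{(-1),\bm\theta}$ in its second block, because the $q_1^{(0)}$-row of $D$ meets only the $q_2^{(-1)}$-column. Both $A_0\tran$ and $C_1\tran$ are cyclic banded of the \emph{same} $(1,0)$ type (diagonal plus subdiagonal plus the $(1,n)$ corner), and right-multiplication by the diagonal factors $M_0A_{01}^{(-1),\bm\theta}$ and $M_{D,1}B_{01}^{(-1),\bm\theta}$ cannot manufacture the opposite $(n,1)$ corner, so both border blocks stay cyclic banded with $(\lambda,\mu)=(1,0)$; symmetry then gives $(\mu,\lambda)=(0,1)$ for the first column. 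Finally $A_0\tran M_0A_0+C_1\tran M_{D,1}C_1$ is a sum of products of a $(1,0)$-type with a $(0,1)$-type cyclic banded matrix; running the representation $A=B+\alpha E_{1n}+\beta E_{n1}$ from the proof of Lemma~\ref{lem:product_cyclic_matrices} shows that, because the factors are merely bidiagonal (not full $(1,1)$), the product is cyclic banded with $(1,1)$ and no second off-diagonal appears — exactly the required $(\lambda+\mu,\lambda+\mu)$. Collecting these facts gives $M^{(-1),\bm\theta}\in CB^3(2,2;1,0)$.
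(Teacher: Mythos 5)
Your proposal is correct and follows essentially the same route as the paper: both verify the $CB^3(1,1;1,0)$ structure of $R_{(-1)}^{(0),\bm\theta}$ directly from \eqref{eq:connection_matrix_formula_piecewise_arc}, and both obtain the $CB^3(2,2;1,0)$ structure of $M^{(-1),\bm\theta}$ by expanding the product block-by-block, noting that diagonal factors cannot alter the structure, and handling the top-left corner via the cyclic-banded product argument of Lemma~\ref{lem:product_cyclic_matrices}. If anything, you are more careful than the paper at the one loose point: the paper cites Lemma~\ref{lem:product_cyclic_matrices} (whose literal statement gives sub-bandwidths $(2,2)$) to conclude $(1,1)$ for the corner block, whereas you correctly rerun its proof with the tighter $(1,0)$/$(0,1)$ bidiagonal-type factors, and you also verify explicitly that the border blocks retain sub-bandwidths $(1,0)$ rather than the naive $(1,1)$.
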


\begin{proof}
That $R_{(-1)}^{(0), \bm\theta}$ is a $CB^3(1,1;1,0)$ matrix is obvious from the definition and the form in \eqref{eq:connection_matrix_formula_piecewise_arc}, so we focus on $M^{(-1), \bm\theta}$. It is clear that $M^{(-1), \bm\theta}$ is another $CB^3$-arrowhead matrix since we can show they are closed under multiplication and diagonal matrices are $CB^3(0,0;0,0)$ matrices, so we just need to determine the parameters defining the structure for $M^{(-1), \bm\theta}$. Since the diagonal matrix will not affect the structure, we can simply consider $M := [R_{(-1)}^{(0), \bm\theta}]\tran R_{(-1)}^{(0), \bm\theta}$ which will have the same structure as $M^{(-1), \bm\theta}$. Additionally, we can ignore the $\|\vb p_0^{(0), \bm\theta}\|^{-2}$ and $\|\vb q_1^{(0), \bm\theta}\|^{-2}$ factors in the first column for the same reason, leaving the matrix $\tilde{R}_{(-1)}^{(0), \bm\theta}$. We can compute, omitting the superscripts in the coefficient matrices $A_{ij}^{(-1),\bm\theta}$ and $B_{ij}^{(-1),\bm\theta}$ for convenience,
\begin{align*}
\left[\tilde{R}_{(-1)}^{(0), \bm\theta}\right]\tran \tilde{R}_{(-1)}^{(0), \bm\theta}  &= \begin{bmatrix}
\left(\vb H\tran\vb P_{02}\right)\left(\vb P_{02}\tran\vb H\right) + \left(\vb H\tran\vb P_{11}\right)\left(\vb P_{11}\tran\vb H\right) & \left(\vb H\tran\vb P_{02}\right)A_{01} & \left(\vb H\tran\vb P_{11}\right)B_{01} \\
A_{01}\left(\vb P_{02}\tran\vb H\right) & A_{01}^2 + A_{11}^2 & \vb 0 & A_{11}A_{12} \\
B_{01}\left(\vb P_{02}\tran\vb H\right) &\vb 0& B_{01}^2+B_{11}^2 &\vb 0 & B_{11}B_{12} \\
& A_{11}A_{12} &\vb 0& A_{12}^2 + A_{22}^2 &\vb 0& \ddots \\
&&B_{12}B_{12}&\vb 0&B_{12}^2 + B_{22}^2 & \ddots \\
&&&&&\ddots
\end{bmatrix}
\end{align*}
From Lemma \ref{lem:product_cyclic_matrices} and \eqref{eq:p02h_p11h_mat_v}, $(\vb H\tran\vb P_{02})(\vb P_{02}\tran\vb H)$ is a cyclic banded matrix with sub-bandwidths $(1, 1)$, and similarly for $(\vb H\tran\vb P_{1})(\vb P_{1}\tran\vb H)$. Thus, this first block is a cyclic banded matrix with sub-bandwidths $(1, 1)$. The structure of the other blocks is clear since they are only being multiplied by diagonal matrices, and so we have our result.
\end{proof}

\subsubsection{Factorising $CB^3$-arrowhead matrices}\label{sec:factor_cbbb}

Similar to \cite{knook2024quasi}, it will be useful to know how to compute the reverse Cholesky factorisation of a $CB^3$-arrowhead matrix. In particular, given a $CB^3$-arrowhead matrix $A$, we are interested in computing a lower triangular matrix $L$ such that $A = L\tran L$. The reverse Cholesky factorisation is preferable to the standard Cholesky factorisation because it leads to less fill-in and instead has a similar structure to a $CB^3$-arrowhead matrix, as the following results show.

\begin{lemma}\label{lem:revch2olcbm}
Let $A \in \mathbb R^{n \times n}$ be a symmetric positive definite matrix with sub-bandwidths $(\ell, \ell)$. Then $A = L\tran L$ where 
\begin{equation}
A = \begin{bmatrix} a_{11} & \vb a\tran \\ \vb a & A_0 \end{bmatrix} \quad \textnormal{and} \quad L = \begin{bmatrix} \sqrt{a_{11} - \vb v\tran\vb v} \\ \vb v & L_0 \end{bmatrix},
\end{equation}
where $\vb v = L_0^{-\mkern-1.5mu\mathsf T}\vb a$, $A_0 = L_0\tran L_0$, and $L_0$ is a $(\ell, 0)$ banded matrix. In particular, $L$ is an $(\ell, 0)$ banded matrix except for the first column. This matrix $L$ can be computed in $\mathcal O(n\ell)$ complexity.
\end{lemma}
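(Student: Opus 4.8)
The plan is to obtain $A = L\tran L$ directly from a $2\times 2$ block computation, to secure well-posedness of the recursion through positive-definiteness, and then to read off the banded structure and the cost from the resulting sparsity pattern, working by induction on the dimension $n$.

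First I would abbreviate the $(1,1)$ entry of $L$ as $\ell_{11}$, write $L = \begin{bmatrix} \ell_{11} & \vb 0\tran \\ \vb v & L_0 \end{bmatrix}$, and multiply out
\[
L\tran L = \begin{bmatrix} \ell_{11}^2 + \vb v\tran\vb v & \vb v\tran L_0 \\ L_0\tran\vb v & L_0\tran L_0 \end{bmatrix}.
\]
Matching this against the block form of $A$ produces the three relations $\ell_{11}^2 = a_{11} - \vb v\tran\vb v$, $\vb a = L_0\tran\vb v$, and $A_0 = L_0\tran L_0$; the middle relation rearranges to $\vb v = L_0^{-\mathsf{T}}\vb a$ once $L_0$ is invertible. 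These are exactly the formulas in the statement, so the task reduces to constructing a suitable $L_0$ and checking that the corner is a real square root.

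For the recursion I would note that the trailing principal block $A_0$ is again symmetric positive definite (a principal submatrix of an SPD matrix) and retains the sub-bandwidths $(\ell,\ell)$, so the inductive hypothesis supplies a reverse Cholesky factor $L_0$ that is $(\ell,0)$ banded, and in particular invertible so that $\vb v = L_0^{-\mathsf{T}}\vb a$ is well-defined. Substituting $\vb v\tran\vb v = \vb a\tran L_0^{-1}L_0^{-\mathsf{T}}\vb a = \vb a\tran A_0^{-1}\vb a$ rewrites the corner as $\ell_{11}^2 = a_{11} - \vb a\tran A_0^{-1}\vb a$, which is precisely the Schur complement of $A_0$ in $A$ and is therefore strictly positive by positive-definiteness of $A$. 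Hence $\sqrt{a_{11} - \vb v\tran\vb v}$ is a genuine real number and the factorisation exists.

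It then remains to locate the nonzeros of $L$. Since the inductive hypothesis keeps every column of $L_0$ within its $(\ell,0)$ band, the only column that can leave the $(\ell,0)$ profile of $L$ is the first, namely $\vb v$. As $L_0\tran$ is upper triangular with upper bandwidth $\ell$, solving $L_0\tran\vb v = \vb a$ by back-substitution shows the support of $\vb v$ is dictated by that of $\vb a$: a nonzero appearing in the lowest entries of $\vb a$ propagates upward through the band and fills the column, whereas a strictly banded $\vb a$ leaves $\vb v$ inside the band. In either case $L$ is $(\ell,0)$ banded apart from its first column. Finally, each level of the recursion costs one banded triangular solve for $\vb v$ at $\OO(n\ell)$, plus $\OO(n)$ for the inner product and square root, giving $\OO(n\ell)$ overall. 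The step I expect to be most delicate is this last bookkeeping: verifying that back-substitution confines the deviation to the single first column and never widens the $(\ell,0)$ band of $L_0$ at the deeper levels of the recursion.
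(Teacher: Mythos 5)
Your overall route is the paper's: the same $2\times 2$ block identity, the same three matching relations, and the same reading-off of the sparsity pattern; your Schur-complement argument that $a_{11}-\vb v\tran\vb v = a_{11}-\vb a\tran A_0^{-1}\vb a>0$ is a genuine improvement, since the paper never justifies that the corner square root exists. However, there are two concrete gaps in how you execute it.

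First, your induction does not close as stated. You apply the inductive hypothesis (the lemma itself) to $A_0$ and claim it supplies an $L_0$ that is $(\ell,0)$ banded; but the lemma's conclusion is only that the factor is $(\ell,0)$ banded \emph{except for its first column}. If $L_0$ were allowed a full first column, then $L$ would have fill-in in its first \emph{two} columns and the lemma would fail for $A$. To close the loop you must show that the exception is vacuous one level down: the first column of $A_0$ (the second column of $A$) carries no cyclic corner entries, so it is supported inside the band, and then your own back-substitution argument shows the corresponding solve stays inside the band. That means the induction has to be run on a refined, two-case statement (band-supported first column $\Rightarrow$ purely banded factor; cyclic first column $\Rightarrow$ fill confined to the first column), with the second case occurring only at the top level. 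You flag exactly this as "the most delicate step" but do not actually carry it out. (Note also that the premise "$A_0$ has no corner entries" is only true when the corners of $A$ sit in its first row and column, which for the paper's definition of sub-bandwidths holds when $\ell=1$ --- the only case the paper ever uses; for $\ell\geq 2$ the trailing block retains corners such as $a_{n,2}$, and then the fill occupies the first $\ell$ columns. The paper's own proof glosses over the same point when it asserts "$A_0$ is a $(\ell,\ell)$ banded matrix".) The paper sidesteps the recursion entirely by treating "banded SPD matrix has a banded reverse Cholesky factor" as a standard fact and peeling off only one row and column.

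Second, the complexity accounting is wrong as written: a recursion with $n$ levels "each costing $\mathcal O(n\ell)$" totals $\mathcal O(n^2\ell)$, not $\mathcal O(n\ell)$. The repair is again the observation you already possess: only the top-level solve has a (potentially) full right-hand side; at every deeper level $\vb a$ is supported in $\mathcal O(\ell)$ entries, so the triangular solve is local and costs $\mathcal O(\ell^2)$, giving a linear-in-$n$ total. Alternatively, follow the paper: compute $L_0$ once by a standard banded reverse Cholesky factorisation and perform a single $\mathcal O(n\ell)$ banded triangular solve for $\vb v$.
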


\begin{proof}
The form of $L$ can be verified directly by multiplying out $L\tran L$. Note that $A_0$ is a $(\ell, \ell)$ banded matrix which gives the banded structure in $L_0$. To obtain the complexity in computing $L$, first see that $L_0$ can be computed from $A_0$ in $\mathcal O(n\ell)$ operations since $A_0$ is banded. Computing $\vb v = L_0^{-\mkern-1.5mu\mathsf T}\vb a$ then takes $\mathcal O(n\ell)$ operations also \cite{Golub2013}, and thus $L$ can be computed in $\mathcal O(n\ell)$ complexity.
\end{proof}

\begin{theorem}\label{thm:revchol_cbbb}
If $A \in \mathbb R^{(m + pm) \times (m + pm)}$ is a symmetric positive definite $CB^3(\ell, \ell; 1, 0)$ or $CB^3(\ell, \ell; 0, 1)$ matrix, then the reverse Cholesky factor $L$ of $A$ can be decomposed as
\begin{equation}
L = L_1 + L_2,
\end{equation}
with $L_1$ a $CB^3(\ell, 0; 1, 1)$ matrix and 
\begin{equation}
L_2 = \begin{bmatrix} \widetilde{L}_2 & \vb 0 \\ \vb 0 & \vb 0 \end{bmatrix},
\end{equation}
where $\widetilde{L}_2 \in \mathbb R^{m \times m}$ is a matrix whose only non-zero entries are in the first column.
\end{theorem}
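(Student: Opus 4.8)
The plan is to carry out a block reverse Cholesky factorisation adapted to the arrowhead partition, generalising the scalar recursion of Lemma \ref{lem:revch2olcbm} to a block $2\times 2$ splitting. Writing $A = \begin{bmatrix} A_0 & B \\ B\tran & D \end{bmatrix}$ with $A_0 \in \mathbb R^{m \times m}$ the cyclic banded top-left block (of sub-bandwidths $(\lambda+\mu, \lambda+\mu) = (1,1)$) and $D \in \mathbb R^{pm \times pm}$ the block-banded, diagonal-blocked trailing part, I would seek $L = \begin{bmatrix} L_0 & \vb 0 \\ V & L_D \end{bmatrix}$. Matching blocks in $L\tran L = A$ gives $L_D\tran L_D = D$, $V = L_D^{-\mkern-1.5mu\mathsf T}B\tran$, and $L_0\tran L_0 = A_0 - V\tran V$. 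Here $D$ is a principal submatrix of $A$ and so positive definite, while $A_0 - V\tran V = A_0 - BD^{-1}B\tran$ is the Schur complement $A/D$, again positive definite, so each of the three factorisations is well defined.

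First I would pin down $L_D$. By the interlacing of Definition \ref{def:cb3ar}, $D = \bigoplus_{i=1}^m D_i$ with each $D_i$ an $(\ell, \ell)$ banded $p \times p$ matrix, and $D$ has no coupling across the within-block index $i$ since its blocks $D_{kj}$ are diagonal. Reverse Cholesky therefore introduces no such coupling, giving $L_D = \bigoplus_{i=1}^m L_{D,i}$ with $L_{D,i}$ the reverse Cholesky factor of $D_i$; Lemma \ref{lem:revch2olcbm} shows each $L_{D,i}$ is $(\ell, 0)$ banded with no first-column fill-in, the cyclic corner coupling being absent for a genuine band. Thus $L_D$ is block $(\ell, 0)$ banded with diagonal blocks, exactly the trailing pattern of a $CB^3(\ell, 0; 1, 1)$ matrix, and $L_D^{-\mkern-1.5mu\mathsf T}$ is block upper-triangular with diagonal blocks.

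Next I would analyse the first block column and the top-left block. Writing $V$ in row-blocks $V_1, \ldots, V_p$ and using that $B\tran$ has nonzero row-blocks only in positions $1, \ldots, \ell$ (namely $B_1\tran, \ldots, B_\ell\tran$), block upper-triangularity of $L_D^{-\mkern-1.5mu\mathsf T}$ gives $V_k = \sum_{j=k}^{\ell}(L_D^{-\mkern-1.5mu\mathsf T})_{kj}B_j\tran$, which vanishes for $k > \ell$ and, for $k \leq \ell$, is a sum of products of a diagonal matrix with a cyclic banded $(\mu, \lambda)$ block, hence cyclic banded $(\mu, \lambda) \subseteq (1,1)$. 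No fill-in is introduced in $V$, so its blocks furnish $C_1, \ldots, C_\ell$ of $L_1$. Moreover $V\tran V = \sum_{k=1}^{\ell} V_k\tran V_k$ is cyclic banded $(1,1)$ by the bandwidth-addition property for products of cyclic banded matrices (cf.\ Lemma \ref{lem:product_cyclic_matrices}), so $A_0 - V\tran V$ is cyclic banded $(1,1)$. Applying Lemma \ref{lem:revch2olcbm} to this Schur complement, its reverse Cholesky factor $L_0$ is $(1,0)$ banded away from its first column, the cyclic corner propagating up through the backward substitution to fill the first column; I would put the part of $L_0$ lying within the cyclic banded $(2,2)$ pattern into the top-left block of $L_1$ and collect the remaining first-column entries into $\widetilde{L}_2$.

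Assembling, $L_1$ --- built from $L_D$, the blocks $V_k$, and the structured part of $L_0$ --- satisfies every clause of Definition \ref{def:cb3ar} for a $CB^3(\ell, 0; 1, 1)$ matrix, while $L_2 = \begin{bmatrix} \widetilde{L}_2 & \vb 0 \\ \vb 0 & \vb 0 \end{bmatrix}$ carries the sole fill-in, confined to the first column of the top-left block. \textbf{The hard part} will be the bookkeeping in the third step: confirming that $L_D^{-\mkern-1.5mu\mathsf T}B\tran$ preserves the cyclic bandwidths of the $C_k$ exactly so that $V$ is genuinely fill-in-free, and that the fill-in generated in factoring $A_0 - V\tran V$ remains in the first column rather than leaking into the band or corner entries retained by $L_1$. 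Checking that the reverse ordering commutes with the interlacing of $D$, and handling the two cases $(\lambda, \mu) = (1, 0)$ and $(0, 1)$ uniformly, are the remaining points requiring care.
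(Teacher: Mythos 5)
Your proposal is correct and follows essentially the same route as the paper: the same block $2\times 2$ splitting, factoring $D$ first as the interlaced direct sum of banded reverse Cholesky factors, solving for the first block column (your $V$ is the paper's $L_B$, which it computes by the equivalent block back-substitution $M_k = \bigl(B_k - \sum_{j=k+1}^{\ell} M_j\widetilde{L}_{jk}\bigr)\widetilde{L}_{kk}^{-1}$), and applying Lemma \ref{lem:revch2olcbm} to the Schur complement $A_0 - V\tran V$. The ``hard parts'' you flag at the end are in fact already settled by your own observations that the blocks of $L_D^{-\mkern-1.5mu\mathsf T}$ are diagonal (so $V$ inherits the cyclic banded structure of $B\tran$ without fill-in) and that Lemma \ref{lem:revch2olcbm} confines the remaining fill-in to the first column.
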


\begin{proof}
The proof is similar to the proof of Theorem 4.2 of \cite{knook2024quasi}. We only consider the $CB^3(\ell, \ell; 1, 0)$ case since the second case then follows after transposing. Write
\begin{equation}\label{eq:thm_proof_decom_a_cb3a}
A = \begin{bmatrix}
A_0 & B \\ 
B\tran & D \end{bmatrix}
\end{equation}
where, as in Definition \ref{def:cb3ar}, $D = D_1 \oplus \cdots \oplus D_m$. The reverse Cholesky factor of $D$ is given by $\widetilde{L} = \widetilde{L}_1 \oplus \cdots \oplus \widetilde{L}_m$, where $D_i = \widetilde{L}_i\tran\widetilde{L}_i$ for $i=1,\ldots,m$ \cite{knook2024quasi}. Now, write
\begin{equation}
\begin{bmatrix}
A_0 & B \\ B\tran & D \end{bmatrix} = \begin{bmatrix} L_0\tran&L_B\tran \\ & \widetilde{L}\tran\end{bmatrix}\begin{bmatrix}L_0 \\ L_B & \widetilde{L} \end{bmatrix} = \begin{bmatrix} L_0\tran L_0 + L_B\tran L_B & L_B\tran\widetilde{L} \\ \widetilde{L}\tran L_B & \widetilde{L}\tran\widetilde{L} \end{bmatrix}.
\end{equation}
We see that $B = L_B\tran\widetilde{L}$. Letting $L_B\tran = (M_1, M_2, \ldots, M_\ell, \vb 0, \ldots, \vb 0)$ and writing
\begin{equation}
\widetilde{L} = \begin{bmatrix}
\widetilde{L}_{11} \\ \vdots & \ddots \\
\widetilde{L}_{p 1} & \cdots & \widetilde{L}_{pp} 
\end{bmatrix},
\end{equation}
multiplying out $L_B\tran\widetilde{L}$ leads to
\begin{equation}
M_\ell = B_\ell\widetilde{L}_{\ell\ell}^{-1} \quad \textnormal{and} \quad M_k = \left(B_k - \sum_{j=k+1}^\ell M_j\widetilde{L}_{jk}\right)\widetilde{L}_{kk}^{-1}, \quad k=1,\ldots,\ell-1.
\end{equation}
Since the $\widetilde{L}_{jk}$ are all diagonal, the $M_k$ blocks all have the same cyclic banded structure as $B_k$, $k=1,\ldots,\ell$. Next, we see that
\begin{equation}
L_0\tran L_0 = A_0 - L_B\tran L_B = A_0 - \sum_{j=1}^\ell M_jM_j\tran,
\end{equation}
thus $L_0$ is the reverse Cholesky factor of $A_0 - \sum_{j=1}^\ell M_jM_j\tran$. Since $M_j$ is a cyclic banded matrix with sub-bandwidths $(1, 0)$, $M_jM_j\tran$ is a cyclic banded matrix with sub-bandwidths $(1, 1)$, and thus has the same structure as $A_0$.  Therefore, from Lemma \ref{lem:revch2olcbm}, $L_0$ is a $(1, 0)$ banded matrix except for its first column. These results lead to the decomposition given in the theorem.
\end{proof}

We could give a more general result for matrices with larger sub-bandwidths, but the cases covered by Theorem \ref{thm:revchol_cbbb} are sufficient for the differential equations covered in this work. The only difference in the proof would be in the structure of $L_0$. The structure of the reverse Cholesky factor in Theorem \ref{thm:revchol_cbbb} leads to the following result for solving symmetric positive definite linear systems with these matrices in optimal complexity.

\begin{corollary}
Let $A \in \mathbb R^{(m+pm) \times (m+pm)}$ be a symmetric positive definite $CB^3(\ell,\ell;1,0)$ or $CB^3(\ell,\ell;0,1)$ matrix and let $A\vb x = \vb b$, where $\vb x, \vb b \in \mathbb R^{m+pm}$. The solution $\vb x$ can be computed in optimal complexity, namely in $\mathcal O(N)$ operations where $N = m + pm$.
\end{corollary}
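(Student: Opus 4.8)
The plan is to reduce the linear solve to the reverse Cholesky factorisation already supplied by Theorem \ref{thm:revchol_cbbb}, followed by two triangular substitutions. Since $A$ is symmetric positive definite and of one of the two admissible types, Theorem \ref{thm:revchol_cbbb} furnishes a lower triangular factor $L = L_1 + L_2$ with $A = L\tran L$, where $L_1$ is a $CB^3(\ell, 0; 1, 1)$ matrix and $L_2$ is supported only in the first column of its top-left $m \times m$ block. I would then solve $A\vb x = \vb b$ in the usual two stages: first back-substitute $L\tran\vb y = \vb b$ (with $L\tran$ upper triangular), then forward-substitute $L\vb x = \vb y$ (with $L$ lower triangular). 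The whole claim therefore rests on showing (i) that $L$ can be formed in $\mathcal O(N)$ work, and (ii) that each triangular solve with $L$ or $L\tran$ also costs $\mathcal O(N)$.

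For (i), I would track the three ingredients appearing in the proof of Theorem \ref{thm:revchol_cbbb}, treating the structural bandwidths $\ell,\lambda,\mu$ as fixed constants. The block-diagonal tail $D = D_1 \oplus \cdots \oplus D_m$ requires $m$ independent reverse Cholesky factorisations of $(\ell,\ell)$-banded matrices of size $p \times p$; each is $\mathcal O(p\ell^2)$, so together they cost $\mathcal O(mp\ell^2) = \mathcal O(N)$. The off-diagonal blocks $M_k = (B_k - \sum_{j>k} M_j\widetilde L_{jk})\widetilde L_{kk}^{-1}$ are each assembled from cyclic banded times diagonal products in $\mathcal O(m)$ work, giving $\mathcal O(\ell^2 m) = \mathcal O(N)$ in total. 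Finally, $L_0$ is the reverse Cholesky factor of the $m \times m$ cyclic banded Schur complement $A_0 - \sum_j M_j M_j\tran$, which Lemma \ref{lem:revch2olcbm} produces in $\mathcal O(m\ell)$ operations, the single dense first column arising from the cyclic corner entries and being computed by a banded triangular solve. Summing these gives $\mathcal O(N)$.

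For (ii), the key observation is that substitution with an already-factored sparse triangular matrix incurs no fill, so its cost is exactly proportional to the number of nonzeros of that matrix. I would therefore count $\mathrm{nnz}(L)$: the diagonal blocks contribute $\mathcal O(N\ell)$ entries, the first block column of $L_1$ contributes $\mathcal O(\ell m)$, the top-left cyclic banded block contributes $\mathcal O(m)$, and the $L_2$ correction adds one dense column of $\mathcal O(m)$ entries, whence $\mathrm{nnz}(L) = \mathcal O(N)$ and each of the two solves is $\mathcal O(N)$. The main obstacle, and really the only place one must be careful, is the arrowhead coupling through this dense first column: one has to verify that processing the banded tail before the top-left block (and the reverse order for $L\tran$) keeps the work associated with the first block at $\mathcal O(m)$ rather than inadvertently triggering $\mathcal O(m^2)$ cost or fill, which is exactly what the explicit structure of $L$ in Theorem \ref{thm:revchol_cbbb} guarantees. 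Combining the $\mathcal O(N)$ factorisation with the two $\mathcal O(N)$ solves then yields the stated optimal complexity.
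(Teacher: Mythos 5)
Your proposal is correct and follows essentially the same route as the paper: both compute the reverse Cholesky factor via the four-step algorithm extracted from Theorem \ref{thm:revchol_cbbb} (factor the interlaced diagonal blocks, form the $M_k$, form the Schur complement, apply Lemma \ref{lem:revch2olcbm}), bounding each step by $\mathcal O(mp)$ or $\mathcal O(m)$, and then argue the two triangular solves cost $\mathcal O(N)$. The only cosmetic difference is that you justify the solve cost by directly counting the nonzeros of $L$ (including the extra dense first column), whereas the paper defers to the analogous argument in Corollary 4.3 of \cite{knook2024quasi} with the same observation about that column adding only $\mathcal O(m)$ work.
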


\begin{proof}
The proof of Theorem \ref{thm:revchol_cbbb} gave the following algorithm for computing the reverse Cholesky factor $L$. Using the same decomposition \eqref{eq:thm_proof_decom_a_cb3a}, the steps are:
\begin{enumerate}
\item Compute $\widetilde{L}_i\tran$ such that $D_i = \widetilde{L}_i\tran\widetilde{L}_i$ for $i=1,\ldots,m$.
\item Compute $M_k = \left(B_k - \sum_{j=k+1}^\ell M_j\widetilde{L}_{jk}\right)\widetilde{L}_{kk}^{-1}$ for $k=\ell,\ell-1,\ldots,1$.
\item Compute $\widetilde{A}_0 = A_0 - \sum_{j=1}^\ell M_jM_j\tran$.
\item Compute the reverse Cholesky factor $L_0$ of $\widetilde{A}_0$.
\end{enumerate}
The reverse Cholesky factors $\widetilde{L}_i$ can be computed in $\mathcal O(p)$ operations, thus the first step takes $\mathcal O(mp)$ operations. For the second step, each $\widetilde{L}_{jk}$ block is diagonal, and so computing $(B_k - \sum_{j=k+1}^\ell M_j\widetilde{L}_{jk})\widetilde{L}_{kk}^{-1}$ for each $k=1,\ldots,\ell$ takes $\mathcal O(m)$ operations in total. The third step similarly takes $\mathcal O(m)$ operations, as does the final step due to Lemma \ref{lem:revch2olcbm}. Thus, the factor overall takes $\mathcal O(m) + \mathcal O(mp) = \mathcal O(N)$ operations to compute. The remaining complexity comes from applying the inverse of $L$ to vectors for computing $\vb x$. Using a similar argument to the proof of Corollary 4.3 in \cite{knook2024quasi} shows that this complexity is also $\mathcal O(N)$, noting that the additional non-zero column in $L_0$ only adds $\mathcal O(m)$ additional operations.
\end{proof}

We apply these results to the examples discussed in Section \ref{sec:solve_de_}, where any symmetric positive definite system with a $CB^3$-arrowhead matrix will be solved in optimal complexity using the reverse Cholesky factors.

\subsection{Differentiation matrix}

In this section we consider the differentiation matrix and the weak Laplacian. We start by defining these two matrices and then deriving their forms.

\begin{definition}
The differentiation matrix $D_{(-1)}^{(0), \bm\theta}$ is defined by 
\begin{equation}
\odv*{\vb P^{(-1),\bm\theta}}{\theta} = \vb P^{(0), \bm\theta}D_{(-1)}^{(0), \bm\theta}.
\end{equation}
The weak Laplacian $-\Delta^{(-1), \bm\theta}$ is defined by 
\begin{equation}
-\Delta^{(-1), \bm\theta} := \left(\odv*{\vb P^{(-1), \bm\theta}}{\theta}\right)\tran\odv*{\vb P^{(-1), \bm\theta}}{\theta}.
\end{equation}
\end{definition}

\begin{proposition}
The differentiation matrix $D_{(-1)}^{(0), \bm\theta}$ is a $CB^3(2,2;1,0)$ matrix of the form
\begin{equation}\label{eq:differentiation_matrix_formula_piecewise_arc}
D_{(-1)}^{(0), \bm\theta} = \begin{bmatrix} \|\vb p_0^{(0), \bm\theta}\|^{-2}\vb P_{02}\tran\vb H^\prime &                                                                      \vb 0 &  \vb 0                                                                        &                                                                         &                                                                      &                                                                         &        \\\vb 0
                            & D_{1,2,\vb p}^{(0), \bm\theta, \vb q} &                                                                        \vb 0 & D_{1,3,\vb p}^{(0), \bm\theta, \vb q} &                                                                      &                                                                         &        \\
\|\vb p_1^{(0), \bm\theta}\|^{-2}\vb P_{12}\tran\vb H^\prime &                                                                        \vb 0 & D_{2,2,\vb q}^{(0), \bm\theta, \vb p} &                                                                        \vb 0 &                \vb 0                                                      &                                                                         &        \\
                           &   \vb 0                                                                      &                 \vb 0                                                        & D_{2,3,\vb p}^{(0), \bm\theta, \vb q} &                                                                     \vb 0 & D_{2,4,\vb p}^{(0), \bm\theta, \vb q} &        \\
                            &                                                                         & D_{3,2,\vb q}^{(0), \bm\theta, \vb p} &                                                                        \vb 0 & D_{3,3,\vb q}^{(0),\bm\theta,\vb p} &                                                                        \vb 0 & \ddots \\
                            &                                                                         &                                                                         &                                                                        \vb 0 &                                                                     \vb 0 & D_{3,4,\vb p}^{(0), \bm\theta, \vb q} & \ddots \\
                            &                                                                         &                                                                         &                                                                         & D_{4,3,\vb q}^{(0),\bm\theta,\vb p} &                                                                        \vb 0 & \ddots \\
                            &                                                                         &                                                                         &                                                                         &                                                                      &                                                                         \vb 0 & \ddots \\
                            &                                                                         &                                                                         &                                                                         &                                                                      &                                                                         & \ddots \end{bmatrix}
\end{equation}
where the coefficients $D_{i, j, \vb p}^{(0), \bm\theta, \vb q}$ and $D_{i,j,\vb q}^{(0), \bm\theta, \vb p}$ are defined from \eqref{eq:diff_mat_arcp_relation}--\eqref{eq:diff_mat_arcq_relation} so that
\begin{equation}
D_{m,m+2, \vb p}^{(0), \bm\theta, \vb q} := \diag\left(d_{m,m+2, \vb p}^{(0, h_1), \vb q}, \ldots, d_{m,m+2, \vb p}^{(0, h_n), \vb q}\right) \in \mathbb R^{n \times n}
\end{equation}
and similarly for $D_{m,m+1,\vb p}^{(0), \bm\theta, \vb q}$, $D_{m+1,m+1,\vb q}^{(0), \bm\theta, \vb p}$, and $D_{m+1,m,\vb q}^{(0), \bm\theta, \vb p}$. The matrices $\vb P_{02}\tran\vb H^\prime$ and $\vb P_{12}\tran\vb H^\prime$ are given by 
\begin{equation}
\vb P_{02}\tran\vb H^\prime = \begin{bmatrix}
-1 & 1 \\
& -1 & 1 \\
& & -1 & \ddots \\
&&&\ddots& 1 \\
&&&\ddots&-1&1\\
1&&&&&-1
\end{bmatrix}, \quad \vb P_{12}\tran\vb H^\prime = \begin{bmatrix} 
\zeta_1 & -\zeta_1 \\
& \zeta_2 & -\zeta_2 \\
& & \zeta_3 & \ddots \\
&&&\ddots & -\zeta_{n-2} \\
&&&\ddots & \zeta_{n-1} & -\zeta_{n-1} \\
-\zeta_n &&&&&\zeta_n 
\end{bmatrix},
\end{equation}
where 
\[
\zeta_i := h_i\csc(\ell_i) \beta^{\tau_i, \left(-1/2,0,-1/2\right)}\left[\sin(\varphi_i)\left(2\alpha^{\tau_i, \left(-1/2,0,-1/2\right)} - 1\right) - \frac{\sin(\varphi_i) - \varphi_i}{1 - h_i}\right].
\]
Moreover, the weak Laplacian is a $CB^3(2,2; 1, 0)$ matrix given by $-\Delta^{(-1), \bm\theta} = [D_{(-1)}^{(0), \bm\theta}]\tran M^{(0), \bm\theta}D_{(-1)}^{(0), \bm\theta}$.
\end{proposition}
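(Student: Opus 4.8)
The plan is to establish the claim in two parts. First I would verify the $CB^3(2,2;1,0)$ structure of the differentiation matrix $D_{(-1)}^{(0),\bm\theta}$ by identifying each of its blocks with quantities whose structure is already known, and then I would deduce the structure of the weak Laplacian from the product formula $-\Delta^{(-1),\bm\theta} = [D_{(-1)}^{(0),\bm\theta}]\tran M^{(0),\bm\theta}D_{(-1)}^{(0),\bm\theta}$. The starting point is the factorisation $\vb P^{(-1),\bm\theta} = \vb P^{(0),\bm\theta}R_{(-1)}^{(0),\bm\theta}$, which together with the per-element differentiation matrices $D_{(b),\vb p}^{(b+1),\vb q}$ and $D_{(b),\vb q}^{(b+1),\vb p}$ from Proposition \ref{prop:diffmatpqqp} should yield $D_{(-1)}^{(0),\bm\theta}$ directly.

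For the first row and column I would differentiate the hat functions from Lemma \ref{lem:hat_functions} term by term: since each $\phi_i$ is a linear trigonometric polynomial on each of $E_{i-1}$ and $E_i$, its derivative lies in the span of the local $b=0$ arc polynomials, and the coordinates of $\odv*{\vb H}{\theta}$ against the $\vb P_{02}$ and $\vb P_{12}$ blocks give precisely the matrices $\vb P_{02}\tran\vb H'$ and $\vb P_{12}\tran\vb H'$ after dividing out by the mass normalisations $\|\vb p_0^{(0),\bm\theta}\|^{-2}$ and $\|\vb p_1^{(0),\bm\theta}\|^{-2}$; the explicit entries $\zeta_i$ and the $\pm 1$ pattern then follow from a direct computation analogous to that for $\vb P_{02}\tran\vb H$ in \eqref{eq:p02h_p11h_mat_v}. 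For the remaining interior blocks, the piecewise bubble functions differentiate element-wise according to \eqref{eq:diff_mat_arcp_relation}--\eqref{eq:diff_mat_arcq_relation}, so that each block is the diagonal assembly over elements of the corresponding per-arc coefficient $d_{i,j,\vb p}^{(0,h_k),\vb q}$ or $d_{i,j,\vb q}^{(0,h_k),\vb p}$. Interlacing these diagonal blocks reproduces \eqref{eq:differentiation_matrix_formula_piecewise_arc}, and reading off the block-bandwidths confirms the $CB^3(2,2;1,0)$ form: the cyclic corners appear only in the first row and column, inherited from the periodic wraparound of the hat functions, while the interior is banded-block-banded with the bandwidths dictated by the $(-1,2)$ and lower-bidiagonal structure of the per-element differentiation matrices.

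For the weak Laplacian, I would substitute $\odv*{\vb P^{(-1),\bm\theta}}{\theta} = \vb P^{(0),\bm\theta}D_{(-1)}^{(0),\bm\theta}$ into the definition of $-\Delta^{(-1),\bm\theta}$ and use $[\vb P^{(0),\bm\theta}]\tran\vb P^{(0),\bm\theta} = M^{(0),\bm\theta}$ to obtain the triple product $[D_{(-1)}^{(0),\bm\theta}]\tran M^{(0),\bm\theta}D_{(-1)}^{(0),\bm\theta}$. The structural claim then follows from the closure of $CB^3$-arrowhead matrices under multiplication (the property already invoked for the mass matrix in Proposition \ref{prop:cb3arrowhead_connection_mass}), noting that $M^{(0),\bm\theta}$ is diagonal and so does not enlarge the bandwidths; the resulting product again has cyclic structure confined to the first block row and column.

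The main obstacle will be the first row and column rather than the interior. The interior blocks are routine diagonal assemblies of already-derived per-element coefficients, but the hat-function derivatives require care: one must confirm that $\odv*{\phi_i}{\theta}$ expands only against the degree-$0$ and degree-$1$ local arc polynomials (so that the first column of $D_{(-1)}^{(0),\bm\theta}$ is supported on the $\vb P_{02}$ and $\vb P_{12}$ blocks and vanishes on $\vb P_{11}$), and that the cyclic wraparound entries (the corner $1$ and $-\zeta_n$) arise correctly from the element identifications $E_0 = E_n$, $\theta_0 = \theta_{n+1}$. Verifying the closed form of $\zeta_i$ — and in particular that the contribution of $\phi_i$ on element $E_i$ against $p_1^{(0,h_i)}$ reduces to the stated expression involving $\alpha^{\tau_i,(-1/2,0,-1/2)}$ and $\beta^{\tau_i,(-1/2,0,-1/2)}$ — is the one genuinely computational step and the most error-prone part of the argument.
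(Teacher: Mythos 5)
Your proposal is correct and takes essentially the same approach as the paper, which computes $D_{(-1)}^{(0),\bm\theta}$ via the coordinate identity $D_{(-1)}^{(0),\bm\theta} = [M^{(0),\bm\theta}]^{-1}[\vb P^{(0),\bm\theta}]\tran\bigl(\mathrm{d}\vb P^{(-1),\bm\theta}/\mathrm{d}\theta\bigr)$: the hat-function derivatives are expanded explicitly against the local degree-$\leq 1$ arc polynomials (verifying $\vb P_{11}\tran\vb H^\prime = \vb 0$ and deriving $\zeta_i$ by direct integration), the interior blocks are assembled element-wise as diagonal matrices from \eqref{eq:diff_mat_arcp_relation}--\eqref{eq:diff_mat_arcq_relation}, and the weak Laplacian's $CB^3(2,2;1,0)$ structure follows from the triple product and closure of $CB^3$-arrowhead matrices under multiplication as in Proposition \ref{prop:cb3arrowhead_connection_mass}. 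Your opening appeal to the factorisation $\vb P^{(-1),\bm\theta} = \vb P^{(0),\bm\theta}R_{(-1)}^{(0),\bm\theta}$ is a harmless red herring (differentiating that identity would require the $b=1$ basis, which the paper never defines), since your actual computation never relies on it.
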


\begin{proof}
The final statement about the weak Laplacian follows from the properties of $D_{(-1)}^{(0), \bm\theta}$ and is proven using a similar argument to that in Proposition \ref{prop:cb3arrowhead_connection_mass}, and so we focus only on $D_{(-1)}^{(0), \bm\theta}$. Similar to the proof of Proposition \ref{prop:pwa_connmat}, the computation of $D_{(-1)}^{(0), \bm\theta}$ relies on the identity 
\begin{equation}\label{eq:piecewise_arc_differentiation_matrix_inp_form}
D_{(-1)}^{(0), \bm\theta} = \left[M^{(0),\bm\theta}\right]^{-1}\left[\vb P^{(0), \bm\theta}\right]\tran\left(\odv*{\vb P^{(-1), \bm\theta}}{\theta}\right).
\end{equation}
The remaining details for this computation are given in \ref{app:computing_differentiation_matrix_piecewise}.
\end{proof}

\subsection{Computing expansions}\label{sec:comp_exp_s}

In order to solve differential equations with our basis, we need to know how to expand functions in this basis, as we will need to be expanding \rev{right-hand sides and initial conditions for time-evolution problems}. In particular, consider expanding a function $f(x, y)$ in the $\vb P^{(0), \bm\theta}$ or $\vb P^{(-1), \bm\theta}$ bases, meaning $f = \vb P^{(0), \bm\theta}\vb f^{(0), \bm\theta}$ or $f = \vb P^{(-1), \bm\theta}\vb f^{(-1), \bm\theta}$. In the $\vb P^{(0), \bm\theta}$ case, we compute the expansion coefficients $\vb f^{(0, h_i)}$ for $f$ restricted to each element $E_i$, $i=1,\ldots,n$, meaning $\left.f\right|_{E_i} = \vb P^{(0, h_i)}\vb f^{(0, h_i)}$ for $i=1,\ldots,n$. Writing $\vb f^{(0, h_i)} = (f_0^{(0, h_i)}, f_1^{(0, h_i)}, f_2^{(0, h_i)}, \ldots)\tran$ and $\vb f_j^{(0), \bm\theta} = (f_j^{(0, h_1)}, \ldots, f_j^{(0, h_n)})\tran$ for $j=0,1,2,\ldots$, $\vb f^{(0), \bm\theta}$ is found by interlacing these coefficients:
\begin{equation}
\vb f^{(0), \bm\theta} = \begin{bmatrix} \left[\vb f_0^{(0), \bm\theta}\right]\tran & \left[\vb f_1^{(0), \bm\theta}\right]\tran & \left[\vb f_2^{(0), \bm\theta}\right]\tran & \cdots \end{bmatrix}\tran.
\end{equation}
The coefficients $\vb f^{(-1), \bm\theta}$ are computed using $\vb f^{(0), \bm\theta}$ and $R_{(-1)}^{(0), \bm\theta}$:
\begin{align}
f &= \vb P^{(0), \bm\theta}\vb f^{(0), \bm\theta} = \vb P^{(-1), \bm\theta}\left[R_{(-1)}^{(0), \bm\theta}\right]^{-1}\vb f^{(0), \bm\theta},
\end{align}
thus $\vb f^{(-1), \bm\theta}$ is the solution to $R_{(-1)}^{(0), \bm\theta}\vb f^{(-1), \bm\theta} = \vb f^{(0), \bm\theta}$. The $CB^3$-arrowhead structure of $R_{(-1)}^{(0), \bm\theta}$ allows this linear system to be solved efficiently.  These results are summarised below.

\begin{proposition}\label{prop:piecewise_arc_b0_trans}
Define $\vb f^{(0, h_i)} = (f_0^{(0, h_i)}, f_1^{(0, h_i)}, f_2^{(0, h_i)}, \ldots)\tran$ and $\vb f_j^{(0), \bm\theta} = (f_j^{(0, h_1)}, \ldots, f_j^{(0, h_n)})\tran$ for $j=0,1,2,\ldots$. The coefficients $\vb f^{(0), \bm\theta}$ such that $f = \vb P^{(0), \bm\theta}\vb f^{(0), \bm\theta}$ are given by 
\begin{equation}\label{eq:piecewise_arc_transform_formula_b0}
\vb f^{(0), \bm\theta} = \begin{bmatrix} \left[\vb f_0^{(0), \bm\theta}\right]\tran & \left[\vb f_1^{(0), \bm\theta}\right]\tran & \left[\vb f_2^{(0), \bm\theta}\right]\tran & \cdots \end{bmatrix}\tran.
\end{equation}
The coefficients $\vb f^{(-1), \bm\theta}$ such that $f = \vb P^{(-1), \bm\theta}\vb f^{(-1), \bm\theta}$ are given by the solution to $R_{(-1)}^{(0), \bm\theta}\vb f^{(-1), \bm\theta} = \vb f^{(0), \bm\theta}$.
\end{proposition}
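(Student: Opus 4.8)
The plan is to establish the $b=0$ case directly by exploiting that the piecewise basis decouples across elements, and then reduce the $b=-1$ case to it through the connection matrix. For $b=0$ the key observation is that, by \eqref{eq:piecewise_arc_base_polynomial_b0}, every basis function $P_{mj;i}^{(0),\bm\theta}$ is supported on the single element $E_i$ and vanishes outside it, so the expansion $f=\vb P^{(0),\bm\theta}\vb f^{(0),\bm\theta}$ splits element by element. I would restrict to a fixed $E_i$: only the functions indexed by that element survive there, and after applying the translation $a_i$ the restriction $\left.f\right|_{E_i}$ becomes an expansion in the arc polynomial basis $\vb P^{(0,h_i)}$, whose coefficients $\vb f^{(0,h_i)}$ are computed by the Gauss--Radau transform of Proposition \ref{prop:transformcoeffarc} (applicable since $b=0>-1$).

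The remaining step for $b=0$ is purely indexing. The global basis $\vb P^{(0),\bm\theta}$ is assembled in \eqref{eq:piecewise_arc_beq0_defn}--\eqref{eq:piecewise_arc_beq0_blocks} by taking, for each fixed position in the arc-polynomial ordering $p_0,q_1,p_1,q_2,\ldots$, the corresponding polynomial across all $n$ elements and interlacing the resulting blocks. Consequently the global coefficient vector is obtained by collecting, for each position $j$, the $j$th element-local coefficient $f_j^{(0,h_i)}$ over $i=1,\ldots,n$ into $\vb f_j^{(0),\bm\theta}$ and stacking these blocks, which is precisely \eqref{eq:piecewise_arc_transform_formula_b0}. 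For $b=-1$ I would then substitute the connection identity $\vb P^{(-1),\bm\theta}=\vb P^{(0),\bm\theta}R_{(-1)}^{(0),\bm\theta}$ from Proposition \ref{prop:pwa_connmat} into $f=\vb P^{(0),\bm\theta}\vb f^{(0),\bm\theta}$, obtaining $f=\vb P^{(-1),\bm\theta}[R_{(-1)}^{(0),\bm\theta}]^{-1}\vb f^{(0),\bm\theta}$, so that $\vb f^{(-1),\bm\theta}$ is identified as the unique solution of $R_{(-1)}^{(0),\bm\theta}\vb f^{(-1),\bm\theta}=\vb f^{(0),\bm\theta}$; the $CB^3$-arrowhead structure of Proposition \ref{prop:cb3arrowhead_connection_mass} makes this solve efficient.

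Since the content is largely bookkeeping, the only genuinely delicate point is justifying the element decoupling rigorously, namely that restricting the global expansion to $E_i$ leaves exactly the $i$th element's contribution and that this coincides with the local arc-polynomial transform evaluated at the Gauss--Radau nodes mapped into $E_i$. This is where I would be most careful, and it is precisely the feature that allows a discontinuous $f$ to be represented in the $b=0$ basis: the element-local transforms never mix values from neighbouring elements, so a jump across a junction causes no difficulty, in contrast with the continuous hat functions of the $b=-1$ basis.
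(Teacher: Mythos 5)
Your proof is correct and takes essentially the same route as the paper: for $b=0$, restrict $f$ to each element, compute the local arc-polynomial coefficients via the transform of Proposition \ref{prop:transformcoeffarc}, and interlace them according to the block ordering of $\vb P^{(0),\bm\theta}$; for $b=-1$, use the connection identity to reduce to the linear system $R_{(-1)}^{(0),\bm\theta}\vb f^{(-1),\bm\theta}=\vb f^{(0),\bm\theta}$. The paper presents this proposition as a summary of the preceding discussion in Section \ref{sec:comp_exp_s} rather than as a formal proof, so your extra attention to the element-decoupling argument only adds rigour to the same construction.
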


\section{Analysis}\label{sec:analysis__}

In this section we consider some theoretical results for our bases. We start by rigorously demonstrating that we can expand any trigonometric polynomial $f(\theta) = a_0+\sum_{n=1}^N (a_n\cos n\theta + b_n\sin n\theta)$ with a finite expansion in $\vb P^{(b), \bm\theta}$ basis, $b \in \{-1, 0\}$, using $M\max\{2N,1\}$ coefficients where $M$ is the number of elements. We then finish with some convergence properties for our basis.

\subsection{Expanding trigonometric polynomials}\label{sec:fourier_expansion_arc}

A fundamental property of our $\vb P^{(-1)}$ basis is that it can be used to exactly represent trigonometric polynomials, meaning with a finite expansion. We prove this fact in this section, starting with a lemma demonstrating that we can represent $\cos n\theta $ and $\sin n\theta$ over an arc as finite expansions in the $\vb p^{(0)}$ and $\vb q^{(0)}$ bases, respectively.

\begin{lemma}\label{lem:expandfinthetaexparc}
Let $n \in \mathbb N$ and assume $\theta \in \Omega_h$, $|h| < 1$. We can expand $\cos n\theta = \sum_{j=0}^n \hat\mu_{jn}p_j^{(0, h)}(\theta)$ and $\sin n\theta = \sum_{j=1}^n \hat\eta_{jn}q_j^{(0, h)}(\theta)$ for some coefficients $\{\hat\mu_{jn}\}$ and $\{\hat\eta_{jn}\}$.
\end{lemma}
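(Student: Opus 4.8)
The plan is to reduce both expansions to the classical Chebyshev identities and then invoke the fact that the arc polynomials of a fixed parity form a graded basis. First I would recall that $\cos n\theta = T_n(\cos\theta) = T_n(x)$, where $T_n$ is the degree-$n$ Chebyshev polynomial of the first kind, and that $\sin n\theta = \sin\theta\, U_{n-1}(\cos\theta) = y\, U_{n-1}(x)$, where $U_{n-1}$ is the degree-$(n-1)$ Chebyshev polynomial of the second kind. Both identities hold for all $\theta$, so the restriction $\theta \in \Omega_h$ (equivalently $\theta \in [-\varphi,\varphi]$ with $\cos\varphi = h$) plays no role beyond specifying the arc on which the expansions are required.

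For the cosine expansion I would note that $x = (h-1)\sigma + 1$ is an affine function of $\sigma$, so $T_n(x)$ is a polynomial of degree exactly $n$ in $\sigma$. By \eqref{eq:arcp}, $p_j^{(0,h)} = P_j^{\tau,(-1/2,0,-1/2)}(\sigma)$ has degree $j$ in $\sigma$, and since the semiclassical Jacobi polynomials are orthogonal polynomials of strictly increasing degree, $\{p_j^{(0,h)}\}_{j=0}^n$ is a basis for the space of polynomials of degree at most $n$ in $\sigma$. Hence $\cos n\theta = T_n(x)$ lies in this span, yielding the coefficients $\{\hat\mu_{jn}\}_{j=0}^n$.

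For the sine expansion I would observe from \eqref{eq:arcq} that $q_j^{(0,h)} = y\,P_{j-1}^{\tau,(1/2,0,1/2)}(\sigma)$, so the span of $\{q_j^{(0,h)}\}_{j=1}^n$ equals $y$ times the space of polynomials of degree at most $n-1$ in $\sigma$. Since $U_{n-1}(x)$ is a polynomial of degree $n-1$ in $x$, hence in $\sigma$, it can be written as $\sum_{j=1}^n \hat\eta_{jn}\, P_{j-1}^{\tau,(1/2,0,1/2)}(\sigma)$; multiplying through by $y$ gives $\sin n\theta = y\,U_{n-1}(x) = \sum_{j=1}^n \hat\eta_{jn}\, q_j^{(0,h)}(\theta)$, as claimed.

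There is no serious obstacle here: the argument is essentially a graded-basis dimension count once the Chebyshev structure is recognised. The only point requiring care is the degree bookkeeping --- verifying that the affine change of variables between $x$ and $\sigma$ preserves polynomial degree, and that each $P_j^{\tau,(\cdot,0,\cdot)}$ is a genuine degree-$j$ polynomial, so that $\{p_j^{(0,h)}\}_{j=0}^n$ and $\{q_j^{(0,h)}\}_{j=1}^n$ really do span the asserted spaces. I would also remark that, because these families are bases, the coefficients $\{\hat\mu_{jn}\}$ and $\{\hat\eta_{jn}\}$ are in fact uniquely determined, although the lemma only requires their existence.
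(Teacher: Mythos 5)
Your proof is correct, and it reaches the result by a genuinely different, more elementary route than the paper's. Both arguments start from the same Chebyshev identities $\cos n\theta = T_n(x)$ and $\sin n\theta = y\,U_{n-1}(x)$, but you conclude by a graded-basis count: since $\sigma \mapsto x = (h-1)\sigma + 1$ is an invertible affine map (as $|h|<1$), $T_n(x)$ and $U_{n-1}(x)$ are polynomials of exact degree $n$ and $n-1$ in $\sigma$, and the families $\{P_j^{\tau,(-1/2,0,-1/2)}\}_{j=0}^n$ and $\{P_{j-1}^{\tau,(1/2,0,1/2)}\}_{j=1}^n$ span precisely those spaces, so existence (and uniqueness) of the coefficients is immediate. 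The paper's proof, split into Lemmas \ref{lem:cosnthetaexparc} and \ref{lem:sinnthetaexparc} in \ref{app:proof_finite_arc_expansion}, instead exploits orthogonality to write the coefficients as inner products, e.g.\ $\mu_{jn} = 2\inp{P_j(\sigma)}{T_n(\zeta)}^{\tau,(-1/2,0,-1/2)}$ with $\zeta = 1+(h-1)\sigma$, and then combines the Chebyshev three-term recurrence with that of the semiclassical Jacobi polynomials to obtain the explicit two-index recurrence \eqref{eq:cos_gamma_recur} with closed-form initial values; the vanishing $\mu_{jn}=0$ for $j>n$, hence finiteness of the expansion, is read off from that recurrence (it also follows at once from orthogonality by your degree argument, since $T_n(\zeta)$ has degree $n<j$). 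The trade-off is clear: your argument is shorter and makes the structural reason for finiteness transparent, but it is purely existential, whereas the paper's longer computation is needed because the lemma is used constructively downstream (Corollary \ref{cor:trigfinbas}, Theorem \ref{thm:trigpolyexpans22}, and the expansion machinery of Section \ref{sec:comp_exp_s}), so an actual algorithm --- recurrence plus initial data --- for $\hat\mu_{jn}$ and $\hat\eta_{jn}$ is required, not just their existence. To serve that purpose your proof would have to be supplemented, e.g.\ with the orthogonality formulas $\hat\mu_{jn} = \inp{\cos n\theta}{p_j}^{(0,h)}/\|p_j\|_{(0,h)}^2$ that the paper takes as its starting point.
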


\begin{proof}
We give the proof of this lemma in \ref{app:proof_finite_arc_expansion}, where we also derive a recurrence relationship for computing the coefficients $\hat\mu_{jn}$ and $\hat\eta_{jn}$. In particular, the $\cos n\theta$ expansion is considered in Lemma \ref{lem:cosnthetaexparc}, and the $\sin(n\theta)$ expansion is considered in Lemma \ref{lem:sinnthetaexparc}.
\end{proof}

\begin{corollary}\label{cor:trigfinbas}
Given a trigonometric polynomial $f(\theta) = a_0 + \sum_{n=1}^N (a_n\cos n\theta + b_n \sin n \theta)$ for $\theta \in \Omega_h$, $|h| < 1$, we can write $f(\theta) = \vb P^{(b)}(\theta)\vb f$ for $b \geq -1$, where $\vb f = (\vb f_{2N+1}\tran, \vb 0\tran)\tran$ and $\vb f_{2N+1} \in \mathbb R^{2N+1}$. In particular, $f(\theta)$ can be represented as an expansion in the $\vb P^{(b)}$ basis with $2N+1$ terms.
\end{corollary}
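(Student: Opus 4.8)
The plan is to reduce the statement to a dimension count on the space $\mathcal H_N$ of degree-$\leq N$ bivariate polynomials on the arc, and to observe that the first $2N+1$ entries of $\vb P^{(b)}$ form a basis for $\mathcal H_N$ for every $b \geq -1$. First I would record the interlaced ordering: since $\vb P_0^{(b)} = p_0^{(b)}$ and $\vb P_n^{(b)} = (q_n^{(b)}, p_n^{(b)})$ for $n \geq 1$, the first $2N+1$ entries of the quasimatrix $\vb P^{(b)}$ are exactly $\{p_j^{(b)}\}_{j=0}^N \cup \{q_j^{(b)}\}_{j=1}^N$ (one for $\vb P_0$ and two for each of $\vb P_1,\ldots,\vb P_N$). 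It therefore suffices to show that $f$ lies in their span, as this places all coefficients of $\vb f$ beyond index $2N+1$ at zero.

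For the case $b=0$ this is immediate from Lemma \ref{lem:expandfinthetaexparc}: writing the constant term as $a_0 = a_0\,p_0^{(0)}$ and substituting $\cos n\theta = \sum_{j=0}^n \hat\mu_{jn}p_j^{(0)}$ and $\sin n\theta = \sum_{j=1}^n \hat\eta_{jn}q_j^{(0)}$ into $f$, every term that appears is some $p_j^{(0)}$ with $0 \leq j \leq N$ or some $q_j^{(0)}$ with $1 \leq j \leq N$. Collecting coefficients produces a vector supported on the first $2N+1$ entries, which is the claim for $b=0$.

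For general $b \geq -1$ I would argue by spans rather than re-deriving the expansion. Using $x=\cos\theta$, $y=\sin\theta$ we have $\cos n\theta = T_n(x)$ and $\sin n\theta = y\,U_{n-1}(x)$, so $f \in \mathcal H_N$; modulo $x^2+y^2-1$ every element of $\mathcal H_N$ reduces to $a(x)+y\,b(x)$ with $\deg a \leq N$ and $\deg b \leq N-1$, so $\dim\mathcal H_N = 2N+1$. On the other hand $p_j^{(b)} = P_j^{\tau,(-1/2,b,-1/2)}(\sigma)$ is a polynomial in $x$ of exact degree $j$, while $q_j^{(b)} = yP_{j-1}^{\tau,(1/2,b,1/2)}(\sigma)$ is $y$ times a polynomial in $x$ of exact degree $j-1$; this persists at $b=-1$ because the defining relation \eqref{eq:semiclassical_jacobi_bneg1_def} multiplies a degree-$(j-1)$ factor by $(1-x)$, again yielding exact degree $j$. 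Hence $\{p_j^{(b)}\}_{j=0}^N$ spans all $x$-polynomials of degree $\leq N$ and $\{q_j^{(b)}\}_{j=1}^N$ spans $y$ times all $x$-polynomials of degree $\leq N-1$; these $2N+1$ functions are linearly independent (the $q_j$ carry the factor $y$) and their span is precisely $\mathcal H_N$. Since $f \in \mathcal H_N$, it expands in the first $2N+1$ entries of $\vb P^{(b)}$, giving $\vb f = (\vb f_{2N+1}\tran, \vb 0\tran)\tran$.

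I expect the only delicate point to be the $b=-1$ case, where the weight is non-integrable and orthogonality of $\vb P^{(-1)}$ is merely formal, so that the clean appeal to the arc-polynomial basis theorem does not directly apply. The span/dimension argument sidesteps this entirely, as it uses only the exact degrees of $p_j^{(-1)}$ and $q_j^{(-1)}$ supplied by \eqref{eq:semiclassical_jacobi_bneg1_def} and no orthogonality. As an alternative I would keep in reserve the connection-matrix route: writing $\vb P^{(0)} = \vb P^{(b)}R_{(0)}^{(b)}$ with $R_{(0)}^{(b)}$ upper triangular (a product of the upper-triangular matrices of \eqref{eq:conn_mat_rbb+1_02bm_arc_prop4}, or its inverse when $b=-1$), the coefficient vector $R_{(0)}^{(b)}\vb f^{(0)}$ inherits support in the first $2N+1$ entries; but the span argument is cleaner and uniform in $b$.
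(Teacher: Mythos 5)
Your proof is correct, but for the step that handles general $b$ it takes a genuinely different route from the paper. The paper's own proof is two lines: the $b=0$ case from Lemma \ref{lem:expandfinthetaexparc} (exactly as in your second paragraph), and then, for $b \neq 0$, a transformation of the $\vb P^{(0)}$ coefficients by the connection matrix $R_{(0)}^{(b)}$, with upper triangularity ensuring that a coefficient vector supported in the first $2N+1$ entries stays supported there --- i.e.\ precisely the route you keep ``in reserve'' at the end. Your main argument instead counts dimensions: $f \in \mathcal H_N$, $\dim \mathcal H_N = 2N+1$, and the first $2N+1$ entries of $\vb P^{(b)}$ span $\mathcal H_N$ because $p_j^{(b)}$ has exact degree $j$ in $x$ and $q_j^{(b)}$ is $y$ times an exact-degree-$(j-1)$ polynomial, a fact that survives at $b=-1$ by \eqref{eq:semiclassical_jacobi_bneg1_def}. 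This buys you two things the paper's proof does not make explicit: it is uniform in $b$ (no case split, and Lemma \ref{lem:expandfinthetaexparc} is not even needed), and it avoids any appeal to orthogonality or to invertibility of connection matrices --- a genuine virtue at $b=-1$, where orthogonality is only formal and where the paper's route implicitly needs $R_{(0)}^{(-1)} = [R_{(-1)}^{(0)}]^{-1}$ to act on finitely supported vectors, which in turn rests on the nonvanishing of the diagonal of \eqref{eq:conn_mat_rbb+1_02bm_arc_prop4} (equivalently, on the same exact-degree facts you use). What the paper's route buys in exchange is constructiveness: Lemma \ref{lem:expandfinthetaexparc} supplies recurrences for the coefficients and the connection matrices are computable, so one obtains $\vb f_{2N+1}$ explicitly, whereas your span argument only asserts existence. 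Both proofs are sound; yours is the cleaner existence argument, the paper's is the one you would implement.
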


\begin{proof}
For $b = 0$ this follows from simply expanding each of the terms $\cos(n\theta)$ and $\sin(n\theta)$ in $f(\theta)$ using Lemma \ref{lem:expandfinthetaexparc}. When $b \neq 0$, the result follows from first expanding in the $\vb P^{(0)}$ basis and then transforming using the connection matrix $R_{(0)}^{(b)}$, noting that this matrix is upper triangular.
\end{proof}

Lemma \ref{lem:expandfinthetaexparc} and Corollary \ref{cor:trigfinbas} lead us to the following theorem that extends these results to the entire circle. Note also in the following theorem that the $\vb P^{(-1),\bm\theta}$ basis requires $M$ fewer coefficients than the $\vb P^{(0),\bm\theta}$ basis when the trigonometric polynomial is not constant.

\begin{theorem}\label{thm:trigpolyexpans22}
Given a trigonometric polynomial $f(\theta) = a_0 + \sum_{n=1}^N (a_n \cos n \theta + b_n \sin n\theta)$ for $\theta \in \mathbb R$, we can write $f(\theta) = \vb P^{(b), \bm\theta}\vb f^{(b), \bm\theta}$ for $b \in \{-1, 0\}$, where $\vb f^{(b), \bm\theta} = ([\vb f_0^{(b), \bm\theta}]\tran, \vb 0\tran)\tran$ and $\vb f_0^{(0), \bm\theta} \in \mathbb R^{M(2N+1)}$ and $\vb f_0^{(-1), \bm\theta} \in \mathbb R^{M\max\{2N,1\}}$, using the grid $\bm\theta=(\theta_1,\ldots,\theta_{M+1})\tran$ with $M \geq 2$. 
\end{theorem}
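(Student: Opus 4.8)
The plan is to treat the two cases $b=0$ and $b=-1$ separately, reducing the first to the single-arc statement of Corollary \ref{cor:trigfinbas} element by element, and then deducing the second from the first through the connection matrix $R_{(-1)}^{(0),\bm\theta}$.

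For $b=0$ the essential point is that the element maps $a_i$ of Definition \ref{eq:piecewise_arc_element_mapping} are pure translations, so I would first show that the restriction of $f$ to each element remains a trigonometric polynomial of degree $N$ in the local coordinate. Writing $\phi = a_i(\theta) = \theta - (\theta_i+\theta_{i+1})/2$, the function $\phi \mapsto f(\phi + (\theta_i+\theta_{i+1})/2)$ is, by the angle-addition formulae, again a trigonometric polynomial of degree $N$ on the arc $\Omega_{h_i}$. Corollary \ref{cor:trigfinbas} then expands it in $\vb P^{(0,h_i)}$ using exactly the $2N+1$ coefficients attached to $p_0^{(0,h_i)},q_1^{(0,h_i)},p_1^{(0,h_i)},\ldots,q_N^{(0,h_i)},p_N^{(0,h_i)}$, i.e. the arc polynomials of degree at most $N$; pulling back through $a_i$ and comparing with \eqref{eq:piecewise_arc_base_polynomial_b0} shows these are precisely the coefficients of $P_{mj;i}^{(0),\bm\theta}$ for $m\le N$. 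Interlacing the $M$ elementwise expansions as in \eqref{eq:piecewise_arc_beq0_defn} places all nonzero coefficients in the leading blocks $\vb P_{02}^{(0),\bm\theta},\vb P_{11}^{(0),\bm\theta},\ldots,\vb P_{N1}^{(0),\bm\theta},\vb P_{N2}^{(0),\bm\theta}$, so $\vb f_0^{(0),\bm\theta}\in\mathbb R^{M(2N+1)}$ with all remaining coefficients zero.

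For $b=-1$ I would use $\vb P^{(-1),\bm\theta}=\vb P^{(0),\bm\theta}R_{(-1)}^{(0),\bm\theta}$, so that $\vb f^{(-1),\bm\theta}$ is the unique solution of $R_{(-1)}^{(0),\bm\theta}\vb f^{(-1),\bm\theta}=\vb f^{(0),\bm\theta}$, and prove that this solution is supported only on the hat block $\vb H$ and on the bubble blocks of degree at most $N$. The structural input is that, per element, the arc connection relations \eqref{eq:arc_connection_p1}--\eqref{eq:arc_connection_p2} with $b=-1$ express each degree-$m$ bubble purely in terms of $b=0$ arc polynomials of degrees $m-1$ and $m$ of the same ($p$- or $q$-)type, with nonzero diagonal coefficients $a_{mm}^{(-1,h_i)},\,b_{m-1,m-1}^{(-1,h_i)}$, while the hat functions, being linear trigonometric polynomials, feed only the $b=0$ degrees $0$ and $1$. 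I would then argue by maximal degree: if some degree-$m^\ast$ bubble coefficient with $m^\ast>N$ were nonzero on an element $E_i$, taking $m^\ast$ maximal, then since no degree-$(m^\ast+1)$ bubble coefficient survives, and since the blocks $A_{ij}^{(-1),\bm\theta},B_{ij}^{(-1),\bm\theta}$ of Proposition \ref{prop:pwa_connmat} are diagonal (so contributions localise to $E_i$), the only contribution to the degree-$m^\ast$ $b=0$ coefficient on $E_i$ passes through the nonzero diagonal entry, forcing that entry of $\vb f^{(0),\bm\theta}$ to be nonzero at degree $m^\ast>N$ — contradicting the support just established. Hence $\vb f^{(-1),\bm\theta}$ vanishes beyond the hat block and the degree-$\le N$ bubbles.

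Finally I would collect the count: for $N\ge 1$ the surviving blocks are $\vb H$ (one block of size $M$), $\vb P_{12}^{(-1),\bm\theta}$, and the pairs $\{\vb P_{m1}^{(-1),\bm\theta},\vb P_{m2}^{(-1),\bm\theta}\}_{m=2}^N$, giving $1+1+2(N-1)=2N$ blocks and so $\vb f_0^{(-1),\bm\theta}\in\mathbb R^{2MN}$, whereas the constant case $N=0$ is recovered using that the $M$ hat functions sum to $1$ (a partition of unity, readily checked from \eqref{eq:hat_function_psi1}--\eqref{eq:hat_function_psi2}), which together give the stated $M\max\{2N,1\}$. I expect the main obstacle to be precisely this $b=-1$ bookkeeping: one must verify that the hat block replaces exactly the $b=0$ degree-$0$ constants and degree-$1$ functions $q_1=y$ without disturbing the degree grading, and check that the maximal-degree contradiction is genuinely elementwise, which is what the block-diagonality of the connection coefficients in Proposition \ref{prop:pwa_connmat} is needed for.
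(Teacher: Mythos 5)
Your $b=0$ argument is correct and is essentially the paper's: restrict to each element (translation preserves the degree-$N$ trigonometric structure), apply Corollary \ref{cor:trigfinbas}, and interlace via Proposition \ref{prop:piecewise_arc_b0_trans}. The $N=0$ count via the partition-of-unity property of the hats is also fine.

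The gap is in the $b=-1$ case. The theorem is an \emph{existence} statement: one must exhibit a finitely supported solution of $R_{(-1)}^{(0),\bm\theta}\vb f^{(-1),\bm\theta}=\vb f^{(0),\bm\theta}$. Your argument instead posits ``the unique solution'' and bounds its support by a maximal-degree contradiction, and this fails on two counts. First, the contradiction requires a maximal nonzero degree $m^\ast$ to exist, i.e.\ it only applies to solutions already known to be finitely supported; what it proves is ``any finitely supported solution has bubble degree $\leq N$,'' which is not existence and is circular as a route to the theorem. Second, uniqueness over arbitrary coefficient sequences is not just unjustified but false: the degree-$m$ block rows read $A_{mm}^{(-1),\bm\theta}\vb f_{m2}^{(-1)}+A_{m,m+1}^{(-1),\bm\theta}\vb f_{m+1,2}^{(-1)}=\vb f_{m2}^{(0)}$ (similarly with the $B$ blocks), with invertible diagonal blocks, so for any choice of hat coefficients one can propagate upward through these recurrences and manufacture a solution with infinitely many nonzero entries; the kernel of $R_{(-1)}^{(0),\bm\theta}$ on the space of all sequences contains an $M$-parameter family. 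Nothing in your argument rules out that every genuine solution is of this infinitely supported type.

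What is actually needed --- and what the paper does --- is the constructive direction: set $\vb f_{mj}^{(-1)}=\vb 0$ for $m>N$, solve the bidiagonal block recurrences \emph{downward} from degree $N$ using the invertibility of the diagonal blocks, and then determine the $M$ hat coefficients from the two remaining block rows, namely the $\vb P_{02}\tran\vb H$ row and the $\vb P_{11}\tran\vb H$ row. That last step is $2M$ scalar equations in $M$ unknowns, and its consistency is precisely the ``bookkeeping'' you flag as the main obstacle but never carry out; it is where the content of the $b=-1$ case lies. One way to settle it: since every bubble vanishes at the nodes and $\phi_i(\theta_j)=\delta_{ij}$, the hat coefficients must be the nodal values $f(\theta_i)$; then $g:=f-\sum_i f(\theta_i)\phi_i$ restricted to $E_i$ is a trigonometric polynomial of degree $\leq N$ vanishing at both endpoints, a space of dimension $2N-1$ that is spanned by the $2N-1$ linearly independent bubbles of degree $\leq N$ on $E_i$. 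Without some such existence argument, your proposal does not establish the theorem.
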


\begin{proof}
The case for $N = 0$ is obvious since $\vb f_0^{(b), \bm\theta} = (a_0,\ldots,a_0) \in \mathbb R^M$ for both $b \in \{-1, 0\}$. The $b = 0$ case  follows from combining Corollary \ref{cor:trigfinbas} and Proposition \ref{prop:piecewise_arc_b0_trans} since each element contributes $2N+1$ terms. The $b = -1$ case can be derived from $b=0$ using $R_{(-1)}^{(0), \bm\theta}\vb f^{(-1), \bm\theta} = \vb f^{(0), \bm\theta}$, leading to the system $\vb f_i^{(0), \bm\theta} = C_i\vb f_{i-1}^{(-1), \bm\theta} + D_i\vb f_{i+1}^{(-1), \bm\theta}$, where $C_i$ and $D_i$ are invertible diagonal matrices. Sparing the details, we can show that we obtain a valid solution when $\vb f_i^{(-1), \bm\theta} = \vb 0$ for $i \geq 2N$ which leads to the result.
\end{proof}

\subsection{Convergence}

Now we consider the convergence of expansions in our basis to a given function $f$, which is no longer assumed to be a finite trigonometric polynomial. To start, we give some definitions followed by a result analogous to Theorem 3.2 of \cite{olver2021orthogonal}, stating that if we can expand the even and odd parts of a function in a semiclassical Jacobi expansion then the expansion also converges in the arc polynomial basis. This then implies that, provided a function $f$ has a convergent series over each element in a given grid, $f$ has a convergent series in the $\vb P^{(b), \bm\theta}$ basis for $b \in \{-1, 0\}$.

\begin{definition}\label{def:odd_even_part_arc_+}
Given a function $f(x, y)$ on an arc $\Omega_h$, $|h|<1$, define the \emph{even} and \emph{odd} parts of $f$ to be, respectively,
\begin{equation}
f_e(x) := \frac{f\left(x, \sqrt{1-x^2}\right) + f\left(x, -\sqrt{1-x^2}\right)}{2}, \qquad f_o(x) := \frac{f\left(x, \sqrt{1-x^2}\right) - f\left(x, -\sqrt{1-x^2}\right)}{2\sqrt{1-x^2}}.
\end{equation} 
We may also slightly misuse this notation to define $f_e(\theta) = (f(\cos\theta,\sin\theta)+f(\cos\theta,-\sin\theta))/2$ and $f_o(\theta) = (f(\cos\theta,\sin\theta)-f(\cos\theta,-\sin\theta))/\colr{(2\sin\theta)}$.
\end{definition}

\begin{definition}
Given a function $f(x, y)$ on an arc $\Omega_h$, $|h|<1$, define the partial sum operator $S_n^{(b, h)}$ for $b \geq -1$ by
\begin{equation}
S_n^{(b, h)}[f](x, y) = \hat f_0 p_0(x, y) + \sum_{j=1}^{n} \left[\hat f_{j1} q_j(x, y) + \hat f_{j2}p_j(x, y)\right],
\end{equation}
omitting the dependence of the functions and coefficients on $b$ and $h$, where
$\hat f_0 = \inp{f}{p_0}/\|p_0\|^2$ and 
\[
\hat f_{j1} = \dfrac{\inp{f}{q_j}}{\|q_j\|^2}, \quad \hat f_{j2} = \dfrac{\inp{f}{p_j}}{\|p_j\|^2}, \quad j=1,\ldots,n.
\]
If needed, we include superscripts to highlight the $(b, h)$ dependence. Similarly, for a function $g(\sigma)$ on $0 \leq \sigma \leq 1$ we let $s_n^{t, (a, b, c)}$ denote the partial sum operator
\begin{equation}
s_n^{t, (a, b, c)}[g](\sigma) = \sum_{j=0}^{n-1} \hat g_j P_j^{t, (a, b, c)}(\sigma), \qquad \hat g_j = \dfrac{\inp*{g}{P_j^{t, (a, b, c)}}^{t, (a, b, c)}}{\left\|P_j^{t, (a, b, c)}\right\|_{t, (a, b, c)}^2},~~j=0,1,\ldots,n.
\end{equation}
\end{definition}

\begin{proposition}\label{prop:conv_series_arcpol2q}
Let $f(x, y)$ be a function on an arc $\Omega_h$, $|h| < 1$ and assume $b \geq -1$. If 
\begin{equation}
s_n^{\tau,(-1/2,b,-1/2)}[f_e(\zeta)] \to f_e(\zeta) \quad \textnormal{and} \quad s_n^{\tau,(1/2, b, 1/2)}[f_o(\zeta)] \to f_o(\zeta)
\end{equation}
pointwise as $n \to \infty$, where $\zeta := 1 + (h-1)\sigma, \sigma := (x-1)/(h-1)$, and $\tau := 2/(1-h)$, then $S_n^{(b, h)}[f] \to f$ as $n \to \infty$. The same result holds when considering convergence in the norm \eqref{eq:def:inp} for $f_e$ and $f_o$, and \eqref{eq:inp_arc} for $f$, rather than pointwise convergence.
\end{proposition}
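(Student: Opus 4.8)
The plan is to show that the arc partial sum factors exactly into the two scalar semiclassical partial sums appearing in the hypothesis, reducing the claim to the assumed one-dimensional convergence. The organising observation is that on $\Omega_h$ one has $f(x,y)=f_e(x)+yf_o(x)$ --- valid on both the upper and lower arc, as one checks by substituting $y=\pm\sqrt{1-x^2}$ into Definition \ref{def:odd_even_part_arc_+} --- so that the functions $p_j$ (independent of the sign of $y$) interact only with $f_e$, while the $q_j=yP_{j-1}^{\tau,(1/2,b,1/2)}(\sigma)$ interact only with $f_o$.

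First I would identify the coefficients. Using the symmetric two-sided form \eqref{eq:inp_arc_cartesian} together with $p_j(x,-y)=p_j(x,y)$, the inner product $\inp{f}{p_j}$ collapses to a pairing of $2f_e$ with $p_j$; passing to $\sigma$ through \eqref{eq:inp_sj} makes this a fixed multiple of $\inp{f_e}{P_j^{\tau,(-1/2,b,-1/2)}}^{\tau,(-1/2,b,-1/2)}$, and the same constant appears in $\|p_j\|^2$, so that $\hat f_{j2}$ (and $\hat f_0$) is exactly the $j$th semiclassical Jacobi coefficient of $f_e$. For $q_j$ I would use $q_j(x,-y)=-q_j(x,y)$ and $f(x,y)-f(x,-y)=2yf_o(x)$, which introduces a factor $y^2$ into the integrand; the key algebraic identity is then $y^2\,w^{\tau,(-1/2,b,-1/2)}(\sigma)=(1-h)^2\,w^{\tau,(1/2,b,1/2)}(\sigma)$, following from $y^2=(1-h)^2\sigma(\tau-\sigma)$ as already used in \eqref{eq:diff_mat_stepB}. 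This converts the weight from the $(-1/2,b,-1/2)$ family to the $(1/2,b,1/2)$ family, and the matching constant cancels against $\|q_j\|^2$, so that $\hat f_{j1}$ is the $(j-1)$th semiclassical coefficient of $f_o$.

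Substituting these coefficients back gives the exact decomposition
\begin{equation}
S_n^{(b,h)}[f](x,y) = s_{n+1}^{\tau,(-1/2,b,-1/2)}[f_e](\sigma) + y\,s_n^{\tau,(1/2,b,1/2)}[f_o](\sigma),
\end{equation}
where the sum over $p_0,\dots,p_n$ supplies the $n+1$ terms of $s_{n+1}$ and the sum over $q_1,\dots,q_n$ reindexes to the $n$ terms of $s_n$. Letting $n\to\infty$ and invoking the two pointwise hypotheses yields $S_n^{(b,h)}[f](x,y)\to f_e(x)+yf_o(x)=f(x,y)$. For the norm statement I would apply the same even/odd splitting to $g:=S_n^{(b,h)}[f]-f$, whose even and odd parts are $g_e=s_{n+1}[f_e]-f_e$ and $g_o=s_n[f_o]-f_o$; since $g(x,y)^2+g(x,-y)^2=2g_e^2+2y^2g_o^2$ (the cross terms cancel), the reduction above produces the Pythagorean identity
\begin{equation}
\|S_n^{(b,h)}[f]-f\|_{(b,h)}^2 = 2(1-h)^b\|g_e\|_{\tau,(-1/2,b,-1/2)}^2 + 2(1-h)^{b+2}\|g_o\|_{\tau,(1/2,b,1/2)}^2,
\end{equation}
each summand tending to zero by the norm form of the hypothesis.

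The main difficulty is careful bookkeeping rather than any conceptual leap: one must track the weight-family conversion and the powers of $(1-h)$ so that all normalising constants cancel and the arc coefficients coincide with the scalar ones with nothing left over. A secondary point needing care is $b=-1$, where $w^{\tau,(-1/2,-1,-1/2)}$ is non-integrable and the inner products are only formal; there the coefficient-matching must be read as an identity between the formally defined coefficients (equivalently, one routes through $b=0$ via the upper-triangular connection matrix $R_{(0)}^{(b)}$), but the decomposition, and hence the convergence conclusion, is unaffected.
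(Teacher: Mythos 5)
Your proposal is correct and follows essentially the same route as the paper's proof: reduce $S_n^{(b,h)}[f]$ to the two scalar semiclassical partial sums via the even/odd split $f = f_e + yf_o$, match coefficients through \eqref{eq:inp_sj} and the weight conversion $y^2 = (1-h)^2\sigma(\tau-\sigma)$, conclude pointwise convergence, obtain the norm statement from the same Pythagorean-type identity, and dispatch $b=-1$ through the $b=0$ connection matrix. (Your partial-sum indexing $s_{n+1}[f_e] + y\,s_n[f_o]$ is in fact the one consistent with the paper's definition of $s_n$; the paper writes $s_n + y\,s_{n-1}$, an immaterial off-by-one.)
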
 

\begin{proof}
We can assume that $b > -1$ in what follows since, for $b = -1$, the same result follows from $b=0$ after using $R_{(-1)}^{(0)}$. We omit superscripts showing the dependence on $b$ and $h$ in what follows. Using \eqref{eq:inp_sj}, we can show
\begin{align}
\inp{f}{p_j} &= 2(1-h)^b\inp*{P_j^{\tau, (-1/2, b, -1/2)}(\sigma)}{f_e(\zeta)}^{\tau, (-1/2, b, -1/2)}, \quad j=0,1,2,\ldots, \\
\inp{f}{q_j} &= 2(1-h)^{b+2}\inp*{P_{j-1}^{\tau, (1/2, b, 1/2)}(\sigma)}{f_o(\zeta)}^{\tau, (1/2, b, 1/2)}, \quad j = 1, 2, 3, \ldots, 
\end{align}
where $\zeta := 1 + (h-1)\sigma$. Now, note
\begin{equation}\label{eq:pjqj21mhbPj_form}
\|p_j\|^2 = 2(1-h)^b\left\|P_j^{\tau,(-1/2,b,-1/2)}\right\|_{\tau, (-1/2, b, -1/2)}^2, ~~\|q_j\|^2 = 2(1-h)^{b+2}\left\|P_j^{\tau,(1/2,b,1/2)}\right\|_{\tau,(1/2,b,1/2)}^2.
\end{equation}
Using these expressions, we find
\begin{equation}
S_n^{(b, h)}[f] = s_n^{\tau,(-1/2,b,-1/2)}[f_e(\zeta)] + ys_{n-1}^{\tau,(1/2,b,1/2)}[f_o(\zeta)].
\end{equation}
Assuming $s_n^{\tau,(-1/2,b,-1/2)}[f_e(\zeta)] \to f_e(\zeta)$ and $s_{n-1}^{\tau,(1/2,b,1/2)}[f_o(\zeta)]$ as $n \to \infty$, we have
\begin{align*}
S_n^{(b, h)}[f](x, y) &\to f_e(\zeta) + yf_o(\zeta) = f(x, y),
\end{align*}
as required. The equivalent statement about convergence in norm follows from noting that
\begin{equation}\label{eq:lem:fmsnfform}
\left\|f - S_n[f]\right\|^2 = 2(1-h)^b\left[\left\|f_e - s_n^{\tau,(-1/2,b,-1/2)}[f_e(\zeta)]\right\|_{\tau,(-1/2,b,-1/2)}^2 + (1-h)^2\left\|f_o - s_{n-1}^{\tau, (1/2, b, 1/2)}[f_o(\zeta)]\right\|_{\tau,(1/2,b,1/2)}^2\right]. \qedhere
\end{equation}
\end{proof}

\begin{theorem}
Let $f(\theta)$ be a $2\pi$-periodic function for $\theta \in \mathbb R$ and consider a grid $\bm\theta$ with $M$ elements. If, over each element $E_i$, $f$ has a convergent series in the $\vb P^{(b)}$ basis, then $f$ has a convergent series in the $\vb P^{(b), \bm\theta}$ basis, $b \in \{-1, 0\}$. In particular, if $f_e$ and $f_o$ have convergent series in the $\vb P^{\tau_i, (-1/2,b,-1/2)}$ and $\vb P^{\tau_i, (1/2,b,1/2)}$ bases, respectively, over each element, $f$ has a convergent series in the $\vb P^{(b)}$ basis.
\end{theorem}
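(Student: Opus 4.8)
The plan is to prove the $b=0$ statement directly by exploiting that each piecewise basis function $P_{mj;i}^{(0),\bm\theta}$ is supported on a single element, and then to deduce the $b=-1$ case from it via the connection matrix $R_{(-1)}^{(0),\bm\theta}$, in the same spirit as the reduction used in the proof of Proposition~\ref{prop:conv_series_arcpol2q}. The concluding ``in particular'' clause is nothing more than Proposition~\ref{prop:conv_series_arcpol2q} applied on a single element, since that proposition already converts convergence of the semiclassical Jacobi series of $f_e$ and $f_o$ into convergence of the arc polynomial series on $\Omega_{h_i}$; the real content is therefore the passage from per-element convergence to global convergence on $I$.

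For $b=0$, I would truncate the interlaced expansion \eqref{eq:piecewise_arc_beq0_defn} at polynomial degree $N$, i.e.\ retain the blocks $\vb P_{mj}^{(0),\bm\theta}$ for $m \le N$, and call the resulting partial sum $S_N[f]$. Because every retained basis function vanishes outside its own element, $S_N[f]$ restricted to $E_i$ is exactly the degree-$N$ arc polynomial partial sum of $f$ on that element. Since the $E_i$ partition $I$ up to finitely many endpoints, the error decomposes as
\begin{equation}
\bigl\|f - S_N[f]\bigr\|_{L^2(I)}^2 = \sum_{i=1}^M \bigl\|f - S_N[f]\bigr\|_{L^2(E_i)}^2 .
\end{equation}
Using that $a_i$ is a pure translation, each summand equals the arc polynomial error on $\Omega_{h_i}$ measured in $\|\cdot\|_{(0,h_i)}$, which for $b=0$ is precisely the $L^2$ norm on $[-\varphi_i,\varphi_i]$ by \eqref{eq:inp_arc}. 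By hypothesis each of these $M$ terms tends to $0$ as $N\to\infty$, and a finite sum of null sequences is null, giving global convergence.

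For $b=-1$, I would transfer the result through $\vb P^{(-1),\bm\theta} = \vb P^{(0),\bm\theta} R_{(-1)}^{(0),\bm\theta}$: a convergent $b=0$ expansion of $f$ yields $b=-1$ coefficients by solving $R_{(-1)}^{(0),\bm\theta}\vb f^{(-1),\bm\theta} = \vb f^{(0),\bm\theta}$ as in Proposition~\ref{prop:piecewise_arc_b0_trans}, and since the truncated expansions in the two bases represent the same function, convergence is inherited. The ``in particular'' clause then closes the argument: applying Proposition~\ref{prop:conv_series_arcpol2q} on each $E_i$ supplies exactly the per-element arc polynomial convergence assumed in the first part.

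The step I expect to be the main obstacle is making the $b=-1$ transfer rigorous at the level of \emph{partial sums}. The hat functions in $\vb H^{\bm\theta}$ straddle two elements, so the clean single-element decomposition is no longer available and one cannot simply sum element-wise errors as for $b=0$. Instead I would use that $R_{(-1)}^{(0),\bm\theta}$ is banded (block bandwidths $(1,1)$ by Proposition~\ref{prop:pwa_connmat}) and invertible, so that a degree-$N$ truncation in the $b=-1$ basis transforms into a $b=0$ expansion truncated at essentially the same degree; the care required is in matching the truncation levels and controlling the boundary blocks so that the transformed coefficients vanish beyond the truncation, mirroring the coefficient-matching argument already carried out in Theorem~\ref{thm:trigpolyexpans22}.
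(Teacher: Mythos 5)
Your proposal follows essentially the same route as the paper: the paper's proof is a one-line appeal to Proposition~\ref{prop:piecewise_arc_b0_trans} (element-wise interlacing of coefficients for $b=0$, connection-matrix transfer via $R_{(-1)}^{(0),\bm\theta}$ for $b=-1$) combined with Proposition~\ref{prop:conv_series_arcpol2q} for the per-element convergence conditions, which is exactly the decomposition you carry out. Your write-up is in fact more explicit than the paper's --- in particular the subtlety you flag about matching truncation levels for the $b=-1$ partial sums is real but is passed over silently by the paper --- yet the underlying argument is identical.
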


\begin{proof}
This is an immediate consequence of Proposition \ref{prop:piecewise_arc_b0_trans}. Note that Proposition \ref{prop:conv_series_arcpol2q} gives the conditions for the convergence of $f$ over each element.
\end{proof}

\subsubsection{Rate of convergence}

Now let us examine the rate of convergence of our series. In particular, if
\begin{equation}
f(x, y) = \hat f_0 p_0(x, y) + \sum_{j=1}^\infty \left[\hat f_{j1}q_j(x, y) + \hat f_{j2}p(x, y)\right],
\end{equation}
we are interested in determining asymptotics for $\|f - S_n[f]\|$, $|\hat f_{j1}|$, and $|\hat f_{j2}|$\rev{, explicitly relating the convergence rates to the function's analyticity, the element size, and the polynomial degree}. To discuss this convergence, we first need the following definition.

\begin{definition}
We use $E_{\rho, c}$ to denote the open ellipse centred at the origin with foci at $(\pm c, 0)$ whose semimajor and semiminor axis lengths sum to $\rho$; note that $E_{\rho, 1}$ is the \emph{Bernstein ellipse} \cite{trefethen2019approximation}. When necessary, we will instead let $E_{\rho, [a, b]}^x$ denote such an ellipse whose foci are instead at $a$ and $b$ whose centre is at $(x, 0)$.
\end{definition}

The arguments we apply in what follows mirror those used for the half-range Chebyshev polynomials of \cite{huybrechs2010fourier}.

\begin{lemma}\label{lem:seqorthpol_lem_}
Let $\{r_n(x)\}$ be a sequence of orthogonal polynomials on $[a, b]$ with respect to the positive and integrable weight $w$. Given a function $g$, define $g_n(x) = \sum_{j=0}^n c_jr_j(x)$ where $c_j = \int_a^b g(x)r_j(x)w(x) \dx /\int_a^b r_j(x)^2 w(x) \dx$. If $g$ is analytic in $[a, b]$ and analytically continuable to $E_{\rho(b-a)/2, [a, b]}^{(a+b)/2}$ bounded there, where $\rho>1$, then $\|g - g_n\|_w = \mathcal O(\rho^{-n})$ as $n \to \infty$, where $\|g - g_n\|_w^2 = \int_a^b (g(x)-g_n(x))^2 w(x) \dx$.
\end{lemma}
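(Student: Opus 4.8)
The plan is to reduce the statement to a classical Chebyshev-type estimate by an affine change of variables, and then invoke the standard Bernstein-ellipse convergence bound for orthogonal expansions. The result asserts that if $g$ is analytic on $[a,b]$ and extends analytically and boundedly to the ellipse $E_{\rho(b-a)/2,[a,b]}^{(a+b)/2}$, then the weighted $L^2$ truncation error decays like $\OO(\rho^{-n})$. The key observation is that $\rho$ here is measured relative to the Bernstein ellipse for the \emph{standard} interval $[-1,1]$: the foci of the stated ellipse are at $a$ and $b$, and its semiaxis lengths sum to $\rho(b-a)/2$, which is exactly the image of the Bernstein ellipse $E_{\rho,1}$ (foci $\pm 1$, semiaxes summing to $\rho$) under the affine map $x = \tfrac{a+b}{2} + \tfrac{b-a}{2}t$ sending $[-1,1]$ onto $[a,b]$.

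First I would perform this affine substitution. Set $t = (2x - a - b)/(b-a)$, so that $x \in [a,b]$ corresponds to $t \in [-1,1]$, and define $\tilde g(t) := g\bigl(\tfrac{a+b}{2} + \tfrac{b-a}{2}t\bigr)$ together with the pulled-back weight $\tilde w(t) := w\bigl(\tfrac{a+b}{2}+\tfrac{b-a}{2}t\bigr)$. Under this map the orthogonal polynomials $r_n(x)$ become orthogonal polynomials $\tilde r_n(t)$ on $[-1,1]$ with respect to $\tilde w$, the coefficients $c_j$ are preserved (the Jacobian factor cancels between numerator and denominator), and the weighted norms transform with a single overall factor $\tfrac{b-a}{2}$ that cancels from both sides of the error bound. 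Crucially, the ellipse $E_{\rho(b-a)/2,[a,b]}^{(a+b)/2}$ pulls back exactly to the Bernstein ellipse $E_{\rho,1}$, so the hypothesis becomes: $\tilde g$ is analytic on $[-1,1]$ and analytically continuable to the Bernstein ellipse $E_\rho := E_{\rho,1}$, bounded there.

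Next I would invoke the classical result (e.g. along the lines of \cite{trefethen2019approximation}) that for a function $\tilde g$ analytic inside and bounded on the Bernstein ellipse $E_\rho$, the coefficients in \emph{any} expansion in orthogonal polynomials relative to a positive integrable weight decay geometrically, and consequently the weighted $L^2$ truncation error satisfies $\|\tilde g - \tilde g_n\|_{\tilde w} = \OO(\rho^{-n})$. The cleanest route is to bound the best polynomial approximation: by Bernstein's theorem the best uniform (hence weighted $L^2$, since the weight is integrable and the interval finite) degree-$n$ polynomial approximant $p_n^\ast$ to $\tilde g$ satisfies $\|\tilde g - p_n^\ast\|_\infty = \OO(\rho^{-n})$, and since $\tilde g_n$ is the orthogonal projection onto polynomials of degree $n$ in the $\tilde w$-weighted inner product, $\|\tilde g - \tilde g_n\|_{\tilde w} \le \|\tilde g - p_n^\ast\|_{\tilde w} \le \bigl(\int_{-1}^1 \tilde w\bigr)^{1/2}\|\tilde g - p_n^\ast\|_\infty = \OO(\rho^{-n})$. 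Transporting this back through the affine map (which only rescales the norm by a constant) yields $\|g - g_n\|_w = \OO(\rho^{-n})$, as claimed.

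The main obstacle is purely bookkeeping rather than conceptual: one must verify that the affine change of variables maps the stated ellipse precisely to the Bernstein ellipse (checking both the foci and the sum-of-semiaxes condition transform correctly), and that the orthogonality structure, the coefficient formulas, and the weighted norm all transform consistently so that the overall scaling factors cancel. The analytic core — geometric decay of orthogonal expansion coefficients for functions analytic on a Bernstein ellipse — is entirely standard and can be cited; the slight subtlety worth flagging is that the result is stated for a \emph{general} positive integrable weight $w$ rather than the Chebyshev weight, but the orthogonal-projection-versus-best-approximation argument above handles any such weight uniformly, since the only property used is that $\int_{-1}^1 \tilde w < \infty$.
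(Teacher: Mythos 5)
Your proposal is correct, and it is compatible with what the paper does, but you have done strictly more work than the paper itself: the paper's entire proof is a one-line citation, observing that the lemma is a restatement of Lemma 3.9 of \cite{huybrechs2010fourier} transplanted from the standard interval to $[a,b]$. Your affine-rescaling bookkeeping (checking that the foci and the sum-of-semiaxes condition pull the ellipse $E_{\rho(b-a)/2,[a,b]}^{(a+b)/2}$ back exactly to the Bernstein ellipse, and that the coefficients and weighted norms transform with cancelling Jacobian factors) is precisely the implicit content of the paper's phrase ``on the interval $[a,b]$,'' and your analytic core --- bound the weighted $L^2$ projection error by the weighted norm of the best \emph{uniform} approximant, then invoke Bernstein's theorem for functions analytic and bounded in the ellipse, with the integrability of $w$ supplying the constant $\bigl(\int \tilde w\bigr)^{1/2}$ --- is essentially the standard proof of the cited lemma itself. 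So where the paper outsources the result, you reprove it self-containedly; the gain is that your argument makes visible exactly which hypotheses are used (only positivity and integrability of the weight, and boundedness of $g$ in the open ellipse), while the cost is duplication of a known result that the paper was content to cite.
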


\begin{proof}
This is a restatement of \cite[Lemma 3.9]{huybrechs2010fourier} on the interval $[a, b]$.
\end{proof}

\begin{theorem}\label{thm:rate_of_conv_main_rho}
Let $f(\cos\theta, \sin\theta)$ be a $2\pi$-periodic function \rev{restricted to} an arc $\Omega_h$, $|h| < 1$, and let $\cos\varphi = h$. If $f$ is analytic in $[-\varphi, \varphi]$ and analytically continuable to the region
\begin{equation}\label{eq:defDrho}
D_h(\rho) := \arccos\left(E_{\rho(1-h)/2, [h, 1]}^{(1 + h)/2}\right) := \left\{\theta : x = \cos \theta,\, x \in E_{\rho(1-h)/2, [h, 1]}^{(1+h)/2}\right\},
\end{equation}
where $\rho > 1$, and if $f$ is bounded in $D_h(\rho)$, then for $b \geq -1$
\begin{equation}
\left\|f - S_n^{(b, h)}[f]\right\|_{(b, h)} = \mathcal O(\rho^{-n})\quad \textnormal{as}~n\to\infty.
\end{equation}
Similarly, all the coefficients in the expansion are $\mathcal O(\rho^{-n})$ as $n \to \infty$.
\end{theorem}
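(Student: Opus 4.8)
The plan is to reduce the bivariate estimate to two univariate semiclassical Jacobi expansions, one for the even part $f_e$ and one for the odd part $f_o$ of Definition \ref{def:odd_even_part_arc_+}, and then apply Lemma \ref{lem:seqorthpol_lem_} to each. The organising identity is \eqref{eq:lem:fmsnfform}, which writes $\|f-S_n^{(b,h)}[f]\|_{(b,h)}^2$ as a positively weighted sum of $\|f_e-s_n^{\tau,(-1/2,b,-1/2)}[f_e]\|^2$ and $\|f_o-s_{n-1}^{\tau,(1/2,b,1/2)}[f_o]\|^2$. Hence it suffices to bound each scalar Jacobi error by $\mathcal{O}(\rho^{-n})$. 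As in the proof of Proposition \ref{prop:conv_series_arcpol2q}, I would first dispose of the endpoint case $b=-1$ by reducing it to $b=0$ through the upper-triangular connection matrix $R_{(-1)}^{(0)}$ from \eqref{eq:conn_mat_rbb+1_02bm_arc_prop4}; in the remaining argument I may therefore take $b>-1$, so that the weights $w^{\tau,(-1/2,b,-1/2)}$ and $w^{\tau,(1/2,b,1/2)}$ on $[0,1]$ are positive and integrable and Lemma \ref{lem:seqorthpol_lem_} applies.

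Next I would match the hypotheses across the change of variables. Lemma \ref{lem:seqorthpol_lem_} on $[0,1]$ requires that $f_e$ and $f_o$, regarded as functions of $\sigma$, continue analytically to $E_{\rho/2,[0,1]}^{1/2}$. Under the affine map $\sigma\mapsto x=(h-1)\sigma+1$ this ellipse is carried exactly onto $E_{\rho(1-h)/2,[h,1]}^{(1+h)/2}$: the endpoints $\{0,1\}$ map to the foci $\{1,h\}$, the centre $1/2$ to $(1+h)/2$, and the semi-axis sum is scaled by $1-h$. Since affine maps preserve analyticity, it is equivalent to show that $f_e$ and $f_o$, as functions of $x$, continue analytically and boundedly to $E_{\rho(1-h)/2,[h,1]}^{(1+h)/2}$. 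This is precisely the ellipse whose $\arccos$-preimage is the region $D_h(\rho)$ of \eqref{eq:defDrho}, which is the reason for that definition.

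The crux is the passage from analyticity of $F(\theta):=f(\cos\theta,\sin\theta)$ on $D_h(\rho)$ to analyticity of $f_e(x)$ and $f_o(x)$ on the ellipse. Writing $f_e(\theta)=\tfrac12\bigl(F(\theta)+F(-\theta)\bigr)$ and $f_o(\theta)=\bigl(F(\theta)-F(-\theta)\bigr)/(2\sin\theta)$, both are $2\pi$-periodic and even in $\theta$, and $D_h(\rho)$ is symmetric under $\theta\mapsto-\theta$, so both are analytic on $D_h(\rho)$ as functions of $\theta$; the apparent singularity of $f_o$ at $\theta=0$ is removable because the odd numerator vanishes there, and $\sin\theta$ has no other zero in $D_h(\rho)$ because $\varphi<\pi$ keeps $\theta=\pi$ outside the region. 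Being even, periodic and analytic, each descends to a single-valued analytic function of $x=\cos\theta$ on the ellipse, the branch point of $\arccos$ at the focus $x=1$ being neutralised precisely by the evenness in $\theta$ --- the same mechanism used for the half-range Chebyshev polynomials in \cite{huybrechs2010fourier}. Boundedness of $f_e$ is immediate from that of $f$, while boundedness of $f_o$ follows from its analyticity together with control of $1/\sin\theta$ away from the removable point. I expect this transfer of analyticity across the $\arccos$ branch point to be the main obstacle; the surrounding steps are essentially bookkeeping.

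With this in hand, Lemma \ref{lem:seqorthpol_lem_} gives $\|f_e-s_n[f_e]\|=\mathcal{O}(\rho^{-n})$ and $\|f_o-s_{n-1}[f_o]\|=\mathcal{O}(\rho^{-(n-1)})=\mathcal{O}(\rho^{-n})$, and substituting into \eqref{eq:lem:fmsnfform} yields $\|f-S_n^{(b,h)}[f]\|_{(b,h)}^2=\mathcal{O}(\rho^{-2n})$, as claimed. For the coefficient estimate I would use the identities in the proof of Proposition \ref{prop:conv_series_arcpol2q}, which show that $\hat f_{j2}$ and $\hat f_{j1}$ are exactly the semiclassical Jacobi coefficients of $f_e$ and $f_o$. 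Orthogonality then gives $|\hat f_{n+1,k}|^2\,\|\cdot\|^2\le\|f-S_n[f]\|^2=\mathcal{O}(\rho^{-2n})$, and since $\|p_n\|$ and $\|q_n\|$ are bounded below --- constant for $b=0$ by \eqref{eq:arc_polynomial_mass_matrix_formulae_pq} and comparable for general $b$ via $R_{(0)}^{(b)}$ --- each coefficient is $\mathcal{O}(\rho^{-n})$.
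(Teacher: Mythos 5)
Your proposal is correct and follows essentially the same route as the paper's proof: decompose $f$ into even and odd parts, reduce via \eqref{eq:lem:fmsnfform} to two univariate semiclassical Jacobi expansions, apply Lemma \ref{lem:seqorthpol_lem_} on the ellipse with foci $h$ and $1$ (your explicit transfer of analyticity through the $\arccos$ branch point is the detail the paper compresses into ``the singularity squares away''), handle $b=-1$ by the connection matrix $R_{(-1)}^{(0)}$, and read off the coefficient bounds from the error asymptotics. The only quibble is your parenthetical claim that $\sin\theta$ has no zero in $D_h(\rho)$ besides $\theta=0$ because ``$\varphi<\pi$ keeps $\theta=\pi$ outside the region'': for large $\rho$ the ellipse $E_{\rho(1-h)/2,[h,1]}^{(1+h)/2}$ can contain $x=-1$, so $\theta=\pm\pi$ may lie in $D_h(\rho)$, but the conclusion survives because the odd, $2\pi$-periodic numerator $F(\theta)-F(-\theta)$ also vanishes at $\theta=\pm\pi$, making those singularities removable as well.
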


\begin{proof}
We omit the dependence on $(b, h)$ unless necessary in what follows, and assume $b > -1$ for now. To start, consider $f_e(x) \equiv f_e(\cos\theta)$, recalling Definition \ref{def:odd_even_part_arc_+}. Since $f_e$ is even, its degree $n$ approximant takes the form
\begin{equation}
f_e^{(n)}(\cos\theta) = \sum_{j=0}^n \hat f_{j2}p_j(\cos\theta),
\end{equation}
writing $p_j(\cos\theta) \equiv p_j(\cos\theta, \sin\theta)$ since $p_j$ is independent of $\sin\theta$. Then
\begin{align*}
\left\|f_e(\cos\theta) - f_e^{(n)}(\cos\theta)\right\|^2 &= \int_{-\varphi}^\varphi \left(f_e(\cos\theta) - f_e^{(n)}(\cos\theta)\right)^2 w(\cos\theta) \dthe = 2\int_h^1 \left(f_e(x) - f_e^{(n)}(x)\right)^2 \frac{w(x)}{\sqrt{1 - x^2}} \dx.
\end{align*}
Since $f_e(x)$ is $2\pi$-periodic and analytic in $E_{\rho(1 - h)/2, [1, h]}^{(1+h)/2}$, the singularity from $1/\sqrt{1-x^2}$ squares away and thus, applying Lemma \ref{lem:seqorthpol_lem_}, we see $\|f_e(\cos\theta-f_e^{(n)}(\cos\theta)\|^2 = \mathcal O(\rho^{-2n})$ for $x \in E_{\rho(1-h)/2, [h, 1]}^{(1+h)/2}$, meaning for $\theta \in D_h(\rho)$. A similar argument applies to the odd part $f_o(\cos\theta)$. Combining these results using $f(x, y) = f_e(x) + yf_o(x)$ gives the result for $b > -1$, noting the expression for $\|f - S_n[f]\|$ from \eqref{eq:lem:fmsnfform}. The result for the coefficients follows immediately from the asymptotics on the error. 

Now we consider $b = -1$. Letting $\hat f_{02} := \hat f_0$ and using superscripts to indicate the dependence on $b$, $R_{(-1)}^{(0)}\hat{\vb f}^{(-1)} = \hat{\vb f}^{(0)}$ together with \eqref{eq:conn_mat_rbb+1_02bm_arc_prop4} gives
\begin{align}
a_{jj}^{(0)}\hat f_{j2}^{(-1)} + a_{j,j+1}^{(0)}\hat f_{j+1,2}^{(-1)} &= \hat f_{j2}^{(0)}, \qquad b_{j-1,j-1}^{(0)}\hat f_{j1}^{-1)} + b_{j-1,j}^{(0)}\hat f_{j+1,1}^{(-1)} = \hat f_{j1}^{(0)}, \qquad j=0,1,2,\ldots.
\end{align}
Thus, we have the bounds $|\hat f_{n1}^{(-1)}| \leq |\hat f_{n1}^{(0)}|/|b_{n-1,n-1}^{(0)}|$ and $|\hat f_{n2}^{(-1)}| \leq |\hat f_{n2}^{(0)}|/|a_{nn}^{(0)}|$ from which the result on the coefficients and the error when $b=-1$ follows; note that $1/|b_{n-1,n-1}^{(0)}|$ and $1/|a_{nn}^{(0)}|$ are asymptotically bounded \cite{kuijlaars2004riemann}.
\end{proof}

We can use Theorem \ref{thm:rate_of_conv_main_rho} to give an analogous result for the piecewise arc polynomial basis, stated below in Theorem \ref{thm:rate_of_conv_main_rho:all__}. In what follows, given a grid $\bm\theta = (\theta_1,\ldots,\theta_{n+1})\tran$, $n \geq 2$, we use the notation
\begin{equation*}
S_{m, i}^{(0), \bm\theta}[f](x, y) = \hat f_0^{(0, h_i)}p_0^{(0, h_i)}(a_i(\theta)) + \sum_{j=1}^m \left[\hat f_{j2}^{(0, h_i)}q_j^{(0, h_i)}(a_i(\theta)) + \hat f_{j1}^{(0, h_i)}p_j^{(0, h_i)}(a_i(\theta))\right], \quad S_m^{(0), \bm\theta}[f](x, y) = \sum_{i=1}^n S_{m, i}^{(0), \bm\theta}[f](x, y)
\end{equation*}
so that, given an expansion $f(x, y) = \vb P^{(0), \bm\theta}\vb f^{(0), \bm\theta}$, we have $f(x, y) = \lim_{m \to \infty} S_m^{(0), \bm\theta}[f](x, y)$. Similarly, we define
\begin{align*}
S_{m, i}^{(-1), \bm\theta}[f](x, y) &= \hat f_i^{(-1, h_i)}\phi_i(\theta) + \hat f_{12}^{(-1, h_i)}p_1^{(-1, h_i)}(a_i(\theta)) + \sum_{j=2}^m \left[\hat f_{j1}^{(-1, h_i)}q_j^{(-1, h_i)}(a_i(\theta)) + \hat f_{j2}^{(-1,h_i)}p_j^{(-1,h_i)}(a_i(\theta))\right], \\
S_m^{(-1), \bm\theta}[f](x, y) &= \sum_{i=1}^n S_{m,i}^{(-1),\bm\theta}[f](x, y).
\end{align*}
We define the norm
\begin{equation}
\|g\|_{(-1), \bm\theta}^2  = \sum_{i=1}^n \int_{\theta_i}^{\theta_{i+1}} g(\theta)^2 \frac{1}{\cos a_i(\theta) - h} \dthe,
\end{equation}
given a grid $\bm\theta=(\theta_1,\ldots,\theta_{n+1})\tran$, and $\|g\|_{(0), \bm\theta}^2 = \int_{-\pi}^\pi g(\theta)^2 \dthe$.

\begin{theorem}\label{thm:rate_of_conv_main_rho:all__}
Let $f(\cos\theta, \sin\theta)$ be a $2\pi$-periodic function and take $b \in \{-1, 0\}$ together with the grid $\bm\theta = (\theta_1, \ldots, \theta_{n+1})\tran$, $n \geq 2$. Let $F_i(\theta)$ be the restriction of $F(\theta) := f(\cos\theta,\sin\theta)$ to $[\theta_i,\theta_{i+1}]$. Suppose each $F_i$ is analytic in $[\theta_i, \theta_{i+1}]$ and analytically continuable to 
\[
D_{h_i}^i(\rho_i) := \left\{\theta + \frac{\theta_i+\theta_{i+1}}{2} : \theta \in D_{h_i}(\rho_i)\right\}
\] with $\rho_i > 1$, and assume $F_i$ is bounded in $D_{h_i}^i(\rho_i)$, $i=1,\ldots,n$. Define $\rho := \min_{i=1}^n \rho_i$. Then
\begin{equation}
\left\|f - S_m^{(b), \bm\theta}\right\|_{(b), \bm\theta} = \mathcal O(\rho^{-m}) \quad\text{as}~m \to \infty.
\end{equation}
Similarly, all the coefficients in the expansion are $\mathcal O(\rho^{-m})$ as $m \to \infty$.
\end{theorem}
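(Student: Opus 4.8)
The plan is to reduce everything to the single-element estimate of Theorem~\ref{thm:rate_of_conv_main_rho} by exploiting the fact that both the piecewise norm $\|\cdot\|_{(b),\bm\theta}$ and the partial-sum operator $S_m^{(b),\bm\theta}$ decompose over the elements $E_i$. I would begin with the case $b=0$, which is the cleanest. Since every basis function in $\vb P^{(0),\bm\theta}$ is supported on a single element, the restriction of $S_m^{(0),\bm\theta}[f]$ to $E_i$ is exactly $S_{m,i}^{(0),\bm\theta}[f]$, and after the change of variables $\phi = a_i(\theta)$ this is precisely the single-arc partial sum $S_m^{(0,h_i)}$ of $F_i\circ a_i^{-1}$ on the arc $\Omega_{h_i}$. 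The same change of variables identifies the element contribution to $\|\cdot\|_{(0),\bm\theta}^2$ with the single-arc norm $\|\cdot\|_{(0,h_i)}^2$.

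Next I would invoke Theorem~\ref{thm:rate_of_conv_main_rho} element by element. The hypothesis that $F_i$ is analytic on $[\theta_i,\theta_{i+1}]$ and continuable to $D_{h_i}^i(\rho_i)$ is, after translation by $a_i$, exactly the hypothesis that $F_i\circ a_i^{-1}$ is continuable to $D_{h_i}(\rho_i)$, so Theorem~\ref{thm:rate_of_conv_main_rho} yields $\|F_i - S_m^{(0,h_i)}\|_{(0,h_i)} = \mathcal O(\rho_i^{-m})$ together with the same decay for all coefficients on that element. Summing the squared errors gives
\begin{equation*}
\left\|f - S_m^{(0),\bm\theta}[f]\right\|_{(0),\bm\theta}^2 = \sum_{i=1}^n \left\|F_i - S_m^{(0,h_i)}\right\|_{(0,h_i)}^2 = \sum_{i=1}^n \mathcal O(\rho_i^{-2m}) \le n\,\mathcal O(\rho^{-2m}),
\end{equation*}
and because $n$ is fixed and $\rho = \min_i\rho_i$, taking the square root produces the claimed $\mathcal O(\rho^{-m})$ bound; the coefficient bound is immediate since the $b=0$ coefficients are exactly the per-element single-arc coefficients.

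For $b=-1$ I would transfer the result from $b=0$ through the piecewise connection matrix, exactly as the single-arc case was handled at the end of the proof of Theorem~\ref{thm:rate_of_conv_main_rho}. The coefficients satisfy $R_{(-1)}^{(0),\bm\theta}\hat{\vb f}^{(-1),\bm\theta} = \hat{\vb f}^{(0),\bm\theta}$, and since $R_{(-1)}^{(0),\bm\theta}$ is a $CB^3(1,1;1,0)$ matrix whose block entries are the bounded connection coefficients $a_{ij}^{(0)}, b_{ij}^{(0)}$ with diagonal part asymptotically bounded away from zero, solving this system propagates the $\mathcal O(\rho^{-m})$ decay of $\hat{\vb f}^{(0),\bm\theta}$ to $\hat{\vb f}^{(-1),\bm\theta}$. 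The error bound then follows by again splitting $\|\cdot\|_{(-1),\bm\theta}^2$ over elements and controlling each element's error by the tail of these coefficients. I expect this $b=-1$ step to be the main obstacle: unlike the $b=0$ basis, the hat functions $\phi_i$ couple adjacent elements, so the restriction of $S_m^{(-1),\bm\theta}[f]$ to a single element is no longer simply a single-arc $b=-1$ partial sum, and the decay must be argued through the connection-matrix relation and its asymptotically bounded diagonal rather than by a direct element-wise identification.
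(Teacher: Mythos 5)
Your proposal is correct and takes essentially the same route as the paper: the paper's own proof is the one-line remark that the theorem is ``a simple corollary of Theorem~\ref{thm:rate_of_conv_main_rho} together with Proposition~\ref{prop:piecewise_arc_b0_trans}'', which is precisely your element-wise application of the single-arc estimate (enabled by the pure-translation maps $a_i$) combined with the piecewise assembly and the connection-matrix transfer $R_{(-1)}^{(0),\bm\theta}\hat{\vb f}^{(-1),\bm\theta}=\hat{\vb f}^{(0),\bm\theta}$ for $b=-1$. Your handling of the $b=-1$ case via bounded connection coefficients mirrors exactly the argument the paper already uses at the end of the proof of Theorem~\ref{thm:rate_of_conv_main_rho}, so you have in fact supplied more detail than the paper itself.
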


\begin{proof}
This is a simple corollary of Theorem \ref{thm:rate_of_conv_main_rho} together with Proposition \ref{prop:piecewise_arc_b0_trans}.
\end{proof}

Now let us give an example of Theorem \ref{thm:rate_of_conv_main_rho:all__}. To give further comparisons, we will also compare against the Fourier basis $\vb F := (1, \sin\theta, \cos\theta, \sin2\theta, \cos2\theta, \ldots)$ and a periodic version of the \emph{piecewise integrated Legendre} basis introduced in \cite{knook2024quasi}, denoted $\vb W^{(b), \bm\theta}$ for $b \in \{-1, 0\}$. The precise definition of this basis is given in \ref{app:integ_legen_def}. We can directly compare the analogous regions to $D_h(\rho)$ for the convergence of the coefficients in the $\vb F$ basis and in the $\vb W^{(b), \bm\theta}$ basis. In particular:
\begin{enumerate}
\item If $f(\theta)$ is $2\pi$-periodic, analytic, and bounded in the open strip of half-width $\alpha_F$ around the real axis, then the $n$th Fourier coefficient is $\mathcal O(\e^{-\alpha_F n})$ \cite{wright2015extension}, or equivalently $\mathcal O(\rho_F^{-n})$ where $\rho_F = \log\alpha_F$. We denote this open strip by $H(\rho_F) := \{z \in \mathbb C : |\!\operatorname{Im} z| < \rho_F\}$. 
\item The analogous result to Theorem \ref{thm:rate_of_conv_main_rho:all__} for $\vb W^{(b), \bm\theta}$ assumes that each $F_i$ is analytic in $[\theta_i, \theta_{i+1}]$ and analytically continuable to $E_i(\rho_i) := E_{\rho_i\ell_i/2, [\theta_i, \theta_{i+1}]}^{(\theta_i + \theta_{i+1})/2}$ with $\rho_i > 1$. Using Lemma \ref{lem:seqorthpol_lem_} gives a convergence rate of $\mathcal O(\rho_W^{-m})$ as $m \to \infty$, $\rho_W = \min_{i=1}^n \rho_i$. \colr{Here, $m$ is the polynomial degree to which we take the  approximations.}
\end{enumerate}

\begin{figure}[h!]
\centering
\includegraphics[width=\textwidth]{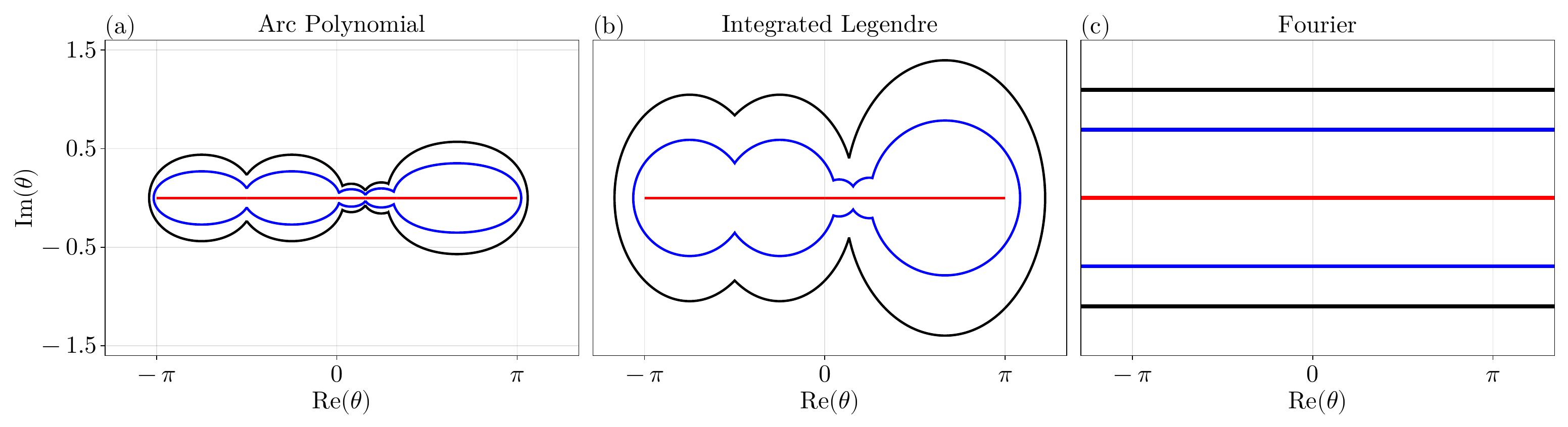}
\caption{Convergence regions for the three bases at $\rho=1$, $\rho=2$, and $\rho=3$. (a) Union of the convergence regions $D_{h_i}^i(\rho)$ for the piecewise arc polynomial basis $\vb P^{(b), \bm\theta}$, $b \in \{-1, 0\}$, for each $\rho \in \{1,2,3\}$ with $\bm\theta = (-\pi,-\pi/2,0,1/2,\pi/3,\pi)\tran$. (b) Union of the convergence regions $E_i(\rho)$ for the periodic piecewise integrated Legendre basis $\vb W^{(b), \bm\theta}$, $b \in \{-1, 0\}$, for each $\rho \in \{1, 2, 3\}$ with $\bm\theta$ as in (a). (c) Convergence regions $H(\log\rho)$ for the Fourier basis $\vb F$ for each $\rho \in \{1, 2, 3\}$; the logarithm is to compare convergence rates all with the same base $\rho$. The disparity in the sizes of the regions between the three bases highlights how functions need to be analytic in a much smaller region for expansions in the arc polynomial basis compared to those in the $\vb W^{(b), \bm\theta}$ and $\vb F$ bases for the same rate of convergence.}\label{fig:convergence_regions}
\end{figure}

We show in Figure \ref{fig:convergence_regions} a comparison of these three regions in the complex $\theta$-plane with the non-uniform grid $\bm\theta=(-\pi,-\pi/2,0,1/2,\pi/3,\pi)\tran$, showing contours for the convergence rates $\mathcal O(1)$, $\mathcal O(2^{-n})$, and $\mathcal O(3^{-n})$. The region in (a) is much smaller than those in (b)--(c), indicating that, to reach the same rate of convergence, expansions in the $\vb P^{(b), \bm\theta}$ basis require a much smaller region of analyticity than those in the $\vb W^{(b), \bm\theta}$ and $\vb F$ bases. To consider the convergence rates for an actual function, consider
\begin{equation}
f(\theta) = \frac{\e^{\sin x}}{\cos(2\theta) - \cosh(1/5)}
\end{equation}
which is singular at $\theta = \pm \i/10$. The results we obtain are shown in Figure \ref{fig:convergence_slopes}, where we use $\bm\theta=(-\pi,0,\pi)\tran$. To compare the bases, we consider the coefficients against both the polynomial degree and the total degrees of freedom. In Figure \ref{fig:convergence_slopes}(a) the points shown are those with the maximum magnitude at the associated polynomial degree, and we see that the $\vb P^{(0), \bm\theta}$ converges at a much faster rate, namely $\mathcal O(\rho^{-n})$ with $\rho \approx 1.5757$, than the $\vb W^{(0), \bm\theta}$ and $\vb F$ bases which converge at the approximate rates $\mathcal O(1.2887^{-n})$ and $\mathcal O(1.1052^{-n})$, respectively. Figure \ref{fig:convergence_slopes}(b) instead plots each individual coefficient's magnitude so that we compare against the degrees of freedom. We see that the Fourier basis converges much slower than the other two bases, but the convergence rate is similar between the $\vb P^{(0), \bm\theta}$ and $\vb W^{(0), \bm\theta}$ bases. Despite this similarity, as we will see in Section \ref{sec:solve_de_} the $\vb P^{(0), \bm\theta}$ \rev{basis} is preferred due to its ability to better represent periodic functions.

\begin{figure}[h!]
\centering
\includegraphics[width=\textwidth]{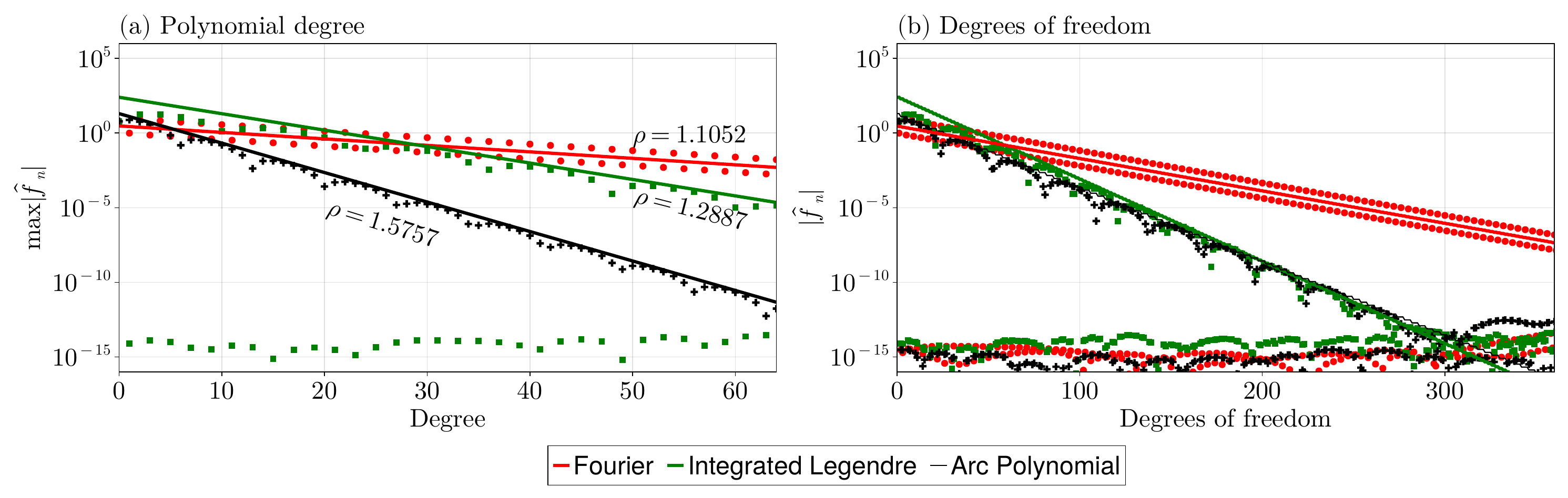}
\caption{Expansion coefficients for $f(\theta)=(\cos(2\theta)-\cosh(1/5))^{-1}$ in the piecewise arc polynomial basis $\vb P^{(0), \bm\theta}$, the periodic piecewise integrated Legendre basis $\vb W^{(0), \bm\theta}$, where $\bm\theta=(-\pi,0,\pi)\tran$, and the Fourier basis $\vb F$. (a) For $\vb P^{(0), \bm\theta}$ the coefficients shown are those with the highest magnitude for each degree across each element, and similarly for $\vb W^{(0), \bm\theta}$. The Fourier coefficients shown are the highest magnitude between the coefficient on $\sin n \theta$ and $\cos n \theta$ for each $n$. The solid lines show the convergence rates $\mathcal O(\rho^{-n})$ using the maximal $\rho$ for each basis; the maximal $\rho$ for the convergence regions for $\vb P^{(0), \bm\theta}$ is $\rho \approx 1.5757$, for $\vb W^{(0), \bm\theta}$ we have $\rho \approx 1.2887$, and for $\vb F$ we have $\rho \approx 1.1052$. We see that the coefficients in the $\vb P^{(0), \bm\theta}$ convergence  faster than those in the other two bases. (b) Similar to (a) except we plot each coefficient's magnitude instead of aggregating over each polynomial degree, thus comparing degrees of freedom. The Fourier basis converges much slower than the other two bases, and the $\vb P^{(0), \bm\theta}$ and $\vb W^{(0), \bm\theta}$ have a similar convergence rate.}\label{fig:convergence_slopes}
\end{figure} 

\section{Solving differential equations}\label{sec:solve_de_}

The primary motivation for this work is the solution of piecewise-smooth differential equations with periodic boundary conditions. The Fourier basis, as is typically used for problems with periodic boundary conditions \cite{boyd2001chebyshev}, is not appropriate for these problems. In particular, for solutions not analytic in a sufficiently large strip around the real axis, the number of coefficients needed to resolve the solution to machine precision is large. Piecewise bases such as our $\vb P^{(0),\bm\theta}$ are more appropriate for such equations since the elements can be defined to align with the discontinuities, giving a smooth function inside each element. For this reason, in what follows, we compare results when using the piecewise arc polynomial basis $\vb P^{(b),\bm\theta}$, the periodic piecewise integrated Legendre basis $\vb W^{(b),\bm\theta}$, and the Fourier basis $\vb F$.

\paragraph{Code availability} The code that implements our bases and produces the results in the examples that follow is provided in \texttt{ArcPolynomials.jl} \cite{arc}.

\subsection{Screened Poisson equation}\label{sec:screened_poisson}

Our first example considers the screened Poisson equation in one dimension  \cite{knook2024quasi}
\begin{equation}\label{eq:screened_poisson}
-\odv[2]{u(\theta)}{\theta} + \omega^2 u(\theta) = f(\theta)
\end{equation}
with periodic boundary conditions, where $f$ is $2\pi$-periodic and $\omega > 1$ is a frequency parameter. In this case, the Fourier basis is preferable \colr{when $f$ is analytic in a sufficiently large horizontal strip centred on the real axis}, otherwise the piecewise bases will be preferable. We consider general $f$ to start, but use a discontinuous initial condition $f(\theta) = 2 + \sgn(|\theta| - \pi/3)$ for a practical example afterwards. To solve \eqref{eq:screened_poisson}, we first put it into weak form. Taking $v \in H_{\mathrm{per}}^1(I)$, where $I := [-\pi, \pi]$, $H_{\mathrm{per}}^s(I) := \{v \in H^s(I) : v~\textnormal{is}~2\pi\textnormal{-periodic}\}$, $H^s(I) := W^{s, 2}(I)$ is the typical Sobolev space \cite{adams2003sobolev, knook2024quasi}, the weak form of \eqref{eq:screened_poisson} is: find $u \in H_{\mathrm{per}}^1(I)$ such that
\begin{equation}\label{eq:weak_screened_poisson}
\inp{v'}{u'} + \omega^2 \inp{v}{u} = \inp{v}{f} \quad \forall v \in H_{\mathrm{per}}^1(I),
\end{equation}
where $f \in H_{\mathrm{per}}^{-1}(I)$ and $\inp{f}{g} := \int_{-\pi}^\pi f(\theta)g(\theta) \dthe$. Focusing on the $\vb P^{(-1), \bm\theta}$ basis first for a given grid $\bm\theta$, we expand $v(\theta) = \vb P^{(-1), \bm\theta}\vb v$ and $u(\theta) = \vb P^{(-1), \bm\theta}\vb u$ . Since $f \in H_{\mathrm{per}}^{-1}(I)$, allowing for discontinuities in $f$, we expand $f$ in the $b=0$ basis rather than the $b=-1$ basis, giving $f(\theta) = \vb P^{(0), \bm\theta}\vb f$ so that \eqref{eq:weak_screened_poisson} becomes
\begin{equation}\label{eq:weak_screened_poisson_vec_v}
\vb v\tran\left(-\Delta^{(-1), \bm\theta} + \omega^2 M^{(-1), \bm\theta}\right)\vb u = \vb v\tran\left[R_{(-1)}^{(0), \bm\theta}\right]\tran M^{(0), \bm\theta}\vb f.
\end{equation}
Enforcing \eqref{eq:weak_screened_poisson_vec_v} for all $\vb v$ gives
\begin{equation}\label{eq:weak_screened_poisson_vec}
\left(-\Delta^{(-1), \bm\theta} + \omega^2 M^{(-1), \bm\theta}\right)\vb u = \left[R_{(-1)}^{(0), \bm\theta}\right]\tran M^{(0), \bm\theta}\vb f.
\end{equation}
A similar equation to \eqref{eq:weak_screened_poisson_vec} holds for the $\vb W^{(-1), \bm\theta}$ basis. In the Fourier basis we work with the strong form \eqref{eq:screened_poisson}, giving
\begin{equation}\label{eq:screened_poisson_fourier}
\left(-D^2 + \omega^2I\right)\vb u_{\vb F} = \vb f_{\vb F},
\end{equation}
where $\d\vb F/\d\theta = \vb FD$, $u(\theta) = \vb F(\theta)\vb u_{\vb F}$, and $f(\theta) = \vb F(\theta)\vb f_{\vb F}$. To solve these infinite dimensional systems, we need to project them into a finite dimensional space. To this end, define the $n \times \infty$ projection operator $\mathcal P_n = (I_n, \vb 0)$ so that, given a matrix $A \in \mathbb R^{\infty \times \infty}$, $\mathcal P_nA\mathcal P_n\tran \in \mathbb R^{n \times n}$ is the $n \times n$ principal finite section of $A$. We call $n$ the \emph{truncation size}. The solution $\vb u$ to \eqref{eq:weak_screened_poisson_vec} is thus approximated by the solution to the projected problem, namely $\vb u \approx (\tilde{\vb u}\tran, \vb 0)\tran$ where
\begin{equation}\label{eq:weak_screened_poisson_projection_sol}
\tilde{\vb u}\tran =  \left[\mathcal P_n\left(\colr{-}\Delta^{(-1), \bm\theta} + \omega^2M^{(-1), \bm\theta}\right)\mathcal P_n\tran\right]^{-1}\mathcal P_n\left(\left[R_{(-1)}^{(0), \bm\theta}\right]\tran M^{(0), \bm\theta}\vb f\right)\mathcal P_n\tran,
\end{equation}
and similarly in the $\vb W^{(-1), \bm\theta}$ basis. Note that, in practice, $\vb f$ is finite dimensional since we only compute as many coefficients as are needed to resolve $f$ to machine precision; the same holds for other function expansions in the examples that follow. The system \eqref{eq:screened_poisson_fourier} does not require such a projection since $D$ is block diagonal, allowing the system to be solved exactly as $\vb f_{\vb F}$ is finite dimensional. As discussed in Section \ref{sec:factor_cbbb}, we compute $\tilde{\vb u}\tran$ in \eqref{eq:weak_screened_poisson_projection_sol} by using the reverse Cholesky factorisation of $\mathcal P_n(-\Delta^{(-1), \bm\theta} + \omega^2 M^{(-1), \bm\theta})\mathcal P_n\tran$, and similarly for the truncated solution in the $\vb W^{(-1), \bm\theta}$ basis; \colr{note that, for problems where the resulting system is not symmetric positive definite, we can not apply the reverse Cholesky factorisation and so cannot solve the problem in optimal complexity.}

To now make this discussion concrete, we fix $\omega = 3/2$ and use the discontinuous function 
\begin{equation}\label{eq:discont_f_}
f(\theta) = 2 + \sgn\left(|\theta| - \frac{\pi}{3}\right), 
\end{equation}
and our grid is $10$ equally spaced points between $-\pi$ and $\pi$. To machine precision, $f$ requires $9$ coefficients in the $\vb P^{(-1), \bm\theta}$ and $\vb W^{(-1), \bm\theta}$ bases, and the discontinuity in $f$ means that we cannot expand $f$ in the $\vb F$ basis, hence we only consider $\vb P^{(-1),\bm\theta}$ and $\vb W^{(-1),\bm\theta}$ in what follows. Using these expansion lengths, we use the truncation size $n = 144$ for both bases, where this $n$ is the number of coefficients needed plus $15M$ where $M$ is the number of elements; this extra $15M$ comes from needing to account for the block bandwidths from the products in \eqref{eq:weak_screened_poisson_projection_sol}. 

\begin{remark}
The next examples in this section that do not involve a discontinuity will only need a correction of $2M$ due to Theorem \ref{thm:trigpolyexpans22} since, as a consequence, the $\vb P^{(-1), \bm\theta}$ solution should map trigonometric polynomials back into trigonometric polynomials, although the same cannot necessarily be said for the $\vb W^{(-1), \bm\theta}$ basis. 
\end{remark}

We show the solution in each basis in Figure \ref{fig:screened_poisson}, as well as the derivatives for each solution up to the second derivative, using 
\begin{equation}\label{eq:derivative_formulae}
\odv*[2]{\vb P^{(-1), \bm\theta}}{\theta} = \vb P^{(0), \bm\theta}D_{(-1)}^{(0), \bm\theta}\left[R_{(-1)}^{(0), \bm\theta}\right]^{-1}D_{(-1)}^{(0), \bm\theta},
\end{equation}
with a similar result for $\vb W^{(-1), \bm\theta}$. We truncate the matrices in \eqref{eq:derivative_formulae} using $\mathcal P_{15M}$ since, for larger $n$, the condition number of $R_{(-1)}^{(0), \bm\theta}$ becomes too large to get accurate derivatives when applying its inverse. \colr{We show the differences between the numerical solutions and the exact solution in Figure \ref{fig:screened_poisson}, where we find that the errors are comparable between each basis.}

\begin{figure}[h!]
\centering
\includegraphics[width=\textwidth]{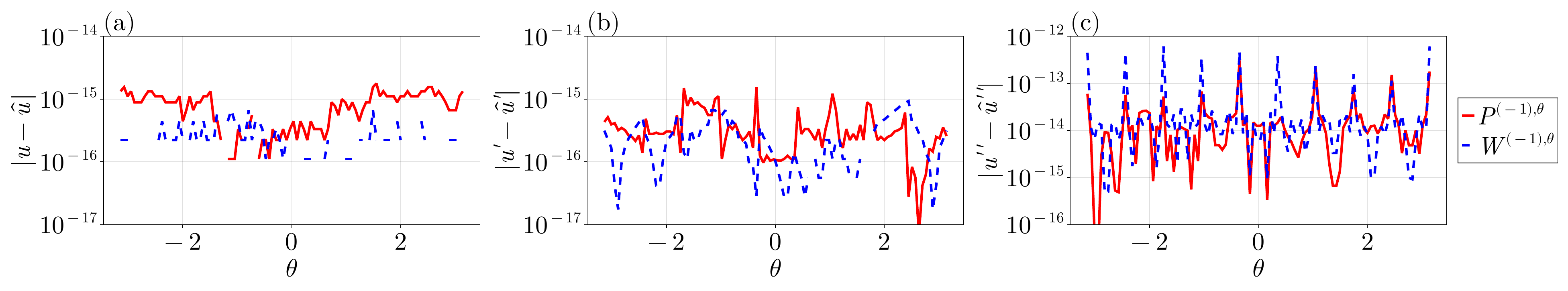}
\caption{\colr{Errors in the solutions and derivatives for the screened Poisson equation $-u'' + \omega^2 u = f$ \eqref{eq:screened_poisson} with $\omega = 3/2$, $f(\theta) = 2 +\sgn(|\theta|-\pi/3)$, and $\bm\theta$ is defined by $10$ equally spaced points between $-\pi$ and $\pi$, using the $\vb P^{(-1), \bm\theta}$ (red, solid) and $\vb W^{(-1), \bm\theta}$ (blue, dashed) bases, with errors computed using solutions and derivatives computed from the exact solution. The truncation sizes for $\vb P^{(-1), \bm\theta}$ and $\vb W^{(-1), \bm\theta}$ are $n = 144$. These results show that the two solutions do agree with minimal error.}}\label{fig:screened_poisson}
\end{figure}

\subsection{Heat Equation}

The second problem we consider is the heat equation,
\begin{equation}\label{eq:heat_eqn}
\pdv{u(\theta, t)}{t} = \pdv[2]{u(\theta, t)}{\theta}.
\end{equation}
The solution to \eqref{eq:heat_eqn} is governed by, using a similar argument to the screened Poisson example, 
\begin{equation}\label{eq:weak_heat_eqn_}
M^{(-1), \bm\theta} \odv{\vb u}{t} = \Delta^{(-1),\bm\theta}\vb u
\end{equation}
in the $\vb P^{(-1),\bm\theta}$ basis, with a similar result for the $\vb W^{(-1), \bm\theta}$ basis. To avoid discussing complications related to timestepping, we solve these systems using the matrix exponential, for example $\vb u(t) = \exp([M^{(-1), \bm\theta}]^{-1}\Delta^{(-1), \bm\theta}t)\vb u(0)$, computing $\vb u(0)$ from the initial condition. We compute the matrix exponential using ExponentialUtilities.jl \cite{ExponentialUtilities.jl2024}. The initial condition over $[-\pi,\pi]$ is given by 
\begin{equation}\label{eq:hat_fnc_approx}
u(\theta, 0) = \begin{cases} 0 & -\pi \leq \theta < -\pi/4-\varepsilon,\\
1 + (\theta-\varepsilon+\pi/4)/(2\varepsilon) & -\pi/4-\varepsilon\leq \theta<-\pi/4+\varepsilon, \\
2 + (\theta + \varepsilon - \pi/4)/\pi & -\pi/4+\varepsilon\leq \theta<\pi/4-\varepsilon, \\
2-(\theta+\varepsilon-\pi/4)/\varepsilon & \pi/4-\varepsilon\leq \theta<\pi/4+\varepsilon, \\
0 & \pi/4+\varepsilon\leq \theta \leq \pi,
\end{cases}
\end{equation}
with $\varepsilon = 0.005$, which is a continuous approximation to the function $\chi$ that is zero outside of $[-\pi/4, \pi/4]$ and linearly increases from $1$ to $2$ over $[-\pi/4,\pi/4]$. We do not use the Fourier basis in this example as \eqref{eq:hat_fnc_approx} requires too many coefficients to approximate. We use the grid $\bm\theta=(-\pi,-\pi/4-\varepsilon,-\pi/4+\varepsilon,\pi/4-\varepsilon,\pi/4+\varepsilon,\pi)\tran$, leading to truncation sizes of $n=55$ and $n=15$ for the $\vb P^{(-1), \bm\theta}$ and $\vb W^{(-1),\bm\theta}$ bases, respectively.

\begin{remark}
Instead of using an approximation to $\chi$, we could expand $\chi$ in the $b=0$ basis so that instead of \eqref{eq:weak_heat_eqn_} we have
\begin{equation}
M^{(0), \bm\theta}\odv{\vb u}{t} = -\left(\odv*{\vb P^{(0),\bm\theta}}{\theta}\right)\tran\left(\odv*{\vb P^{(0),\bm\theta}}{\theta}\right)\vb u.
\end{equation}
The complication with this approach is in the computation of the derivatives, since we have not defined a $b = 1$ basis to represent the derivatives in; this basis could be defined using $b=1$ polynomials together with delta functions, but we do not consider defining $\vb P^{(1), \bm\theta}$ here. We can use
\begin{equation}
-\left(\odv*{\vb P^{(0),\bm\theta}}{\theta}\right)\tran\left(\odv*{\vb P^{(0),\bm\theta}}{\theta}\right) = -\left(R_{(-1)}^{(0), \bm\theta}\right)^{-\mkern-1.5mu\mathsf T}\left(D_{(-1)}^{(0),\bm\theta}\right)\tran M^{(0), \bm\theta} D_{(-1)}^{(0),\bm\theta} \left(R_{(-1)}^{(0), \bm\theta}\right)^{-1},
\end{equation}
however we end up needing many coefficients to resolve the solution accurately due to the need to compute $[R_{(-1)}^{(0), \bm\theta}]^{-1}$ for the matrix exponential. One way around this would be to take a single backward Euler step so that the discontinuity is smoothed out, followed by the matrix exponential for the remaining steps, or to simply use timestepping for the entire integration. We do not consider these alternatives here.
\end{remark}

Rather than only look at the solution, we use this example to analyse the differences between the $\vb P^{(-1),\bm\theta}$ and $\vb W^{(-1),\bm\theta}$ bases. For a given numerical solution $\hat u(\theta, t)$, define the \emph{periodic drift} for its $d$th derivative with respect to $\theta$ to be $\mathcal E_{\mathrm{per}}^{(d)}(t) := |\hat u^{(d)}(\pi, t) - \hat u^{(d)}(-\pi, t)|$, where $\hat u^{(d)}(\theta, t)$ denotes an estimate for the $d$th derivative with respect to $\theta$ at $(\theta, t)$. The drift $\mathcal E_{\mathrm{per}}^{(d)}(t)$ measures how $\hat u^{(d)}(\theta, t)$ loses $2\pi$-periodicity as time increases. The results are shown in Figure \ref{fig:heat_equation_1}. We see that the drifts in the $\vb P^{(-1), \bm\theta}$ basis decay to machine precision over time, as expected since the solution goes to a constant for large time. In contrast, the drifts for the $\vb W^{(-1),\bm\theta}$ basis remain large as time increases, showing that this basis fails to preserve periodicity unlike in the $\vb P^{(-1), \bm\theta}$ basis.

\begin{figure}[h!]
\centering
\includegraphics[width=\textwidth]{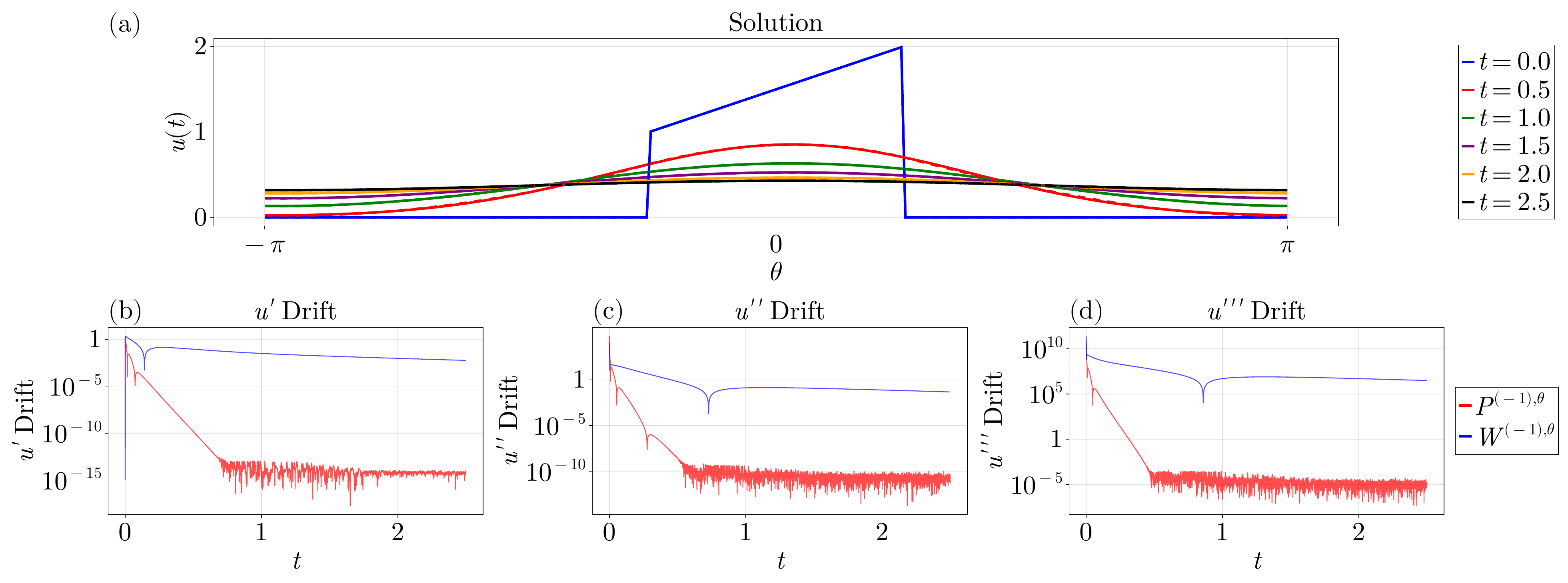}
\caption{Solutions and drifts for the heat equation $\partial_t u = \partial_{\theta\theta} u$ with $u(\theta, 0)$ given by \eqref{eq:hat_fnc_approx}. The grid is given by $\bm\theta=(-\pi,-\pi/4-\varepsilon,-\pi/4+\varepsilon,\pi/4-\varepsilon,\pi/4+\varepsilon,\pi)\tran$ where $\varepsilon=0.005$. The truncation sizes are $n=55$ and $n=15$ for the $\vb P^{(-1),\bm\theta}$ and $\vb W^{(-1),\bm\theta}$ bases, respectively. Primes indicate derivatives with respect to $\theta$. The results show that, while the drift in the solutions with the $\vb P^{(-1),\bm\theta}$ basis decay to zero over time as the solution spreads out to a constant, those in the $\vb W^{(-1),\bm\theta}$ basis remain large.}
\label{fig:heat_equation_1}
\end{figure}

The results in Figure \ref{fig:heat_equation_1} can be explained in terms of eigenfunctions. With the help of Theorem \ref{thm:trigpolyexpans22} and in contrast to the $\vb W^{(-1),\bm\theta}$ basis, it can be shown that the span of the $\vb P^{(-1),\bm\theta}$ basis   contains the low order eigenfunctions of constant coefficient differential operators. This fact implies that the $\vb P^{(-1),\bm\theta}$ basis leads to improved smoothness properties when integrating \eqref{eq:heat_eqn} over time, since the discontinuities in the solution are captured by the spurious high order eigenfunctions of the discretisation. These rapidly dissipate out, leaving only the true low order smooth eigenfunctions. On the other hand, $\vb W^{(-1),\bm\theta}$ only contains the constant eigenfunction, and the remaining eigenfunctions of the discretisation are not smooth. 
We show results with truncation sizes $n=20$ and $n=5$ for the $\vb P^{(-1),\bm\theta}$ and $\vb W^{(-1),\bm\theta}$ bases, respectively, which corresponds to roughly a third of the total number of coefficients needed to resolve the solution, for both bases in Figure \ref{fig:heat_equation_2}. We see in Figure \ref{fig:heat_equation_2} that, despite using fewer coefficients compared to the amount needed to accurately resolve the initial conditions, the drift still decays for the $\vb P^{(-1),\bm\theta}$ basis as expected, while the $\vb W^{(-1),\bm\theta}$ basis leads to more numerical error due to the eigenfunctions of the discretisation not being smooth.

\begin{figure}[h!]
\centering
\includegraphics[width=\textwidth]{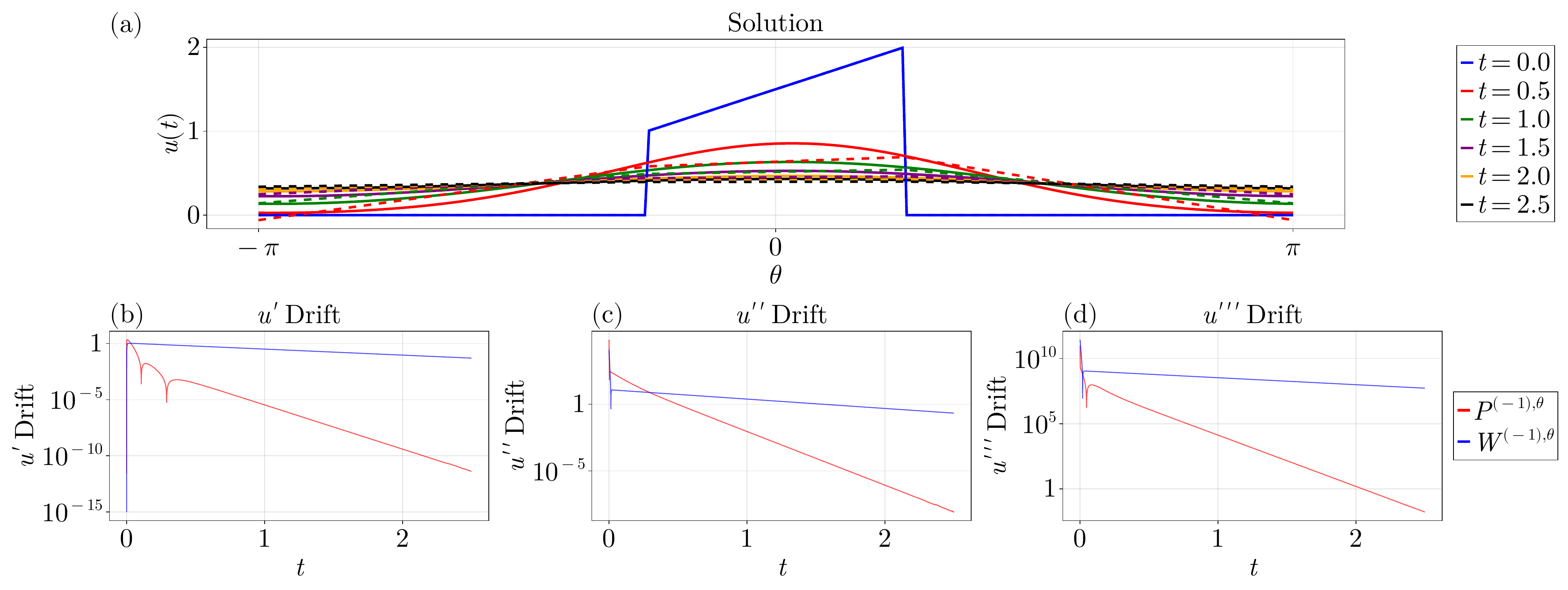}
\caption{Similar to Figure \ref{fig:heat_equation_1}, except now underresolving with truncation size $n=20$ and $n=5$ for the $\vb P^{(-1),\bm\theta}$ and $\vb W^{(-1),\bm\theta}$, respectively; these truncation sizes correspond to using, roughly, only a third of the number of coefficients needed to accurately resolve the solution. The results show that the $\vb P^{(-1),\bm\theta}$ basis still gives accurate results, while the solution with the $\vb W^{(-1),\bm\theta}$ basis has more numerical error in the solution, showing that the drift is causing a loss in accuracy.}
\label{fig:heat_equation_2}
\end{figure}

\rev{We can give a numerical demonstration of the differences in the eigenvalues of the piecewise arc and integrated Legendre bases. Consider the eigenvalue problem $u'' = \lambda u$. In the $\vb P^{(-1), \bm\theta}$ basis, putting this into weak form gives the generalised eigenvalue problem
\begin{align}
-\Delta^{(-1), \bm\theta} \vb u &= \lambda M^{(-1), \bm\theta} \vb u,
\end{align}
and similarly for the $\vb W^{(-1), \bm\theta}$ basis. Using $\bm\theta = (-\pi,-\pi/3,\pi/3,\pi)\tran$ and truncating the matrices to their $12 \times 12$ principal finite sections, we compute the eigenfunctions and plot the first four in Figure \ref{fig:eigval_ex}. We see from Figure \ref{fig:eigval_ex} (a) and (c) that the eigenfunctions computed in the $\vb P^{(-1), \bm\theta}$ basis are in $C^\infty$. In contrast, while the eigenfunctions in the $\vb W^{(-1), \bm\theta}$ basis are smooth as we see in Figure \ref{fig:eigval_ex}(b), the derivatives are not smooth, shown by the sharp jumps in the red curve in Figure \ref{fig:eigval_ex}(d). These differences between the curves show the key differences between the bases, and why the arc polynomial basis has better time integration properties as argued previously for Figures \ref{fig:heat_equation_1}--\ref{fig:heat_equation_2}.
}

\begin{figure}[h!]
\centering
\includegraphics[width=\textwidth]{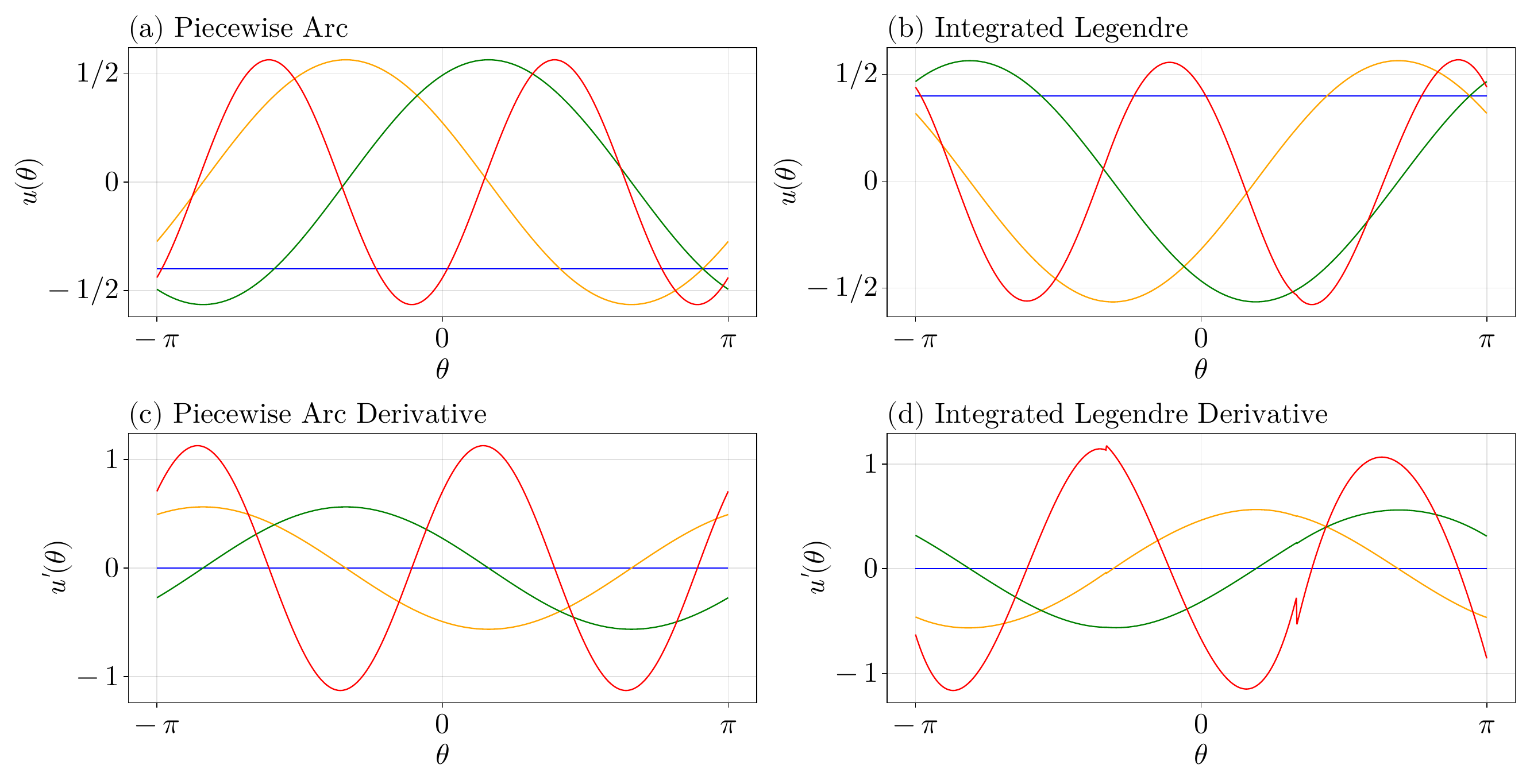}
\caption{\rev{The first four eigenfunctions for the eigenvalue problem $u'' = \lambda u$ in the $\vb P^{(-1), \bm\theta}$ and $\vb W^{(-1), \bm\theta}$ bases, computed with $\bm\theta=(-\pi,-\pi/3,\pi/3,\pi)\tran$ and with a $12 \times 12$ truncation. The blue curve is for $\lambda=0$, the orange and green curves are for $\lambda = 1$, and the red curve is for $\lambda = 4$. The curves plotted in (a) are for the eigenfunctions $u(\theta) = \vb P^{(-1), \bm\theta}\vb u$, and similarly for (b), while in (c)--(d) we plot $u'(\theta)$ instead. The plots show that while the eigenfunctions for the $\vb P^{(-1), \bm\theta}$ basis are in $C^\infty$, those in the $\vb W^{(-1), \bm\theta}$ basis are only continuous, explaining why we see the difference in time integration properties in Figures \ref{fig:heat_equation_1}--\ref{fig:heat_equation_2}.}}
\label{fig:eigval_ex}
\end{figure}

\subsection{Linear Schr\"odinger equation}
\label{sec:schrodinger}
Our third example is the linear Schr\"odinger equation,
\begin{equation}
\i\pdv{u(\theta, t)}{t} = \pdv[2]{u(\theta, t)}{\theta}.\label{eq:schrodinger}
\end{equation}
Using a similar argument to the screened Poisson example, the solution to \eqref{eq:schrodinger} is governed by 
\begin{equation}\label{eq:schrodinger_arc}
M^{(-1), \bm\theta}\odv{\vb u}{t} = -\i\Delta^{(-1), \bm\theta}\vb u
\end{equation}
in the $\vb P^{(-1), \bm\theta}$ basis, with a similar result for the $\vb W^{(-1), \bm\theta}$ basis. The Fourier basis leads to $\partial_t\vb u_{\vb F} = -\i D^2\vb u_{\vb F}$. Similarly to the heat equation example, we use the matrix exponential to solve these systems. We use the grid $\bm\theta = (-\pi,-\pi/3,\pi/3,\pi)\tran$. For this example, we consider the initial condition
\begin{equation}\label{eq:schrodinger_ic}
u(\theta, 0) = \sin(7\theta) + \e^{-\cos\theta}.
\end{equation}
The truncation sizes for $\vb P^{(-1), \bm\theta}$ and $\vb W^{(-1), \bm\theta}$ are $n = 60$ and $n = 93$, respectively. We can compute the matrix exponential for the Fourier solution exactly since $D$ is block diagonal.

The solutions and the drifts for this example are shown in Figure \ref{fig:schrodinger}.  We see that, unlike the $\vb P^{(-1), \bm\theta}$ and $\vb F$ bases, the drift in the $\vb W^{(-1), \bm\theta}$ solution and its derivatives increases over time, showing a disadvantage of the $\vb W^{(-1),\bm\theta}$ basis compared to our $\vb P^{(-1), \bm\theta}$ basis.

\begin{figure}[h!]
\centering
\includegraphics[width=\textwidth]{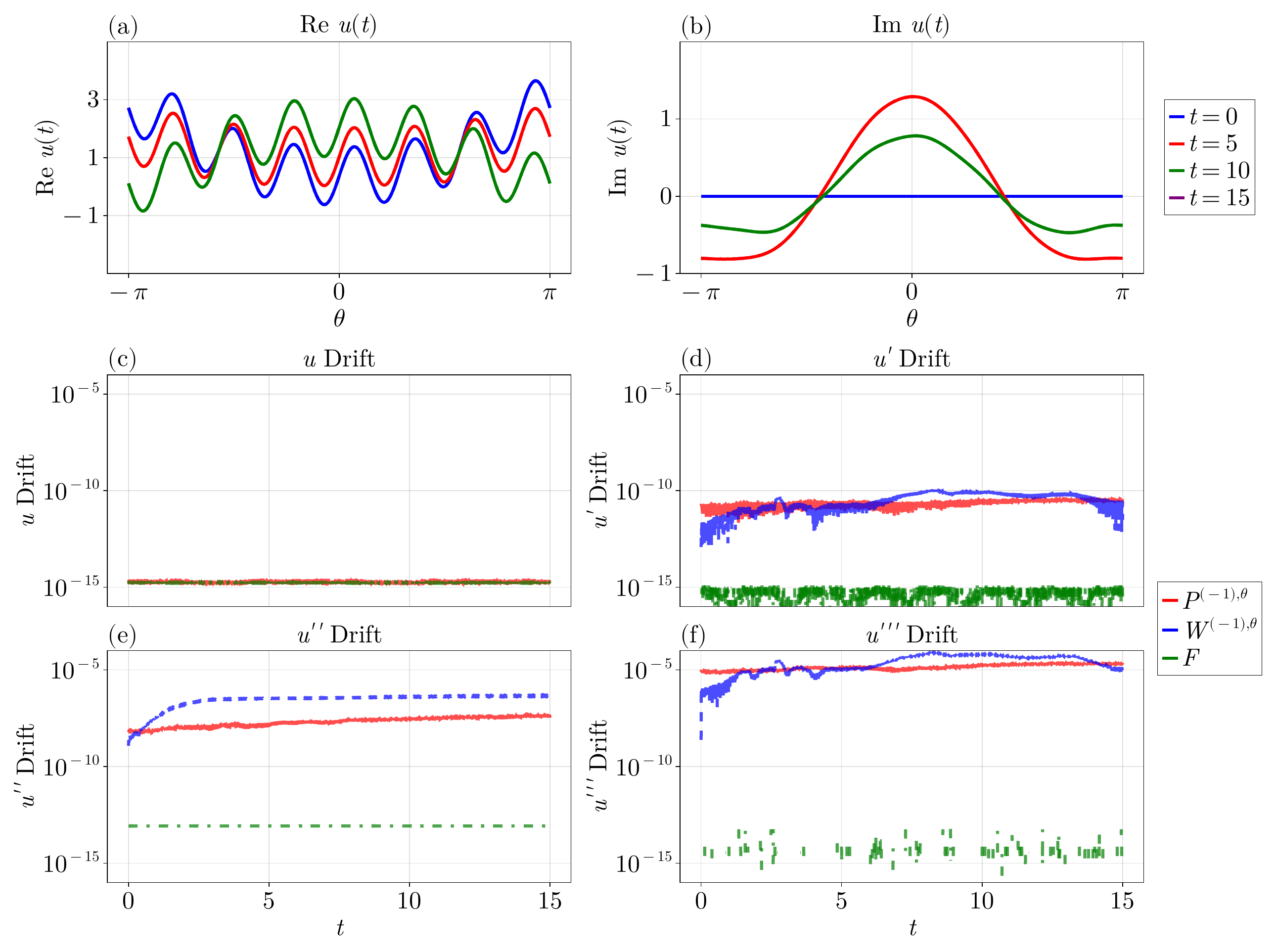}
\caption{Solutions and drifts for Schr\"odinger's equation $\i\partial_t u = \partial_{\theta\theta} u$ with $u(\theta, 0) =  \sin(7\theta) + \e^{-\cos\theta}$ and truncation sizes $n = 60$ and $n = 93$ for the $\vb P^{(-1), \bm\theta}$ and $\vb W^{(-1), \bm\theta}$ bases, respectively, where $\bm\theta = (-\pi,-\pi/3,\pi/3,\pi)\tran$. Primes indicate derivatives with respect to $\theta$. The results show that the drift in the $\vb W^{(-1), \bm\theta}$ solution worsens over time, unlike the $\vb P^{(-1), \bm\theta}$ and $\vb F$ solutions.}\label{fig:schrodinger}
\end{figure}

\subsection{Convection-diffusion equation}

Next we consider the convection-diffusion equation
\begin{equation}
\pdv{u(\theta, t)}{t} = \pdv[2]{u(\theta, t)}{\theta} - v(\theta)\pdv{u(\theta, t)}{\theta}\label{eq:convdiffeq}
\end{equation}
with periodic boundary conditions, where $v(\theta) = -\sin(\theta)/1000$ and the initial condition in $[-\pi, \pi]$ is given by 
\begin{equation}\label{eq:convection_diff_eq_ic}
u(\theta, 0) = \begin{cases} \e^{-\cos4\theta}\sin(3\theta) & |\theta| \leq \pi/4, \\ \e^{-\cos4\theta}\sin(\theta) & \textnormal {otherwise}, \end{cases}
\end{equation}
and defined outside of $[-\pi, \pi]$ through its periodic extension. This is a case which the Fourier basis cannot easily handle due to the derivative discontinuity at $|\theta| = \pi/4$. The piecewise bases can handle this easily by placing the element intersections at the discontinuity, in particular we use the grid $\bm\theta=(-\pi,-\pi/4,\pi/4,\pi)\tran$. Putting the equation into weak form and letting $J_{v}^{(0), \bm\theta}$ be the matrix such that $v(\theta)\vb P^{(0), \bm\theta} = \vb P^{(0), \bm\theta}J_{v}^{(0), \bm\theta}$, we obtain
\begin{equation}\label{eq:conviekqpdifok}
M^{(-1), \bm\theta}\odv{\vb u}{t} = \left[\Delta^{(-1)} - \left[R_{(-1)}^{(0), \bm\theta}\right]\tran M^{(0), \bm\theta}J_{v}^{(0), \bm\theta}D_{(-1)}^{(0),\bm\theta}\right]\vb u,
\end{equation}
with a similar result for the $\vb W^{(-1), \bm\theta}$ basis. We derive this multiplication matrix $J_{v}^{(0), \bm\theta}$ in \ref{app:multmat}. We use the matrix exponential to integrate \eqref{eq:conviekqpdifok}.

The Fourier basis in this case leads to an expansion with $179,956$ coefficients to achieve machine precision, and so it is \rev{prohibitively expensive to use} $\vb F$ for this example. The truncation sizes we use for $\vb P^{(-1), \bm\theta}$ and $\vb W^{(-1), \bm\theta}$ are $n=177$ and $n=216$, respectively. The solutions we obtain up to $t = 2.5$ are shown in Figure \ref{fig:conv_diff__}, where we again show the drift for the solutions. The drifts for $u$ and $\partial u/\partial \theta$ are not shown as they are all negligible or zero. We see that the drift for $\partial^2 u/\partial \theta^2$ is roughly machine precision for the $\vb P^{(-1), \bm\theta}$ basis, while it is much larger in the $\vb W^{(-1), \bm\theta}$ basis. The drifts are comparable for $\partial^3 u/\partial \theta^3$, although the $\vb P^{(-1), \bm\theta}$ curve has less drift. Thus, similar to Figure \ref{fig:schrodinger}, we see that the drift properties are much better in the $\vb P^{(-1), \bm\theta}$ than in the $\vb W^{(-1), \bm\theta}$ basis. \colr{Note that these curves for the drifts are reminiscent of those seen in Figure \ref{fig:heat_equation_1}, and can be similarly explained in terms of eigenfunctions due to the derivative discontinuities in the initial condition.}

\begin{figure}[h!]
\centering
\includegraphics[width=\textwidth]{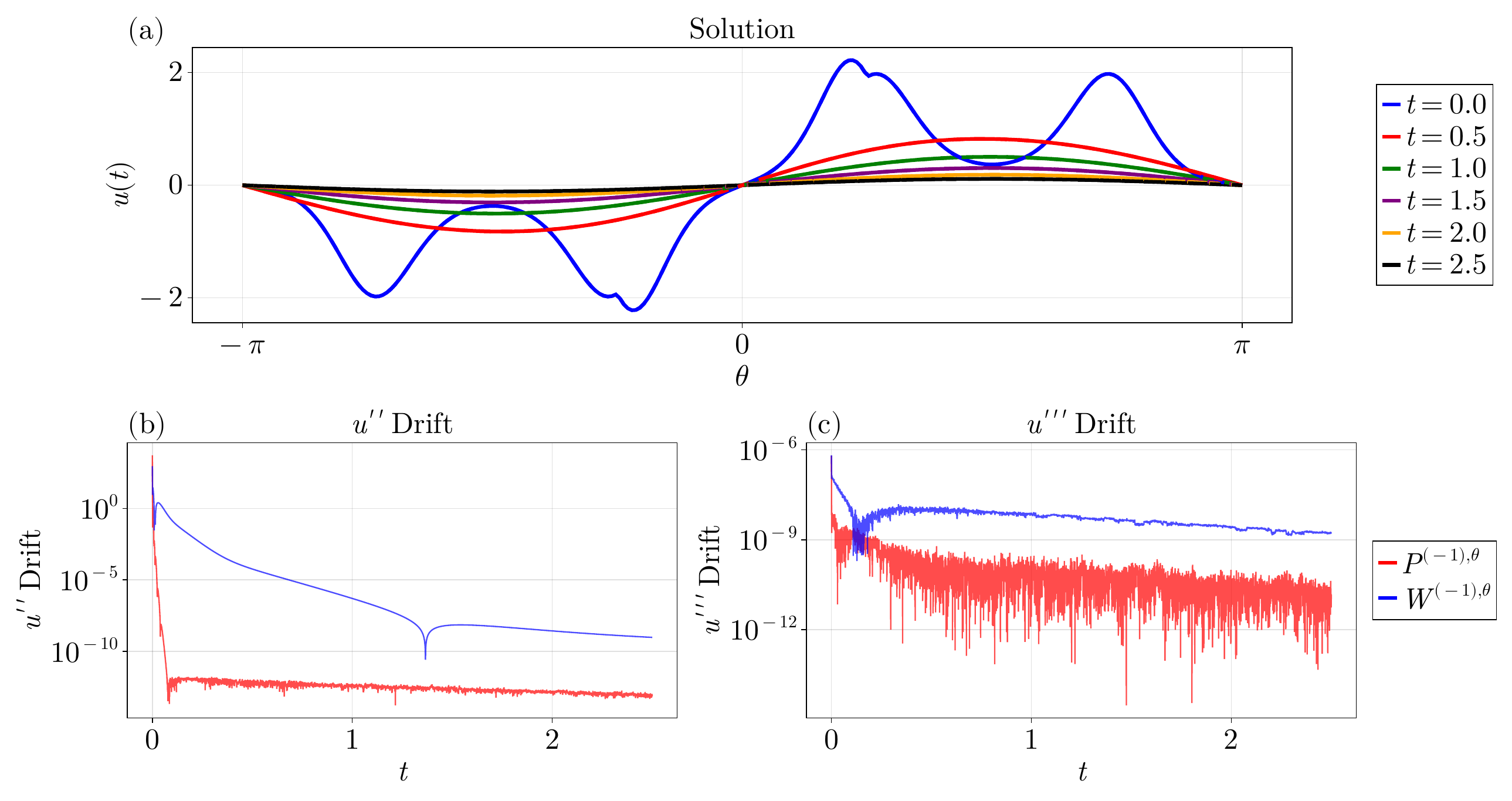}
\caption{Drifts for the solution of the convection-diffusion equation $\partial_t u = \partial_{\theta\theta} u - v\partial_\theta u$ \eqref{eq:convdiffeq} with periodic bundary conditions, where $v(\theta) = -\sin(\theta)/1000$ and $u(\theta, 0) = \e^{-\cos 4\theta} s(\theta)$, where $s(\theta) = \sin(3\theta)$ if $|\theta| \leq \pi/4$ and $s(\theta) = \sin\theta$ otherwise. The grid is $\bm\theta=(-\pi,-\pi/4,\pi/4,\pi)\tran$ and the truncation sizes are $n=60$ and $n=93$ for the $\vb P^{(-1), \bm\theta}$ and $\vb W^{(-1), \bm\theta}$ bases, respectively. Primes indicate derivatives with respect to $\theta$. The plots show that, similar to Figure \ref{fig:schrodinger}, the drift in the $\vb P^{(-1), \bm\theta}$ basis is much smaller than in the $\vb W^{(-1), \bm\theta}$ basis.}\label{fig:conv_diff__}
\end{figure}

\subsection{Complexity}\label{sec:complexity}

We close this section by considering the performance of our method, analysing the computational time required for (1) building and factorising the left-hand side of \eqref{eq:weak_screened_poisson_vec}, and (2) solving the system \eqref{eq:weak_screened_poisson_vec}. We analyse this performance using the same problem as in Section \ref{eq:screened_poisson}, except using $f(\theta) = \exp(\cos(\theta))$ for simplicity to avoid the need to include a grid point to accommodate the discontinuity in $f(\theta) = 2 + \sgn(|\theta|-\pi/3)$ in \eqref{eq:discont_f_}; we still expand $f$ in the $b=0$ basis to keep the examples similar. The results we obtain are shown in Figure \ref{fig:timing_results}, where we observe the optimal complexity in both parts of the solution process, with all lines showing a linear slope against the degrees of freedom used. The performance in building the discretisations with the $\vb W^{(-1), \bm\theta}$ basis is significantly better than in the $\vb P^{(-1), \bm\theta}$ basis. However, these can be reused for other differential equations on the same grid $\bm\theta$ and we emphasise that, for problems involving temporal integration, the build cost is only incurred once instead of at each step.
Furthermore, as argued in Section \ref{sec:analysis__}, the $\vb P^{(-1), \bm\theta}$ basis requires fewer degrees of freedom for an accurate solution than in the $\vb W^{(-1), \bm\theta}$ basis, making this difference less important.  There is still room to improve on the performance of our method significantly, for example improving some of the underlying code used for the semiclassical Jacobi polynomials \cite{SemiPoly.jl2024}, however we do not explore that here as the important part of Figure \ref{fig:timing_results} is the optimal complexity result. 

\begin{figure}[h!]
\centering
\includegraphics[width=\textwidth]{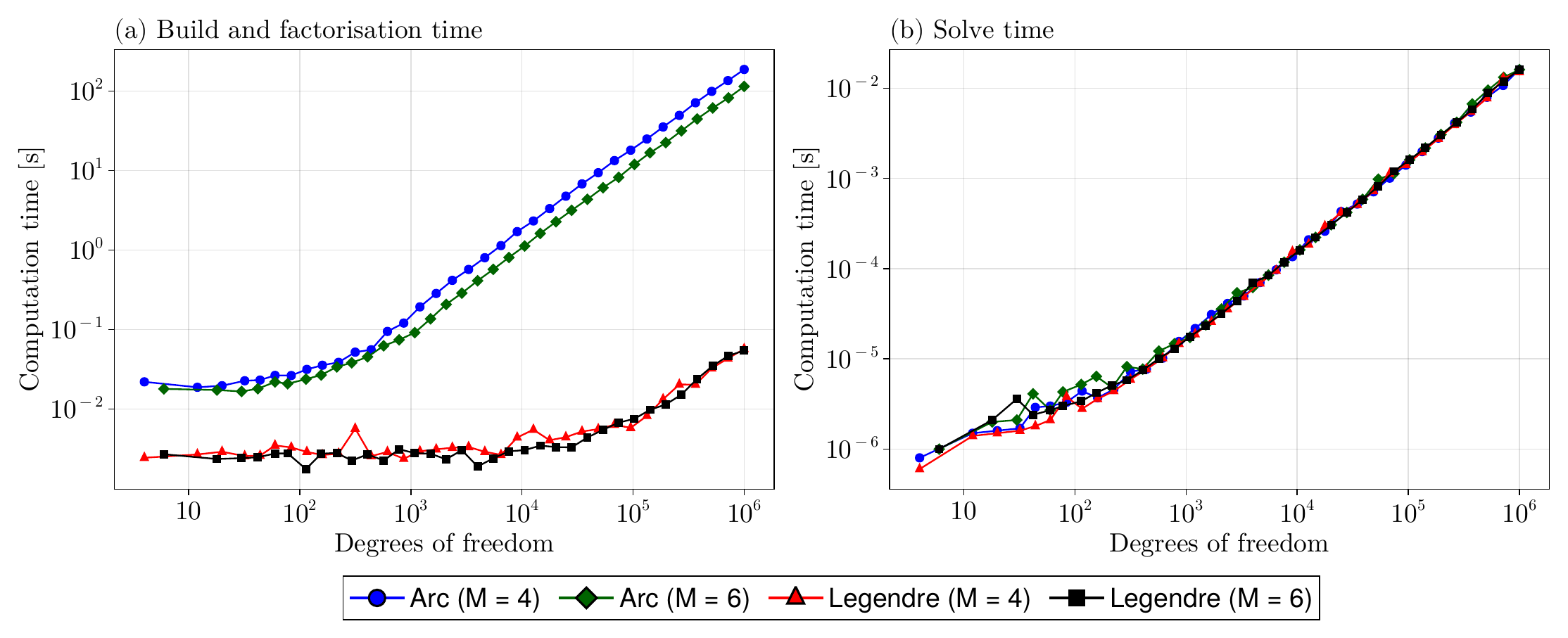}
\caption{\rev{Computational performance results for the screened Poisson equation \eqref{eq:screened_poisson} $-u''(\theta) + \omega^2 u(\theta) = f(\theta)$ with $\omega = 3/2$ and $f(\theta) = \exp(\cos(\theta))$, comparing the $\vb P^{(-1), \bm\theta}$ and $\vb W^{(-1), \bm\theta}$ bases using $M=4$ and $M=6$ equally spaced points for the grid $\bm\theta$. In (a) we show the time to build and factorise the left-hand side \eqref{eq:weak_screened_poisson_vec} $-\Delta^{(-1), \bm\theta} + \omega^2 M^{(-1), \bm\theta}$ (and similarly in the $\vb W^{(-1), \bm\theta}$ basis), and (b) shows the time to solve the system \eqref{eq:weak_screened_poisson_vec} $\left(-\Delta^{(-1), \bm\theta} + \omega^2 M^{(-1), \bm\theta}\right)\vb u = \left[R_{(-1)}^{(0), \bm\theta}\right]\tran M^{(0), \bm\theta}\vb f$ (and similarly in the $\vb W^{(-1), \bm\theta}$ basis), using a varying degrees of freedom. The timings in each plot show the optimal complexity we expect, with unit slopes for each curve. The performance in building the discretisations for the $\vb W^{(-1), \bm\theta}$ is better than that in the $\vb P^{(-1), \bm\theta}$ basis, however, this cost is only incurred once in problems involving time-stepping or for problems reusing the same grid $\bm\theta$. Furthermore, as discussed in Section \ref{sec:analysis__}, our $\vb P^{(-1), \bm\theta}$ basis requires fewer degrees of freedom making this different less important.}}
\label{fig:timing_results}
\end{figure}

\section{Conclusion}\label{sec:conc}

In this work we introduced a one-dimensional periodic basis for solving differential equations with periodic boundary conditions. The operators derived with this basis have a special banded structure that makes them efficient to work with. The basis can be used to solve problems \rev{in optimal complexity, even those with discontinuities} that a standard approach with a Fourier basis cannot handle, and has better numerical and convergence properties than other orthogonal polynomial approaches such as with a piecewise integrated Legendre basis from \cite{babuvska1983lecture, knook2024quasi}. Our numerical examples demonstrated these differences, and also highlighted that our solution is able to better preserve periodicity when integrating overtime.

Our arc polynomial basis, and its piecewise variant, \rev{could be used to solve more complicated problems than those considered here. Nonlinear equations such as Burgers' equation or the nonlinear Schr\"odinger equation could be considered, using a pseudospectral approach by efficiently transforming from values on a grid to polynomial coefficients, and from polynomial coefficients to values on a grid, similar to the method used in Section 6 of \cite{knook2024quasi} for solving Burgers' equation with the integrated Legendre basis.} The basis also generalises simply to problems other than one-dimensional differential equations. \colr{Using a tensor product basis, we could apply our basis to problems over domains such as tensor product domains, periodic strips, and a torus. \rev{Problems that periodic in both dimensions would also be simple, applying a tensor product basis with our arc polynomials.} Similar to \cite{knook2024quasi}, these higher dimensional problems could be combined with an alternating direction implicit iteration for solving the associated linear systems to achieve quasi-optimal complexity.} A subject of future work will be the consideration of problems in sectors of the form $\{0 < r < 1,\, |\theta| < \varphi\}$, where the arc polynomials can be used to generalise Zernike polynomials effectively to this domain. \rev{One complication with this approach is that, while one may form tensor-product bases such as $C_{ij}^{(h)}(r, \theta) = P_i^{(0, 1)}(2r-1)p_j^{(0, h)}(\theta)$ and $S_{ij}^{(h)}(r, \theta) = P_i^{(0,1)}(2r-1)q_j^{(0,h)}(\theta)$, these are not polynomials in Cartesian coordinates ($x$ and $y$). As a consequence, the associated operators may not be sparse, particularly when used in an $hp$-FEM framework, and there may be regularity issues at $r=0$ that require special care. Instead, an approach similar to that used in \cite{fasondini023orthogonal} is necessary if one wants polynomials in Cartesian coordinates}, using the Gram--Schmidt process on $C_{ij}^{(h)}$ and $S_{ij}^{(h)}$ to construct the orthogonal polynomials on the sector. Similar extensions could be done to annuli, with generalisations similar to those in \cite{papadopoulos2024building, papadopoulos2024sparse_}.  

\section*{Acknowledgements}
DV is grateful for the financial support of a President's PhD scholarship from Imperial College. This work was supported in part by the EPSRC grant EP/T022132/1 “Spectral element methods for fractional differential equations, with applications in applied analysis and medical imaging”.




\clearpage

\appendix 
\setcounter{lemma}{0}
\renewcommand{\thelemma}{\Alph{section}\arabic{lemma}}
\setcounter{definition}{0}
\renewcommand{\thedefinition}{\Alph{section}\arabic{definition}}
\setcounter{theorem}{0}
\renewcommand{\thetheorem}{\Alph{section}\arabic{theorem}}
\setcounter{proposition}{0}
\renewcommand{\theproposition}{\Alph{section}\arabic{proposition}}

\section{Computing $\alpha^{t, (a, b, c)}$ and $\beta^{t, (a, b, c)}$}
\label{app:linear_coefficients}
In this appendix we consider the computation of the coefficients $\alpha^{t, (a, b, c)}$ and $\beta^{t, (a, b, c)}$ defining $P_1^{t, (a, b, c)}(x) = \beta^{t, (a, b, c)}(x-\alpha^{t, (a, b, c)}$. To start, we denote by $\Gamma^{t, (a, b, c)}$ the integral
\begin{equation}\label{eqapp:weight_integral}
\Gamma^{t, (a, b, c)} = \int_0^1 w^{t, (a, b, c)}(x) \dx = \mathrm B(a+1, b+1)t^c{}_2F_1\left(a+1, -c; a + b + 2, t^{-1}\right),
\end{equation}
where $\mathrm B$ is the beta function and ${}_2F_1$ is the Gaussian hypergeometric function. This identity \eqref{eqapp:weight_integral} could be derived by relating the integral to the generalised beta distribution \cite{mcdonald1995generalization}.

Using the Gram--Schmidt procedure with the inner product \eqref{eq:def:inp},
we obtain
\begin{align*}
\tilde{P}_1^{t, (a, b, c)}(x) &= x - \frac{\inp{x}{1}^{t, (a, b, c)}}{\|1\|_{t, (a, b, c)}^2} = x - \alpha^{t, (a, b, c)}, \quad \text{where} \quad \alpha^{t, (a, b, c)} = \frac{\Gamma^{t, (a+1, b, c)}}{\Gamma^{t, (a, b, c)}}.
\end{align*}
We can obtain $P_1^{t, (a, b, c)}$ from this by noting that all the polynomials have the same norm, meaning $\|P_n^{t, (a, b, c)}\|_{t, (a, b, c)}^2 = \|1\|_{t, (a, b, c)}^2 = \Gamma^{t, (a, b, c)}$ for all $n \geq 0$ \cite{papadopoulos2024building}. Thus,
\begin{align*}
P_1^{t, (a, b, c)}(x) &= \sqrt{\frac{\|1\|_{t,(a,b,c)}^2}{\left\|\tilde P_1^{t,(a,b,c)}\right\|_{t,(a,b,c)}^2}}\tilde P_1^{t, (a, b, c)} \\
&= \sqrt{\frac{\Gamma^{t, (a, b, c)}}{\int_0^1 \left(x - \alpha^{t, (a, b, c)}\right)^2w^{t, (a, b, c)}(x) \dx}}\left(x - \alpha^{t, (a, b, c)}\right) \\ 
&= \sqrt{\frac{\Gamma^{t, (a, b, c)}}{\int_0^1 \left[x^2 - 2\alpha^{t, (a, b, c)}x + \left(\alpha^{t, (a, b, c)}\right)^2\right]w^{t, (a, b, c)}(x)}}\left(x - \alpha^{t, (a, b, c)}\right) \\
&= \sqrt{\frac{\Gamma^{t, (a, b, c)}}{\Gamma^{t, (a+2, b, c)} - 2\alpha^{t, (a, b, c)}\Gamma^{t, (a+1, b, c)} + \left(\alpha^{t, (a, b, c)}\right)^2\Gamma^{t, (a, b, c)}}}\left(x - \alpha^{t, (a, b, c)}\right) \\
&= \beta^{t, (a, b, c)}\left(x - \alpha^{t, (a, b, c)}\right) ~~ \text{where} ~~ \beta^{t, (a, b, c)} = \sqrt{\frac{\Gamma^{t, (a, b, c)}}{\Gamma^{t, (a+2, b, c)} - 2\alpha^{t, (a, b, c)}\Gamma^{t, (a+1, b, c)} + \left(\alpha^{t, (a, b, c)}\right)^2\Gamma^{t, (a, b, c)}}}.
\end{align*}

\clearpage

\section[ $D_{(b), q}^{(b+1), q}$ is lower bidiagonal]{ $D_{(b), \vb q}^{(b+1), \vb p}$ is lower bidiagonal}\label{app:dbqbpqb_lowerbidiag}

In this appendix we complete the details of the proof of Proposition \ref{prop:diffmatpqqp}. We need to show that $D_{(b), \vb q}^{(b+1), \vb p}$ is a lower bidiagonal matrix, which is not immediately obvious since the individual operators for $x\partial_y$ and $y\partial_x$ are dense. There is likely a simpler way to express $D_{(b), \vb q}^{(b+1), \vb p}$ than \eqref{eq:diff_mat_arcq} that makes this bidiagonal structure clear, but here we prove it using inner products. In particular, consider
\[
\odv*{q_n^{(b)}(x, y)}{\theta} = \sum_{j=0}^\infty d_{nj}p_j^{(b+1)}(x, y).
\]
The numerator of $d_{nj}$, denoted $\gamma_{nj}$, is given by 
\begin{align}
\gamma_{nj} &= \inp*{\odv*{q_n^{(b)}}{\theta}}{p_j^{(b+1)}}^{(b+1, h)}\nonumber \\
&= 2q_n^{(b)}(\varphi)p_j^{(b+1)}(\varphi)(\cos\varphi - h)^{b+1} - \int_{-\varphi}^\varphi q_n^{(b)}(\theta)\odv*{\left[p_j^{(b+1)}(\theta)(\cos\theta-h)^{b+1}\right]}{\theta} \dthe.\label{eq:gammanj}
\end{align}
If $b \neq -1$, the first term in \eqref{eq:gammanj} is zero since $\cos\varphi - h = \cos\varphi - \cos\varphi = 0$. Otherwise, see that
\begin{align*}
q_n^{(-1)}(\varphi) &= \sin\varphi P_{n-1}^{\tau, (1/2, -1, 1/2)}(1) = \begin{cases} \sin\varphi & n = 1, \\ 0 & \text{otherwise}.
\end{cases}
\end{align*}
We will consider the $(b, n) = (-1, 1)$ case last and assume that this first term is zero in what follows. To evaluate the integral we first note that, using $\cos\theta - h = (1-h)(1-\sigma)$, we have the weighted derivative formula
\begin{equation}
\odv*{\left[\left(\cos\theta-h\right)^{b+1}\vb p^{(b+1)}\right]}{\theta} = \left(\cos\theta-h\right)^b \vb q^{(b)}D_{\mathrm b, (-1/2, b+1, -1/2)}^{\tau, (1/2, b, 1/2)},
\end{equation}
and this matrix $D_{\mathrm b, (-1/2, b+1, -1/2)}^{\tau, (1/2, b, 1/2)}$ is an upper bidiagonal matrix \cite{papadopoulos2024building}. Thus, 
\[
\odv*{\left[\left(\cos\theta-h\right)^{b+1}p_j^{(b+1)}(\theta)\right]}{\theta}  = \left(\cos\theta-h\right)^b\left[\xi_{j-1,j}q_j^{(b)}(\theta) + \xi_{jj}q_{j+1}^{(b)}\right],
\]
where $\xi_{j-1,j}$ and $\xi_{jj}$ are the non-zero entries in the $j$th column of $(1-h)^b D_{\mathrm b, (-1/2,b+1,-1/2)}^{\tau,(1/2,b,1/2)}$. Putting this relationship into \eqref{eq:gammanj}, assuming $(b, n) \neq (-1, 1)$,
\begin{equation}
\gamma_{nj} = -\xi_{j-1,j}\inp{q_n^{(b)}}{q_j^{(b)}}^{(b, h)} - \xi_{jj}\inp{q_n^{(b)}}{q_{j+1}^{(b)}}^{(b, h)}.
\end{equation}
Thus, $\gamma_{nj}$ is zero whenever $n \neq j$ or $n \neq j + 1$. In particular, $d_{nj} \neq 0$ only for $n \neq j$ or $n \neq j +1$, showing that $D_{(b), \vb q}^{(b+1), \vb p}$ has this lower bidiagonal structure.

We still need to consider the $b = -1$ and $n = 1$ case. Here, $\mathrm dq_1^{(-1)}/\mathrm d\theta = \cos\theta$, so
\begin{align*}
\gamma_{1j} &= \int_{-\varphi}^\varphi \cos\theta p_j^{(0)}(\theta) \dthe.
\end{align*}
Writing out $\cos(\theta)$ as a linear combination of $p_0^{(0)}(\theta) = 1$ and $p_1^{(0)}(\theta) = P_1^{\tau,(-1/2,0,-1/2)}[(\cos\theta-1)/(h-1)]$, it is easy to show that
\begin{equation}\label{app:cos_expansion_res}
\cos \theta = \left[1 + (h-1)\alpha^{\tau, (-1/2, 0, -1/2)}\right]p_0^{(0)}(\theta) + \frac{h-1}{\beta^{\tau, (-1/2, 0, -1/2)}}p_1^{(0)}(\theta),
\end{equation}
which implies that $\gamma_{1j} = 0$ whenever $j > 1$; note that $\beta^{\tau,(-1/2,0,-1/2)} \neq 0$ since that would otherwise imply that $p_1^{(0)}$ is identically zero. We therefore see that $d_{1j}$ is non-zero only for $j=1$ and $j=2$ when $b=-1$ and $n=1$. Thus, $D_{(b), \vb q}^{(b+1), \vb p}$ is also a lower bidiagonal matrix in this case, and this completes the proof. 

\clearpage 

\section[Computing $R_{(-1)}^{(0), \theta}$]{Computing $R_{(-1)}^{(0), \bm\theta}$}\label{app:computing_connection_matrix_piecewise}

We compute $R_{(-1)}^{(0), \bm\theta}$ using \eqref{eq:piecewise_arc_connection_matrix_inp_form}, deriving the formula in \eqref{eq:connection_matrix_formula_piecewise_arc}, in this appendix. For notational simplicity, we denote $\vb P = \vb P^{(0), \bm\theta}$ and $\vb Q = \vb P^{(-1), \bm\theta}$. To start, consider $\vb P_{mj}\tran\vb H$. Since the $\vb P_{mj}$ are orthogonal to all polynomials with degree less than $m$, and since the hat functions are of degree $1$,
\[
\vb P_{mj}\tran\vb H = 0, \quad m > 1,
\]
meaning in the first column of $\vb P\tran\vb Q$ we only need $\vb P_{02}\tran\vb H$, $\vb P_{11}\tran\vb H$, and $\vb P_{12}\tran\vb H$. We start with $\vb P_{02}\tran\vb H$:
\begin{align*}
\int_I P_{02;i}^{(0), \bm\theta}(\theta)\phi_j(\theta) \dthe &= \int_{E_i} \underbrace{p_0^{(0, h_i)}(a_i(\theta))}_1 \phi_j(\theta) \dthe = \int_{E_i} \phi_j(\theta) \dthe.
\end{align*}
This last integral is non-zero only for $i = j$ or $j = i + 1$. If $i = j$ then, using \eqref{eq:hat_function_psi2},
\begin{align}
\int_I P_{02;i}^{(0), \bm\theta}(\theta)\phi_i(\theta) \dthe &= \frac12 \int_{E_i} 1 + \csc(\ell_i)\left[\left(\sin\theta_i + \theta_{i+1}\right) \cos \theta - \left(\cos \theta_i + \cos\theta_{i+1}\right) \sin \theta\right] \dthe = \frac{\ell_i}{2}. \label{eq:app:mass_matrix_block11_i=j}
\end{align}
Similarly, $j = i + 1$ gives
\begin{align}
\int_I P_{02;i}^{(0), \bm\theta} \phi_{i+1}(\theta) \dthe &= \frac12\int_{E_i} 1 - \csc(\ell_i)\left[\left(\sin \theta_i + \sin\theta_{i+1}\right)\cos\theta - \left(\cos\theta_i + \cos\theta_{i+1}\right)\sin\theta\right] \dthe = \frac{\ell_i}{2}. \label{eq:app:mass_matrix_block11_j=i+1}
\end{align}
Note that these results are for $i = 1, \ldots, n$, so $\vb P_{02}\tran\vb H$ also has a non-zero value in the lower-left corner instead of being upper bidiagonal. Thus,
\begin{equation}
\vb P_{02}\tran\vb H = \frac12
\begin{bmatrix}
\ell_1 & \ell_1 &        &        &             &    \\
 & \ell_2 & \ell_2 &        &             &           \\
       &  & \ell_3 & \ddots &             &           \\
       &        & & \ddots & \ell_{n-2} &           \\
       &        &        &  & \ell_{n-1} & \ell_{n-1} \\
\ell_n &        &        &        &  & \ell_n   
\end{bmatrix}.
\end{equation}

Now consider $\vb P_{11}\tran\vb H$. Here,
\begin{align*}
\int_I P_{11;i}^{(0), \bm\theta}(\theta)\phi_j(\theta) \dthe &= \int_{E_i} \underbrace{q_1^{(0, h_i)}\left(a_i(\theta)\right)}_{\left.y\right|_{a_i(\theta)}} \phi_j(\theta) \dthe \\
&= \int_{E_i} \sin\left(\theta - \frac{\theta_i + \theta_{i+1}}{2}\right) \phi_j(\theta) \dthe.
\end{align*}
In the case $i = j$,
\begin{align}
\int_I P_{11;i}^{(0), \bm\theta}(\theta)\phi_i(\theta) \dthe &= \frac12\int_{E_i} \sin\left(\theta - \frac{\theta_i + \theta_{i+1}}{2}\right)\left\{1 + \csc(\ell_i)\left[\left(\sin\theta_i + \theta_{i+1}\right) \cos \theta - \left(\cos \theta_i + \cos\theta_{i+1}\right) \sin \theta\right]\right\} \dthe \nonumber \\
&= \frac{\sin(\ell_i/2) + \sin(3\ell_i/2) - 2\ell_i\cos\ell_i}{4\sin\ell_i} \nonumber \\
&= \frac{\sin \ell_i - \ell_i}{4\sin(\ell_i/2)}.\label{eq:app:mass_matrix_block21_i=j}
\end{align}
Next, $j = i + 1$ gives
\begin{align}
\int_I P_{11;i}^{(0), \bm\theta}(\theta)\phi_{i+1}(\theta) \dthe &= \frac12\int_{E_i} \sin\left(\theta - \frac{\theta_i + \theta_{i+1}}{2}\right) \left\{1 - \csc(\ell_i)\left[\left(\sin \theta_i + \sin\theta_{i+1}\right)\cos\theta - \left(\cos\theta_i + \cos\theta_{i+1}\right)\sin\theta\right] \right\} \dthe \nonumber \\
&= \frac{\ell_i - \sin\ell_i}{4\sin(\ell_i/2)}.\label{eq:app:mass_matrix_block21_j=i+1}
\end{align}
Thus, letting $\xi_i = (\sin\ell_i-\ell_i)/\sin(\ell_i/2)$,
\begin{equation}
\vb P_{11}\tran\vb H = \frac14\begin{bmatrix} 
\xi_1 & -\xi_1 \\
& \xi_2 & -\xi_2 \\
& & \xi_3 & \ddots \\
&&&\ddots & -\xi_{n-2} \\
&&& & \xi_{n-1} & -\xi_{n-1} \\
-\xi_n &&&&&\xi_n 
\end{bmatrix}.
\end{equation}

The final block to consider in the first column is $\vb P_{12}\tran\vb H$. We have
\begin{align*}
\int_I P_{12;i}^{(0), \bm\theta}(\theta)\phi_j(\theta) \dthe &= \int_{E_i} p_1^{(0, h_i)}(a_i(\theta))\phi_j(\theta) \dthe.
\end{align*}
Writing $P_1^{\tau_i, (-1/2, 0, -1/2)}(\sigma) = \beta(x - \alpha)$, where the coefficients $\beta$ and $\alpha$ can be computed from the results in \ref{app:linear_coefficients}, we have
\[
p_1^{(0, h_i)}(a_i(\theta)) = \beta\left[\frac{\cos\left(\theta - \frac{\theta_i + \theta_{i+1}}{2}\right) - 1}{h_i - 1} - \alpha\right].
\]
Working through the details, we can eventually show that $\vb P_{12}\tran\vb H = \vb 0$.

Now we need the remaining columns. All the blocks in these columns are diagonal since they involve integrals between functions supported only across a single element. Consider
\begin{equation}
\vb P_{m_1j}\tran\vb Q_{m_2k} = \left\{\int_{E_i} P_{m_1j;i}^{(0), \bm\theta}(a_i(\theta))P_{m_2k;\ell}^{(-1), \bm\theta}(a_\ell(\theta)) \dthe \right\}_{i,\ell=1,\ldots,n}.
\end{equation}
By orthogonality, the integrals with $j \neq k$ are all zero, and so we only need to consider the cases $j=k=1$ or $j=k=2$. Note that $m_1 = 0, 1, \ldots$ and $m_2 = 1, 2, \ldots$.  First taking $j=k=1$ and, since the blocks are all diagonal, $i=\ell$,
\begin{align}
\int_{E_i} P_{m_11;i}^{(0), \bm\theta}(a_i(\theta))P_{m_21;i}^{(-1), \bm\theta}(a_i(\theta)) \dthe &= \int_{-\varphi_i}^{\varphi_i} P_{m_11;i}^{(0), \bm\theta}(\theta)P_{m_21;i}^{(-1), \bm\theta} \dthe = \int_{-\varphi_i}^{\varphi_i} q_{m_1}^{(0, h_i)}(\theta)q_{m_2}^{(-1, h_i)}(\theta) \dthe.\nonumber
\end{align}
We can use \eqref{eq:arc_connection_p2} to write 
\[
q_{m_2}^{(-1, h_i)}(\theta) = b_{m_2-2,m_2-1}^{(0, h_i)} q_{m_2-1}^{(0,h_i)}(\theta) + b_{m_2-1,m_2-1}^{(0, h_i)}q_{m_2}^{(0, h_i)}(\theta),
\]
 Thus,
\begin{align}
\int_{E_i} P_{m_11;i}^{(0),\bm\theta}(a_i(\theta))P_{m_21;i}^{(-1), \bm\theta}(a_i(\theta)) \dthe &= \int_{-\varphi_i}^{\varphi_i} q_{m_1}^{(0, h_i)}\left[b_{m_2-2,m_2-1}^{(-1,h_i)}q_{m_2-1}^{(0,h_i)}(\theta) + b_{m_2-1,m_2-1}^{(-1,h_i)}q_{m_2}^{(0, h_i)}(\theta)\right] \dthe\nonumber \\
&= b_{m_2-2,m_2-1}^{(-1, h_i)}\inp{q_{m_1}^{(0, h_i)}}{q_{m_2-1}^{(0, h_i)}}^{(0, h_i)} + b_{m_2-1,m_2-1}^{(-1, h_i)}\inp{q_{m_1}^{(0, h_i)}}{q_{m_2}^{(0, h_i)}}^{(0, h_i)}.\nonumber
\end{align}
Using orthogonality,
\begin{align}\label{eq:app:arc_connection_m11m21}
\int_{E_i} P_{m_11;i}^{(0),\bm\theta}(a_i(\theta))P_{m_21;i}^{(-1), \bm\theta}(a_i(\theta)) \dthe &= \begin{cases} b_{m_1-1,m_1}^{(-1, h_i)}\inp{q_{m_1}^{(0, h_i)}}{q_{m_1}^{(0,h_i)}}^{(0, h_i)} & m_1 = m_2 - 1, \\ 
b_{m_1-1,m_1-1}^{(-1, h_i)}\inp{q_{m_1}^{(0, h_i)}}{q_{m_1}^{(0, h_i)}}^{(0,h_i)} & m_1=m_2, \\
0 & \text{otherwise}. \end{cases} 
\end{align}
The remaining norms can be computed using \eqref{eq:arc_polynomial_mass_matrix_formulae_pq}.

Now we consider $j = k = 2$.
\begin{align}
\int_{E_i} P_{m_12; i}^{(0), \bm\theta}(a_i(\theta))P_{m_22; i}^{(0), \bm\theta}(a_i(\theta)) \dthe &= \int_{-\varphi_i}^{\varphi_i} p_{m_1}^{(0, h_i)}(\theta)p_{m_2}^{(-1, h_i)}(\theta) \dthe \nonumber \\
&= \int_{-\varphi_i}^{\varphi_i} p_{m_1}^{(0, h_i)}(\theta)\left[a_{m_2-1,m_2}^{(-1, h_i)}p_{m_2-1}^{(0, h_i)}(\theta) + a_{m_2m_2}^{(-1,h_i)}p_{m_2}^{(0,h_i)}\right] \dthe \nonumber \\
&= a_{m_2-1,m_2}^{(-1, h_i)}\inp{p_{m_1}^{(0, h_i)}}{p_{m_2-1}^{(0, h_i)}}^{(0, h_i)} + a_{m_2m_2}^{(-1, h_i)}\inp{p_{m_1}^{(0, h_i)}}{p_{m_2}^{(0, h_i)}}^{(0, h_i)} \nonumber \\
&= \begin{cases} a_{m_1,m_1+1}^{(-1, h_i)}\inp{p_{m_1}^{(0, h_i)}}{p_{m_1}^{(0, h_i)}}^{(0, h_i)} & m_1 = m_2 - 1, \\ 
a_{m_2m_2}^{(-1, h_i)}\inp{p_{m_1}^{(0, h_i)}}{p_{m_1}^{(0, h_i)}}^{(0, h_i)} & m_1 = m_2, \\
0 & \text{otherwise}. \end{cases} \label{eq:app:arc_connection_m12m22}
\end{align}

We have now computed all the blocks needed in the matrix $\vb P\tran\vb Q$. Recall the definition for the matrices $\|\vb p_m^{(0), \bm\theta}\|^2$ and $\|\vb q_m^{(0), \bm\theta}\|^2$ from \eqref{eq:piecewise_arc_mass_matrix_formula_repr_pm_def},
\begin{equation}
\|\vb p_m^{(0), \bm\theta}\|^2 := \diag\left(\|p_m^{(0, h_1)}\|_{(0, h_1)}^2, \ldots, \|p_m^{(0, h_n)}\|_{(0, h_n)}^2\right) \in \mathbb R^{n\times n},
\end{equation}
and similarly for $\|\vb q_m^{(0), \bm\theta}\|^2$. Next, define
\begin{equation}
B_{m -1, m}^{(-1), \bm\theta} = \diag\left(b_{m-1,m}^{(-1,h_1)}, \ldots, b_{m-1,m}^{(-1,h_n)}\right) \in \mathbb R^{n \times n},
\end{equation}
with similar definitions for $B_{m-1,m-1}^{(-1), \bm\theta}$, $A_{m,m+1}^{(-1), \bm\theta}$, and $A_{mm}^{(-1), \bm\theta}$. With this notation, we have
\begin{align}
\vb P_{m1}\tran\vb Q_{m+1,1} &= B_{m-1,m}^{(-1), \bm\theta}\|\vb q_m^{(0), \bm\theta}\|^2, \\
\vb P_{m1}\tran\vb Q_{m1} &= B_{m-1,m-1}^{(-1), \bm\theta}\|\vb q_m^{(0), \bm\theta}\|^2, \\
\vb P_{m2}\tran\vb Q_{m+1,2} &= A_{m, m+1}^{(-1), \bm\theta}\|\vb p_m^{(0), \bm\theta}\|^2, \\
\vb P_{m2}\tran\vb Q_{m2} &= A_{mm}^{(-1), \bm\theta}\|\vb p_m^{(0), \bm\theta}\|^2.
\end{align}
We can use these expressions to write 
\begin{equation}
\vb P\tran\vb Q = \begin{bmatrix}
\vb P_{02}\tran\vb H & A_{01}^{(-1),\bm\theta}\|\vb p_0^{(0),\bm\theta}\|^2 &                           &                           &                           &        \\
\vb P_{11}\tran\vb H &  \vb 0                         & B_{01}^{(-1), \bm\theta}\|\vb q_1^{(0),\bm\theta}\|^2 &                           &                           &        \\
                     & A_{11}^{(-1),\bm\theta}\|\vb p_1^{(0),\bm\theta}\|^2 &                          \vb 0 & A_{12}^{(-1),\bm\theta}\|\vb p_1^{(0),\bm\theta}\|^2 &                           &        \\
                     &                           & B_{11}^{(-1),\bm\theta}\|\vb q_2^{(0),\bm\theta}\|^2 &   \vb 0                        & B_{12}^{(-1),\bm\theta}\|\vb q_2^{(0),\bm\theta}\|^2 &        \\
                     &                           &                           & A_{22}^{(-1),\bm\theta}\|\vb p_2^{(0),\bm\theta}\|^2 &    \vb 0                       & \ddots \\
                     &                           &                           &                           & B_{22}^{(-1),\bm\theta}\|\vb q_3^{(0),\bm\theta}\|^2 & \ddots \\
                     &                           &                           &                           &                           & \ddots
\end{bmatrix}
\end{equation}

Now consider $[M^{(0), \bm\theta}]^{-1}(\vb P\tran\vb Q)$. The matrix $M^{(0), \bm\theta}$ was already given in \eqref{eq:piecewise_arc_mass_matrix_formula_repr}. Taking the inverse of this matrix and multiplying on the left, we finally obtain
\begin{equation}\label{eq:app:connection_matrix_formula_piecewise_arc}
R_{(-1)}^{(0), \bm\theta} =\begin{bmatrix}
\|\vb p_0^{(0), \bm\theta}\|^{-2}\vb P_{02}\tran\vb H & A_{01}^{(-1),\bm\theta} &                           &                           &                           &        \\
\|\vb q_1^{(0), \bm\theta}\|^{-2}\vb P_{11}\tran\vb H & \vb 0                          & B_{01}^{(-1), \bm\theta} &                           &                           &        \\
                     & A_{11}^{(-1),\bm\theta} &     \vb 0                      & A_{12}^{(-1),\bm\theta} &                           &        \\
                     &                           & B_{11}^{(-1),\bm\theta} &                          \vb 0 & B_{12}^{(-1),\bm\theta} &        \\
                     &                           &                           & A_{22}^{(-1),\bm\theta} &      \vb 0                     & \ddots \\
                     &                           &                           &                           & B_{22}^{(-1),\bm\theta} & \ddots \\
                     &                           &                           &                           &                           & \ddots
\end{bmatrix}.
\end{equation}

\clearpage

\section[Computing $D_{(-1)}^{(0), \theta}$]{Computing $D_{(-1)}^{(0), \bm\theta}$}\label{app:computing_differentiation_matrix_piecewise}

In this appendix we compute $D_{(-1)}^{(0), \bm\theta}$ using \eqref{eq:piecewise_arc_differentiation_matrix_inp_form} to derive \eqref{eq:differentiation_matrix_formula_piecewise_arc}. As in \ref{app:computing_connection_matrix_piecewise}, we let $\vb P = \vb P^{(0), \bm\theta}$ and $\vb Q = \vb P^{(-1), \bm\theta}$ so that $D_{(-1)}^{(0), \bm\theta} = [M^{(0),\bm\theta}]^{-1}\vb P\tran\vb Q^\prime$, letting primes denote derivatives with respect to $\theta$. 

We start by considering $\vb P\tran\vb Q^\prime$. Similarly to $R_{(-1)}^{(0), \bm\theta}$, in the first column we only need to consider $\vb P_{02}\tran\vb H^\prime$, $\vb P_{11}\tran\vb H^\prime$, and $\vb P_{12}\tran\vb H^\prime$. Using \eqref{eq:hat_function_psi1}--\eqref{eq:hat_function_psi2}, we find, with some simplifying,
\begin{align}
\odv*{\psi_i^{(1)}(\theta)}{\theta} &= h_{i-1}\csc(\ell_{i-1})\cos\left(\theta - \frac{\theta_{i-1}+\theta_i}{2}\right) = h_{i-1}\csc(\ell_{i-1})\cos a_{i-1}(\theta), \\
\odv*{\psi_i^{(2)}(\theta)}{\theta} &= -h_i\csc(\ell_i)\cos\left(\theta-\frac{\theta_i+\theta_{i+1}}{2}\right) = -h_i\csc(\ell_{i})\cos a_{i}(\theta).
\end{align}
Now let us compute $\vb P_{02}\tran\vb H^\prime$.
\begin{align}
\int_I P_{02;i}^{(0), \bm\theta}(\theta)\phi_j^\prime(\theta) \dthe &= \int_{E_i} \underbrace{p_0^{(0, h_i)}(a_i(\theta))}_1\phi_j^\prime(\theta)  \dthe = \phi_j(\theta_{i+1}) - \phi_j(\theta_i) = \begin{cases} -1 & j = i, \\ 1 & j = i + 1, \\ 0 & \text{otherwise}. \end{cases}
\end{align}
Thus,
\begin{equation}
\vb P_{02}\tran\vb H^\prime = \begin{bmatrix}
-1 & 1 \\
& -1 & 1 \\
& & -1 & \ddots \\
&&&\ddots& 1 \\
&&&&-1&1\\
1&&&&&-1
\end{bmatrix}.
\end{equation}
Next, for $\vb P_{11}\tran\vb H^\prime$, 
\begin{align}
\int_I P_{11;i}^{(0), \bm\theta}(\theta)\phi_j^\prime(\theta) \dthe &= \int_{E_i} \underbrace{q_1^{(0, h_i)}(a_i(\theta))}_{\left.y\right|_{a_i(\theta)}}\phi_j^\prime(\theta) \dthe \nonumber\\&= \int_{E_i} \sin(a_i(\theta))\phi_j'(\theta) \dthe \nonumber\\&= \begin{cases} -\frac12\csc(\ell_i)\int_{-\varphi_i}^{\varphi_i} \sin(\theta)\cos(\theta) \dthe & i = j, \\ 
\frac12\csc(\ell_i)\int_{-\varphi_i}^{\varphi_i} \sin(\theta)\cos(\theta) \dthe & i = j + 1, \\ 0 & \text{otherwise} \end{cases} \nonumber\\& = 0.
\end{align}
Hence, $\vb P_{11}\tran\vb H^\prime = \vb 0$. The last block in this column to consider is $\vb P_{12}\tran\vb H^\prime$. For this computation, recall from \ref{app:linear_coefficients} that we have
\begin{align}
p_1^{(0, h_i)}(\theta) &= P_1^{\tau_i, (-1/2,0,-1/2)}\left(\frac{\cos\theta-1}{h_i-1}\right) \nonumber \\ 
&= \beta^{\tau_i, (-1/2, 0, -1/2)}\left(\frac{\cos \theta - 1}{h_i - 1} - \alpha^{\tau_i, (-1/2, 0, -1/2)}\right).
\end{align}
Then,
\begin{align}
\int_I P_{12}^{(0), \bm\theta}(\theta)\phi_j'(\theta) \dthe &= \int_{E_i} p_1^{(0, h_i)}(a_i(\theta)) \phi_j'(\theta) \dthe. \nonumber 
\end{align}
If $i = j$,
\begin{align}
\int_I P_{12}^{(0), \bm\theta}(\theta)\phi_i'(\theta) \dthe &= -h_i\csc(\ell_i)\beta^{\tau_i, \left(-\frac12,0,-\frac12\right)}\int_{-\varphi_i}^{\varphi_i} \left(\frac{\cos \theta - 1}{h_i - 1} - \alpha^{\tau_i, \left(-\frac12,0,-\frac12\right)}\right) \cos \theta \dthe \nonumber \\
&= h_i\csc(\ell_i) \beta^{\tau_i, \left(-\frac12,0,-\frac12\right)}\left[\sin(\varphi_i)\left(2\alpha^{\tau_i, \left(-\frac12,0,-\frac12\right)} - 1\right) - \frac{\sin(\varphi_i) - \varphi_i}{1 - h_i}\right].
\label{eq:zeta_i_defn}\end{align}
The case with $i = j + 1$ is similar, with $\int_I P_{12}^{(0), \bm\theta}(\theta)\phi_{i+1}'(\theta)\dthe = -\int_I P_{12}^{(0), \bm\theta}(\theta)\phi_i'(\theta) \dthe$. Defining $\zeta_i$ to be the quantity in \eqref{eq:zeta_i_defn}, we have
\begin{equation}
\vb P_{12}\tran\vb H^\prime = \begin{bmatrix} 
\zeta_1 & -\zeta_1 \\
& \zeta_2 & -\zeta_2 \\
& & \zeta_3 & \ddots \\
&&&\ddots & -\zeta_{n-2} \\
&&&\ddots & \zeta_{n-1} & -\zeta_{n-1} \\
-\zeta_n &&&&&\zeta_n 
\end{bmatrix}
\end{equation}

Now let us consider the blocks
\begin{equation}
\vb P_{m_1j}\tran\vb Q_{m_2k} = \left\{\int_{E_i} P_{m_1j;i}^{(0), \bm\theta}(a_i(\theta))\left(\odv*{P_{m_2k;\ell}^{(-1), \bm\theta}(a_\ell(\theta))}{\theta}\right) \dthe\right\}_{i,\ell=1,\ldots,n}.
\end{equation}
Writing $\mathrm d\vb p^{(b, h)}/\mathrm d\theta = \vb q^{(b+1, h)}D_{(b), \vb p}^{(b+1, h), \vb q}$ and $\mathrm d\vb q^{(b)}/\mathrm d\theta = \vb p^{(b+1, h)}D_{(b), \vb q}^{(b+1, h), \vb p}$, we see that the integrals above are zero whenever $j = k$ from orthogonality. Moreover, as with $R_{(-1)}^{(0), \bm\theta}$, the non-zero blocks are all diagonal. To start, consider $j = 1$ and $k = 2$.
\begin{align*}
\int_{E_i} P_{m_11;i}^{(0), \bm\theta}(a_i(\theta))\left(\odv*{P_{m_22;i}^{(-1), \bm\theta}(a_i(\theta))}{\theta}\right) \dthe &= \int_{-\varphi_i}^{\varphi_i} q_{m_1}^{(0, h_i)}(\theta)\odv*{p_{m_2}^{(-1, h_i)}(\theta)}{\theta} \dthe.
\end{align*}
Using \eqref{eq:diff_mat_arcp_relation}, we can write
\begin{equation}
\odv*{p_m^{(-1, h_i)}(\theta)}{\theta} = d_{m-1, m+1, \vb p}^{(0, h_i), \vb q}q_{m-1}^{(0, h_i)} + d_{m, m+1, \vb p}^{(0, h_i), \vb q}q_m^{(0, h_i)}.
\end{equation}
Thus,
\begin{align}
\int_{E_i} P_{m_11;i}^{(0), \bm\theta}(a_i(\theta))\left(\odv*{P_{m_22;i}^{(-1),\bm\theta}(a_i(\theta))}{\theta}\right)\dthe &= \int_{-\varphi_i}^{\varphi_i} q_{m_1}^{(0, h_i)}(\theta)\left[d_{m_2-1, m_2+1, \vb p}^{(0, h_i), \vb q}q_{m_2-1}^{(0, h_i)} + d_{m_2, m_2+1, \vb p}^{(0, h_i), \vb q}q_{m_2}^{(0, h_i)}\right] \dthe \nonumber \\
&= d_{m_2-1,m_2+1, \vb p}^{(0,h_i), \vb q}\inp{q_{m_1}^{(0, h_i)}}{q_{m_2-1}^{(0, h_i)}}^{(0, h_i)} + d_{m_2,m_2+1,\vb p}^{(0, h_i), \vb q}\inp{q_{m_1}^{(0, h_i)}}{q_{m_2}^{(0, h_i)}}^{(0, h_i)} \nonumber \\
&= \begin{cases} d_{m_1,m_1+2,\vb p}^{(0, h_i), \vb q}\inp{q_{m_1}^{(0, h_i)}}{q_{m_1}^{(0, h_i)}}^{(0, h_i)} & m_1 = m_2 - 1, \\
d_{m_1,m_1+1,\vb p}^{(0,h_i),\vb q}\inp{q_{m_1}^{(0,h_i)}}{q_{m_1}^{(0,h_i)}}^{(0,h_i)} & m_1 = m_2, \\
0 & \text{otherwise}. \end{cases}
\end{align}
Thus,
\begin{align}
\vb P_{m1}\tran\vb Q_{m+1,2}^\prime &= D_{m,m+2, \vb p}^{(0), \bm\theta, \vb q}\|\vb q_m^{(0), \bm\theta}\|^2, \label{eq:app:diffmatform1} \\
\vb P_{m1}\tran\vb Q_{m2}^\prime &= D_{m,m+1, \vb p}^{(0), \bm\theta, \vb q}\|\vb q_m^{(0), \bm\theta}\|^2, \label{eq:app:diffmatform2}
\end{align}
where
\begin{align}\label{eq:app:diffmatdvecdefn}
D_{m,m+2, \vb p}^{(0), \bm\theta, \vb q} = \diag\left(d_{m,m+2, \vb p}^{(0, h_1), \vb q}, \ldots, d_{m,m+2, \vb p}^{(0, h_n), \vb q}\right) \in \mathbb R^{n \times n},
\end{align}
and similarly for $D_{m,m+1,\vb p}^{(0), \bm\theta, \vb q}$. 

Next, consider $j = 2$ and $k = 1$. Using \eqref{eq:diff_mat_arcq_relation},
\begin{equation}
\odv*{q_{m}^{(-1,h_i)}(\theta)}{\theta} = d_{m,m,\vb q}^{(0, h_i), \vb p}p_{m-1}^{(0, h_i)}(\theta) + d_{m+1,m,\vb q}^{(0, h_i), \vb p}p_m^{(0, h_i)}(\theta).
\end{equation}
The integrals are then
\begin{align}
\int_{E_i} P_{m_12;i}^{(0),\bm\theta}(a_i(\theta))\left(\odv*{P_{m_21;i}^{(-1),\bm\theta}(a_i(\theta))}{\theta}\right) \dthe &= \int_{-\varphi_i}^{\varphi_i} p_{m_1}^{(0,h_i)}(\theta)\odv*{q_{m_2}^{(-1,h_i)}(\theta)}{\theta} \dthe \nonumber \\
&= \int_{-\varphi_i}^{\varphi_i} p_{m_1}^{(0, h_i)}(\theta)\left[d_{m_2,m_2,\vb q}^{(0, h_i), \vb p}p_{m_2-1}^{(0, h_i)}(\theta) + d_{m_2+1,m_2,\vb q}^{(0, h_i), \vb p}p_{m_2}^{(0, h_i)}(\theta)\right] \dthe \nonumber \\
&= \begin{cases} 
d_{m_1+1,m_1+1,\vb q}^{(0,h_i), \vb p}\inp{p_{m_1}^{(0, h_i)}}{p_{m_1}^{(0, h_i)}}^{(0,h_i)} & m_1 = m_2 - 1, \\ 
d_{m_1+1,m_1,\vb q}^{(0, h_i), \vb p}\inp{p_{m_1}^{(0, h_i)}}{p_{m_1}^{(0, h_i)}}^{(0, h_i)} & m_1 = m_2, \\
0 & \text{otherwise}. \end{cases}
\end{align}
This gives
\begin{align}
\vb P_{m2}\tran\vb Q_{m+1,1}^\prime &= D_{m+1,m+1,\vb q}^{(0), \bm\theta, \vb p}\|\vb p_m^{(0), \bm\theta}\|^2, \label{eq:app:diffmatform3} \\
\vb P_{m2}\tran\vb Q_{m1}^\prime &= D_{m+1,m,\vb q}^{(0), \bm\theta, \vb p}\|\vb p_m^{(0), \bm\theta}\|^2, \label{eq:app:diffmatform4}
\end{align}
defining $D_{m+1,m+1,\vb q}^{(0), \bm\theta, \vb p}$ and $D_{m+1,m,\vb q}^{(0), \bm\theta, \vb p}$ similarly to \eqref{eq:app:diffmatdvecdefn}. Putting \eqref{eq:app:diffmatform1}--\eqref{eq:app:diffmatform4} together gives
\begin{equation}
\vb P\tran\vb Q^\prime = \begin{bmatrix} \vb P_{02}\tran\vb H^\prime &                                                                        \vb 0 &  \vb 0                                                                        &                                                                         &                                                                      &                                                                         &        \\ \vb 0
                            & D_{1,2,\vb p}^{(0), \bm\theta, \vb q}\|\vb q_1^{(0), \bm\theta}\|^2 &      \vb 0                                                                   & D_{1,3,\vb p}^{(0), \bm\theta, \vb q}\|\vb q_1^{(0), \bm\theta}\|^2 &                                                                      &                                                                         &        \\
\vb P_{12}\tran\vb H^\prime &                                                                        \vb 0 & D_{2,2,\vb q}^{(0), \bm\theta, \vb p}\|\vb p_1^{(0), \bm\theta}\|^2 &                                                                       \vb 0 &       \vb 0                                                               &                                                                         &        \\
                            &                                                                        \vb 0 &                                          \vb 0                               & D_{2,3,\vb p}^{(0), \bm\theta, \vb q}\|\vb q_2^{(0), \bm\theta}\|^2 &                                                                     \vb 0 & D_{2,4,\vb p}^{(0), \bm\theta, \vb q}\|\vb q_2^{(0), \bm\theta}\|^2 &        \\
                            &                                                                         & D_{3,2,\vb q}^{(0), \bm\theta, \vb p}\|\vb p_2^{(0), \bm\theta}\|^2 &                                                                        \vb 0 & D_{3,3,\vb q}^{(0),\bm\theta,\vb p}\|\vb p_2^{(0),\bm\theta}\|^2 &                                                                        \vb 0 & \ddots \\
                            &                                                                         &                                                                         &                                                                         \vb 0&                                                                     \vb 0 & D_{3,4,\vb p}^{(0), \bm\theta, \vb q}\|\vb q_3^{(0), \bm\theta}\|^2 & \ddots \\
                            &                                                                         &                                                                         &                                                                         & D_{4,3,\vb q}^{(0),\bm\theta,\vb p}\|\vb p_3^{(0),\bm\theta}\|^2 &                                                                        \vb 0 & \ddots \\
                            &                                                                         &                                                                         &                                                                         &                                                                      &                                                                         \vb 0& \ddots \\
                            &                                                                         &                                                                         &                                                                         &                                                                      &                                                                         & \ddots \end{bmatrix}.
\end{equation}
Multiplying on the left by $[M^{(0), \bm\theta}]^{-1}$, we obtain
\begin{equation}
D_{(-1)}^{(0), \bm\theta} = \begin{bmatrix} \|\vb p_0^{(0), \bm\theta}\|^{-2}\vb P_{02}\tran\vb H^\prime &                                                                        \vb 0 &           \vb 0                                                              &                                                                         &                                                                      &                                                                         &        \\
                      \vb 0      & D_{1,2,\vb p}^{(0), \bm\theta, \vb q} &                                                                        \vb 0 & D_{1,3,\vb p}^{(0), \bm\theta, \vb q} &                                                                      &                                                                         &        \\
\|\vb p_1^{(0), \bm\theta}\|^{-2}\vb P_{12}\tran\vb H^\prime &                                                                        \vb 0 & D_{2,2,\vb q}^{(0), \bm\theta, \vb p} &                                                                        \vb 0 & \vb 0                                                                      &                                                                         &        \\
                            &                                                                         \vb 0&                                                                        \vb 0 & D_{2,3,\vb p}^{(0), \bm\theta, \vb q} &                                                                     \vb 0 & D_{2,4,\vb p}^{(0), \bm\theta, \vb q} &        \\
                            &                                                                         & D_{3,2,\vb q}^{(0), \bm\theta, \vb p} &                                                                        \vb 0 & D_{3,3,\vb q}^{(0),\bm\theta,\vb p} &                                                                        \vb 0 & \ddots \\
                            &                                                                         &                                                                         &                                                                         \vb 0&                                                                    \vb 0  & D_{3,4,\vb p}^{(0), \bm\theta, \vb q} & \ddots \\
                            &                                                                         &                                                                         &                                                                         & D_{4,3,\vb q}^{(0),\bm\theta,\vb p} &                                                                        \vb 0 & \ddots \\
                            &                                                                         &                                                                         &                                                                         &                                                                      &                                                                         \vb 0& \ddots \\
                            &                                                                         &                                                                         &                                                                         &                                                                      &                                                                         & \ddots \end{bmatrix}.
\end{equation}

\clearpage

\section{Proof of Lemma \ref{lem:expandfinthetaexparc}}\label{app:proof_finite_arc_expansion}

In this appendix, we prove Lemma \ref{lem:expandfinthetaexparc} and give expressions for the coefficients in the $\cos(n\theta)$ and $\sin(n\theta)$ expansions.

\begin{lemma}\label{lem:cosnthetaexparc}
Let $n \in \mathbb N$ and $h \in (-1, 1)$. We can write
\begin{equation}\label{eq:cosntheta_expansion_arc}
\cos(n\theta) = \frac{1}{\mu_{00}}\sum_{j=0}^n \mu_{jn}p_j^{(0, h)}(\theta), \quad |\theta| < \varphi,
\end{equation}
where $\cos\varphi = h$ and the coefficients $\mu_{jn}$ satisfy the recurrence
\begin{equation}\label{eq:cos_gamma_recur}
\mu_{jn} = 2\left[1 + (h-1)a_j^{\tau, (-1/2, 0, -1/2)}\right]\mu_{j,n-1} - \mu_{j, n-2} + 2(h-1)\left[c_j^{\tau,(-1/2, 0, -1/2)} \mu_{j-1,n-1} + b_j^{\tau,(-1/2,0,-1/2)}\mu_{j+1,n-1}\right],
\end{equation}
 where $a_j^{\tau, (-1/2, 0, -1/2)}, b_j^{\tau,(-1/2,0,-1/2)}$, and $c_j^{\tau,(-1/2,0,-1/2)}$ are the coefficients defined in Proposition \ref{prop1}, and $\tau = 2/(1-h)$. For $j > n$ or $j < 0$, $\mu_{jn} = 0$. The initial values for the recurrence \eqref{eq:cos_gamma_recur} are $\mu_{00} = 4\arccsc(\!\!\sqrt\tau)$, $\mu_{01} = \mu_{00}[1+(h-1)\alpha^{\tau,(-1/2,0,-1/2)}]$, and $\mu_{11}=\mu_{00}(h-1)/\beta^{\tau,(-1/2,0,-1/2)}$. 
\end{lemma}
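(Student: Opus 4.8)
The plan is to define, for each $n \ge 0$ and $0 \le j \le n$, the unnormalised inner product $\mu_{jn} := \inp{\cos(n\theta)}{p_j^{(0,h)}}^{(0,h)}$ of $\cos(n\theta)$ against the arc polynomial, and to prove existence, the expansion formula \eqref{eq:cosntheta_expansion_arc}, and the recurrence \eqref{eq:cos_gamma_recur} in turn. First I would establish existence of the finite expansion: since $\sigma = (\cos\theta - 1)/(h-1)$ is affine in $x = \cos\theta$, each $p_j^{(0,h)}(\theta) = P_j^{\tau,(-1/2,0,-1/2)}(\sigma)$ is a polynomial of exact degree $j$ in $\cos\theta$, so $\{p_0^{(0,h)}, \ldots, p_n^{(0,h)}\}$ spans all polynomials of degree at most $n$ in $\cos\theta$; as $\cos(n\theta) = T_n(\cos\theta)$ has degree $n$, it lies in this span and $\mu_{jn} = 0$ for $j > n$ (and, by the $j<0$ convention, for $j<0$). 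The crucial normalisation observation is that the $p_j^{(0,h)}$ all share the common squared norm $\|p_j^{(0,h)}\|^2 = 4\arccsc(\!\!\sqrt\tau) = \mu_{00}$ by \eqref{eq:arc_polynomial_mass_matrix_formulae_pq}; dividing the orthogonal expansion coefficients $\inp{\cos(n\theta)}{p_j^{(0,h)}}/\|p_j^{(0,h)}\|^2$ by this common value produces exactly the prefactor $1/\mu_{00}$ appearing in \eqref{eq:cosntheta_expansion_arc}.

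For the recurrence, I would apply the Chebyshev three-term recurrence $\cos((n+1)\theta) = 2\cos\theta\cos(n\theta) - \cos((n-1)\theta)$ and pair it against $p_j^{(0,h)}$. Since multiplication by $\cos\theta = x$ is self-adjoint for the inner product \eqref{eq:inp_arc}, we have $\inp{\cos\theta\,\cos(n\theta)}{p_j^{(0,h)}} = \inp{\cos(n\theta)}{\cos\theta\,p_j^{(0,h)}}$, so the only remaining ingredient is to re-expand $\cos\theta\,p_j^{(0,h)} = x\,p_j^{(0,h)}$ in the arc polynomial basis. Writing $x = (h-1)\sigma + 1$ and applying the semiclassical three-term recurrence $\sigma P_j = c_j P_{j-1} + a_j P_j + b_j P_{j+1}$ from Proposition \ref{prop1} with parameters $\tau, (-1/2, 0, -1/2)$ gives
\[
\cos\theta\,p_j^{(0,h)} = (h-1)c_j\,p_{j-1}^{(0,h)} + \left[1 + (h-1)a_j\right]p_j^{(0,h)} + (h-1)b_j\,p_{j+1}^{(0,h)}.
\]
Substituting this and collecting the inner products $\mu_{j\pm1,n}$, $\mu_{jn}$, $\mu_{j,n-1}$ yields the claimed recurrence \eqref{eq:cos_gamma_recur} after the index shift $n+1 \mapsto n$. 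Note that this inner-product route handles $b_j$ and $c_j$ on the correct neighbours automatically, avoiding the re-indexing (and the appeal to symmetry of the Jacobi matrix) that a direct coefficient-matching of $2\cos\theta\cos(n\theta)$ would require.

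Finally, the initial values follow from direct evaluation: $\mu_{00} = \inp{1}{1}^{(0,h)} = \|p_0^{(0,h)}\|^2 = 4\arccsc(\!\!\sqrt\tau)$, while $\mu_{01}$ and $\mu_{11}$ come from pairing the explicit expansion $\cos\theta = [1 + (h-1)\alpha]\,p_0^{(0,h)} + \tfrac{h-1}{\beta}\,p_1^{(0,h)}$, already recorded in \eqref{app:cos_expansion_res} with $\alpha = \alpha^{\tau,(-1/2,0,-1/2)}$ and $\beta = \beta^{\tau,(-1/2,0,-1/2)}$, against $p_0^{(0,h)}$ and $p_1^{(0,h)}$ and again using the common norm $\mu_{00}$. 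I expect the main work to be bookkeeping rather than conceptual: one must check the boundary index $j = 0$, where the $c_0$ term drops out consistently with $\mu_{-1,n} = 0$, and confirm that the recurrence coefficients are precisely those of Proposition \ref{prop1} under the affine change of variable $x = (h-1)\sigma + 1$. The self-adjointness of multiplication by $x$ is the step that makes the whole argument clean.
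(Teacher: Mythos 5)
Your proof is correct and follows essentially the same route as the paper's: both define $\mu_{jn}$ as inner products of $\cos(n\theta)$ against $p_j^{(0,h)}$, apply the Chebyshev three-term recurrence, use self-adjointness of multiplication together with the semiclassical Jacobi three-term recurrence to produce \eqref{eq:cos_gamma_recur}, and read off the initial values from the expansion \eqref{app:cos_expansion_res} and the common norm $\mu_{00}$. The only cosmetic differences are that the paper changes variables to $\sigma$ first (writing $\cos(n\theta) = T_n(\zeta)$ and using multiplication by $\sigma$ in the semiclassical inner product) whereas you work directly with multiplication by $\cos\theta$ in the arc inner product, and that you establish $\mu_{jn}=0$ for $j>n$ a priori by degree counting while the paper deduces it from the recurrence.
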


\begin{proof}
In what follows, we will denote $p_j(\theta) := p_j^{(0, h)}(\theta), P_j(\sigma) := P_j^{\tau, (-1/2, 0, -1/2)}(\sigma)$, and $\inp{\cdot}{\cdot} := \inp{\cdot}{\cdot}^{\tau,(-1/2,0,-1/2)}$. Now, let us start by considering the $n =0$ and $n = 1$ cases. Since $p_0(\theta) = 1$, we immediately get $\mu_{00} = 4\arccsc(\!\!\sqrt\tau)$. This $\mu_{00}$ is the denominator of all the coefficients in the expansion \eqref{eq:cosntheta_expansion_arc} since it is given by $\|p_j\|_{(0,h)}^2$ from \eqref{eq:arc_polynomial_mass_matrix_formulae_pq}. The $n=1$ case can be derived using the expansion we already found in \eqref{app:cos_expansion_res}, giving the stated expressions for $\mu_{01}$ and $\mu_{11}$.

Now consider $n > 1$. For this case, note that from orthogonality we have $\mu_{jn} = \inp{p_j(\theta)}{\cos(n\theta)}^{(0, h)}$. Letting $T_n(x) := \cos(n \arccos x)$ denote the $n$th degree Chebyshev polynomial of the first kind, we obtain 
\begin{align}
\mu_{jn} &= \int_{-\varphi}^\varphi p_j(\theta)\cos(n\theta) \dthe = 2\int_0^1 P_j(\sigma)T_n(\zeta)w^{\tau,(-1/2,0,-1/2)}(\sigma) \d\sigma = 2\inp{P_j(\sigma)}{T_n(\zeta)},
\end{align}
where $\zeta := 1 + (h-1)\sigma$. Using $T_n(x) = 2xT_{n-1}(x) - T_{n-2}(x)$ \cite{gautschi2004orthogonal},
\begin{align*}
\mu_{jn} &= 2\mu_{j,n-1} - \mu_{j,n-2} + 2(h-1)\inp{P_j(\sigma)}{\sigma T_{n-1}(\zeta)}.
\end{align*}
Then, writing $\inp{P_j(\sigma)}{\sigma T_{n-1}(\zeta)} = \inp{\sigma P_j(\sigma)}{T_{n-1}(\zeta)}$ together with the three-term recurrence for $P_j$ and simplifying gives \eqref{eq:cos_gamma_recur}, from which we can also see that $\mu_{jn} = 0$ whenever $j > n$ so that the expansion \eqref{eq:cosntheta_expansion_arc} is finite.
\end{proof}

\begin{lemma}\label{lem:sinnthetaexparc}
Using similar notation as in Lemma \ref{lem:cosnthetaexparc} and taking $n \in \mathbb Z^+$, we can write
\begin{equation}
\sin(n\theta) = \frac{1}{\eta_{00}}\sum_{j=1}^n \eta_{jn}q_j^{(0, h)}(\theta), \quad |\theta| < \varphi,
\end{equation}
where the coefficients $\eta_{jn}$ satisfy the recurrence
\begin{equation}
\eta_{jn} = 2\left[1 + (h-1)a_{j-1}^{\tau,(1/2,0,1/2)}\right]\eta_{j,n-1} - \eta_{j,n-2} + 2(h-1)\left[c_{j-1}^{\tau,(1/2,0,1/2)}\eta_{j-1,n-1} + b_{j-1}^{\tau,(1/2,0,1/2)}\eta_{j+1,n-1}\right].
\end{equation}
When $j > n$ or $j < 1$, $\eta_{jn} = 0$. The initial values for the recurrence \eqref{eq:cos_gamma_recur} are $\eta_{11} = (1-h)^2[\tau^2\arccsc(\!\!\sqrt\tau) + (2-\tau)\sqrt{\tau-1}]/2$, $\eta_{12} = 2\eta_{11}[1+(h-1)\alpha^{\tau,(1/2,0,1/2)}]$, and $\eta_{22} = 2\eta_{11}(h-1)/\beta^{\tau,(1/2,0,1/2)}$.
\end{lemma}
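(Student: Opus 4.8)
The plan is to mirror the proof of Lemma \ref{lem:cosnthetaexparc} line for line, replacing the even part of the function by its odd part, the Chebyshev polynomials of the first kind by those of the second kind, and the $(-1/2,0,-1/2)$ family by the $(1/2,0,1/2)$ family. Throughout I would abbreviate $q_j(\theta) := q_j^{(0,h)}(\theta)$, $P_j(\sigma) := P_j^{\tau,(1/2,0,1/2)}(\sigma)$, and $\inp{\cdot}{\cdot} := \inp{\cdot}{\cdot}^{\tau,(1/2,0,1/2)}$. Because every semiclassical Jacobi polynomial of a fixed family has the same squared norm, the same is true of the $q_j$, so $\|q_j\|_{(0,h)}^2$ is independent of $j$ and equals $\eta_{11}$; this common value is exactly the single normaliser appearing in the denominator of the expansion, playing the role that $\mu_{00}$ plays for the cosine series. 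By orthogonality of the $q_j^{(0,h)}$ with respect to \eqref{eq:inp_arc}, the coefficient of $q_j$ in the expansion of $\sin(n\theta)$ is $\inp{q_j}{\sin(n\theta)}^{(0,h)}/\eta_{11}$, so it suffices to set $\eta_{jn} := \inp{q_j}{\sin(n\theta)}^{(0,h)}$ and verify the recurrence together with the base values.

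The key step is a change of variables turning the arc inner product into a semiclassical one. Writing $\sin(n\theta) = \sin\theta\, U_{n-1}(\cos\theta)$ with $U_{n-1}$ the Chebyshev polynomial of the second kind, and using $q_j(\theta) = \sin\theta\, P_{j-1}(\sigma)$, the integrand $\sin^2\theta\, P_{j-1}(\sigma)\,U_{n-1}(\cos\theta)$ is even in $\theta$. I would therefore fold the integral onto $[0,\varphi]$ and substitute $\sigma = (x-1)/(h-1)$, $x=\cos\theta$. Using $y^2 = (1-h)^2\sigma(\tau-\sigma)$ and $\d\theta = (1-h)\,\d\sigma/y$, one gets $\sin^2\theta\,\d\theta = (1-h)^2 w^{\tau,(1/2,0,1/2)}(\sigma)\,\d\sigma$, which yields the clean reduction
\begin{equation*}
\eta_{jn} = 2(1-h)^2\inp{P_{j-1}}{U_{n-1}(\zeta)}, \qquad \zeta := 1 + (h-1)\sigma = \cos\theta .
\end{equation*}
This is the direct analogue of $\mu_{jn} = 2\inp{P_j}{T_n(\zeta)}$ from Lemma \ref{lem:cosnthetaexparc}, the only differences being the prefactor $2(1-h)^2$ (in place of $2$, owing to the two extra powers of $y$) and the use of $U_{n-1}$ in place of $T_n$.

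From here the recurrence is purely algebraic. I would apply $U_{n-1}(\zeta) = 2\zeta U_{n-2}(\zeta) - U_{n-3}(\zeta)$ (valid for $n\geq 2$ with $U_{-1}\equiv 0$), split $\zeta = 1 + (h-1)\sigma$, and move the factor $\sigma$ onto $P_{j-1}$ via its three-term recurrence $\sigma P_{j-1} = c_{j-1}P_{j-2} + a_{j-1}P_{j-1} + b_{j-1}P_j$ with the $(1/2,0,1/2)$ coefficients of Proposition \ref{prop1}. Recognising $\inp{P_{j-2}}{U_{n-2}}$, $\inp{P_{j-1}}{U_{n-2}}$, $\inp{P_j}{U_{n-2}}$, and $\inp{P_{j-1}}{U_{n-3}}$ as $\eta_{j-1,n-1}$, $\eta_{j,n-1}$, $\eta_{j+1,n-1}$, and $\eta_{j,n-2}$ (the common factor $2(1-h)^2$ cancelling everywhere) and collecting the $\eta_{j,n-1}$ terms reproduces the stated recurrence exactly. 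Orthogonality forces $\eta_{jn}=0$ for $j>n$, so the sum is finite. The base data come from $\sin\theta = q_1$, giving $\eta_{11} = \|q_1\|^2$ from \eqref{eq:arc_polynomial_mass_matrix_formulae_pq}, and from $\sin(2\theta) = 2\sin\theta\cos\theta = \sin\theta\cdot 2x$: expanding $2x = 2(h-1)\sigma + 2$ in $P_0,P_1$ using $P_1 = \beta^{\tau,(1/2,0,1/2)}(\sigma-\alpha^{\tau,(1/2,0,1/2)})$ and $x=(h-1)\sigma+1$ yields $\eta_{12}$ and $\eta_{22}$.

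The main obstacle is bookkeeping rather than conceptual: keeping the index shift between $q_j$ and $P_{j-1}$ consistent throughout, correctly identifying $\sin(n\theta)$ with $\sin\theta\,U_{n-1}(\cos\theta)$ (rather than the first-kind $T_n$ used for the cosine), and checking that the prefactor $2(1-h)^2$ cancels uniformly so that it leaves no trace in the recurrence and permits the single denominator $\eta_{11}$. Once the reduction identity for $\eta_{jn}$ is in hand, the remainder is a routine repetition of the cosine computation.
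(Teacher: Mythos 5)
Your proposal is correct and follows essentially the same route as the paper's proof: it reduces $\eta_{jn}$ to the semiclassical inner product $2(1-h)^2\inp{P_{j-1}^{\tau,(1/2,0,1/2)}}{U_{n-1}(\zeta)}^{\tau,(1/2,0,1/2)}$ via the same change of variables, then obtains the recurrence from the Chebyshev-$U$ three-term recurrence combined with that of $P^{\tau,(1/2,0,1/2)}$, and derives the initial values from $\sin\theta = q_1^{(0,h)}$ and the expansion of $\sin 2\theta$ in $q_1^{(0,h)}, q_2^{(0,h)}$. Your identification of the denominator $\eta_{00}$ with the common norm $\eta_{11} = \|q_j\|_{(0,h)}^2$ correctly resolves the paper's slight notational looseness, and your bookkeeping (index shift $q_j \leftrightarrow P_{j-1}$, uniform cancellation of the $2(1-h)^2$ prefactor) checks out.
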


\begin{proof}
The proof is essentially the same as in Lemma \ref{lem:cosnthetaexparc}, except the $\eta_{jn}$ are given by
\begin{equation}
\eta_{jn} = \int_{-\varphi}^\varphi q_j^{(0, h)}(\theta)\sin(n\theta) \dthe = 2(1-h)^2\inp*{P_{j-1}^{\tau,(1/2,0,1/2)}}{U_{n-1}(\zeta)}^{\tau,(1/2,0,1/2)},
\end{equation}
where $\zeta := 1 + (h-1)\sigma$ and $U_{n-1}(x) := \sin(n\arccos x)/\sqrt{1-x^2}$ is the Chebyshev polynomial of the second kind that also satisfies $U_n(x) = 2xU_{n-1}(x) - U_{n-2}(x)$ \cite{gautschi2004orthogonal}. The initial values for the recurrence are obtained by noting that $\sin(\theta) = q_1^{(0, h)}(\theta)$ and $\sin(2\theta) = 2\left(1+(h-1)\alpha^{\tau,(1/2,0,1/2)}\right)q_1^{(0, h)}(\theta) + 2(h-1)q_2^{(0, h)}(\theta)/\beta^{\tau,(1/2,0,1/2)}$. 
\end{proof}

\clearpage

\section{Definition of the periodic piecewise integrated Legendre basis}\label{app:integ_legen_def}
The definition we use for the $\vb W^{(-1),\bm\theta}$ basis, based on \cite{knook2024quasi}, is given below.

\begin{definition}[\cite{knook2024quasi}]
Let $C_k^{(\lambda)}(\theta)$ denote the $k$th degree ultraspherical polynomial orthogonal with respect to $(1-\theta^2)^{\lambda-1/2}$ on $[-1, 1]$ for $\lambda > -1/2$, $\lambda \neq 0$, normalised such that $C_k^{(\lambda)}(\theta) = 2^k(\lambda)_k\theta^k/k! + \mathcal O(\theta^{k-1})$, where $(\lambda)_k$ is the Pochhammer symbol. Define the $k$th degree \emph{integrated Legendre polynomial} $W_k^{(-1)}(\theta)$ by
\begin{equation}
W_k^{(-1)}(\theta) := \frac{(1 - \theta^2)C_k^{(3/2)}(\theta)}{(k+1)(k+2)}.
\end{equation}
Let $W_k^{(0)}(\theta) \equiv C_k^{(1/2)}(\theta)$ denote the $k$th degree Legendre polynomial. Given a grid $\bm\theta = (\theta_1,\ldots,\theta_{n+1})$, define the polynomials for $b \in \{-1, 0\}$
\begin{equation}
W_{ki}^{(b), \bm\theta}(\theta) = \begin{cases} W_k^{(b)}(b_i(\theta)) & \theta \in E_i, \\
0 & \text{otherwise} \end{cases} 
\end{equation}
over each element, where $b_i(\theta) := (2\theta - \theta_i - \theta_{i+1})/\ell_i$ maps $E_i$ into $[-1, 1]$. Define the hat functions $\vb H_L^{\bm\theta}$ by 
\begin{align*}
\vb H_L^{\bm\theta} = \begin{bmatrix} h_1^{\bm\theta}(\theta) & \cdots & h_n^{\bm\theta}(\theta) \end{bmatrix},
\end{align*}
where
\begin{equation}
h_1^{\bm\theta}(\theta) = \begin{cases}
(\theta_2 - \theta)/\ell_1 & \theta \in E_1, \\
(\theta -\theta_n)/\ell_n & \theta \in E_n, \\
0 & \text{otherwise}, \end{cases} \quad h_j^{\bm\theta}(\theta) = \begin{cases} (\theta -\theta_{j-1})/\ell_{j-1} & \theta \in E_{j-1}, \\ 
(\theta_{j+1} - \theta)/\ell_j & \theta \in E_j, \\ 0 & \text{otherwise} \end{cases} \quad j=2,\ldots,n.
\end{equation}
With these definitions, we define the bases
\begin{align*}
\vb W^{(0), \bm\theta} &:= \begin{bmatrix} \vb W_0^{(0), \bm\theta} & \vb W_1^{(0), \bm\theta} & \cdots \end{bmatrix}, \\
\vb W^{(-1), \bm\theta} &:= \begin{bmatrix} \vb H_L^{\bm\theta} & \vb W_0^{(-1), \bm\theta} & \vb W_1^{(-1), \bm\theta} & \cdots \end{bmatrix},
\end{align*}
where $\vb W_k^{(b), \bm\theta} := (W_{k1}^{(b),\bm\theta},\ldots,W_{kn}^{(b),\bm\theta})\tran$.
\end{definition}

\clearpage

\section{Multiplication matrices}\label{app:multmat}

In this appendix we consider the problem of expanding $a(x, y)\vb P^{(b,h)}(x, y)$ in the $\vb P^{(b,h)}(x, y)$ basis. In particular, the computation of the matrix $J_a$ such that $a(x, y)\vb P^{(b,h)}(x, y) = \vb P^{(b,h)}(x, y)J_a$. For what follows, we will need the definition of the Jacobi matrix given below (in general, this is the non-symmetric transposed version of the typical definition of a Jacobi matrix \cite{olver2020fast}, although $J^{t, (a, b, c)}$ defined below is symmetric).

\begin{definition}[\cite{olver2020fast}]\label{def:app:jacmat}
Let $\vb P$ be a basis of orthogonal polynomials. The \emph{Jacobi matrix} is the matrix $J$ such that $x\vb P(x) = \vb P(x)J$. This matrix corresponds to the three-term recurrence for the polynomials, denoted by $p_n$, given by
\begin{align*}
xp_n(x) &= c_np_{n-1}(x) + a_np_n(x) + b_np_{n+1}(x), 
\end{align*}
where $c_0 = 0$. We will use superscripts to refer to Jacobi matrices for the semiclassical Jacobi polynomials, i.e. $x\vb P^{t, (a, b, c)} = \vb P^{t, (a, b, c)}J^{t, (a, b, c)}$.
\end{definition}

Let's now work on computing $J_a$. First, the following lemma concerning weighted conversion between semiclassical Jacobi polynomials both with $b = -1$ will be necessary.

\begin{lemma}\label{lem:weight_conv_jac:app}
The matrix $L_{\mathrm a\mathrm c, (a, -1, c)}^{t, (a-1, -1, c-1)}$ is a $(2, 0)$ banded matrix given by 
\begin{equation}
L_{\mathrm a\mathrm c, (a, -1, c)}^{t, (a-1, -1, c-1)} = \begin{bmatrix}
t - 1 & \vb 0\tran \\
\ell_1 \vb e_1 + \ell_2\vb e_2 & L_{\mathrm a\mathrm c, (a, 1, c)}^{t, (a-1,1,c-1)}
\end{bmatrix},
\end{equation}
where $\ell_1 = 1 + \alpha^{t, (a-1, 1, c-1)} - t$, $\ell_3 = 1/\beta^{t, (a-1, 1, c-1)}$, $\vb e_1 = (1, 0, \ldots)\tran$, and $\vb e_2 = (0,1,0,\ldots)\tran$.
\end{lemma}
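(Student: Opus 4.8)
The plan is to reduce the weighted connection to an ordinary polynomial multiplication and then exploit the block definition \eqref{eq:semiclassical_jacobi_bneg1_def} of the $b=-1$ polynomials. Since the source weight $w^{t,(a,-1,c)}$ and the target weight $w^{t,(a-1,-1,c-1)}$ share the same factor $(1-x)^{-1}$, the subscript-$\mathrm a\mathrm c$ convention (only the $x$ and $t-x$ factors are ``present'') gives, after cancelling $x^{a-1}(t-x)^{c-1}$ from both sides, the clean identity $x(t-x)\vb P^{t,(a,-1,c)} = \vb P^{t,(a-1,-1,c-1)} L_{\mathrm a\mathrm c,(a,-1,c)}^{t,(a-1,-1,c-1)}$. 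Thus $L := L_{\mathrm a\mathrm c,(a,-1,c)}^{t,(a-1,-1,c-1)}$ is exactly the matrix expanding $x(t-x)$ times each entry of $\vb P^{t,(a,-1,c)}$ in the basis $\vb P^{t,(a-1,-1,c-1)} = \begin{bmatrix} 1 & (1-x)\vb P^{t,(a-1,1,c-1)}\end{bmatrix}$. Because this target basis is graded (entry $k$ has degree $k$), such an expansion is unique, which is what makes the coefficient reading-off well defined.

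First I would handle the first column, whose source entry is $1$: I must expand the degree-two polynomial $x(t-x)$ in the three lowest target entries $1$, $(1-x)$, and $(1-x)P_1^{t,(a-1,1,c-1)} = (1-x)\beta^{t,(a-1,1,c-1)}(x-\alpha^{t,(a-1,1,c-1)})$. Matching the coefficients of $1$, $x$, and $x^2$ produces a $3\times 3$ triangular system whose solution gives coefficient $t-1$ on $1$, coefficient $1+\alpha^{t,(a-1,1,c-1)}-t=\ell_1$ on $(1-x)$, and coefficient $1/\beta^{t,(a-1,1,c-1)}=\ell_3$ on $(1-x)P_1^{t,(a-1,1,c-1)}$. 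Hence the first column is $(t-1,\ell_1,\ell_3,0,\ldots)\tran$, i.e. $t-1$ stacked on $\ell_1\vb e_1 + \ell_3\vb e_2$, which matches the stated form (the subscript on $\vb e_2$ in the statement should read $\ell_3$ rather than $\ell_2$).

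Next I would treat the remaining columns. The $(n+1)$-th source entry is $(1-x)P_n^{t,(a,1,c)}$, so I expand $x(t-x)(1-x)P_n^{t,(a,1,c)} = (1-x)\,[x(t-x)P_n^{t,(a,1,c)}]$. The bracketed factor is governed by the ordinary $b=1$ weighted connection obtained by the same cancellation, $x(t-x)\vb P^{t,(a,1,c)} = \vb P^{t,(a-1,1,c-1)}L_{\mathrm a\mathrm c,(a,1,c)}^{t,(a-1,1,c-1)}$. Multiplying through by $(1-x)$ and recognising that $(1-x)\vb P^{t,(a-1,1,c-1)}$ is precisely the tail of $\vb P^{t,(a-1,-1,c-1)}$, I conclude that each tail column of $L$ maps only into the tail rows, with coefficients equal to the corresponding entries of $L_{\mathrm a\mathrm c,(a,1,c)}^{t,(a-1,1,c-1)}$, and with \emph{no} contribution to the first (constant) row: every resulting term carries the factor $(1-x)$ and hence cannot produce the constant basis element. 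This gives the claimed block form with bottom-right block $L_{\mathrm a\mathrm c,(a,1,c)}^{t,(a-1,1,c-1)}$.

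Finally I would establish the $(2,0)$ banded structure. The first column reaches exactly two rows below the diagonal, as just computed. For the bottom-right block, multiplication by $x(t-x)$ raises degree by two, and an orthogonality argument pins the bandwidth: since $x(t-x)\,w^{t,(a-1,1,c-1)} = w^{t,(a,1,c)}$, the entry is $\inp{P_n^{t,(a,1,c)}}{P_j^{t,(a-1,1,c-1)}}^{t,(a,1,c)}/\|P_j^{t,(a-1,1,c-1)}\|_{t,(a-1,1,c-1)}^2$, which vanishes for $j<n$ by orthogonality of $P_n^{t,(a,1,c)}$ to lower-degree polynomials, while the degree bound forces vanishing for $j>n+2$; thus the block is $(2,0)$ banded, and shifting it into block position $(1,1)$ preserves this. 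Combining the two pieces yields a $(2,0)$ banded matrix overall. I expect the only mildly delicate points to be the coefficient bookkeeping in the first column together with the verification that the tail contributes nothing to the constant row; both become routine once the reduction of $L$ to multiplication by $x(t-x)$ is in place.
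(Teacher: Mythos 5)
Your proposal is correct and follows essentially the same route as the paper's proof: both decompose $L_{\mathrm a\mathrm c, (a,-1,c)}^{t,(a-1,-1,c-1)}$ into its first column plus a tail block, identify the tail block with $L_{\mathrm a\mathrm c,(a,1,c)}^{t,(a-1,1,c-1)}$ (your factor-of-$(1-x)$ observation is exactly the paper's second block equation), and determine the first column from the degree-two identity $x(t-x) = (t-1) + \ell_1(1-x) + \ell_2(1-x)P_1^{t,(a-1,1,c-1)}(x)$ --- you by matching coefficients of $1$, $x$, $x^2$, the paper by substituting $x=1$, $x=\alpha^{t,(a-1,1,c-1)}$, and $x=0$. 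The only substantive differences are that where the paper cites \cite{papadopoulos2024building} for the $(2,0)$ bandedness of the tail block you give a valid self-contained orthogonality argument (using $x(t-x)\,w^{t,(a-1,1,c-1)} = w^{t,(a,1,c)}$), and you correctly flag the $\ell_2$/$\ell_3$ labelling typo in the statement, which the paper's own proof (writing $\bm\ell = (\ell_1,\ell_2,0,\ldots)\tran$) confirms.
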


\begin{proof}
Writing 
\[
L_{\mathrm a\mathrm c, (a, -1, c)}^{t, (a-1, -1, c-1)} = \begin{bmatrix} \ell & \vb 0\tran \\ \bm\ell & L \end{bmatrix},
\]
we obtain the two equations
\begin{align}
w^{t, (a, 0, c)} &=  w^{t, (a-1, 0, c-1)}\ell + w^{t, (a-1, 1, c-1)}\vb P^{t, (a-1, 1, c-1)}\bm\ell, \\
w^{t, (a, 1, c)}\vb P^{t, (a, 1, c)} &= w^{t, (a-1, 1, c-1)}\vb P^{t, (a-1, 1, c-1)}L.
\end{align}
The second equation shows that $L = L_{\mathrm a\mathrm c, (a, 1, c)}^{t, (a-1, 1, c-1)}$. For the first equation, the form of $\bm\ell$ follows from substituting $x = 0$, $x = 1$, and $x = \alpha^{t, (a-1, 1, c-1)}$. The fact that $L_{\mathrm a\mathrm c, (a, -1, c)}^{t, (a-1, -1, c-1)}$ is a $(2, 0)$ banded matrix follows from the fact that $\bm\ell$ is of the form $\bm\ell = (\ell_1, \ell_2, 0, \ldots)\tran$ and that $L_{\mathrm a\mathrm c, (a, 1, c)}^{t, (a-1, 1, c-1)}$ is a $(2, 0)$ banded matrix also \cite{papadopoulos2024building}.
\end{proof}

Now we may continue. Omitting the superscript $(b, h)$ in what follows, the first step for computing $J_a$ is to compute the products $p_n(x, y)\vb p(x, y)$, $p_n(x, y)\vb q(x, y)$, $q_n(x, y)\vb p(x, y)$, and $q_n(x, y)\vb q(x, y)$. We use the notation $\vb P^- := \vb P^{\tau, (-1/2, b, -1/2)}$ and $\vb P^+ := \vb P^{\tau, (1/2, b, 1/2)}$, and similarly define $J^{\pm}$ to be the Jacobi matrix of $\vb P^{\pm}$.
The first product is
\begin{align}
p_n(x, y)\vb p(x, y) = P_n^-(\sigma)\vb P^-(\sigma) = \vb P^-(\sigma)P_n^-(J^-) = \vb p(x, y)p_n(J^-) = \vb p(x, y)M_{n, (2, 2)},\label{eq:app:g:eM22}
\end{align}
where $M_{n, (2, 2)} := p_n(J^-)$, abusing notation slightly to write $p_n(\sigma) \equiv p_n(x, y)$ instead of the usual $p_n(\theta) \equiv p_n(\cos\theta,\sin\theta)$. Note that we have made use of the fact that $h(\sigma)\vb P^- = \vb P^-h(J^-)$ for polynomial $h$ which follows directly from the definition of the Jacobi matrix. Next, 
\begin{align}\label{eq:app:g:pnnqxy_eM21}
p_n(x, y)\vb q(x, y) = yP_n^-(\sigma)\vb P^+(\sigma) = y\vb P^+(\sigma)P_n^-(J^+) = \vb q(x, y)P_n^-(J^+) = \vb q(x, y)M_{n, (2, 1)},
\end{align}
where $M_{n, (2, 1)} := p_n(J^+)$. Continuing,
\begin{equation}\label{eq:app:g:eM12}
q_n(x, y)\vb p(x, y) = yP_{n-1}^+(\sigma)\vb P^-(\sigma) = y\vb P^-(\sigma)P_{n-1}^+(J^-) = \vb q(x, y)R_{(-1/2,b,-1/2)}^{\tau,(1/2,b,1/2)}P_{n-1}^+(J^-) = \vb q(x, y)M_{n, (1, 2)},
\end{equation}
where $M_{n, (1, 2)} := R_{(-1/2, b, -1/2)}^{\tau, (1/2, b, 1/2)}P_{n-1}^+(J^-)$. Lastly,
\begin{align*}
q_n(x, y)\vb q(x, y) &= y^2 P_{n-1}^+(\sigma)\vb P^+(\sigma) = y^2 \vb P^+(\sigma)P_{n-1}^+(J^+).
\end{align*}
Consider $y^2\vb P^+(\sigma)$. Using $y^2 = \sigma(1-h)^2(\tau-\sigma)$, we can write
\[
y^2\vb P^+ = (1-h)^2 w^{\tau,(1/2,-b,1/2)}(\sigma)\left(w^{\tau,(1/2,b,1/2)}\vb P^+\right).
\]
We know that we can write $w^{\tau,(1/2,b,1/2)}\vb P^+ = w^{\tau,(-1/2,b,-1/2)}\vb P^-L_{\mathrm a\mathrm c, (1/2,b,1/2)}^{\tau,(-1/2,b,-1/2)}$ where $L_{\mathrm a\mathrm c, (1/2,b,1/2)}^{\tau,(-1/2,b,-1/2)}$ is a $(2, 0)$ banded matrix \cite{papadopoulos2024building}, including when $b = -1$ thanks to Lemma \ref{lem:weight_conv_jac:app}. Thus, $y^2\vb P^+ = (1-h)^2\vb P^-L_{\mathrm a\mathrm c, (1/2,b,1/2)}^{\tau,(-1/2,b,-1/2)}$. This gives
\begin{equation}
q_n(x, y)\vb q(x, y) = y^2\vb P^+(\sigma)P_{n-1}^+(J^+) = (1-h)^2\vb P^-L_{\mathrm a\mathrm c, (1/2, b, 1/2)}^{\tau, (-1/2, b, -1/2)}P_{n-1}^+(J^+) = \vb pM_{n, (1, 1)},
\label{eq:app:g:e11}
\end{equation}
where $M_{n, (1, 1)} := (1-h)^2L_{\mathrm a\mathrm c, (1/2, b, 1/2)}^{\tau, (-1/2, b, -1/2)}P_{n-1}^+(J^+)$.  We summarise these results in the following lemma.

\begin{lemma}\label{lem:app:pnpprod_forms_Mij}
Given $b \geq -1$ and $|h| < 1$, we can write
\begin{align}
p_n^{(b, h)}(x, y)\vb p^{(b, h)}(x, y) &= \vb p^{(b, h)}(x, y)M^{(b, h)}_{n, (2, 2)}, \qquad\qquad M^{(b, h)}_{n, (2, 2)} := P_n^{\tau, (-1/2, b, -1/2)}\left(J^{\tau, (-1/2, b, -1/2)}\right), \label{eq:app:g:__EM22} \\
p_n^{(b, h)}(x, y)\vb q^{(b, h)}(x, y) &= \vb q^{(b, h)}(x, y)M^{(b, h)}_{n, (2, 1)}, \qquad\qquad M^{(b, h)}_{n, (2, 1)} := P_n^{\tau, (-1/2, b, -1/2)}\left(J^{\tau, (1/2, b, 1/2)}\right), \label{eq:app:g:__EM21}\\
q_n^{(b, h)}(x, y)\vb p^{(b, h)}(x, y) &= \vb q^{(b, h)}(x, y)M^{(b, h)}_{n, (1, 2)}, \qquad\qquad M^{(b, h)}_{n, (1, 2)} := R_{(-1/2,b,-1/2)}^{\tau,(1/2,b,1/2)}P_{n-1}^{\tau, (1/2, b, 1/2)}\left(J^{\tau, (-1/2, b, -1/2)}\right),  \label{eq:app:g:__EM12} \\
q_n^{(b, h)}(x, y)\vb q^{(b, h)}(x, y) &= \vb p^{(b, h)}(x, y)M^{(b, h)}_{n, (1, 1)}, \qquad\qquad M^{(b, h)}_{n, (1, 1)} := (1 - h)^2L_{\mathrm a\mathrm c, (1/2, b, 1/2)}^{\tau, (-1/2, b, -1/2)}P_{n-1}^{\tau, (1/2, b, 1/2)}\left(J^{\tau, (1/2, b, 1/2)}\right).\label{eq:app:g:__EM11}
\end{align}
\end{lemma}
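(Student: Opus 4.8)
The plan is to establish all four identities from a single algebraic device, the \emph{functional-calculus identity} for Jacobi matrices: if $\vb P$ is a quasimatrix of orthogonal polynomials in $\sigma$ with Jacobi matrix $J$, so that $\sigma\vb P(\sigma) = \vb P(\sigma)J$, then for every univariate polynomial $g$ one has $g(\sigma)\vb P(\sigma) = \vb P(\sigma)g(J)$. This follows by induction on $\deg g$: it is trivial for $g\equiv 1$, multiplying $\sigma^k\vb P = \vb P J^k$ by $\sigma$ and applying $\sigma\vb P = \vb P J$ once more gives $\sigma^{k+1}\vb P = \vb P J^{k+1}$, and linearity extends it to all $g$. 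I would record this first, applied to both $\vb P^- := \vb P^{\tau, (-1/2, b, -1/2)}$ and $\vb P^+ := \vb P^{\tau, (1/2, b, 1/2)}$ with their Jacobi matrices $J^-$ and $J^+$.

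Next I recall the definitions $p_n = P_n^-(\sigma)$, $q_n = yP_{n-1}^+(\sigma)$, $\vb p = \vb P^-(\sigma)$, and $\vb q = y\vb P^+(\sigma)$, where $\sigma = (x-1)/(h-1)$ and $y$ is a scalar factor that may be moved freely across the (right-acting) matrices. The first two identities are then immediate: $p_n\vb p = P_n^-(\sigma)\vb P^-(\sigma) = \vb P^-(\sigma)P_n^-(J^-) = \vb p\,M_{n,(2,2)}$, and $p_n\vb q = y\,P_n^-(\sigma)\vb P^+(\sigma) = y\,\vb P^+(\sigma)P_n^-(J^+) = \vb q\,M_{n,(2,1)}$, reading off $M_{n,(2,2)}=P_n^-(J^-)$ and $M_{n,(2,1)}=P_n^-(J^+)$.

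For the $(1,2)$ product I would apply the identity to $\vb P^-$ and then change basis: $q_n\vb p = yP_{n-1}^+(\sigma)\vb P^-(\sigma) = y\vb P^-(\sigma)P_{n-1}^+(J^-)$, and since the connection matrix gives $\vb P^- = \vb P^+ R_{(-1/2,b,-1/2)}^{\tau,(1/2,b,1/2)}$, we have $y\vb P^- = \vb q\,R_{(-1/2,b,-1/2)}^{\tau,(1/2,b,1/2)}$, producing the stated $M_{n,(1,2)}$. The $(1,1)$ product is where the real work lies, and I expect it to be the main obstacle: a factor $y^2$ appears, which must be re-expressed as a polynomial in $\sigma$ via $y^2 = (1-h)^2\sigma(\tau-\sigma)$ and then returned to the $\vb p$-quasimatrix. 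After $P_{n-1}^+(\sigma)\vb P^+(\sigma) = \vb P^+(\sigma)P_{n-1}^+(J^+)$, I would use the weighted connection matrix $L_{\mathrm a\mathrm c,(1/2,b,1/2)}^{\tau,(-1/2,b,-1/2)}$, which satisfies $\sigma(\tau-\sigma)\vb P^+ = \vb P^- L_{\mathrm a\mathrm c,(1/2,b,1/2)}^{\tau,(-1/2,b,-1/2)}$ (this is exactly the $\mathrm{ac}$-weighted conversion, clearing the $\sigma^{1/2}(\tau-\sigma)^{1/2}$ factors on both sides), so that $y^2\vb P^+ = (1-h)^2\vb P^- L_{\mathrm a\mathrm c,(1/2,b,1/2)}^{\tau,(-1/2,b,-1/2)}$ and hence $q_n\vb q = \vb p\,M_{n,(1,1)}$.

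The delicate points I would flag are: (i) verifying that the weighted conversion $\sigma(\tau-\sigma)\vb P^+ = \vb P^- L_{\mathrm a\mathrm c,(1/2,b,1/2)}^{\tau,(-1/2,b,-1/2)}$ remains valid, and that $L_{\mathrm a\mathrm c,(1/2,b,1/2)}^{\tau,(-1/2,b,-1/2)}$ is still a well-defined banded operator, in the non-integrable boundary case $b=-1$, which is precisely what Lemma~\ref{lem:weight_conv_jac:app} supplies; and (ii) keeping the operator ordering consistent throughout, since all polynomials-in-$J$ and connection/weighted-connection matrices act on the right of the quasimatrices and do not in general commute. With these two checks in place, the four displayed formulas follow directly by reading off the matrices $M_{n,(\cdot,\cdot)}$.
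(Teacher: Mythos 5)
Your proposal is correct and follows essentially the same route as the paper's derivation in the appendix on multiplication matrices: functional calculus $g(\sigma)\vb P = \vb P\,g(J)$ for the $(2,2)$ and $(2,1)$ products, the connection matrix $\vb P^- = \vb P^+ R_{(-1/2,b,-1/2)}^{\tau,(1/2,b,1/2)}$ for the $(1,2)$ product, and the substitution $y^2 = (1-h)^2\sigma(\tau-\sigma)$ combined with the weighted connection matrix $L_{\mathrm a\mathrm c,(1/2,b,1/2)}^{\tau,(-1/2,b,-1/2)}$ (with Lemma \ref{lem:weight_conv_jac:app} covering $b=-1$) for the $(1,1)$ product. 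The only differences are presentational: you spell out the induction behind the functional-calculus identity and explicitly flag the operator-ordering caveat, both of which the paper leaves implicit.
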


Now let's consider $a(x, y)\vb P(x, y)$. Given $\vb a := (a_{02}, a_{11}, a_{12}, \ldots)\tran$ such that $a(x, y) = \vb P(x, y)\vb a$, we can write
\begin{align}\label{eq:axy_sum_an1qn_an2pn}
a(x, y) &= \sum_{n=1}^\infty a_{n1}q_n(x, y) + \sum_{n=0}^\infty a_{n2}p_n(x, y).
\end{align}
Then, using Lemma \ref{lem:app:pnpprod_forms_Mij},
\begin{align}
a(x, y)\vb p(x, y) &= \vb q(x, y)\sum_{n=1}^\infty a_{n1}M_{n, (1, 2)} + \vb p(x, y)\sum_{n=0}^\infty a_{n2}M_{n, (2, 2)}. \label{eq:app:g:ap_expa_mul_mat}\\
a(x, y)\vb q(x, y) &= \vb p(x, y)\sum_{n=1}^\infty a_{n1}M_{n, (1, 1)} + \vb q(x, y)\sum_{n=0}^\infty a_{n2}M_{n, (2, 1)}. \label{eq:app:g:aq_expq_mul_matpq}
\end{align}
Next, define $M_{(1, 2)} := \sum_{n=1}^\infty a_{n1}M_{n, (1, 2)}$ and similarly for $M_{(2, 2)}$, $M_{(1, 1)}$, and $M_{(2, 1)}$. To evaluate these matrices requires some more work. First,
\begin{align}\label{eq:app:g:form_of_pgm22}
M_{(2, 2)} &= \sum_{n=0}^\infty a_{n2}M_{n, (2, 2)} = \sum_{n=0}^\infty a_{n2}P_n^-(J^-) = b_2(J^-),
\end{align}
where $b_2(\sigma) := \sum_{n=0}^\infty a_{n2} P_n^-(\sigma)$. Next,
\begin{align}\label{eq:app:g:form_of_pgm21}
M_{(2, 1)} &= \sum_{n=0}^\infty a_{n2}M_{n, (2, 1)} = \sum_{n=0}^\infty a_{n2}P_n^-(J^+) = b_2(J^+).
\end{align}
For $M_{(1, 2)}$ we have
\begin{align}\label{eq:app:g:form_of_pgm12}
M_{(1, 2)} &= \sum_{n=1}^\infty a_{n1}M_{n, (1, 2)} = \sum_{n=1}^\infty a_{n1}R_{(-1/2, b, -1/2)}^{\tau, (1/2, b, 1/2)}P_{n-1}^+(J^-) = R_{(-1/2, b, -1/2)}^{\tau, (1/2, b, 1/2)}b_1(J^-),
\end{align}
where $b_1(\sigma) := \sum_{n=1}^\infty a_{n1}P_{n-1}^+(\sigma)$. Finally,
\begin{align}\label{eq:app:g:form_of_pgm11}
M_{(1, 1)} &= \sum_{n=1}^\infty a_{n1} M_{n, (1, 1)} = \sum_{n=1}^\infty a_{n1}(1-h)^2L_{\mathrm a\mathrm c, (1/2,b,1/2)}^{\tau,(-1/2,b,-1/2)}P_{n-1}^+(J^+) = (1-h)^2L_{\mathrm a\mathrm c, (1/2, b, 1/2)}^{\tau, (-1/2, b, -1/2)}b_1(J^+).
\end{align}
The only remaining complication involved in \eqref{eq:app:g:form_of_pgm22}--\eqref{eq:app:g:form_of_pgm11} is in the computation of $b_1(J^{\pm})$ and $b_2(J^{\pm})$. This computation is done using Clenshaw's algorithm \cite{olver2020fast} together with the three-term recurrence relationships defining $\vb P^{\pm}$, making use of the \texttt{Clenshaw} function in RecurrenceRelationshipArrays.jl \cite{RecurrenceRelationshipArrays.jl2024}. We summarise these results and give further properties of these $M$ matrices below.

\begin{lemma}\label{lem:app:g:pnpprod_form_JA_MAT_mulmat}
Given $b \geq -1$ and $|h| < 1$, suppose $a(x, y)$ has a convergent $\vb P^{(b, h)}$-series so that
\begin{equation}
a(x, y) = b_1(\sigma) + yb_1(\sigma) \equiv \vb P^{(b, h)}\vb a, 
\end{equation}
where $\sigma := (x-1)/(h-1)$, $\tau := 2/(1-h)$, $b_2(\sigma) := \sum_{n=0}^\infty a_{n2}P_n^{\tau, (-1/2, b, -1/2)}(\sigma)$, $b_1(\sigma) := \sum_{n=1}^\infty a_{n1}P_{n-1}^{\tau, (1/2, b, 1/2)}(\sigma)$, and $\vb a = (a_{02}, a_{11}, a_{12}, \ldots)\tran$. Then, using the same notation as in Lemma \ref{lem:app:pnpprod_forms_Mij},
\begin{align}
\sum_{n=0}^\infty a_{n2}M^{(b, h)}_{n, (2, 2)} &= b_2(J^{\tau, (-1/2, b, -1/2)}) =: M^{(b, h)}_{(2, 2)}, \\
\sum_{n=0}^\infty a_{n2}M^{(b, h)}_{n, (2, 1)} &= b_2(J^{\tau, (1/2, b, 1/2)}) =: M^{(b, h)}_{(2, 1)}, \\
\sum_{n=1}^\infty a_{n1}M^{(b, h)}_{n, (1, 2)} &= R_{(-1/2, b, -1/2)}^{\tau, (1/2, b, 1/2)}b_1(J^{\tau, (-1/2, b, -1/2)}) =: M_{(1, 2)}^{(b, h)}, \\
\sum_{n=1}^\infty a_{n1}M^{(b, h)}_{(1, 1)} &= (1-h)^2L_{\mathrm a\mathrm c, (1/2, b, 1/2)}^{\tau, (-1/2, b, -1/2)}b_1(J^{\tau, (1/2, b, 1/2)}) =: M_{(1, 1)}^{(b, h)},
\end{align}
and
\begin{align}
a(x, y)\vb p^{(b, h)}(x, y) &= \vb q^{(b, h)}(x, y)M^{(b, h)}_{(1, 2)} + \vb p^{(b, h)}(x, y)M^{(b, h)}_{(2, 2)}, \\
a(x, y)\vb q^{(b, h)}(x, y) &= \vb p^{(b, h)}(x, y)M_{(1, 1)}^{(b, h)} + \vb q^{(b, h)}(x, y)M_{(2, 1)}^{(b, h)}.
\end{align}
Let $\alpha_1$ and $\alpha_2$ be such that $a_{n1} = 0$ for $n > \alpha_1$ and $a_{n2} = 0$ for $n > \alpha_2$, in particular $b_1(\sigma) \equiv \sum_{n=1}^{\alpha_1} a_{n1}P_{n-1}^{\tau, (1/2, b, 1/2)}(\sigma)$ and $b_2(\sigma) \equiv \sum_{n=0}^{\alpha_2} a_{n2}P_n^{\tau, (-1/2, b, -1/2)}(\sigma)$. Then $M_{(2, 2)}^{(b, h)}$ and $M_{(2, 1)}^{(b, h)}$ are $(\alpha_2 , \alpha_2)$ banded matrices, $M_{(1, 2)}^{(b, h)}$ is a $(\alpha_1-1, \alpha_1 + 1)$ banded matrix, and $M_{(1, 1)}^{(b, h)}$ is a $(\alpha_1 + 1, \alpha_1-1)$ banded matrix.
\end{lemma}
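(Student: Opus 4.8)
The plan is to assemble the four summation identities and the two product formulas directly from the derivation preceding the statement, and then to establish the bandwidth claims through a functional-calculus argument on the Jacobi matrices combined with the known bandwidths of the conversion operators. The first task is to record the decomposition of $a(x,y)$: starting from \eqref{eq:axy_sum_an1qn_an2pn} and substituting $p_n(x,y) = P_n^{\tau,(-1/2,b,-1/2)}(\sigma)$ and $q_n(x,y) = yP_{n-1}^{\tau,(1/2,b,1/2)}(\sigma)$, the two sums separate into $b_2(\sigma) + y\,b_1(\sigma)$, which is the claimed form (this also repairs the evident typo $b_1 \to b_2$ in the first summand of the stated identity).

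The four summation identities then follow by linearity of the Jacobi functional calculus applied to Lemma \ref{lem:app:pnpprod_forms_Mij}. Since $M_{n,(2,2)} = P_n^{\tau,(-1/2,b,-1/2)}(J^{\tau,(-1/2,b,-1/2)})$, summing against the coefficients $a_{n2}$ and using $\sum_n a_{n2}P_n^{\tau,(-1/2,b,-1/2)}(J) = \bigl(\sum_n a_{n2}P_n^{\tau,(-1/2,b,-1/2)}\bigr)(J) = b_2(J)$ gives $M_{(2,2)} = b_2(J^{\tau,(-1/2,b,-1/2)})$, and identically for the other three; this reproduces \eqref{eq:app:g:form_of_pgm22}--\eqref{eq:app:g:form_of_pgm11}. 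The two product formulas are obtained by inserting these into \eqref{eq:app:g:ap_expa_mul_mat}--\eqref{eq:app:g:aq_expq_mul_matpq}.

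The substantive part is the bandwidth count. Here I would invoke the hypothesis that $a_{n1}=0$ for $n>\alpha_1$ and $a_{n2}=0$ for $n>\alpha_2$, so $b_1$ is a polynomial of degree $\alpha_1-1$ and $b_2$ of degree $\alpha_2$. The Jacobi matrices $J^{\tau,(\pm 1/2, b, \pm 1/2)}$ are tridiagonal, i.e. $(1,1)$ banded, and I would use the elementary fact that a degree-$d$ polynomial of a $(1,1)$ banded matrix is $(d,d)$ banded (an easy induction on $d$, each factor of $J$ widening the band by one on each side). This immediately gives that $M_{(2,2)}$ and $M_{(2,1)}$ are $(\alpha_2,\alpha_2)$ banded, and that $b_1(J^{\tau,(-1/2,b,-1/2)})$ and $b_1(J^{\tau,(1/2,b,1/2)})$ are $(\alpha_1-1,\alpha_1-1)$ banded.

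For the last two matrices I would combine these estimates with the structure of the conversion operators, using that the product of an $(\ell_1,u_1)$ and an $(\ell_2,u_2)$ banded matrix is $(\ell_1+\ell_2,u_1+u_2)$ banded. The connection matrix $R_{(-1/2,b,-1/2)}^{\tau,(1/2,b,1/2)}$ increments both the $a$ and $c$ parameters by one; since each single-parameter increment is upper bidiagonal, it factorises as a product of two such matrices and is therefore $(0,2)$ banded. Multiplying by the $(\alpha_1-1,\alpha_1-1)$ matrix $b_1(J^{\tau,(-1/2,b,-1/2)})$ yields the claimed $(\alpha_1-1,\alpha_1+1)$ band for $M_{(1,2)}$. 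Likewise the weighted connection matrix $L_{\mathrm a\mathrm c, (1/2,b,1/2)}^{\tau,(-1/2,b,-1/2)}$ is $(2,0)$ banded --- including the $b=-1$ case, which is precisely the content of Lemma \ref{lem:weight_conv_jac:app} --- so its product with $b_1(J^{\tau,(1/2,b,1/2)})$ is $(\alpha_1+1,\alpha_1-1)$ banded, giving $M_{(1,1)}$. I expect the main obstacle to be bookkeeping: confirming the exact bandwidths and orientations (upper versus lower) of $R$ and $L$, verifying the additivity rule is applied with the correct signs, and in particular checking that the $b=-1$ weighted conversion retains the $(2,0)$ structure supplied by Lemma \ref{lem:weight_conv_jac:app} rather than degenerating at the first row or column.
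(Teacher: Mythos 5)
Your proposal is correct and follows essentially the same route as the paper: the summation and product identities are assembled by linearity from Lemma \ref{lem:app:pnpprod_forms_Mij} and \eqref{eq:app:g:ap_expa_mul_mat}--\eqref{eq:app:g:aq_expq_mul_matpq}, and the bandwidths come from the fact that a degree-$d$ polynomial of a tridiagonal Jacobi matrix is $(d,d)$ banded, combined with bandwidth additivity and the $(0,2)$ and $(2,0)$ bandedness of $R_{(-1/2,b,-1/2)}^{\tau,(1/2,b,1/2)}$ and $L_{\mathrm a\mathrm c,(1/2,b,1/2)}^{\tau,(-1/2,b,-1/2)}$ (the latter covered at $b=-1$ by Lemma \ref{lem:weight_conv_jac:app}), exactly as in the paper's proof. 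Your observation that the stated expansion should read $b_2(\sigma) + y\,b_1(\sigma)$ rather than $b_1(\sigma) + y\,b_1(\sigma)$ is also a correct catch of a typo in the statement.
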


\begin{proof}
The only unproven part of this result is the last statement concerning the bandwidths of these matrices. To prove these bandwidths, suppose we have a polynomial $h(\sigma) = \sum_{n=0}^d h_n \sigma^n$ and consider $h(J)$ where $J$ is a tridiagonal matrix. Since multiplying matrices adds their bandwidths together, the largest bandwidth matrix in the sum associated with $h(J)$ comes from $J^d$ which is a $(d, d)$ banded matrix (since tridiagonal matrices are $(1, 1)$ banded matrices). Since $b_1(\sigma) = \sum_{n=1}^{\alpha_1} a_{n1}P_{n-1}^{\tau, (1/2, b, 1/2)}$, we see that $b_1(J^{\tau, (\pm 1/2, b, \pm 1/2)})$ must be a $(\alpha_1-1, \alpha_1-1)$ banded matrix and similarly for $b_2(\sigma) = \sum_{n=0}^{\alpha_2} a_{n2}P_n^{\tau, (-1/2, b, -1/2)}(\sigma)$. The result for $M_{(1, 2)}^{(b, h)}$ and $M_{(1, 1)}^{(b, h)}$ follows from this argument together with noting that $R_{(-1/2, b, -1/2)}^{\tau, (1/2, b, 1/2)}$ and $L_{\mathrm a\mathrm c, (1/2,b,1/2)}^{\tau,(-1/2,b,-1/2)}$ are $(0, 2)$ and $(2, 0)$ banded matrices, respectively.
\end{proof}

Using this result, we can finally derive $J_a$. We see that
\begin{align*}
a(x, y)p_n(x, y) &= \vb q(x, y)M_{(1, 2)}\vb e_n + \vb p(x, y)M_{(2, 2)}\vb e_n, \quad n=0,1,2,\ldots, \\
a(x, y)q_n(x, y) &= \vb p(x, y)M_{(1, 1)}\vb e_{n-1} + \vb q(x, y)M_{(2, 1)}\vb e_{n-1}, \quad n=1,2,\ldots.
\end{align*}
Thus, letting $\vb e_0 := (1,0,\ldots)\tran$, $\vb e_1=(0,1,0,\ldots)\tran$, and so on, and letting $M_{(u, v)} = \{m_{i, j, (u, v)}\}_{i,j=0,1,2,\ldots}$, we can write, omitting the $(x, y)$ arguments,
\begin{align*}
a\vb P &= a\begin{bmatrix} p_0 & q_1 & p_1 & q_2 & p_2 & \cdots \end{bmatrix} = a\begin{bmatrix} 
\vb qM_{(1,2)}\vb e_0 + \vb pM_{(2,2)}\vb e_0 \\
\vb pM_{(1, 1)}\vb e_0 + \vb qM_{(2, 1)}\vb e_0 \\
\vb qM_{(1,2)}\vb e_1 + \vb pM_{(2,2)}\vb e_1 \\
\vb pM_{(1,1)}\vb e_1 + \vb qM_{(2, 1)}\vb e_1 \\
\vb qM_{(1, 2)}\vb e_2 + \vb pM_{(2, 2)}\vb e_2\\ 
\vdots
\end{bmatrix}\tran = \begin{bmatrix} 
\sum_{n=1}^\infty q_nm_{n-1, 0, (1, 2)} + \sum_{n=0}^\infty p_nm_{n, 0, (2, 2)} \\
\sum_{n=0}^\infty p_nm_{n, 0, (1, 1)} + \sum_{n=1}^\infty q_nm_{n-1, 0, (2, 1)} \\
\sum_{n=1}^\infty q_nm_{n-1, 1, (1, 2)} + \sum_{n=0}^\infty p_nm_{n, 1, (2, 2)} \\
\sum_{n=0}^\infty p_nm_{n, 1, (1, 1)} + \sum_{n=1}^\infty q_nm_{n-1, 1, (2, 1)} \\
\sum_{n=1}^\infty q_nm_{n-1, 2, (1, 2)} + \sum_{n=0}^\infty p_nm_{n, 2, (2, 2)} \\
\vdots
\end{bmatrix}\tran.
\end{align*}
We can write this first entry and second entry as
\begin{align*}
\sum_{n=1}^\infty q_nm_{n-1,0,(1,2)} + \sum_{n=0}^\infty p_nm_{n,0,(2,2)} &= \begin{bmatrix} p_0 & q_1 & p_1 & q_2 & p_2 & \cdots \end{bmatrix}\begin{bmatrix} m_{0,0,(2,2)} \\ m_{0,0,(1,2)} \\ m_{1,0,(2,2)} \\ m_{1,0,(1,2)} \\ m_{2,0,(2,2)} \\ \vdots \end{bmatrix}, \\
\sum_{n=0}^\infty p_nm_{n, 0, (1, 1)} + \sum_{n=1}^\infty q_nm_{n-1, 0, (2, 1)} &= \begin{bmatrix} p_0 & q_1 & p_1 & q_2 & p_2 & \cdots \end{bmatrix} \begin{bmatrix} m_{0,0,(1,1)} \\ m_{0,0,(2,1)} \\ m_{1,0,(1,1)} \\ m_{1,0,(2,1)} \\ m_{2,0,(1,1)} \end{bmatrix},
\end{align*}
respectively, and this pattern continues for the remaining entries. Thus, we finally obtain the following result. Note that the bandwidth of $J_a$ in the result below is simply the number of non-zero entries in $\vb a = (a_{02}, a_{11}, a_{12}, a_{21}, a_{22}, \ldots)\tran$, with the exception that if the last non-zero is an $a_{n1}$ term instead of an $a_{n2}$ term we need to add $1$.

\begin{proposition}\label{app:prop:mulmat_r1+}
Given $b \geq -1$ and $|h| < 1$, suppose $a(x, y)$ has a convergent $\vb P^{(b, h)}$-series so that $a(x, y) = b_1(\sigma) + yb_1(\sigma) \equiv \vb P^{(b, h)}A$, using the same notation as in Lemma \ref{lem:app:g:pnpprod_form_JA_MAT_mulmat}. Then the matrix $J_a^{(b, h)}$ defined by $a(x, y)\vb P^{(b, h)}(x, y) = \vb P^{(b, h)}(x, y)J_a^{(b, h)}$ is given by
\begin{equation}\label{eq:ja_mat_res}
J_a^{(b, h)} = \left[\begin{array}{c|cc|cc|c}
m_{0, 0, (2, 2)} & m_{0, 0, (1, 1)} & m_{0, 1, (2, 2)} & m_{0, 1, (1, 1)} & m_{0, 2, (2, 2)} &\cdots \\ \hline
m_{0, 0, (1, 2)} & m_{0, 0, (2, 1)}
 & m_{0, 1, (1, 2)} & m_{0, 1, (2, 1)} & m_{0, 2, (1, 2)} &  \cdots \\
m_{1, 0, (2, 2)} & m_{1, 0, (1, 1)} & m_{1, 1, (2, 2)} & m_{1, 1, (1, 1)} & m_{1, 2, (2, 2)} &   \cdots \\ \hline
m_{1, 0, (1, 2)} & m_{1, 0, (2, 1)} & m_{1, 1, (1, 2)} & m_{1, 1, (2, 1)} & m_{1, 2, (1, 2)} & \cdots \\
m_{2, 0, (2, 2)} & m_{2, 0, (1, 1)} & m_{2, 1, (2, 2)} & m_{2, 1, (1, 1)} & m_{2, 2, (2, 2)} &  \cdots \\ \hline
m_{2, 0, (1, 2)} & m_{2, 0, (2, 1)} & m_{2, 1, (1, 2)} & m_{2, 1, (2, 1)} & m_{2, 2, (1, 2)} & \cdots \\
m_{3, 0, (2, 2)} & m_{3, 0, (1, 1)} & m_{3, 1, (2, 2)} & m_{3, 1, (1, 1)} & m_{3, 2, (2, 2)} &  \cdots \\ \hline
m_{3, 0, (1, 2)} & m_{3, 0, (2, 1)} & m_{3, 1, (1, 2)} & m_{3, 1, (2, 1)} & m_{3, 2, (1, 2)} &  \cdots \\
m_{4, 0, (2, 2)} & m_{4, 0, (1, 1)} & m_{4, 1, (2, 2)} & m_{4, 1, (1, 1)} & m_{4, 2, (2, 2)} &  \cdots \\ \hline
\vdots           & \vdots           & \vdots           & \vdots           & \vdots                      & \ddots
\end{array}\right]
\end{equation}
using the notation $M_{(u, v)}^{(b, h)} = \{m_{i,j,(u,v)}\}_{i,j=0,1,2,\ldots}$. This matrix $J_a^{(b, h)}$ is an $(\alpha, \alpha)$ banded matrix, where $\alpha = 2\max\{\alpha_1, \alpha_2\} + \mathbf{1}[\alpha_1 \geq \alpha_2]$, $\alpha_1$ and $\alpha_2$ are as in Lemma \ref{lem:app:g:pnpprod_form_JA_MAT_mulmat}, and $\mathbf{1}[\alpha_1 \geq \alpha_2] = 1$ if $\alpha_1 \geq \alpha_2$ and $\mathbf{1}[\alpha_1 \geq \alpha_2] = 0$ otherwise.
\end{proposition}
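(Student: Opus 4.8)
The plan is to formalise the column-by-column computation carried out immediately before the statement, and then to convert the block bandwidths supplied by Lemma \ref{lem:app:g:pnpprod_form_JA_MAT_mulmat} into a single bandwidth for the interlaced matrix $J_a^{(b, h)}$.

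First I would establish the explicit entries \eqref{eq:ja_mat_res}. By definition $J_a^{(b, h)}$ is read off column by column from $a(x, y)\vb P^{(b, h)}(x, y)$. Fixing the interlaced ordering $\vb P^{(b, h)} = [\,p_0\ q_1\ p_1\ q_2\ p_2\ \cdots\,]$, I record the index dictionary that $p_k$ occupies column $2k$ and $q_k$ occupies column $2k-1$ (indexing from $0$). The two expansion identities \eqref{eq:app:g:ap_expa_mul_mat}--\eqref{eq:app:g:aq_expq_mul_matpq} of Lemma \ref{lem:app:g:pnpprod_form_JA_MAT_mulmat} give $a\,p_n$ and $a\,q_n$ as explicit combinations of $\vb p$ and $\vb q$; extracting the coefficient of $p_m$ (resp.\ $q_m$) from each and placing it in row $2m$ (resp.\ $2m-1$) reproduces precisely the interlaced pattern displayed in \eqref{eq:ja_mat_res}. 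This step is routine once the dictionary between the $M_{(u,v)}^{(b,h)}$ entries and the interlaced positions is fixed.

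The substantive part is the bandwidth count. Using the bandwidths from Lemma \ref{lem:app:g:pnpprod_form_JA_MAT_mulmat} --- namely $M_{(2,2)}^{(b,h)}$ and $M_{(2,1)}^{(b,h)}$ are $(\alpha_2,\alpha_2)$ banded, $M_{(1,2)}^{(b,h)}$ is $(\alpha_1-1,\alpha_1+1)$ banded, and $M_{(1,1)}^{(b,h)}$ is $(\alpha_1+1,\alpha_1-1)$ banded --- I would track, for each of the two column types, the farthest nonzero row from the diagonal after the doubling and half-step shift induced by interlacing. For the $p_n$ column (diagonal at position $2n$) the $q$-entries come from $M_{(1,2)}^{(b,h)}$ and land at positions $2m-1$ with $|2m-1-2n|\le 2\alpha_1-1$, while the $p$-entries come from $M_{(2,2)}^{(b,h)}$ and satisfy $|2m-2n|\le 2\alpha_2$. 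For the $q_n$ column (diagonal at $2n-1$) the $p$-entries from $M_{(1,1)}^{(b,h)}$ reach $2m-(2n-1)$ as far as $2\alpha_1+1$ below the diagonal, and the $q$-entries from $M_{(2,1)}^{(b,h)}$ stay within $2\alpha_2$. Taking the maximum over all four contributions gives a symmetric band of half-width $\max\{2\alpha_1+1,\,2\alpha_2\}$, which equals $2\max\{\alpha_1,\alpha_2\}+\mathbf{1}[\alpha_1\ge\alpha_2]$ by a short case split on whether $\alpha_1\ge\alpha_2$.

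The one genuine subtlety --- and the step I expect to require the most care --- is the asymmetry between the upper and lower bandwidths of $M_{(1,2)}^{(b,h)}$ and $M_{(1,1)}^{(b,h)}$. Because $p$ lives on even positions and $q$ on odd positions, the half-step offset between the two families converts the asymmetric $(\alpha_1-1,\alpha_1+1)$ and $(\alpha_1+1,\alpha_1-1)$ bands into contributions that remain symmetric about the correct diagonal; verifying that these shifts line up, so that the $+1$ appears exactly when $\alpha_1\ge\alpha_2$ and not otherwise, is where the bookkeeping must be done carefully. Once the four envelopes are pinned down in this way, the claimed $(\alpha,\alpha)$ bandedness with $\alpha = 2\max\{\alpha_1,\alpha_2\}+\mathbf{1}[\alpha_1\ge\alpha_2]$ follows immediately.
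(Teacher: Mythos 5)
Your construction of the entries in \eqref{eq:ja_mat_res} is exactly the paper's own argument: the paper also obtains the matrix by applying \eqref{eq:app:g:ap_expa_mul_mat}--\eqref{eq:app:g:aq_expq_mul_matpq} column by column and interlacing according to the dictionary $p_k \leftrightarrow$ column $2k$, $q_k \leftrightarrow$ column $2k-1$ (this is the derivation immediately preceding the statement). For the bandwidth, the paper offers only a one-line remark (counting the nonzero entries of $\vb a$), so your envelope-tracking via the bandwidths in Lemma \ref{lem:app:g:pnpprod_form_JA_MAT_mulmat} is actually more explicit than what the paper records, and your final count $\max\{2\alpha_1+1,\,2\alpha_2\} = 2\max\{\alpha_1,\alpha_2\}+\mathbf{1}[\alpha_1\ge\alpha_2]$ is correct.

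However, the step you yourself flagged as delicate is precisely where your written bookkeeping goes wrong. In a $p_n$ column (column index $2n$) the coefficient of $q_m$ is $m_{m-1,n,(1,2)}$, and since $M_{(1,2)}^{(b,h)}$ is $(\alpha_1-1,\alpha_1+1)$ banded this is nonzero for $n-\alpha_1\le m\le n+\alpha_1$; the corresponding rows $2m-1$ therefore extend only $2\alpha_1-1$ \emph{below} the diagonal but as far as $2\alpha_1+1$ \emph{above} it, so your symmetric bound $|2m-1-2n|\le 2\alpha_1-1$ is false on the upper side (e.g.\ for $a=q_1$, so $\alpha_1=1$, $\alpha_2=0$, the $p_n$ column contains a $q_{n-1}$ entry three rows above the diagonal). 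Dually, in a $q_n$ column the $p$-entries from $M_{(1,1)}^{(b,h)}$ reach $2\alpha_1+1$ below but only $2\alpha_1-1$ above. Neither contribution is symmetric about the diagonal on its own; the band of $J_a^{(b,h)}$ is symmetric only because these two mirrored asymmetric envelopes are superimposed (equivalently, for $b>-1$, because multiplication by $a$ is symmetric with respect to the inner product, which forces a symmetric sparsity pattern). Taken literally, your bounds would understate the upper bandwidth as $\max\{2\alpha_1-1,\,2\alpha_2\}$ --- giving $2k$ rather than the correct $2k+1$ when $\alpha_1=\alpha_2=k$. Your conclusion survives because the binding term $2\alpha_1+1$ does appear in your list (from the $q$-column, lower side) and the true band is indeed symmetric, but the justification of that symmetry must come from the mirroring of the two asymmetric envelopes, not from each envelope being symmetric.
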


The analogous result for $\vb P^{(b), \bm\theta}$ can be derived by applying Proposition \ref{app:prop:mulmat_r1+} over each element.

\end{document}